\DeclareMathAlphabet{\mathpzc}{OT1}{pzc}{m}{it}
\newtheorem{theorem}{Theorem}
\crefname{theorem}{theorem}{Theorems}
\Crefname{Theorem}{Theorem}{Theorems}
\newtheorem*{lemma_nonumber*}{Lemma}
\newaliascnt{lemma}{theorem}
\newtheorem{lemma}[lemma]{Lemma}
\crefname{lemma}{lemma}{lemmas}
\Crefname{Lemma}{Lemma}{Lemmas}
\newaliascnt{corollary}{theorem}
\crefname{corollary}{corollary}{corollaries}
\Crefname{Corollary}{Corollary}{Corollaries}
\newaliascnt{proposition}{theorem}
\newtheorem{proposition}[proposition]{Proposition}
\crefname{proposition}{proposition}{propositions}
\Crefname{Proposition}{Proposition}{Propositions}
\newaliascnt{example}{theorem}
\newtheorem{example}[example]{Example}
\crefname{example}{example}{examples}
\Crefname{Example}{Example}{Examples}
\newaliascnt{remark}{theorem}
\newtheorem{remark}[remark]{Remark}
\crefname{remark}{remark}{remarks}
\Crefname{Remark}{Remark}{Remarks}
\newtheorem{assumption}{\textbf{A}\hspace{-3pt}}
\newtheorem{assumptionD}{\textbf{D}\hspace{-3pt}}
\newtheorem*{assumptionD'}{\textbf{D'}\hspace{-3pt}}
\renewcommand{\leq}{\leqslant}
\renewcommand{\geq}{\geqslant}
\def\scrA{\mathscr{A}}
\def\scrB{\mathscr{B}}
\def\scrC{\mathscr{C}}
\def\varphibf{\pmb{\varphi}}
\newcommandx\sequenceg[3][2=,3=]
\newcommand{\half}{{\nicefrac{1}{2}}}
\newcommand{\ie}{\textit{i.e.}}
\newcommand{\rme}{\mathrm{e}}
\newcommand{\rmV}{\mathrm{V}}
\newcommand{\rmSpan}{\mathrm{Span}}
\newcommand{\rmCov}{\mathrm{Cov}}
\newcommand{\Abf}{\mathbf{A}}
\newcommand{\Bbf}{\mathbf{B}}
\newcommand{\bfX}{\mathbf{X}}
\newcommand{\bfV}{\mathbf{V}}
\def\msk{\mathsf{K}}
\def\msb{\mathsf{B}}
\def\msec{\mathsf{E}_{\mathrm{c}}}
\def\msw{\mathsf{W}}
\def\msy{\mathsf{Y}}
\def\mcbb{\mathcal{B}}  
\newcommand{\mcb}[1]{\mathcal{B}(#1)}
\def\mcy{\mathcal{Y}}
\def\mcw{\mathcal{W}}
\def\mcf{\mathcal{F}}
\def\rset{\mathbb{R}}
\def\nset{\mathbb{N}}
\def\nsets{\mathbb{N}^*}
\def\rmm{\mathrm{m}}
\def\rmd{\mathrm{d}}
\def\rml{\mathrm{L}}
\def\rme{\mathrm{e}}
\def\rmC{\mathrm{C}}
\newcommand{\abs}[1]{\left\vert #1 \right\vert}
\newcommand{\absLigne}[1]{\vert #1 \vert}
\newcommand{\tvnorm}[1]{\| #1 \|_{\mathrm{TV}}}
\newcommand{\tvnormEq}[1]{\left \| #1 \right \|_{\mathrm{TV}}}
\newcommandx{\Vnorm}[2][1=V]{\| #2 \|_{#1}}
\newcommandx{\normpi}[2][2=2]{\left\Vert  #1 \right\Vert_{#2}}
\newcommandx{\normH}[2][2=2]{\left\Vert  #1 \right\Vert}
\newcommandx{\normHLigne}[2][2=2]{\Vert  #1 \Vert}
\newcommandx{\normHLine}[2][2=2]{\Vert  #1 \Vert}
\newcommandx{\normmu}[2][2=2]{\left\Vert  #1 \right\Vert_{#2}}
\newcommandx{\normopmu}[2][2=2]{\left\Vert  #1 \right\Vert_{#2}}
\newcommandx{\normopH}[2][2=2]{\left\Vert  #1 \right\Vert_{\mathrm{op}}}
\newcommand{\normop}[1]{\left\Vert  #1 \right\Vert_{\mathrm{op}}}
\newcommand{\ps}[2]{\left\langle#1,#2 \right\rangle}
\newcommandx{\normpiLine}[2][2=2]{\Vert  #1 \Vert_{#2}}
\newcommandx{\normmuLine}[2][2=2]{\Vert  #1 \Vert_{#2}}
\newcommandx{\normopmuLine}[2][2=2]{\Vert  #1 \Vert_{#2}}
\newcommandx{\normopHLine}[2][2=2]{\Vert  #1 \Vert_{\mathrm{op}}}
\newcommandx{\normopLine}[2][2=2]{\Vert  #1 \Vert_{\mathrm{op}}}
\newcommandx{\normopLigne}[2][2=2]{\Vert  #1 \Vert_{\mathrm{op}}}
\newcommandx{\VnormEq}[2][1=V]{\left\| #2 \right\|_{#1}}
\newcommandx{\norm}[2][1=]{\ifthenelse{\equal{#1}{}}{\left\Vert #2 \right\Vert}{\left\Vert #2 \right\Vert^{#1}}}
\newcommandx{\normLigne}[2][1=]{\ifthenelse{\equal{#1}{}}{\Vert #2 \Vert}{\Vert #2\Vert^{#1}}}
\newcommand{\parenthese}[1]{\left(#1 \right)}
\newcommand{\parentheseDeux}[1]{\left[ #1 \right]}
\newcommand{\defEns}[1]{\left\lbrace #1 \right\rbrace }
\newcommandx\probaMarkovTilde[2][2=]
\newcommand{\PE}{\mathbb{E}}
\newcommand{\expe}[1]{\PE \left[ #1 \right]}
\newcommandx{\expeE}[2][2=]{\mathbb{E}^{#2}\left[ #1 \right]}
\newcommandx{\expeI}[2][1=]{\PE_{#1} \left[ #2 \right]}
\newcommand{\plusinfty}{+\infty}
\def\ie{\textit{i.e.}}
\def\eqsp{\;}
\renewcommand{\iint}[2]{\{ #1,\ldots,#2\}}
\newcommand{\coint}[1]{\left[#1\right)}
\newcommand{\ocint}[1]{\left(#1\right]}
\newcommand{\ooint}[1]{\left(#1\right)}
\newcommand{\ccint}[1]{\left[#1\right]}
\newcommand{\ocintLigne}[1]{(#1]}
\newcommand{\cballdim}[3]{\overline{\operatorname{B}}_{#1}(#2,#3)}
\newcommandx\sequence[3][2=,3=]
\newcommandx\sequenceD[3][2=,3=]
\newcommandx{\sequencen}[2][2=n\in\N]{\ensuremath{\{ #1_n, \eqsp #2 \}}}
\newcommandx\sequenceDouble[4][3=,4=]
\newcommandx{\sequencenDouble}[3][3=n\in\N]{\ensuremath{\{ (#1_{n},#2_{n}), \eqsp #3 \}}}
\def\iid{i.i.d.}
\def\eg{e.g.}
\def\Id{\operatorname{Id}}
\def\Idd{\operatorname{I}_d}
\def\Zd{\bfZero_{d}}
\def\generator{\mathscr{L}}
\def\transpose{\operatorname{T}}
\newcommand{\1}{\mathbbm{1}}
\newcommandx{\CPE}[4][1=,4=]{{\mathbb E}^{#4}_{#1}\left[#2 \, \middle | #3 \right]} 
\newcommandx{\CPELigne}[4][1=,4=]{{\mathbb E}^{#4}_{#1}[#2\, | #3 ]} 
\newcommandx{\CPVar}[3][1=]{\mathrm{Var}^{#3}_{#1}\left\{ #2 \right\}}
\newcommand{\CPP}[3][]
{\ifthenelse{\equal{#1}{}}{{\mathbb P}\left(\left. #2 \, \right| #3 \right)}{{\mathbb P}_{#1}\left(\left. #2 \, \right | #3 \right)}}
\def\distance{\ell}
\newcommandx{\wasserstein}[3][1=\distance,3=]{\mathbf{W}_{#1}^{#3}\left(#2\right)}
\newcommandx{\wassersteinLigne}[3][1=\distance,3=]{\mathbf{W}_{#1}^{#3}(#2)}
\newcommandx{\wassersteinD}[1][1=\distance]{\mathbf{W}_{#1}}
\newcommandx{\wassersteinDLigne}[1][1=\distance]{\mathbf{W}_{#1}}
\def\bgamma{\bar{\gamma}}
\def\bsigma{\bar{\sigma}}
\newcommand\ceil[1]{\lceil #1 \rceil}
\newcommand\floor[1]{\lfloor #1 \rfloor}
\newcommandx{\gperthmc}[2][1=,2=]{\ifthenelse{\equal{#1}{}}{\Xi}{\ifthenelse{\equal{#2}{}}{\Xi_{h,#1}}{\Xi_{#2,#1}}}}
\newcommandx{\Phiverlet}[2][1=,2=]{\ifthenelse{\equal{#1}{}}{\Phi}{\Phi_{#1}^{\circ (#2)}}}
\newcommandx{\gpertub}[2][1=,2=]{\ifthenelse{\equal{#1}{}}{g}{g_{#1}^{#2}}}
\newcommandx{\Phiverletq}[2][1=,2=]{\ifthenelse{\equal{#1}{}}{\widetilde{\Phi}}{\widetilde{\Phi}_{#1}^{\circ (#2)}}}
\newcommandx{\Phiverletqi}[2][1=,2=]{\ifthenelse{\equal{#1}{}}{\bar{\Psi}}{\bar{\Psi}_{#1}^{\circ (#2)}}}
\newcommandx{\Pkerhmc}[2][1=,2=]{\ifthenelse{\equal{#1}{}}{\mathrm{P}}{\mathrm{P}_{#1, #2}}}
\newcommandx{\Qkerhmc}[2][1=,2=]{\ifthenelse{\equal{#1}{}}{\mathrm{Q}}{\mathrm{Q}_{#1, #2}}}
\newcommandx{\tPkerhmc}[2][1=,2=]{\ifthenelse{\equal{#1}{}}{\tilde{\mathrm{P}}}{\tilde{\mathrm{P}}_{#1, #2}}}
\newcommandx{\PkerhmcD}[2][1=,2=]{\ifthenelse{\equal{#1}{}}{\mathrm{K}}{\mathrm{K}_{#1, #2}}}
\def\tZ{\tilde{Z}}
\def\varphibf{\boldsymbol{\varphi}}
\newcommandx{\functionspace}[2][1=+]{\mathbb{F}_{#1}(#2)}
\newcommandx\EntU[1][1=]{\ifthenelse{\equal{#1}{}}{\mathrm{Ent}}{\mathrm{Ent}_{#1}}}
\def\Id{\operatorname{Id}}
\def\Gammabf{\mathbf{\Gamma}}
\newcommand{\argmin}{\operatorname*{arg\,min}}
\newcommandx{\VarDeux}[3][3=]{\operatorname{Var}^{#3}_{#1}\left\{#2 \right\}}
\newcommandx{\osc}[2][1=]{\mathrm{osc}_{#1}(#2)}
\def\Sigmabf{\boldsymbol{\Sigma}}
\newcommandx{\mgamma}[1][1=\gamma]{\mathbf{m}_{#1}}
\newcommandx{\Sgamma}[1][1=\gamma]{\Sigmabf_{#1}}
\newcommand{\Sgamman}[1]{\Sigmabf_{#1, \gamma}}
\def\PP{\mathbb{P}}
\def\LyaW{\mathpzc{W}}
\def\LyaV{\mathpzc{V}}
\def\LyapV{\mathpzc{V}}
\def\Fpzc{\mathpzc{F}}
\newcommand{\Lya}[1][\gamma]{\LyaW_{#1}}
\def\LyaZmain{\LyaV}
\newcommand{\Lyaexp}[1][\gamma]{\overline{\LyaW}_{#1,\varpi}}
\newcommand{\LyaexpU}[1][\gamma]{\overline{\LyaW}_{#1,\varpi_U}}
\newcommand{\LyaZmainexp}[1]{\overline{\LyaV}_{#1}}
\def\cWb{\underline{c}_{\LyaW}}
\def\cWu{\overline{c}_{\LyaW}}
\newcommand{\Lip}{\mathrm{Lip}}
\newcommand\tx{\tilde{x}}
\newcommandx\carreduchampadjoint[1][1=]{\Gamma_{#1}^{\star}}
\newcommandx\carreduchampadjointbf[1][1=]{\Gammabf_{#1}^{\star}}
\def\scrC{\mathscr{C}}
\def\rsetdd{\rset^{2d}}
\newcommandx{\normCarreSGamma}[2][1=]{\norm{#2}^2_{\Sgamman{#1}^{-1/2}}}
\def\0bf{\boldsymbol{0}}
\def\tz{\tilde{z}}
\def\Gminor{\mathrm{G}}
\newcommand{\Gminorc}[1]{\mathrm{G}^{(#1)}}
\def\cminor{\mathrm{c}}
\newcommand{\cminorc}[1]{\cminor^{(#1)}}
\def\Phiminor{\Phi}
\def\Gammaminor{\Gamma}
\def\GammaminorF{\Upsilon}
\newcommandx\Phiminorc[2][2=]
\newcommandx\Gammaminorc[2][2=]
\newcommandx\GammaminorcF[2][2=]
\def\tPhiminor{\tilde{\Phi}}
\newcommandx\tPhiminorc[2][2=]
\def\tGammaminor{\widetilde{\Gamma}}
\def\tGammaminorF{\widetilde{\GammaminorF}}
\newcommandx\tGammaminorc[2][2=]
\newcommandx\tGammaminorcF[2][2=]
\def\bPhiminor{\bar{\Phi}}
\newcommandx\bPhiminorc[2][2=]
\def\bGammaminor{\overline{\Gamma}}
\newcommandx\bGammaminorc[2][2=]
\def\bGammaminorF{\overline{\GammaminorF}}
\newcommandx\bGammaminorcF[2][2=]
\newcommandx\bPhiminorinvc[2][2=]
\newcommandx\bGammaminorinvc[2][2=]
\newcommandx\bGammaminorinvcF[2][2=]
\def\LipXi{\mathrm{L}}
\newcommandx\LipXic[2][2=]
\def\Psiminor{\Psi}
\newcommandx\Psiminorc[2][2=]
\def\Ximinor{\Xi}
\newcommandx\Ximinorc[2][2=]
\def\Mminor{\mathrm{M}}
\newcommandx\Mminorc[2][2=]
\def\Bminor{\Theta}
\def\alphaminor{\alpha}
\def\betaminor{\beta}
\def\alphaminorbf{\boldsymbol{\alpha}}
\def\betaminorbf{\boldsymbol{\beta}}
\def\XminorTer{X}
\def\VminorTer{V}
\def\Ximinor{\Xi}
\def\Onebf{\boldsymbol{1}}
\def\Unbf{\Onebf}
\def\Zerobf{\boldsymbol{0}}
\def\bfZero{\boldsymbol{0}}
\def\Nminor{N}
\def\LipMGammaminor{\mathrm{M}_{\Gamma,\gamma}}
\def\LipGammaminor{\mathrm{L}_{\Gamma,\gamma}}
\newcommand{\tZc}[1]{\widetilde{Z}^{(#1)}}
\newcommand{\tZbfc}[1]{\widetilde{\mathbf{Z}}^{(#1)}}
\newcommand{\tZbfcs}{\widetilde{\mathbf{Z}}}
\newcommand{\Zbfc}[1]{\mathbf{Z}^{(#1)}}
\newcommand{\bfWc}[1]{\mathbf{W}^{(#1)}}
\def\vartZ{\mathbf{S}}
\newcommand{\vartZc}[1]{\vartZ^{(#1)}}
\def\bminorg{g_{\gamma}}
\def\LipT{\mathtt{L}}
\def\Rkerminor{R}
\newcommand{\Rkerminorc}[1]{\Rkerminor_{#1}}
\def\Qkerminor{Q}
\newcommand{\Qkerminorc}[1]{\Qkerminor_{#1}}
\def\gminor{g}
\newcommand{\gminorc}{\tau_{\gamma}}
\def\gminorbf{\mathbf{g}}
\newcommand\gminorbfc[1]{\gminorbf^{(#1)}}
\def\Pminor{\mathbf{P}}
\newcommand\Pminorc[1]{\Pminor^{(#1)}}
\def\bart{\bar{t}}
\def\tzbf{\mathbf{\tz}}
\def\JacD{\mathrm{J}}
\def\JacM{\mathrm{Jac}}
\def\tbound{\bar{t}_0}
\newcommand{\coeffContinueMatrix}[2]{\mathbf{\Sigma}_{#1}^{(#2)}}
\newcommand{\continueMatrix}[1]{\mathbf{\Sigma}^{(#1)}}
\def\btZ{\bar{t}_0}
\def\brho{\bar{\rho}}
\newcommand{\bfw}{\mathbf{w}}
\newcommand{\wdensity}[1]{\mu_W^{#1}}
\newcommand{\rsetddd}{\rset^{3d}}
\def\Dbf{\mathbf{D}}
\def\Ltt{\mathtt{L}}
\newcommand{\txts}[1]{\textstyle #1}
\def\tLambda{\tilde{\Lambda}}
\def\muw{\mu_{\mathrm{w}}}
\def\muwOne{\mu_{\mathrm{w_1}}}
\def\muwTwo{\mu_{\mathrm{w_2}}}
\def\rmX{\mathrm{X}}
\def\rmV{\mathrm{V}}
\def\XF{X}
\def\BM{\mathrm{B}}
\def\rmy{\mathrm{y}}
\def\parametreGradU{\alpha_U}
\def\constgGradU{\mathrm{C}_U}
\def\LipGradU{L}
\newcommand{\constasssuperexpone}{\zeta_U}
\newcommand{\fonctionControle}{\Fpzc}
\newcommand{\fonctionControleBis}{\widetilde{\Fpzc}}
\def\bvartheta{\bar{\vartheta}}
\newcommand{\LSfonction}[1][\gamma]{\phi_{#1}}
\newcommand{\normLip}[1]{\norm{#1}_\Lip}
\def\tsigma{\widetilde{\sigma}}
\newcommand{\cLya}{c_{\LyaW}}
\newcommand{\cbgamma}{\bgamma_{\LyaW}}
\newcommand{\cPhi}{\mathfrak{C}_\phi}
\newcommand{\LSconstant}{C_{\mathrm{S}}}
\newcommand{\LLS}{\tilde{\mathtt{L}}}
\newcommand{\deltau}{{\delta_{U}}}
\def\supplementname{supplement}
\def\supplementname{appendix}
\title{Uniform minorization condition and convergence bounds for discretizations of kinetic Langevin dynamics}
\author[1]{Alain Durmus}
\author[2]{Aurélien Enfroy}
\author[1]{\'Eric Moulines}
\author[3]{Gabriel Stoltz}
\affil[1]{\small{CMAP, CNRS, Ecole Polytechnique, Institut Polytechnique de Paris, 91120 Palaiseau, France}}
\affil[2]{\small{Laboratoire de mathématiques d’Orsay, Université Paris-Saclay, Orsay, France}}
\affil[3]{\small{CERMICS, Ecole des Ponts, Marne-la-Vallée, France \& MATHERIALS team-project, Inria Paris, France}}
\begin{document}
\footnotetext[1]{Email: alain.durmus@polytechnique.edu, eric.moulines@polytechnique.edu}
\footnotetext[2]{Email: aurelien.enfroy@ens-paris-saclay.fr}
\footnotetext[3]{Email: gabriel.stoltz@enpc.fr}

\maketitle

\begin{abstract}
  We study the convergence in total variation and
  $V$-norm of discretization schemes of the underdamped Langevin
  dynamics. Such algorithms are very popular and commonly used in
  molecular dynamics and computational statistics to  approximatively sample from a target
  distribution of interest. We show first that, for a
  very large class of schemes, a minorization condition uniform in the
  stepsize holds. This class encompasses popular methods such as
  the Euler-Maruyama scheme and the schemes based on splitting
  strategies. Second, we provide mild conditions ensuring that the
  class of schemes that we consider satisfies a geometric Foster--Lyapunov drift
  condition, again uniform in the stepsize. This allows us to derive geometric convergence bounds, with a
  convergence rate scaling linearly with the stepsize. This kind of
  result is of prime interest to obtain estimates on norms of
  solutions to Poisson equations associated with a given numerical method. 
\end{abstract}



\section{Introduction}

Langevin dynamics are nowadays one of the
default dynamics to sample configurations of molecular systems in
computational statistical physics; see for
instance~\cite{FrenkelSmit,Tuckerman,AT17} for reference textbooks on
molecular dynamics, as well as the more mathematically oriented
works~\cite{PavliotisBook,LM15,LS16}. 
They  are also gaining
increasing popularity in 
Bayesian statistics and machine learning
\cite{robert:2007:choice,barber_2012} to obtain approximate samples
from the a posteriori distribution of a statistical model \cite{cheng:et:al:2018,dalalyan2020sampling}. 
In this paper, we are interested in the Langevin dynamics, sometimes
coined underdamped or kinetic, which describes the evolution of the position $(X_t)_{t \geq 0}$ and the velocity $(V_t)_{t \geq 0}$ of a
system by the $2d$-dimensional stochastic
differential equation (SDE):
\begin{equation}
  \label{eq:general_SDE}
  \begin{aligned}
    \rmd \rmX_t &= \rmV_t \, \rmd t, \\
    \rmd \rmV_t &= \left[b(\rmX_t) - \kappa \rmV_t \right] \rmd t  + \sigma\,  \rmd \BM_t \eqsp.
  \end{aligned} 
\end{equation}
Here, $\kappa,\sigma>0$ are  some friction and diffusion coefficients respectively, and $(\BM_t)_{t \geq 0}$ is a standard $d$-dimensional Brownian motion defined on the filtered probability space~$(\Omega,\mcf,\PP,(\mcf_t)_{t \geq 0})$ satisfying the usual conditions. Note that, for notational simplicity, the mass matrix of the system is set to be the identity matrix and the friction coefficient is a scalar, independent of the  position. It would nonetheless be possible to consider symmetric definite positive mass matrices, and position dependent friction matrices. A typical choice for the vector field $b \in \rmC^1(\rset^d,\rset^d)$ is $b = -\nabla U$ for some potential energy~$U$, in which case the unique invariant probability measure of~\eqref{eq:general_SDE} is the Boltzmann--Gibbs probability measure, whose density with respect to the Lebesgue measure is proportional to~$(x,v) \mapsto \exp(-\kappa [2U(x)+\norm{v}^2] / \sigma^2)$. However, we are also interested in situations where the drift does not arise from a gradient, as in nonequilibrium molecular dynamics simulations~\cite{CKS05,evans-moriss} (see for instance~\cite[Section~5]{LS16} for a mathematical introduction to this field).

There are various techniques to prove the convergence of the
continuous dynamics~\eqref{eq:general_SDE}, for instance
hypocoercivity~\cite{Talay02,Villani09}, Lyapunov estimates~\cite{Wu01,mattingly:stuart:higham:2002,rey-bellet} and
coupling methods~\cite{EGZ19}. Moreover,
although the dynamics is degenerate, \ie~the covariance matrix
associated with \eqref{eq:general_SDE} is not invertible, it can be
shown using a combination
of controllability arguments and
hypoellipticity~\cite{rey-bellet}, that, for any $t_0 >0$ and initial condition
$(X_0,V_0) = (x_0,v_0)\in\rset^{2d}$, the random variable~$(X_{t_0},V_{t_0})$ has a
distribution with a positive density $(x_{t_0},v_{t_0}) \mapsto p_{t_0}((x_0,v_0),(x_{t_0},v_{t_0}))$ with respect to the Lebesgue
measure on $\rset^{2d}$, implying
that the Markov semigroup associated with \eqref{eq:general_SDE} is
irreducible. Finally, quantitative bounds on $p_{t_0}$ for any $t_0 >0$ can be established by various techniques, such as
Malliavin calculus~\cite{BC13}, representation formulas for pinned
diffusions together with comparison principles~\cite{QZ02,QZ04}, or
through Gaussians bounds obtained with the so-called parametrix
method~\cite{KMM10,LRR20}.

Obtaining quantitative convergence rates at the discrete level for discretization scheme associated with~\eqref{eq:general_SDE}  is more difficult.
In particular, it is of prime concern to establish convergence rates $\rho_{\gamma}$ for discretization schemes with timestep $\gamma >0$ which are similar to their continuous counterpart, \ie~such that $\log(\rho_{\gamma})$ scales linearly in the timestep. More precisely, if we denote by $R_{\gamma}$ the Markov kernel associated with a given discretization scheme, it is sensible to expect that, under appropriate conditions and for any $\gamma >0$ small enough, this kernel admits an invariant distribution~$\pi_{\gamma}$ and $\mathbf{d}(\mu_0 R_{\gamma}^{k},\pi_{\gamma}) \leq C \overline{\rho}^{\gamma k}$ for an initial distribution $\mu_0$, where~$C \geq0$ and $\overline{\rho}\in \coint{0,1}$, while~$\mathbf{d}$ is some distance on the set of probability measures on $\rset^{2d}$. 
Currently, one of the  main options to obtain such a convergence result is to prove Lyapunov estimates and minorization conditions which are uniform in the timestep, \ie~that such conditions holds for $R_{\gamma}^{\ceil{t_0/\gamma}}$ for some $t_0>0$ for constants which do not depend on~$\gamma$. This strategy was used for overdamped Langevin dynamics in~\cite{BH13} and~\cite{dBD19}. While Lyapunov conditions are based on direct algebraic computations, and may require to consider implicit schemes~\cite{mattingly:stuart:higham:2002,KopecLangevin}, showing a minorization condition uniform in the timestep is the main bootleneck of this approach. For non-degenerate stochastic dynamics, this type of results can be established relying on Malliavin calculus~\cite{BT96}, 
but the resulting proof is rather involved. 
 It may also be possible to write a direct proof as in~\cite{BH13} from the Girsanov theorem, although this is however possible only for dynamics with additive noise. Finally, the coupling approach developed in \cite{durmus:moulines:2019,dBD19,eberle2018quantitative} cannot be applied to discretizations of the degenerate SDE \eqref{eq:general_SDE}. 

 In this paper, we consider another approach to obtaining minorization conditions uniform in the timestep: the idea is to consider the numerical scheme over small physical times as a perturbation of a given Gaussian process.
This approach was first advocated by one of the authors to easily present the main results and rationale from \cite{durmus:moulines:2019,dBD19} for discretization schemes of the overdamped Langevin dynamics; see \Cref{sec:idea_ULA}. 
 It turns out that this approach can be extended to various discretization schemes for the underdamped Langevin dynamics, in particular the splittings schemes proposed in~\cite{BO10,LM12}, which are becoming increasingly popular in molecular dynamics (see~\cite[Section~12.2]{AT17}). This contribution allows to amend and correct the proof of~\cite[Lemma~2.8]{leimkuhler:matthews:stoltz:2016} and extend  this result to unbounded spaces.

\paragraph{Outline of the work.}
The present document is organized as follows. We present the main results we obtain in Section~\ref{sec:main_result}; see in particular \Cref{theo:minor_main_gen} for the minorization condition, and \Cref{theo:drift_exp} for the Lyapunov condition. Both results are stated so that the dependence on the timestep is explicit. The general structure of the numerical schemes we consider is motivated in Section~\ref{sec:exampl-numer-schem}, where we present various algorithms to discretize Langevin dynamics. The remaining sections are devoted to the proofs of these results. The proof of the minorization condition is written in \Cref{sec:proof_minorization}, with, for pedagogical purposes, a sketch of the proof for nondegenerate dynamics in \Cref{sec:idea_ULA}; while the proof of the Lyapunov condition can be read in \Cref{sec:proof-crefth_drift}. For completeness, some proofs and derivations are deferred to the \supplementname.

\paragraph{Notation.} In order to present more concisely our results, we use the following notation throughout this work. For $m,n,p,q\in \mathbb{N}^*$, the Kronecker product $\mathbf{A} \otimes \mathbf{B}$ of a $m\times n$ matrix $\mathbf{A}=(a_{i,j})_{(i,j) \in \{1,\ldots,m\}\times\{1,\ldots,n\}}$ and a $p \times q$ matrix $\mathbf{B}=(b_{i,j})_{(i,j) \in \{1,\ldots,p\}\times\{1,\ldots,q\}}$ is the $pm \times qn$ dimensional matrix with entries
\begin{equation}
\forall (i,j) \in \{1,\ldots,pm\}\times\{1,\ldots,qn\}\eqsp,
\qquad
(\mathbf{A}\otimes \mathbf{B})_{i,j}=a_{\lceil i/p \rceil, \lceil j/q \rceil }b_{i-\lfloor  (i-1)/p \rfloor p, j-\lfloor  (j-1)/q \rfloor q } \eqsp. 
\end{equation}
Equivalently, and more explicitly,
\begin{equation}
  \label{eq:1}
  \Abf \otimes \Bbf = \begin{pmatrix} a_{11} \Bbf & \cdots & a_{1n}\Bbf \\ \vdots & \ddots & \vdots \\ a_{m1} \Bbf & \cdots & a_{mn} \Bbf \end{pmatrix} \eqsp.
\end{equation}
For two symmetric matrices $\mathbf{A}$ and $\mathbf{B}$, we say that $\mathbf{A}\succeq  \mathbf{B}$ if $\mathbf{A}-\mathbf{B}$ is positive semi-definite.
We denote by $\Zerobf_d$ and $\Unbf_d$ the $d$-dimensional vectors with all components equal to $0$ and $1$ respectively.

The set~$\mathcal{B}(\rset^d)$ denotes the Borel $\sigma$-field of $\rset^d$. The Euclidean scalar product of vectors~$x$ and $y$ in~$\rset^d$ is denoted by~$\ps{x}{y} = x^{\transpose} y$, the Euclidean norm of~$x$ being~$\norm{x}$. For any $n\in \mathbb{N}^*$ and for any matrix $\mathbf{A}$ of size $n\times n$, the notation~$\normop{\mathbf{A}}$ stands for the induced norm defined by $\normop{\mathbf{A}}=\sup \{\norm{\mathbf{A}x}\,:\, x\in \rset^n$ with $\norm{x}=1\}$. For $k,n,m\in \nsets$, the set of $k$-times continuously differentiable functions $f : \rset^n \to \rset^m$ is denoted by~$\rmC^k(\rset^n,\rset^m)$. For $f \in \rmC^1(\rset^{d},\rset)$, $\nabla f$ is the gradient of $f$, and for $f \in \rmC^2(\rset^{d},\rset)$ we denote by $\Delta f$ the Laplacian of $f$. When $f \in \rmC^1(\rset^{2d},\rset)$, $\nabla_x f$ is the gradient of $f$ restricted to the first~$d$ components and $\nabla_v f$ the gradient of $f$ restricted to the last~$d$ components. When $f \in \rmC^2(\rset^{2d},\rset)$, $\Delta_x f$ is the Laplacian of $f$ restricted to the first~$d$ components and $\Delta_v f$ the Laplacian of $f$  restricted to the last~$d$ components.
The closed ball centered at~$\tx_0  \in  \rset^d$ (with~$d \geq 1$) with radius~$M \geq 0$ is $\cballdim{d}{x_0}{M} = \left\{ x \in \rset^{d} \,: \, \norm{x-x_0} \leq M \right\}$. Finally, the $d$-dimensional standard normal distribution is denoted $\varphibf$  and by abuse of notation  its density with respect to the Lebesgue measure is also denoted by  $z \mapsto \varphibf(z)$.
 For some measurable functions $\LyaV: \rset^{n} \to \coint{1,\infty}$ and $g: \mathbb{R}^{n} \to \rset$, we define
$\Vnorm[\LyaV]{g}= \sup_{x \in \mathbb{R}^{n}} \{| g(x) | / \LyaV(x) \} < \infty$.
The $\LyaV$-norm of a signed measure $\xi$ on $(\mathbb{R}^{n},\mcbb(\rset^{n}))$ is defined as
$\Vnorm[\LyaV]{\xi} = \int_{\mathbb{R}^{n}} \LyaV(x) \rmd \abs{\xi}(x)$, where $\abs{\xi}$ is the absolute
value of $\xi$. In the case $\LyaV \equiv 1$, the $\LyaV$-norm is the total variation norm and it is denoted by $\tvnorm{\cdot}$. Equivalently (see \cite[Theorem D.3.2]{douc:moulines:priouret:2018} for details), $\Vnorm[\LyaV]{\xi}$ can be defined as $\Vnorm[\LyaV]{\xi} = \sup\{ \xi(g) \, :\, \Vnorm[\LyaV]{g}\leq 1\}$. 

\section{Setting and main results}
\label{sec:main_result}

We first discuss in Section~\ref{sec:structural} the general structure of the discretization schemes we consider (the relevance of the structural assumptions we make is illustrated later on by various examples in Section~\ref{sec:exampl-numer-schem}). We then state the main results of this work in Section~\ref{sec:main_results}, namely minoration and drift conditions uniform in the discretization timestep, from which we immediately deduce a geometric convergence with a rate uniform in the timestep as well. 

\subsection{Structural assumptions on the numerical schemes}
\label{sec:structural}

Discretization schemes for~\eqref{eq:general_SDE} are obtained in practice by introducing a positive timestep~$\gamma >0$.  They correspond to a Markov chain~$\{ (\XminorTer_k,\VminorTer_k) \}_{k\in \mathbb{N}}$, where $(\XminorTer_k,\VminorTer_k)$ approximates~$(\rmX_{k\gamma},\rmV_{k\gamma})$, the solution of~\eqref{eq:general_SDE} at time~$k\gamma$. More precisely, we consider the following general structure on the induction defining the numerical schemes: for $k \in\nset$,
\begin{equation}
  \label{eq:def_xminor_v_minor_gen}
 \begin{aligned}
   \XminorTer_{k+1} & = \XminorTer_k + \gamma\VminorTer_k + \gamma f_\gamma\left(\XminorTer_k,\gamma^{\delta}\VminorTer_k,\gamma^{\delta+\half} \sigma_{\gamma} Z_{k+1},W_{k+1}\right) + \gamma^{\delta+\half} \sigma_{\gamma} \Dbf_{\gamma} Z_{k+1} \eqsp,\\ 
    \VminorTer_{k+1} & = \gminorc\VminorTer_k + \gamma g_\gamma\left(\XminorTer_k,\gamma^{\delta}\VminorTer_k, \gamma^{\delta+\half} \sigma_{\gamma} Z_{k+1},W_{k+1}\right)+ \sqrt{\gamma}\sigma_{\gamma} Z_{k+1} \eqsp ,
  \end{aligned} 
\end{equation}
where $\delta >0$ is a positive parameter (equal to~1 in all the examples we consider), the family~$(W_{k+1})_{k \in\nset}$ is a sequence of independent and identically distributed (\iid)~random variables with common distribution~$\muw$ on a measurable space $(\msw,\mcw)$, independent of the family~$(Z_{k+1})_{k\in\nset}$ of \iid~$d$-dimensional standard Gaussian random variables. In many cases of interest (see \Cref{sec:exampl-numer-schem} below), $(W_{k+1})_{k \in\nset}$ is a family of \iid~standard Gaussian random variables.

The actual numerical schemes under consideration are encoded by the  measurable functions~$f_\gamma,g_\gamma: \rsetddd \times\msw \to \rset^d$, as well as by $\sigma_{\gamma},\tau_{\gamma} >0$ and $\Dbf_{\gamma}\in \rset^{d \times d}$. We illustrate the choice of the form for the recursion~\eqref{eq:def_xminor_v_minor_gen} by several discretization schemes for~\eqref{eq:general_SDE} in \Cref{sec:exampl-numer-schem}. One formally expects in the limit~$\gamma \to 0$ that
\[
\tau_\gamma = 1 - \kappa \gamma+ \mathrm{O}(\gamma^2), \qquad \sigma_\gamma \to \sigma,
\]
and for any $x,v \in \rset^d,w\in\msw$,
\begin{equation}
  \label{eq:beharibour_f_g}
g_\gamma(x,\gamma^\delta v,\gamma^{\delta+\half}z,w) \to b(x)\eqsp,
\qquad
f_\gamma(x,\gamma^\delta v,\gamma^{\delta+\half}z,w) \to 0\eqsp.
\end{equation}
These limits are in fact  equalities for simple numerical schemes such as the Euler--Maruyama method (see~\eqref{eq:EM_scheme} below). However, we  need to consider a general framework, and introduce additional noise variables~$(W_{k+1})_{k \in\nset}$ and drift functions such as~$f_\gamma$ in order to analyze more complicated numerical schemes,  as~\eqref{eq:second_order_splitting_complicated} below for instance. Let us also emphasize that for \eqref{eq:beharibour_f_g} to hold, the arguments in the functions~$f_\gamma,g_\gamma$ need to be scaled by powers of~$\gamma$. In addition, these rescaled versions of $f_{\gamma},g_{\gamma}$ are Lipschitz with constants independent of the timestep in all our examples in \Cref{sec:exampl-numer-schem}, which motivates Assumption~\Cref{ass:lip_minor_gen} below.

We consider the following assumptions on the coefficients and functions entering~\eqref{eq:def_xminor_v_minor_gen}. We always assume that $\gamma \in \ocintLigne{0,\bgamma}$, for some fixed $\bgamma >0$. Typically, $\bgamma$ represents a threshold which ensures that the scheme under consideration is stable. Here, for ease of presentation, we assume in \Cref{ass:gminorc} below that $\bgamma$ is even smaller than a specific constant.

The first assumption \Cref{ass:gminorc} expresses some form of consistency of the coefficients~$\sigma_\gamma,\gminorc$ in~\eqref{eq:def_xminor_v_minor_gen}, which are related to the coefficients~$\sigma,\kappa$ in~\eqref{eq:general_SDE}. In addition, we also impose some upper bound on $\bgamma$ to simplify the derivation of our main results but it could be easily relaxed. 

\begin{assumption}
  \label{ass:gminorc}
    \begin{enumerate}[wide, labelwidth=!, labelindent=0pt,label=\arabic*)]
    \item 
There exists $C_{\kappa}\geq 0$ such that  $\bar{\gamma}\leq (\kappa+2C_{\kappa}/\kappa)^{-1}$, and for any $\gamma\in \ocint{0,\bar{\gamma}}$, it holds $\abs{\gminorc-\rme^{-\kappa\gamma}}\leq C_{\kappa} \gamma^2$ and~$\tau_{\gamma} \in\ooint{0,1}$.
\item  There exist $\bar{\sigma},\mathscr{D} \in \mathbb{R}_+$ such that $\displaystyle \sup_{\gamma\in \ocint{0,\bar{\gamma}}}\sigma_\gamma\leq \bar{\sigma}$ and $\displaystyle \sup_{\gamma \in \ocint{0,\bgamma}}\normop{\Dbf_{\gamma}} \leq \mathscr{D} $. Finally, $\displaystyle \lim_{\gamma\downarrow 0}\sigma_\gamma=\sigma$.
\end{enumerate}
\end{assumption}

The conditions in~\Cref{ass:lip_minor_gen} express some form of Lipschitz stability with respect to scaled variables, and quantifies the fact that perturbations arising from~$f_\gamma,g_\gamma$ can be of order~$\gamma$ with respect to positions, while they are restricted to be of order~$\gamma^{1+\delta}$ with respect to momenta. 
\begin{assumption}
  \label{ass:lip_minor_gen}
  For all $w \in \msw$, the functions~$(x,v,z) \mapsto (f_\gamma(x,v,z,w), g_\gamma(x,v,z,w))$ are~$\rmC^1$. In addition, there exists $\LipT \geq 0$ such that, for any $\gamma\in \ocint{0,\bar{\gamma}}$, $w \in \msw$ and $(x,v,z),(x',v',z')\in \rset^{3d}$,
  \begin{align}
    \norm{f_\gamma(x,v,z,w)-f_\gamma(x',v',z',w)}&\leq \LipT\parentheseDeux{\norm{(x,v)-(x',v')}+\norm{z-z'}}  \eqsp, \\
    \norm{g_\gamma(x,v,z,w)-g_\gamma(x',v',z',w)}&\leq \LipT\parentheseDeux{\norm{(x,v)-(x',v')}+\norm{z-z'}} \eqsp.
  \end{align}
\end{assumption}
In fact, we could assume that~\Cref{ass:lip_minor_gen} only holds for $\muw$-almost all $w\in\msw$, but strenghten this condition for ease of presentation.

For concreteness of the discussion and results to follow, we illustrate our choice of the framework specified by \eqref{eq:def_xminor_v_minor_gen} by two simple examples. In order not to distract the reader and to not further postpone our main results, we show that most common discretization schemes of \eqref{eq:general_SDE} fit into the framework~\eqref{eq:def_xminor_v_minor_gen} in \Cref{sec:exampl-numer-schem}.

\begin{example}[Euler--Maruyama scheme]
  \label{exam:em_verlet_main}
 The Euler--Maruyama discretization of~\eqref{eq:general_SDE} reads
\begin{equation}
  \label{eq:EM_scheme}
  \XminorTer_{k+1} = \XminorTer_k+\gamma\VminorTer_k \eqsp, \qquad 
  \VminorTer_{k+1} = (1-\kappa \gamma) \VminorTer_k + \gamma b(\XminorTer_k) + \sqrt{\gamma} \sigma Z_k \eqsp. 
\end{equation}
This numerical scheme can be written in the form~\eqref{eq:def_xminor_v_minor_gen} upon taking
\begin{equation}
\tau_{\gamma} = 1-\kappa \gamma \eqsp, \qquad \sigma_{\gamma} = \sigma\eqsp, \qquad \Dbf_{\gamma} = 0\eqsp, \qquad f_\gamma(x,v,z,w) = 0\eqsp, \qquad g_{\gamma}(x,v,z,w) = b(x)\eqsp.
\end{equation}
The parameter~$\delta$ is irrelevant. Note that \Cref{ass:gminorc} and \Cref{ass:lip_minor_gen} hold under the condition that $b$ is Lipschitz.
\end{example}
\begin{example}[Simple Verlet scheme]
  \label{exam:verlet}
  The general recursion  also  includes the simplest Verlet-type scheme (see \Cref{sec:splitting-schemes}) which reads
\begin{equation}
  \XminorTer_{k+1} = \XminorTer_k+\gamma\VminorTer_k+\gamma^2 b(\XminorTer_k)\eqsp, \qquad \VminorTer_{k+1}  = \rme^{-\kappa \gamma} \VminorTer_k + \gamma \rme^{-\kappa \gamma} b(\XminorTer_k) + \sqrt{\frac{1-\rme^{-2\kappa \gamma}}{2\kappa}} \sigma Z_{k+1}\eqsp.
\end{equation}
 This corresponds
to~\eqref{eq:def_xminor_v_minor_gen} with
$\gminorc = \rme^{-\kappa \gamma}$,
$\sigma_{\gamma} = \sigma\sqrt{(1-\rme^{-2\kappa \gamma})/(2\kappa)}$, $\Dbf_\gamma = 0$ ($\delta$ is
irrelevant), and $f_\gamma(x,v,z,w) = \gamma b(x)$,
$ g_\gamma(x,v,z,w) = \rme^{-\kappa \gamma}b(x)$.
\end{example}

\subsection{Minorization, drift and convergence uniform in the timestep}
\label{sec:main_results}

To state our results, we introduce the Markov kernel $\Rkerminorc{\gamma}$ associated with~\eqref{eq:def_xminor_v_minor_gen}, \ie, for any~$(x,v) \in \rset^{2d}$ and~$\msb \in \mcb{\rset^{2d}}$,
\begin{equation}
  \label{eq:def_R_gamma}
  \Rkerminorc{\gamma}((x,v),\msb) = \int_{\rset^{d+m}} \1_{\msb}\left(\Gammaminorc{\gamma}(x,v,(\gamma\sigma_{\gamma}^2)^{\half}z,w)\right) \varphibf(z) \, \rmd z \, \mu_W( \rmd w) \eqsp,
\end{equation}
where $\varphibf(z)$ is the density of the $d$-dimensional standard normal distribution, and
\begin{equation}
  \label{minor_def_Gamma_b_gamma}
  \begin{aligned}
    \Gammaminor_\gamma(x,v,z,w) & = \Big( x+\gamma v+ \gamma f_\gamma\left(x, \gamma^{\delta} v,\gamma^{\delta} z,w\right)+\gamma^\delta \Dbf_{\gamma} z, \gminorc v+\gamma g_\gamma\left(x, \gamma^{\delta}v,\gamma^\delta z,w\right)+z \Big) \eqsp.
  \end{aligned}
\end{equation}

We present in the following our main results which allow us to
conclude to the uniform $\LyapV$-geometric ergodicity for
$ \Rkerminorc{\gamma}$ of the form
$\Vnorm[\LyapV]{\updelta_{(x,v)} \Rkerminorc{\gamma}^k - \pi_{\gamma}}
\leq C \LyapV(x,v) \rho^{\gamma k}$ for any $k \in\nset$,
$\gamma \in\ocint{0,\bgamma}$ small enough, and where $C\geq 0$ and
$\rho \in \coint{0,1}$ are independent of $\gamma$. The proof of this result standardly follows from quantitative minorization and Lyapunov conditions.


\paragraph{Minorization condition uniform in the timestep.}
\label{sec:minorization_unif_timestep}

Our first main result shows that $\Rkerminorc{\gamma}^{\ceil{t_0/\gamma}}$ satisfies a minorization condition with a constant which depends only on the physical time~$t_0>0$ (considered sufficiently small) and not on~$\gamma$. 

\begin{theorem}
  \label{theo:minor_main_gen}
  Assume that \Cref{ass:gminorc} and \Cref{ass:lip_minor_gen} hold. Then there exists $\bart_0 >0$ such that, for any $t_0 \in \ocint{0,\bart_0}$ and $M \geq 0$, there are $\varepsilon_{t_0,M}>0$ and $\bgamma_{t_0} >0$ for which, for any $\gamma \in \ocint{0,\bgamma_{t_0}}$ and $(x,v),(x',v') \in \cballdim{2d}{\Zerobf_{2d}}{M}$, 
  \begin{equation}
    \label{eq:theo:minor_main_gen}
 \tvnormEq{\Rkerminorc{\gamma}^{\floor{t_0/\gamma}+1}((x,v),\cdot)  - \Rkerminorc{\gamma}^{\floor{t_0/\gamma}+1}((x',v'),\cdot)} \leq 2\left( 1-\varepsilon_{t_0,M}\right). 
  \end{equation}
\end{theorem}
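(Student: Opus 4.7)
The plan is to prove a uniform lower bound of the form $\Rkerminorc{\gamma}^{n}((x,v),\cdot) \geq \varepsilon_{t_0,M}\, \nu_{t_0,M}(\cdot)$ for some fixed nontrivial Borel measure $\nu_{t_0,M}$ on $\rset^{2d}$, valid for all $(x,v) \in \cballdim{2d}{\Zerobf_{2d}}{M}$ and all $\gamma$ small enough, where $n = \floor{t_0/\gamma}+1$. Once this is established, \eqref{eq:theo:minor_main_gen} follows at once from the coupling characterization of the total variation distance. To produce such a common lower bound, I would view the $n$-step iteration of~\eqref{eq:def_xminor_v_minor_gen} as a smooth perturbation of its Gaussian skeleton, following the strategy sketched for the overdamped case in \Cref{sec:idea_ULA} but accounting for the anisotropic scaling typical of kinetic dynamics.

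Iterating~\eqref{eq:def_xminor_v_minor_gen} and setting $\mathbf{Z}=(Z_1,\ldots,Z_n)$ and $\mathbf{W}=(W_1,\ldots,W_n)$, I would write $(\XminorTer_n,\VminorTer_n) = \Phiminor_n(x,v,\mathbf{Z},\mathbf{W})$ and decompose $\Phiminor_n = \Psiminor_n + \gamma\, \mathcal{R}_n$, where $\Psiminor_n(x,v,\cdot)$ is the affine map obtained by dropping the $f_\gamma$ and $g_\gamma$ contributions. The first key step is to establish that the covariance matrix $\Sigmabf_{n,\gamma}$ of the Gaussian random vector $\Psiminor_n(x,v,\mathbf{Z})$ is non-degenerate uniformly in $\gamma$ after the natural anisotropic rescaling: noise enters directly the velocity through $\sqrt{\gamma}\sigma_\gamma Z_k$ and is then transmitted to the position via the $\gamma \VminorTer_k$ term at the subsequent step. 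A careful summation of geometric series in $\tau_\gamma$ should then yield an inequality of the form
\begin{equation*}
\Sigmabf_{n,\gamma} \succeq c(t_0)\, \begin{pmatrix} \gamma^3\, \Idd & 0 \\ 0 & \gamma\, \Idd \end{pmatrix}
\end{equation*}
with an explicit constant $c(t_0)>0$, reflecting the hypoelliptic scaling.

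The second step is the perturbation argument. A discrete Gronwall estimate, relying on \Cref{ass:lip_minor_gen} and \Cref{ass:gminorc}, shows that $\mathcal{R}_n(x,v,\cdot,\mathbf{W})$ is Lipschitz in $\mathbf{Z}$ with a constant of order $\mathrm{O}(t_0 \rme^{C t_0})$, uniformly in $\mathbf{W}$ and in $(x,v)\in \cballdim{2d}{\Zerobf_{2d}}{M}$. Combined with the anisotropic lower bound on $\Sigmabf_{n,\gamma}$, this means that after the change of variables $\mathbf{Z}\mapsto \Sigmabf_{n,\gamma}^{-1/2}(\Phiminor_n(x,v,\mathbf{Z},\mathbf{W}) - \mu_n(x,v))$, where $\mu_n(x,v)$ denotes the deterministic part of $\Phiminor_n$, the resulting map becomes an $\mathrm{O}(t_0)$-Lipschitz perturbation of a surjective linear map from $\rset^{nd}$ onto $\rset^{2d}$. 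Choosing $\bart_0$ sufficiently small, a fixed-point (Banach) argument then produces a measurable right inverse with controlled Jacobian, which translates into a pointwise lower bound on the density of the law of $\Phiminor_n(x,v,\mathbf{Z},\mathbf{W})$ by a fixed fraction of the Gaussian density with mean $\mu_n(x,v)$ and covariance $\Sigmabf_{n,\gamma}$, restricted to a ball of fixed radius around $\mu_n(x,v)$. Since $\mu_n$ maps $\cballdim{2d}{\Zerobf_{2d}}{M}$ into a bounded set (again by discrete Gronwall), this yields, after integration in $\mathbf{W}$, the desired common minoration $\varepsilon_{t_0,M}\, \nu_{t_0,M}$.

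The main obstacle will be the interplay between the two previous steps: the Gaussian part is $\gamma$-degenerate and one must handle its hypoelliptic scaling while simultaneously controlling the nonlinear perturbations induced by $f_\gamma,g_\gamma$, including their possibly singular scaling in $\delta$ and their dependence on the auxiliary variable $\mathbf{W}$. Working consistently in the rescaled coordinates $(x,v)\mapsto (\gamma^{-3/2}x, \gamma^{-1/2}v)$ should expose the anisotropy and make the fixed-point argument clean, but verifying that the constants $\varepsilon_{t_0,M}$ and $\bgamma_{t_0}$ are truly independent of $\gamma$ requires careful bookkeeping at every stage.
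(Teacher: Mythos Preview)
Your high-level strategy---treat the $n$-step map as a Lipschitz perturbation of its Gaussian skeleton and extract a density lower bound---is the paper's, but two concrete issues would prevent the argument from closing as written.

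First, the anisotropic scaling is wrong. After $n=\floor{t_0/\gamma}+1$ steps the covariance of the Gaussian part is of order $\mathrm{diag}(t_0^3,t_0)$, \emph{not} $\mathrm{diag}(\gamma^3,\gamma)$: the velocity accumulates $n\approx t_0/\gamma$ increments of variance~$\gamma$, giving variance $\sim t_0$, and the position integrates this once more to give $\sim t_0^3$ (see \Cref{lem:estimate_coeff_Sigma}, \Cref{lem:matrix_minor_gen_continue} and \Cref{lem:covariance_matrix_minorization_conditions_simple_gen}). Your bound $\Sigmabf_{n,\gamma}\succeq c(t_0)\,\mathrm{diag}(\gamma^3\Idd,\gamma\Idd)$ degenerates as $\gamma\downarrow 0$ and cannot absorb a perturbation whose Lipschitz constant you yourself estimate as $\mathrm{O}(t_0)$; correspondingly, the proposed rescaling $(x,v)\mapsto(\gamma^{-3/2}x,\gamma^{-1/2}v)$ blows up rather than producing order-one quantities.

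Second, and more fundamentally, the step ``a fixed-point argument produces a measurable right inverse with controlled Jacobian'' is where the real difficulty lies, and it is not a Banach fixed point on $\rset^{nd}$. The map $\mathbf{Z}\mapsto(\XminorTer_n,\VminorTer_n)$ goes from $\rset^{nd}$ to $\rset^{2d}$; right inverses are highly non-unique and there is no single Jacobian to bound---you would need a co-area argument with uniform control of the $2d$-dimensional minors along every fibre, which a contraction does not provide. The paper's device is to \emph{condition}: it isolates the $2d$-dimensional ``total noise'' $\Gminorc{n}=(\Gminorc{n}_1,\Gminorc{n}_2)$ in~\eqref{eq:def_Gminor_gen}, builds residual increments $\tZbfc{n}$ independent of $\Gminorc{n}$ by an explicit orthogonal projection (\Cref{lem:minor_orthogonal_projection_gen}, \Cref{lem:indep_z_tilde_gen}), and shows that for each fixed realization of $(\tZbfc{n},\bfWc{n})$ the resulting map $(g_1,g_2)\mapsto(\XminorTer_n,\VminorTer_n)$ is a $\tfrac12$-Lipschitz perturbation of the identity on $\rset^{2d}$, hence a genuine $\rmC^1$-diffeomorphism (\Cref{propo:inverse_lip_Gamma}). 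The change of variables is then an honest $2d$-dimensional one, Jacobian bounds follow from Hadamard's inequality (\Cref{propo:bound_det_Lipsc_map}), and the comparison with the reference initial condition $(\Zerobf_d,\Zerobf_d)$ uses the Lipschitz dependence of the inverse diffeomorphism on $(x,v)$ (\Cref{propo:inverse_lip_gamma_initial_condition}). This conditioning/decomposition is the missing ingredient in your outline.
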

In words, \Cref{theo:minor_main_gen} ensures that for any compact set $\msk \subset \rsetdd$, there exists $t_0,\gamma_{t_0} >0$ for which, for any $\gamma \in \ocint{0,\bgamma_{t_0}}$, $\msk$ is $1$-small for $R_{\gamma}^{\ceil{t_0/\gamma}}$.
The proof of this result can be read in Section~\ref{sec:proof_minorization}. The main steps are sketched out in the simple case of nondegenerate Langevin dynamics in Section~\ref{sec:idea_ULA}.

\paragraph{Lyapunov condition uniform in the timestep.}
\label{sec:Lyapunov_unif_timestep}

To ensure the existence of a unique stationary distribution $\mu_{\gamma}$ for $R_{\gamma}$, and obtain a rate of convergence to stationarity, we need to consider additional assumptions. We focus on conditions which allow to ensure the uniform $\LyapV$-geometric ergodicity of~$R_{\gamma}$ with a Lyapunov function built upon a function $U : \rset^d \to \rset$ satisfying the following condition. 

\begin{assumptionD}
  \label{ass:lip}
  The function $U : \rset^d \to \rset$ is~$\rmC^1$,  $U(x)\geq 0$ for any~$x \in \rset^d$, and $U(0) = 0$, $\nabla U(0)=0$. In addition, there exists $\LipGradU \geq 0$ such that for any $x,y \in\rset^d$, $\norm{\nabla U(x)-\nabla U(y)} \leq \LipGradU \norm{x-y}$.
\end{assumptionD}

In the sequel, we call this assumption \Cref{ass:lip}(U) in order to highlight that it is related to the existence of an appropriate function~$U$. The condition~$\inf_{ \rset^d} U \geq 0$ is not restrictive since any
function bounded from below can be shifted to be nonnegative. The
conditions~$U(0) = 0$ and~$\nabla U(0)=0$ could be relaxed but allow
to simplify some computations and are therefore considered for
ease of presentation. When $b$ comes from the gradient of a potential (up to some perturbation)
function, as discussed around~\eqref{eq:19}, then this potential is a
natural candidate for the function $U$. In this case, \Cref{ass:lip}(U) is necessary to ensure the stability of most of the schemes.

The Lyapunov function~$\Lyaexp:\rset^{2d}\to \rset$ we consider is
parametrized by a constant~$\varpi >0$, and is of the following
exponential form for any $\gamma\in \ocint{0,\bar{\gamma}}$ and
$x,v\in \rset^d$:
\begin{equation}
  \label{eq:16}
   \Lyaexp(x,v)=\exp\parenthese{\varpi\sqrt{1+\Lya(x,v)}}\eqsp,
\end{equation}
where~$\Lya : \rset^{2d} \to \rset_+$, defined as 
\begin{equation}
\Lya(x,v)= \frac{\kappa^2}{2} \norm{x}^2 + \norm{v}^2 + \frac{\kappa^2\gamma (1+\gamma^\delta \vartheta_\gamma)}{1-\gminorc} \ps{x}{v} + 2\parametreGradU U(x) \eqsp,
\label{eq:def_function_lyap_super_quad_init}
\end{equation}
is the sum of~$U(x)$ and a positive definite quadratic form in~$(x,v)$. The parameters~$\vartheta_\gamma,\parametreGradU$ are introduced in \Cref{ass:contoleGAndF} below. As in previous studies on discretization of Langevin dynamics such as~\cite[Equation (8.3)]{mattingly:stuart:higham:2002}, the Lyapunov function $\Lyaexp$ depends on the stepsize~$\gamma$ through~$\Lya$ (in fact, $\Lya$ converges as~$\gamma \to 0$ to a Lyapunov function for the continuous dynamics~\eqref{eq:general_SDE}, similarly to the family of Lyapunov functions considered in \cite{mattingly:stuart:higham:2002}; see \cite[Equation~(3.6)]{mattingly:stuart:higham:2002}).

By the estimates in \Cref{sec:proof-crefth_drift} (see~\Cref{lem:minoration_lya} and~\eqref{eq:upper_bound_QF_Lya}), the design of $\Lya$ ensures that there exist~$\cWb,\cWu \in \rset_+$ and~$\cbgamma > 0$ such that, for any~$\gamma \in \ocint{0,\cbgamma}$,
\begin{equation}
  \label{eq:17}
  \cWb \LyaZmain \leq \Lya \leq \cWu \LyaZmain \eqsp, \qquad \LyaZmain(x,v) = \norm{x}^2 + \norm{v}^2 + U(x) \eqsp.
\end{equation}
Therefore, 
there exist $\varpi_1,\varpi_2 >0$ such that for any~$\varpi >0$, $\gamma \in \ocint{0,\cbgamma}$ and $x,v \in \rset^{d}$,
\begin{equation}
  \label{eq:18}
   \left[ \LyaZmainexp{\varpi}(x,v) \right]^{\varpi_1} \leq \Lyaexp(x,v) \leq \left[ \LyaZmainexp{\varpi}(x,v) \right]^{\varpi_2} \eqsp, \qquad \text{ where } \LyaZmainexp{\varpi}(x,v)   =\exp\parenthese{\varpi\sqrt{1+\LyaZmain(x,v)}}\eqsp. 
\end{equation}
In particular, convergence bounds on~$\Vnorm[ \Lyaexp]{\updelta_{(x,v)} R_{\gamma}^k - \mu_{\gamma}}$ for $\varpi >0$ (where~$\mu_{\gamma}$ is the unique invariant probability measure associated with~$R_{\gamma}$) thus imply convergence bounds on~$\Vnorm[\LyaZmainexp{\varpi}^{\varpi_1}]{\updelta_{(x,v)} R_{\gamma}^k - \mu_{\gamma}}$.

In view of the minorization condition provided by~\Cref{theo:minor_main_gen} and using for example~\cite[Theorem 19.4.1]{douc:moulines:priouret:2018}, it is sufficient to establish a Lyapunov condition for~$R_{\gamma}^{\ceil{t_0/\gamma}}$ uniform in the stepsize~$\gamma>0$ in order to obtain exponential convergence bounds for~$R_\gamma$. To this end, we first establish a Lyapunov condition for~$R_{\gamma}$, under additional technical conditions on the family of functions~$f_{\gamma},g_{\gamma}$ for~$\gamma \in \ocint{0,\bgamma}$, in relation with the potential~$U$ considered in~\Cref{ass:lip}(U). To state these conditions, we introduce the function $\fonctionControle : \rset^{3d} \times \rset^{m} \to \rset_+$ defined for $x,v,z \in\rset^{d}$ and $w \in\rset^{m}$ as
\begin{equation}
  \label{eq:5}
   \fonctionControle\left(x,v,z,w\right) = \frac{\norm{\nabla U(x)}^2}{L^2} + \norm{v}^2+\norm{z}^2+\norm{w}^2+\norm{x} \eqsp.
\end{equation}
Note that the position~$x$ appears through the two terms~$\norm{\nabla U(x)}^2$ and~$\norm{x}$ (mind the fact that the latter norm is not squared).
  
\begin{assumptionD}
  \label{ass:contoleGAndF}
  $\msw = \rset^{m}$, $\mcw = \mcbb(\rset^{m})$ and there exist $\parametreGradU,\constasssuperexpone>0$, $\deltau\in \ocint{0,1}$ and $\constgGradU\geq 0$ for which, for any $\gamma\in \ocint{0,\bar{\gamma}}$, there is~$\vartheta_{\gamma} \in\rset$ with
  \[
  \sup_{\upgamma\in\ocint{0,\bgamma}} \abs{\vartheta_{\upgamma}} \leq \bvartheta\eqsp , 
  \]
  such that the following estimates hold: for any $x,v,z\in \rset^d$ and for $\muw$-almost every $w \in \rset^{m}$,
  \begin{equation}
    \label{eq:ass:contoleGAndF_1}
    \begin{aligned}
      \norm{f_\gamma\left(x,\gamma^{\delta}v, \gamma^{\delta+\half} \sigma_{\gamma} z,w\right)}^2 & + \norm{g_\gamma\left(x,\gamma^{\delta}v, \gamma^{\delta+\half} \sigma_{\gamma} z,w\right)+\parametreGradU\nabla U(x)}^2 \\
      & \qquad \qquad \leq \constgGradU\parentheseDeux{1+\gamma^{\deltau}\fonctionControle\left(x,v, \gamma^{\half} \sigma_{\gamma} z,w\right)} \eqsp ,
    \end{aligned}
  \end{equation}
  and
  \begin{align}
     \ps{x}{f_\gamma\left(x,\gamma^{\delta}v, \gamma^{\delta+\half} \sigma_{\gamma} z,w\right)}&\leq \gamma^\delta\vartheta_\gamma\ps{x}{v}+\gamma^\deltau\constgGradU\norm{x}\norm{w_1}+\constgGradU\parentheseDeux{1+\gamma^{\deltau}\fonctionControle\left(x,v, \gamma^{\half} \sigma_{\gamma} z,w\right)} \eqsp ,\\
    \ps{x}{g_\gamma\left(x,\gamma^{\delta}v, \gamma^{\delta+\half} \sigma_{\gamma} z,w\right)}&\leq -\constasssuperexpone\parentheseDeux{\frac{\norm{\nabla U(x)}^2}{L^2}+\norm{x}} + \constgGradU\gamma^{\deltau}\parentheseDeux{1+\fonctionControle\left(x,v, \gamma^{\half} \sigma_{\gamma} z,w\right)}\eqsp.
  \end{align}
\end{assumptionD}

As for \Cref{ass:lip}(U), we refer to this assumption as \Cref{ass:contoleGAndF}(U) in the sequel.
The condition \eqref{eq:ass:contoleGAndF_1} means that, at first order in $\gamma$, $g_\gamma$ is some bounded perturbation of $\parametreGradU \nabla U$, which holds in many applications. 
The parameter~$\vartheta_{\gamma}$ is the prefactor of the term linear
in~$v$ in the expression of~$f_\gamma$ (see the expressions of the functions~$f_\gamma$ for the examples presented in Section~\ref{sec:exampl-numer-schem}). This explains why an unsigned
term~$\gamma^\delta\vartheta_\gamma\ps{x}{v}$ appears on the right
hand side of the third inequality above.  Since $f_{\gamma}$ and
$g_{\gamma} + \parametreGradU \nabla U$ corresponds to some form of second order expansion in the timestep~$\gamma$ for the schemes we consider, the fact that
$\deltau >0$ in \Cref{ass:contoleGAndF}(U) is relatively easy to
verify. Finally, note that if \Cref{ass:contoleGAndF}(U) is satisfied for
$\deltau >1$, we can replace this parameter by $1\wedge \deltau$ upon
modifying the constants appearing in this assumption.

Note that \Cref{ass:contoleGAndF}(U) is satisfied by the Euler-Maruyama scheme \eqref{eq:EM_scheme} and the ones described in \Cref{sec:exampl-numer-schem} in the case $b = -\nabla U$ with $U$ satisfying \Cref{ass:lip}(U) and
\begin{equation}
  \label{eq:19}
  \liminf_{\norm{x} \to \plusinfty}\frac{ \ps{\nabla U(x)}{x}}{\norm{x} + \norm[2]{\nabla U(x)}} >0 \eqsp. 
\end{equation}
The latter condition is satisfied by potentials which are convex (see \cite[Lemma 2.2]{bakry:barthe:cattiaux:guillin:2008}) or behave at infinity as~$\norm{x}^a$ with~$1 \leq a \leq 2$. In order to illustrate that \Cref{ass:contoleGAndF}(U) is a natural assumption, we check in the \supplementname~\Cref{sec:check_D3} the following result for the splitting scheme leading to~\eqref{eq:second_order_splitting_complicated} (the other cases considered in \Cref{sec:exampl-numer-schem} being easier). 

\begin{proposition}
  \label{propo:verif_drift_condition_splitting_scheme}
  Assume that $b = -\nabla U$ with $U:\rset^d \to \rset$ satisfying \Cref{ass:lip}(U) and \eqref{eq:19}. Then the condition~\Cref{ass:contoleGAndF}(U) holds for the second order splitting scheme leading to~\eqref{eq:second_order_splitting_complicated}.
\end{proposition}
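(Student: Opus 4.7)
The plan is to verify the three inequalities of \Cref{ass:contoleGAndF} with the explicit choices $\parametreGradU = 1$, $\deltau = 1$, $\vartheta_\gamma = (\rme^{-\kappa\gamma/2}-1)/\gamma$ (so that $|\vartheta_\gamma|\leq \kappa/2$), and $w_1 = w$ (with the $w_2$ coordinate trivial). Substituting the scaled arguments $(x, \gamma v, \gamma^{3/2}\sigma_\gamma z, w)$ into \eqref{eq:second_order_splitting_complicated} using $b = -\nabla U$ yields
\begin{align*}
f_\gamma(x,\gamma v,\gamma^{3/2}\sigma_\gamma z, w) &= (\rme^{-\kappa\gamma/2}-1)v - \frac{\gamma}{2}\nabla U(x+y_\gamma) + \sqrt{\frac{\gamma^3\tsigma_{\gamma/2}^2}{2(1+\rme^{-\kappa\gamma})}}\,w, \\
g_\gamma(x,\gamma v,\gamma^{3/2}\sigma_\gamma z, w) &= -\rme^{-\kappa\gamma/2}\nabla U(x+y_\gamma),
\end{align*}
where $y_\gamma$ is an affine combination of $v, z, w$ with coefficients $O(\gamma)$ in front of $v$ and $O(\gamma^{3/2})$ in front of $z$ and $w$. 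A uniform estimate of the form $\|y_\gamma\|^2 \leq C\gamma^2\, \fonctionControle(x, v, \gamma^{1/2}\sigma_\gamma z, w)$ then follows for $\gamma \in \ocint{0,\bgamma}$.

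Inequality \eqref{eq:ass:contoleGAndF_1} is obtained by writing
\begin{equation*}
g_\gamma(x,\gamma v,\gamma^{3/2}\sigma_\gamma z,w) + \nabla U(x) = (1-\rme^{-\kappa\gamma/2})\nabla U(x) - \rme^{-\kappa\gamma/2}\bigl(\nabla U(x+y_\gamma) - \nabla U(x)\bigr),
\end{equation*}
using $|1-\rme^{-\kappa\gamma/2}|\leq \kappa\gamma/2$ together with the Lipschitz property of $\nabla U$ from \Cref{ass:lip}; all contributions are then $O(\gamma^2)\, \fonctionControle$, and the bound on $\|f_\gamma\|^2$ proceeds analogously.

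For the bound on $\ps{x}{f_\gamma}$, the linear-in-$v$ term matches $\gamma \vartheta_\gamma \ps{x}{v}$ exactly, and the noise term contributes $O(\gamma^{3/2})\|x\|\|w\|$, which fits into the allowance $\gamma^{\deltau}\constgGradU \|x\|\|w_1\|$ since $\deltau = 1$. The delicate contribution is $-\tfrac{\gamma}{2}\ps{x}{\nabla U(x+y_\gamma)}$, which is treated via the identity
\begin{equation*}
\ps{x}{\nabla U(x+y_\gamma)} = \ps{x+y_\gamma}{\nabla U(x+y_\gamma)} - \ps{y_\gamma}{\nabla U(x+y_\gamma)}.
\end{equation*}
Applying \eqref{eq:19} at $x+y_\gamma$ bounds the first scalar product from below by $c(\|\nabla U(x+y_\gamma)\|^2/L^2 + \|x+y_\gamma\|) - C$ for some $c>0$ and $C \geq 0$. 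Multiplying by $-\gamma/2$, one drops the resulting non-positive terms and is left with a contribution $O(\gamma)$ absorbed into $\constgGradU$ together with $\tfrac{\gamma}{2}\|y_\gamma\|\,\|\nabla U(x+y_\gamma)\|$, which by Young's inequality and Lipschitz of $\nabla U$ is controlled by $O(\gamma)\fonctionControle$.

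For the bound on $\ps{x}{g_\gamma}$, the same identity is used, but this time the negative contribution is retained. Using $\|x+y_\gamma\|\geq \|x\|-\|y_\gamma\|$ and the elementary inequality $\|\nabla U(x+y_\gamma)\|^2 \geq \tfrac{1}{2}\|\nabla U(x)\|^2 - L^2\|y_\gamma\|^2$, the negative piece becomes $-\constasssuperexpone(\|\nabla U(x)\|^2/L^2 + \|x\|)$ for a suitable $\constasssuperexpone > 0$, up to $O(\gamma)$ corrections absorbed into $\constgGradU\gamma^{\deltau}(1+\fonctionControle)$. To absorb the positive $\|\nabla U(x)\|^2$ contribution stemming from $\|y_\gamma\|\|\nabla U(x+y_\gamma)\|$, one applies Young's inequality with a weight large enough that the negative piece still dominates, yielding the desired $\constasssuperexpone$.

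The principal obstacle is the gradient-times-position term $\ps{x}{\nabla U(x+y_\gamma)}$: a direct Cauchy--Schwarz bound produces $L\|x\|\|x+y_\gamma\|\sim L\|x\|^2$, which $\fonctionControle$ cannot control because it contains $\|x\|$ only linearly and $\|\nabla U(x)\|^2/L^2$ may grow strictly slower than $\|x\|^2$ (as happens for instance when $U(x) = \sqrt{1+\|x\|^2}$). The key idea is to shift the application of \eqref{eq:19} from $x$ to $x+y_\gamma$, which exchanges the problematic cross term $\|x\|\|y_\gamma\|$ for the benign $\|y_\gamma\|\|\nabla U(x+y_\gamma)\|$; both factors of the latter decompose cleanly into quantities controlled by $\fonctionControle$.
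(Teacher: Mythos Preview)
Your proof plan is essentially the paper's argument: the same parameter choices $\parametreGradU=1$ and $\vartheta_\gamma=(\rme^{-\kappa\gamma/2}-1)/\gamma$, the same decomposition $g_\gamma+\nabla U(x)=(1-\rme^{-\kappa\gamma/2})\nabla U(x)-\rme^{-\kappa\gamma/2}[\nabla U(x+y_\gamma)-\nabla U(x)]$ for~\eqref{eq:ass:contoleGAndF_1}, and the same shift identity $\ps{x}{\nabla U(x+y_\gamma)}=\ps{x+y_\gamma}{\nabla U(x+y_\gamma)}-\ps{y_\gamma}{\nabla U(x+y_\gamma)}$ to apply~\eqref{eq:19} at the perturbed point before transferring back via $\|\nabla U(x+y_\gamma)\|^2\geq\tfrac12\|\nabla U(x)\|^2-L^2\|y_\gamma\|^2$ and $\|x+y_\gamma\|\geq\|x\|-\|y_\gamma\|$. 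Your closing paragraph on the ``principal obstacle'' correctly isolates the reason the shift is needed, which the paper leaves implicit.
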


The condition \Cref{ass:contoleGAndF}(U) also holds when~$b$ is an appropriate perturbation of~$-\nabla U$. Let us emphasize that we consider a weaker assumption than in~\cite{mattingly:stuart:higham:2002} even when~$b$ derives from a potential. More precisely, in the case $b = -\nabla U$, the condition \eqref{eq:19} is strengthened in \cite[Corollary~7.4]{mattingly:stuart:higham:2002} to $\liminf_{\norm{x} \to \plusinfty} \ps{\nabla U(x)}{x}/(\norm[2]{x} + \norm[2]{\nabla U(x)}) >0$.

To ease the presentation of the main results, we consider the following simple assumption on the additional noise $(W_{k+1})_{k\in\nset}$. 

\begin{assumptionD'}
\label{ass:Dprime}
  The random variables~$(W_{k+1})_{k \in \mathbb{N}}$ are i.i.d. $d$-dimensional standard Gaussian random variables, and there exists $\LLS\geq 0$ such that for any $x,v,z,z',w,w'\in\rset^d$, 
  \[
  \norm{f_\gamma(x,v,z,w)-f_\gamma(x,v,z',w')}+\norm{g_\gamma(x,v,z,w)-g_\gamma(x,v,z',w')} \leq \LLS\norm{(z,w)-(z',w')} \eqsp.
  \]
\end{assumptionD'}

In fact, we consider the more general condition \Cref{ass:w} on $(W_{k+1})_{k\in\nset}$  in \Cref{sec:general_assumptions_noise}, which includes \Cref{ass:Dprime} as a special instance. We postpone its presentation since it is mainly motivated by the use of stochastic gradients in discretization schemes for \eqref{eq:general_SDE}, described in \Cref{sec:numer-schem-with}. Therefore, \Cref{ass:w} will be more transparent once this family of discretization schemes is introduced. Nevertheless,  we state and prove our next result under this general assumption. 




\begin{theorem}
  \label{theo:drift_exp}
  Consider a potential~$U$ satisfying \Cref{ass:lip}(U), \Cref{ass:contoleGAndF}(U), and assume that \Cref{ass:gminorc} and \Cref{ass:w} (or~\textbf{D'}) hold. Then there exist $\varpi,\bar{\gamma}>0$, $\lambda \in\ooint{0,1}$ and $K,b\geq0$ (which all depend on~$U$) such that, for any $\gamma\in \ocint{0,\bar{\gamma}}$,
  \begin{equation}
  \forall x,v\in \rset^d, \qquad \Rkerminorc{\gamma}\Lyaexp(x,v) \leq \lambda^{\gamma}\Lyaexp(x,v)+\gamma b\1_{\ccint{0,K}}(\norm{x}+\norm{v}) \eqsp,
  \end{equation}
  where $\Lyaexp$ is defined in \eqref{eq:16}.
\end{theorem}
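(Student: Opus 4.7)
The plan is to establish the drift in two conceptual phases: first prove a first--moment Foster--Lyapunov drift for $\Lya$ itself under $\Rkerminorc{\gamma}$, and then bootstrap to the exponential Lyapunov function $\Lyaexp = \exp(\varpi\sqrt{1+\Lya})$ by combining the tangent--line bound for $\sqrt{1+\cdot}$ with the log--Sobolev concentration granted by \Cref{ass:w}.

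For the first phase, I would take $\Lya(x,v) = U(x) + Q(x,v)$ with $Q(x,v) = \alpha\normLigne{x}^2 + \beta\psLigne{x}{v} + \tfrac{1}{2}\normLigne{v}^2$ for suitable positive constants $\alpha,\beta$, and expand $\Lya(\XminorTer_1,\VminorTer_1)-\Lya(x,v)$ using the recursion \eqref{eq:def_xminor_v_minor_gen}, then take expectation with respect to $(Z,W)$. The $\gminorc v$ contraction in the velocity update, controlled by \Cref{ass:gminorc}, yields $-\gamma\kappa\normLigne{v}^2$ at leading order; the structural estimate on $g_\gamma+\parametreGradU\nabla U$ from \Cref{ass:contoleGAndF} turns the $\beta\psLigne{x}{v}$ cross term into a dissipative contribution of the form $-\gamma\beta\parametreGradU\psLigne{x}{\nabla U(x)}$, which by \eqref{eq:19} is bounded above by $-\gamma c(\normLigne{\nabla U(x)}^2+\normLigne{x})$ outside a compact set. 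Taylor--expanding $U(\XminorTer_1)-U(x)$ using the Lipschitz gradient from \Cref{ass:lip} gives a $\gamma\psLigne{v}{\nabla U(x)}$ contribution that is cancelled by the analogous term arising from $\tfrac12(\normLigne{\VminorTer_1}^2-\normLigne{v}^2)$; the remaining quadratic--in--noise pieces are absorbed into an $O(\gamma)$ constant thanks to \Cref{ass:gminorc}, \Cref{ass:contoleGAndF}, and the moment bounds of \Cref{ass:w}. Tuning $\alpha,\beta$ so that $Q$ is positive definite (which ensures~\eqref{eq:17}) and so that the $x$ and $v$ dissipations combine hypocoercively, I obtain, for all $\gamma$ small enough,
\[
\Rkerminorc{\gamma}\Lya(x,v) \leq \Lya(x,v) - \gamma c_1\bigl[U(x)+\normLigne{x}+\normLigne{v}^2\bigr] + \gamma c_2.
\]

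For the bootstrap I use the concavity bound $\sqrt{1+a}\leq \sqrt{1+b}+(a-b)/(2\sqrt{1+b})$ to get
\[
\Rkerminorc{\gamma}\Lyaexp(x,v) \leq \Lyaexp(x,v)\,\PE\!\left[\exp\!\left(\frac{\varpi\bigl[\Lya(\XminorTer_1,\VminorTer_1)-\Lya(x,v)\bigr]}{2\sqrt{1+\Lya(x,v)}}\right)\right],
\]
and decompose the bracket into its conditional mean $\bar{\mathcal{M}}(x,v)$, estimated by the first--phase drift, plus a centred fluctuation $\mathcal{N}(x,v,Z,W)$. The crucial observation is that, at fixed $W_2$, $\mathcal{N}$ is Lipschitz in $(Z,W_1)$ with constant of order $\sqrt{\gamma(1+\Lya(x,v))}$: noise couples to $(x,v)$ only through the linear--in--noise terms produced by expanding the quadratic $Q$, whose coefficients are at most linear in $\normLigne{(x,v)}$, while purely quadratic--in--noise pieces inherit the $\sqrt{\gamma}$ scaling of the noise entering $(\XminorTer_1,\VminorTer_1)$. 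Dividing by $2\sqrt{1+\Lya(x,v)}$ therefore renormalises the Lipschitz seminorm to $O(\sqrt{\gamma})$ uniformly in $(x,v)$, so the log--Sobolev part of \Cref{ass:w} bounds the $(Z,W_1)$--conditional exponential moment by $\exp(C\varpi^2\gamma/2)$ up to a factor linear in $W_2$, and the exponential moment condition on $W_2$ in \Cref{ass:w} absorbs this last factor for $\varpi$ small. Combining, for some $c_3,c_4>0$,
\[
\Rkerminorc{\gamma}\Lyaexp(x,v) \leq \Lyaexp(x,v)\exp\!\left(-\gamma\varpi\frac{c_1\bigl[U(x)+\normLigne{x}+\normLigne{v}^2\bigr]}{2\sqrt{1+\Lya(x,v)}} + c_3\gamma\varpi + c_4\gamma\varpi^2\right).
\]
By~\eqref{eq:17}, the fraction is bounded below by a positive constant on $\{\normLigne{x}+\normLigne{v}\geq R_U\}$ for $R_U$ large, so choosing $\varpi_U$ small enough makes the exponent at most $-\gamma\eta$ there for some $\eta>0$, giving $\lambda_U=\rme^{-\eta}\in(0,1)$. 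The compact set $\{\normLigne{x}+\normLigne{v}\leq R_U\}$, where $\Lyaexp$ is bounded, contributes the remainder $\gamma b_U$.

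The main technical obstacle will be the careful book--keeping of the Lipschitz--in--$(Z,W_1)$ seminorm of $\mathcal{N}$: noise enters $(\XminorTer_1,\VminorTer_1)$ through $\Dbf_\gamma Z$, through $\sigma_\gamma Z$ in the velocity update, and through the $\gamma^{\delta+\half}\sigma_\gamma z$ arguments of $f_\gamma$ and $g_\gamma$, and each channel must be shown to contribute at most $O(\sqrt{\gamma(1+\Lya(x,v))})$. The $\gamma^{\delta}$--rescaling of the velocity and noise arguments of $f_\gamma,g_\gamma$ in~\eqref{eq:def_xminor_v_minor_gen}, combined with the Lipschitz controls of \Cref{ass:lip_minor_gen} and \Cref{ass:w}, is precisely what makes this possible, but it requires a unified argument covering the Euler--Maruyama, stochastic exponential Euler, and splitting schemes of \Cref{sec:exampl-numer-schem}. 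A secondary subtlety is the hypocoercive tuning of $\alpha$ and $\beta$ in $Q$, which must simultaneously produce the norm equivalence~\eqref{eq:17}, yield dissipation in both position and velocity directions, and keep the residual second--order--in--$\gamma$ terms under control.
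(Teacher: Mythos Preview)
Your two-phase strategy and the combination of Herbst concentration with the tangent-line bound are the right ideas, but the order in which you apply them creates a genuine gap. You propose to apply the concentration inequality of \Cref{ass:w}-\ref{ass:w_logsob_spec_1}-\ref{ass:w_logsob_spec} to the map $(z,w_1)\mapsto \mathcal{N}(x,v,z,w)/\bigl(2\sqrt{1+\Lya(x,v)}\bigr)$, claiming it is Lipschitz with constant~$O(\sqrt{\gamma})$. It is not: the quantity $\Lya(X_1,V_1)$ contains, for instance, the term $\gamma\sigma_\gamma^2\norm{Z}^2$ coming from $\norm{V_1}^2$, whose gradient in~$Z$ is $2\gamma\sigma_\gamma^2 Z$ and hence unbounded. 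Dividing by $\sqrt{1+\Lya(x,v)}$---a function of $(x,v)$ only---does nothing for this. Your claim that the ``purely quadratic--in--noise pieces inherit the $\sqrt{\gamma}$ scaling'' conflates magnitude with Lipschitz regularity: those pieces are indeed $O(\gamma)$ in size, but their Lipschitz seminorm in~$Z$ is $O(\gamma\norm{Z})$, not~$O(\sqrt{\gamma})$, so Herbst does not apply to them.

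The fix adopted in the paper is to reverse the order: apply Herbst \emph{first} to $(z,w_1)\mapsto\LSfonction\bigl(\Gammaminorc{\gamma}(x,v,\sqrt{\gamma}\sigma_\gamma z,(w_1,w_2))\bigr)$ with $\LSfonction=\sqrt{1+\Lya}$. This composition \emph{is} globally $O(\sqrt{\gamma})$-Lipschitz uniformly in $(x,v,w_2)$, because $\LSfonction$ is globally Lipschitz on~$\rset^{2d}$ (the square root neutralises the quadratic growth of~$\Lya$; this is \Cref{lem:phi_lip}) and the one-step map $(z,w_1)\mapsto(X_1,V_1)$ is $O(\sqrt{\gamma})$-Lipschitz (\Cref{lem:Gammaminorc_lip}). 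One obtains $\Rkerminorc{\gamma}\Lyaexp(x,v) \leq \expe{\exp(\varpi\,\CPELigne{\LSfonction(X_1,V_1)}{W_2}+C\varpi^2\gamma)}$, and only \emph{then} applies the tangent-line bound to $\CPELigne{\LSfonction(X_1,V_1)}{W_2}$ to insert the first-phase drift for~$\Lya$ (computed in \Cref{lem:big_calc_drift}, conditionally on $W=w$ rather than in full expectation). A secondary point: in your first phase you invoke~\eqref{eq:19}, which is not a hypothesis of the theorem; the position dissipation comes directly from the third inequality of \Cref{ass:contoleGAndF} via the $\gamma\,\expeLigne{\ps{x}{g_\gamma}}$ contribution to $\expeLigne{\ps{X_1}{V_1}}$, and it yields $-c(\norm{\nabla U(x)}^2/L^2+\norm{x})$ rather than $-cU(x)$.
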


The proof of this result is postponed to
\Cref{sec:proof-crefth_drift}.

\paragraph{Exponential convergence uniform in the timestep.}
\label{sec:expo_cv_unif_timestep}

When the statement of \Cref{theo:drift_exp} holds, \cite[Lemma 1]{durmus:moulines:2016} implies that, for any $k \in \nsets$ and~$\gamma\in \ocint{0,\bar{\gamma}}$,
\begin{equation}
  \label{eq:15}
  \forall x,v\in \rset^d, \qquad \Rkerminorc{\gamma}^k \Lyaexp(x,v) \leq \lambda^{k\gamma}\Lyaexp(x,v)+ b \frac{1}{\lambda^{\bgamma}|\log \lambda|} \eqsp. 
\end{equation}
Therefore, in view of~\cite[Theorem~19.4.1]{douc:moulines:priouret:2018} applied
to~$\Rkerminorc{\gamma}^k$ with $k = \ceil{t_0/\gamma}$ (where~$t_0$ is
such that the conclusions of \Cref{theo:minor_main_gen} hold), we  obtain the following result.

\begin{theorem}
  \label{theo:geo_convergence}
  Consider a potential~$U$ satisfying \Cref{ass:lip}(U) and \Cref{ass:contoleGAndF}(U), and assume that~\Cref{ass:gminorc}, \Cref{ass:lip_minor_gen} and \Cref{ass:w} hold. Then there exists $\bgamma >0$ such that, for any $\gamma \in\ocint{0,\bgamma}$, $R_{\gamma}$ admits a unique invariant probability measure~$\mu_{\gamma}$. Moreover, there exist $\varpi>0$, $A\geq 0$ and $\rho \in\ooint{0,1}$ such that, for any $\gamma\in \ocint{0,\bar{\gamma}}$,
  \begin{equation}
  \label{eq:Lyap_estimate_unif}
  \forall x,v\in \rset^d, \qquad \Vnorm[\Lyaexp]{\updelta_{(x,v)} R_{\gamma}^k - \mu_{\gamma}} \leq A\rho^{k\gamma} \Lyaexp(x,v)\eqsp.
  \end{equation}
\end{theorem}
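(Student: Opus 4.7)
The plan is to invoke a quantitative Harris theorem, e.g.~\cite[Theorem~19.4.1]{douc:moulines:priouret:2018}, not for $\Rkerminorc{\gamma}$ directly but rather for the iterated kernel $\Rkerminorc{\gamma}^{n_\gamma}$ with $n_\gamma := \floor{t_0/\gamma}+1$, where $t_0 \in \ocintLigne{0,\bart_0}$ is fixed small enough that \Cref{theo:minor_main_gen} applies. The two ingredients needed are a uniform-in-$\gamma$ geometric drift for $\Rkerminorc{\gamma}^{n_\gamma}$ and a uniform-in-$\gamma$ $1$-smallness of an appropriate sublevel set of $\LyaexpU$. Everything else is bookkeeping to convert a per-$n_\gamma$-iterate rate into a per-step rate.

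For the drift, specializing~\eqref{eq:15} with $k=n_\gamma$ gives, for every $\gamma \in \ocintLigne{0,\bar\gamma_U}$,
\[
  \Rkerminorc{\gamma}^{n_\gamma}\LyaexpU(x,v) \leq \lambda_U^{n_\gamma\gamma}\LyaexpU(x,v) + \tilde b, \qquad \tilde b := b_U/\left(\lambda_U^{\bar\gamma_U}\absLigne{\log\lambda_U}\right).
\]
Since $n_\gamma\gamma \in \ccintLigne{t_0,\,t_0+\bar\gamma_U}$, the contraction factor is at most $\tilde\lambda := \lambda_U^{t_0} \in \ooint{0,1}$, independently of $\gamma$. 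Moreover, combining~\eqref{eq:16} with the two-sided control~\eqref{eq:17}--\eqref{eq:18} shows that, for every $\gamma \in \ocintLigne{0,\cbgamma}$, the sublevel set $\mathsf{C}_d := \set{(x,v) \in \rset^{2d}}{\LyaexpU(x,v)\leq d}$ lies inside a Euclidean ball $\cballdim{2d}{\Zerobf_{2d}}{M_d}$ with radius $M_d$ depending only on $d$ and not on $\gamma$.

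Choosing $d>0$ large enough that $\tilde\lambda + \tilde b/d < 1$ and applying \Cref{theo:minor_main_gen} with $M=M_d$ yields $\varepsilon := \varepsilon_{t_0,M_d} > 0$ and $\bar\gamma_{t_0}>0$ for which $\mathsf{C}_d$ is $1$-small for $\Rkerminorc{\gamma}^{n_\gamma}$ with contraction constant $1-\varepsilon$, uniformly in $\gamma \in \ocintLigne{0,\bar\gamma_U\wedge\bar\gamma_{t_0}}$. Combined with the drift above, the hypotheses of \cite[Theorem~19.4.1]{douc:moulines:priouret:2018} are met for $\Rkerminorc{\gamma}^{n_\gamma}$. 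This delivers a unique $\Rkerminorc{\gamma}^{n_\gamma}$-invariant probability measure $\mu_\gamma$, which is automatically $\Rkerminorc{\gamma}$-invariant since $\mu_\gamma \Rkerminorc{\gamma}$ is also $\Rkerminorc{\gamma}^{n_\gamma}$-invariant by commutativity, together with constants $\tilde A \geq 0$ and $\tilde\rho \in \ooint{0,1}$ independent of $\gamma$ such that
\[
  \Vnorm[\LyaexpU]{\updelta_{(x,v)}\Rkerminorc{\gamma}^{kn_\gamma} - \mu_\gamma} \leq \tilde A\, \tilde\rho^{k}\, \LyaexpU(x,v), \qquad k\in\nset.
\]

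To conclude, for arbitrary $n\in\nset$, write $n = kn_\gamma + r$ with $k=\floor{n/n_\gamma}$ and $0\leq r<n_\gamma$. Since $\mu_\gamma$ is $\Rkerminorc{\gamma}^r$-invariant, integrating the previous bound against $\updelta_{(x,v)}\Rkerminorc{\gamma}^r$ gives, for any measurable $f$ with $\absLigne{f}\leq \LyaexpU$,
\[
  \absLigne{(\updelta_{(x,v)}\Rkerminorc{\gamma}^n - \mu_\gamma)(f)}
  \leq \tilde A\, \tilde\rho^k\, \Rkerminorc{\gamma}^r\LyaexpU(x,v)
  \leq \tilde A(1+\tilde b)\, \tilde\rho^k\, \LyaexpU(x,v),
\]
where the last bound uses~\eqref{eq:15} with $k=r$ together with $\lambda_U^{r\gamma}\leq 1$ and $\LyaexpU\geq 1$. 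Since $\floor{n/n_\gamma} \geq n\gamma/(t_0+\bar\gamma_U) - 1$, setting $\rho_U := \tilde\rho^{1/(t_0+\bar\gamma_U)} \in \ooint{0,1}$ and $A_U := \tilde A(1+\tilde b)/\tilde\rho$ yields \eqref{eq:Lyap_estimate_unif}. The point requiring care throughout is the $\gamma$-independence of the constants $\tilde\lambda$, $\tilde b$, $\varepsilon$, $M_d$, and hence of $\tilde A$, $\tilde\rho$; this is precisely what \Cref{theo:drift_exp} and \Cref{theo:minor_main_gen} were designed to provide.
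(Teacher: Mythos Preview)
Your proof is correct and follows essentially the same approach as the paper: iterate the one-step drift of \Cref{theo:drift_exp} into~\eqref{eq:15}, combine with the uniform minorization of \Cref{theo:minor_main_gen}, and apply \cite[Theorem~19.4.1]{douc:moulines:priouret:2018} to the block kernel $\Rkerminorc{\gamma}^{n_\gamma}$. The paper leaves the passage from the per-block to the per-step rate implicit; your explicit interpolation via $n=kn_\gamma+r$ and the bound $\Rkerminorc{\gamma}^r\LyaexpU\leq(1+\tilde b)\LyaexpU$ is exactly what is needed to fill that in, and your care in checking that all constants ($\tilde\lambda$, $\tilde b$, $M_d$, $\varepsilon$) are $\gamma$-independent is the key point.
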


Then, combining this result and \eqref{eq:18} we get that there exist $\varpi_1,\varpi_2>0$ such that, for any $\gamma\in \ocint{0,\bar{\gamma}}$,
\begin{equation}
  \forall x,v\in \rset^d, \qquad \VnormEq[\LyaZmainexp{\varpi}^{\varpi_1}]{\updelta_{(x,v)} R_{\gamma}^k - \mu_{\gamma}}  \leq A\rho^{k\gamma}  \LyaZmainexp{\varpi}^{\varpi_2}(x,v)\eqsp,
\end{equation}
where $\LyaZmainexp{\varpi}$ is defined in \eqref{eq:18}.

Another corollary of~\eqref{eq:Lyap_estimate_unif} is the following: there exists~$\mathcal{K} \geq 0$ (which can be computed explictly in terms of~$A,\rho$) such that, for any measurable function~$\upphi : \rset^{2d} \to \rset$ with $\int_{\rset^d} \upphi \, \rmd \pi_{\gamma} = 0$ and $\Vnorm[\Lyaexp]{\upphi} < \plusinfty$, and for any~$\gamma \in (0,\bar{\gamma}]$, the function
  \[
  \uppsi_{\gamma} = \gamma \sum_{k=0}^{\plusinfty} R_{\gamma}^k \upphi
  \]
  is well-defined, satisfies
  \[
  \Vnorm[\Lyaexp]{\uppsi_{\gamma}} \leq \mathcal{K} \Vnorm[\Lyaexp]{\upphi},
  \]
  and is the unique solution to the following Poisson equation associated with $R_{\gamma}$ and $\upphi$ in the Banach space of measurable functions with finite $\Vnorm[\Lyaexp]{\cdot}$-norm and average~0 with respect to~$\pi_\gamma$:
  \[
  \frac{\Id-R_{\gamma}}{\gamma} \uppsi_{\gamma} = \upphi.
  \]

\section{Strategy of proof of \Cref{theo:minor_main_gen} in a simple case}
\label{sec:idea_ULA}

We present in this section the main ideas behind the proof of Theorem~\ref{theo:minor_main_gen}. We illustrate the strategy in the simplest case, namely (overdamped) Langevin dynamics
\[
\rmd \rmX_t = b(\rmX_t) \, \rmd t  + \BM_t \eqsp,
\]
discretized by a Euler--Maruyama scheme
\begin{equation}
  \label{eq:def_ULA}
  \XF_{k+1} = \XF_k + \gamma b(\XF_k) + \sqrt{\gamma} Z_{k+1} \eqsp, 
\end{equation}
where $b :\rset^d \to \rset^d$ and $(Z_{k+1})_{k \in\nset}$ is a sequence of \iid~standard $d$-dimensional Gaussian random variables.
Denoting by $\Qkerminorc{\gamma}$ the transition kernel of~\eqref{eq:def_ULA} (defined analogously to~\eqref{eq:def_R_gamma}), the counterpart of \Cref{theo:minor_main_gen} reads as follows. 

\begin{theorem}
  \label{theo:minor_EM}
  Assume that~$b \in \rmC^1(\rset^d,\rset^d)$ is globally Lipschitz. Then there exists $\bart_0 >0$ such that, for any $t_0 \in \ocint{0,\bart_0}$ and $M \geq 0$, there are $\varepsilon_{t_0,M}>0$ and $\bgamma_{t_0} >0$ for which, for any $\gamma \in \ocint{0,\bgamma_{t_0}}$ and $x,x' \in \cballdim{d}{\Zerobf_{d}}{M}$,
  \begin{equation}
    \label{eq:propo:minor_main_EM}
 \tvnormEq{\Qkerminorc{\gamma}^{\ceil{t_0/\gamma}}(x,\cdot)  - \Qkerminorc{\gamma}^{\ceil{t_0/\gamma}}(x',\cdot)} \leq 2( 1-\varepsilon_{t_0,M}). 
  \end{equation}
\end{theorem}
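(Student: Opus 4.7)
My plan is to establish a $\gamma$-uniform lower bound on the Lebesgue density of $\Qkerminorc{\gamma}^{N}(x,\cdot)$ at a common reference point (say $y_{0}=\Zerobf_{d}$), valid for all $x\in \cballdim{d}{\Zerobf_{d}}{M}$ and $\gamma\in \ocint{0,\bgamma_{t_{0}}}$, with $N=\floor{t_{0}/\gamma}+1$. Any such pointwise bound immediately yields \eqref{eq:propo:minor_main_EM} via $\tvnormLigne{\mu-\nu}=2(1-(\mu\wedge\nu)(\rset^{d}))$ applied to $\mu=\Qkerminorc{\gamma}^{N}(x,\cdot)$ and $\nu=\Qkerminorc{\gamma}^{N}(x',\cdot)$. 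Writing $X_{N}=\Phi_{x}(\mathbf{Z})$ for $\mathbf{Z}=(Z_{1},\ldots,Z_{N})\sim \varphibf^{\otimes N}$, the Gaussian structure is exposed by the orthogonal decomposition $\mathbf{Z}=\eta S+\mathbf{W}$, where $\eta s=(s,\ldots,s)/\sqrt{N}$ is the isometric injection $\rset^{d}\hookrightarrow\rset^{Nd}$. Then $S=\eta^{\transpose}\mathbf{Z}=N^{-1/2}\sum_{k}Z_{k}\sim\varphibf$, $\mathbf{W}=(I-\eta\eta^{\transpose})\mathbf{Z}$ is an independent centered Gaussian vector, and $\sqrt{\gamma}\sum_{k}Z_{k}=\sqrt{N\gamma}\,S$, so that
\begin{equation*}
X_{N}=\Psi_{x,\mathbf{W}}(S),\qquad \Psi_{x,\mathbf{W}}(s):=x+\sqrt{N\gamma}\,s+\gamma\sum_{k=0}^{N-1}b(X_{k}(x,s,\mathbf{W})).
\end{equation*}
This exhibits $X_{N}$ as a perturbation (of order $\gamma N\approx t_{0}$) of the affine-Gaussian map $s\mapsto x+\sqrt{N\gamma}\,s$, whose pushforward of $\varphibf$ is exactly $\mathcal{N}(x,N\gamma I_{d})$.

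\textbf{Uniform invertibility.} Fix $M'>0$ with $\mathbb{P}(\norm{\mathbf{W}}\leq M')\geq 1/2$. On this event, and for $x\in\cballdim{d}{\Zerobf_{d}}{M}$ and $s$ in a fixed compact set, the discrete Gronwall inequality combined with the Lipschitz property of $b$ yields $\gamma$-uniform bounds on $\sup_{k}\norm{X_{k}}$ and on the sensitivity $\nabla_{s}X_{k}=\sqrt{\gamma/N}\sum_{j<k}\prod_{\ell=j+1}^{k-1}(I_{d}+\gamma\nabla b(X_{\ell}))$. Writing each product as $I_{d}+R_{j,N}$ with $\normop{R_{j,N}}\leq \rme^{Lt_{0}}-1\leq Lt_{0}\rme^{Lt_{0}}$ and summing on $j$ factorizes
\begin{equation*}
\nabla_{s}\Psi_{x,\mathbf{W}}(s)=\sqrt{N\gamma}\,(I_{d}+E(x,s,\mathbf{W},\gamma)),\qquad \normop{E}\leq C\,Lt_{0}\rme^{Lt_{0}}.
\end{equation*}
Choosing $\bart_{0}$ so that $CL\bart_{0}\rme^{L\bart_{0}}<1/2$ ensures that $\Psi_{x,\mathbf{W}}$ is a $\rmC^{1}$-diffeomorphism of a compact ball onto its image and that $|\det\nabla_{s}\Psi_{x,\mathbf{W}}|\in [c_{1}t_{0}^{d/2},c_{2}t_{0}^{d/2}]$, uniformly in $\gamma\in\ocint{0,\bgamma}$, $x\in\cballdim{d}{\Zerobf_{d}}{M}$, and $\mathbf{W}$ on the event.

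\textbf{Density lower bound and conclusion.} A Banach fixed-point argument on the contraction $s\mapsto (-x-\gamma\sum_k b(X_{k}))/\sqrt{N\gamma}$ produces a unique $s^{\star}=s^{\star}(x,\mathbf{W})$ with $\Psi_{x,\mathbf{W}}(s^{\star})=\Zerobf_{d}$ and $\norm{s^{\star}}\leq C(M+1)/\sqrt{t_{0}}$. The image of a small $s$-neighborhood of $s^{\star}$ under $\Psi_{x,\mathbf{W}}$ then contains a fixed ball $\ball{\Zerobf_{d}}{r_{0}}$ with $r_{0}\asymp\sqrt{t_{0}}$, and the change-of-variables formula gives, for $y\in\ball{\Zerobf_{d}}{r_{0}}$,
\begin{equation*}
p_{x,\mathbf{W}}(y)=\frac{\varphibf(\Psi_{x,\mathbf{W}}^{-1}(y))}{|\det\nabla_{s}\Psi_{x,\mathbf{W}}(\Psi_{x,\mathbf{W}}^{-1}(y))|}\geq \frac{c_{3}}{t_{0}^{d/2}}\exp\!\left(-\frac{C(M+1)^{2}}{t_{0}}\right).
\end{equation*}
Averaging over $\mathbf{W}$ on $\{\norm{\mathbf{W}}\leq M'\}$ and using $\mathbb{P}(\norm{\mathbf{W}}\leq M')\geq 1/2$ yields $\Qkerminorc{\gamma}^{N}(x,A)\geq \varepsilon_{t_{0},M}\,\Leb(A\cap\ball{\Zerobf_{d}}{r_{0}})$ for all $A\in\mcbb(\rset^{d})$, with $\varepsilon_{t_{0},M}>0$ independent of $\gamma$ and $x\in\cballdim{d}{\Zerobf_{d}}{M}$. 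Applying this common minorization at both $x$ and $x'$ gives \eqref{eq:propo:minor_main_EM}.

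\textbf{Main obstacle.} The main technical difficulty is the $\gamma$-uniform invertibility of $\Psi_{x,\mathbf{W}}$. Both the dominant linear contribution $\sqrt{N\gamma}\,I_{d}$ and the drift-induced Jacobian correction $\gamma\nabla_{s}R$ are a priori of the same magnitude $\sqrt{t_{0}}$ as $\gamma\downarrow 0$, so a naive estimate lets the perturbation overwhelm the Gaussian part; only the careful Gronwall bookkeeping that isolates the leading $I_{d}$ inside each product reveals the true smallness factor $Lt_{0}\rme^{Lt_{0}}$, which in turn dictates the choice of the horizon $\bart_{0}$ and is the step that would fail without the Lipschitz assumption on $b$.
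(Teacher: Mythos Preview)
Your decomposition $\mathbf{Z}=\eta S+\mathbf{W}$ and the representation $X_{N}=\Psi_{x,\mathbf{W}}(S)$ are exactly the paper's: its $G^{(k+1)}$ equals $\sqrt{N\gamma}\,S$ and its $\tZc{k+1}_{i}$ equal your $W_{i}$. Your Jacobian estimate $\nabla_{s}\Psi=\sqrt{N\gamma}(I_{d}+E)$ with $\normop{E}\leq CLt_{0}\rme^{Lt_{0}}$ is also correct and, importantly, holds for \emph{every} realization of $\mathbf{W}$, since only $\normop{\nabla b}\leq L$ enters.

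The gap is in the conditioning step. The vector $\mathbf{W}$ lives in $\rset^{Nd}$ with $N=\floor{t_{0}/\gamma}+1\to\infty$ as $\gamma\downarrow 0$, and $\mathbb{E}\norm{\mathbf{W}}^{2}=(N-1)d$. Hence no $M'$ independent of $\gamma$ satisfies $\mathbb{P}(\norm{\mathbf{W}}\leq M')\geq 1/2$ uniformly in $\gamma$; and if you let $M'=M'_{\gamma}\asymp\sqrt{Nd}$, the only control you extract from $\norm{\mathbf{W}}\leq M'_{\gamma}$ via Cauchy--Schwarz is $\sqrt{\gamma}\,\bigl\|\sum_{j\leq k}W_{j}\bigr\|\leq\sqrt{\gamma k}\,\norm{\mathbf{W}}\lesssim\sqrt{t_{0}Nd}\asymp t_{0}\sqrt{d/\gamma}\to\infty$. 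Your bound $\norm{s^{\star}}\leq C(M+1)/\sqrt{t_{0}}$ requires $\gamma\sum_{k}\norm{b(X_{k}(0))}$ to be bounded, hence $\sup_{k}\norm{X_{k}(0)}$ bounded, and this fails on the event you chose. The fix is immediate: condition instead on $\bigl\{\max_{k\leq N}\sqrt{\gamma}\,\bigl\|\sum_{j\leq k}W_{j}\bigr\|\leq C\sqrt{t_{0}}\bigr\}$. Since $\sqrt{\gamma}\sum_{j\leq k}W_{j}=\sqrt{\gamma}\sum_{j\leq k}Z_{j}-(k/N)\sqrt{\gamma}\sum_{j\leq N}Z_{j}$ is a discrete Brownian bridge, Doob's maximal inequality for the random walk $k\mapsto\sqrt{\gamma}\sum_{j\leq k}Z_{j}$ gives this event probability $\geq 1/2$ for $C$ large enough, uniformly in $\gamma$; on it, your Gronwall and fixed-point bounds become $\gamma$-uniform and the remainder of the argument goes through.

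The paper avoids conditioning altogether. Working in the variable $g=G^{(k+1)}$ (your $\sqrt{N\gamma}\,S$), the map $g\mapsto\bGammaminorcF{\gamma,x,\tzbf}^{[N]}(g)=x+g+\gamma\sum_{i}b(x_{i})$ is a perturbation of the identity with Lipschitz defect $\leq Lt_{0}\rme^{Lt_{0}}$ for \emph{every} $\tzbf$. Instead of Lebesgue measure on a ball, the minorizing measure $\mu_{t_{0},\gamma}$ is obtained by pushing the full law of $(G^{(k+1)},\tZbfc{k+1})$ through the reference map at $x=\Zerobf_{d}$ with $g$ replaced by $g/\sqrt{2}$; the density ratio is then bounded pointwise in $\tzbf$ by combining Hadamard's inequality (for the Jacobian ratio) with the elementary inequality $\norm{a-b}^{2}\geq\norm{a}^{2}/2-\norm{b}^{2}$ (for the Gaussian ratio) and a Lipschitz-in-$x$ estimate on the inverse map that is uniform in $\tzbf$. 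This $\sqrt{2}$ trick is what makes the argument robust enough to transfer to the degenerate kinetic case in Section~\ref{sec:proof_minorization}.
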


To prove this result, we need to study the distribution of~$\XF_{k}$ defined by \eqref{eq:def_ULA} for $k$ of order~$t_0/\gamma$ for some $t_0 >0$. The main idea is that the recursion \eqref{eq:def_ULA} can be seen as a perturbation of the evolution with~$b\equiv 0$ provided~$t_0$ is sufficiently small. To this end, we first notice that a straightforward induction gives, for any $k \in \nset$,
\begin{equation}
  \label{eq:ula_G_k}
\XF_{k+1} = \XF_0 + G^{(k+1)} + \gamma \sum_{i=0}^k b(\XF_i) \eqsp, \qquad G^{(k+1)} = \sqrt{\gamma} \sum_{i=1}^{k+1} Z_i \eqsp.
\end{equation}
If $t_0>0$ is fixed,   the variance of $G^{(k+1)}$ is of order $t_0$ for $k \approx t_0/\gamma$, from which the proof of \Cref{theo:minor_EM} easily follows when~$b \equiv 0$. To treat the case $b \not \equiv 0$, we rewrite~\eqref{eq:ula_G_k} as
\begin{equation}
  \label{eq:ula_G_2_k}
  \XF_{k+1} =  G^{(k+1)} +  \Phi_{\tZbfcs}^{(k+1)}(G^{(k+1)}) ,
\end{equation}
for some application~$\Phi_{\tZbfcs}^{(k+1)} : \rset^{d} \to \rset^d$, where $ \tZbfcs$ stands for~$(\tZc{k+1}_1,\ldots,\tZc{k+1}_{k})$, which are some \iid~Gaussian random variables constructed from~$(Z_1,\dots,Z_k)$ (see the precise definition~\eqref{def_tZc_ULA} below) and independent of~$G^{(k+1)}$. We show in addition that $g \mapsto \Phi_{\tZbfcs}^{(k+1)}(g)$ is Lipschitz with Lipschitz constant strictly smaller than~$1$ for~$t_0$ sufficiently small. As a result, the mapping $g \mapsto g + \Phi_{\tZbfcs}^{(k+1)}(g)$ is a $\rmC^1$ diffeomorphism, so that, by a change a variable, the random variable~$\XF_{k+1}$ admits a density with respect to the Lebesgue measure, and this density can even be bounded from below.

In order to rigorously formalize the above discussion, we first need to provide expressions for the random variables~$(\tZc{k+1}_1,\ldots,\tZc{k+1}_{k})$. Relying on Cochran's theorem, natural candidates are obtained by a linear combination of the first~$k$ original Gaussian increments~$(Z_1,\dots,Z_{k})$ and~$G^{(k+1)}$, as
\begin{equation}
  \label{def_tZc_ULA}
  \tZc{k+1}_i = Z_i - \frac{1}{\sqrt{\gamma}(k+1)}G^{(k+1)}\eqsp, \qquad i \in \iint{1}{k} \eqsp. 
\end{equation}
The random vector~$(\tZc{k+1}_1,\ldots,\tZc{k+1}_{k},G^{(k+1)})$ is still a Gaussian vector, with~$(\tZc{k+1}_1,\ldots,\tZc{k+1}_{k})$ independent of~$G^{(k+1)}$ by construction. This allows to express the iterated transition kernel as follows, upon introducing $n_0 = t_0/\gamma$ (assuming for simplicity that~$t_0/\gamma \in \mathbb{N}$ and $t_0/\gamma>1$): 
\begin{equation}
  \label{eq:R_gamma_iterated_EM}
  \Qkerminorc{\gamma}^{n_0}(x,\msb) = \int_{\rset^{n_0 d}} \1_{\msb}\left\{\gminor + \Phi_{\tzbf}^{(n_0)}(\gminor)\right\} \mathbf{N}_{0,t_0 \Id_d}(\gminor) \, \mathbf{N}_{0,\Sigma_{n_0}}(\tzbf) \, \rmd \tzbf \, \rmd\gminor \eqsp,
\end{equation}
where we denote by~$\mathbf{N}_{0,\Sigma}$ the density of Gaussian random variables with mean~0 and covariance matrix~${\Sigma}$, and $\Sigma_{n_0}$ is the covariance matrix of $(\tZc{n_0}_1,\ldots,\tZc{n_0}_{n_0-1})$. We can show similarly to \Cref{lem:indep_z_tilde_gen} that $\Sigma_{n_0}$ is positive definite.

Now that the output of the Markov chain has been rewritten as a perturbation of the output obtained with~$b=0$, we can proceed with a quantitative analysis to obtain lower bounds on~\eqref{eq:R_gamma_iterated_EM}. By a reasoning similar to the one leading to \Cref{propo:inverse_lip_Gamma} below, there exists~$\bar{t}_0>0$ (sufficiently small) such that the function~$g\mapsto \Phi_{\tzbf}^{(n_0)}(g)$ is Lipschitz with Lipschitz constant strictly smaller than~$1$ for any $t_0 \in \ocint{0,\bar{t}_0}$ (provided $\gamma>0$ is sufficiently small). Therefore, $\Id + \Phi_{\tzbf}^{(n_0)}$ is a perturbation of the identity and hence a $\rmC^1(\rset^d,\rset^d)$-diffeomorphism. Denoting by~$\GammaminorcF{\tzbf}[n_0]$ its inverse, we obtain, by a change of variable, 
\begin{equation}
  \label{eq:ula_G_4_k}
  \Qkerminorc{\gamma}^{n_0}(x,\msb) = \int_{\rset^{n_0 d}}  \1_{\msb}(u) \, \mathbf{N}_{0,t_0 \Id_d}\left( \GammaminorcF{\tzbf}[n_0](u) \right) \eqsp \JacD_{\tzbf}^{(n_0)}(u) \, \mathbf{N}_{0,\Sigma_{n_0}}(\rmd \tzbf) \, \rmd \tzbf \, \rmd u \eqsp,
\end{equation}
where $\JacD_{\tzbf}^{(n_0)}(u)$ is the absolute value of the determinant of the Jacobian matrix associated with~$\GammaminorcF{\tzbf}[n_0]$.

The next step is to construct some minorization probability measure to provide a lower bound on $\Qkerminorc{\gamma}^{n_0}(x,\msb)$ based on~\eqref{eq:ula_G_4_k}. We need to this end to explicitly indicate the dependence of~$\GammaminorcF{\tzbf}[n_0]$ on the initial condition~$x$, as the point~$x=\Zd$ will serve as a reference initial condition. We then consider
\begin{equation}
  \mu_{t_0}(\msb) = \int_{\rset^{n_0 d}} \1_{\msb}\left\{\frac{\gminor}{\sqrt{2}} + \Phi_{\Zd,\tzbf}^{(n_0)}\left(\frac{\gminor}{\sqrt{2}}\right)\right\} \mathbf{N}_{0,t_0 \Id_d}(\gminor) \, \mathbf{N}_{0,\Sigma_{n_0}}(\tzbf) \, \rmd \tzbf \, \rmd\gminor \eqsp,
\end{equation}
where the subscript~$\Zd$ in~$\Phi_{\Zd,\tzbf}^{(n_0)}$ indicates that this corresponds to~\eqref{eq:ula_G_2_k} starting from~$\XF_0 = \Zd$. This expression is similar to~\eqref{eq:R_gamma_iterated_EM}, except that the initial condition is set to~$\Zd$ and~$g$ is replaced by~$g/\sqrt{2}$ (in order to make use of the inequality~\eqref{eq:ineq_Gaussian_A_2} below). Similarly to~\eqref{eq:ula_G_4_k}, a change of variable leads to
\begin{equation}
  \mu_{t_0}(\msb) = 2^{d/2} \int_{\rset^{n_0 d}}  \1_{\msb}(u) \, \mathbf{N}_{0,t_0 \Id_d} \left( \sqrt{2}\GammaminorcF{\Zd,\tzbf}[n_0](u) \right) \eqsp \JacD_{\Zd,\tzbf}^{(n_0)}(u) \, \mathbf{N}_{0,\Sigma_{n_0}}( \tzbf)  \, \rmd \tzbf \, \rmd u \eqsp.
\end{equation}
From this expression and \eqref{eq:ula_G_4_k}, we get 
\begin{equation}
\Qkerminorc{\gamma}^{n_0}(x,\msb) \geq 2^{-d/2} \int_{\rset^{d}} \1_{\msb}(u)  \inf_{\tzbf\in \rset^{(n_0-1) d}} \defEns{A_{x,\tzbf}(u)} \mu_{t_0}(\rmd u) \eqsp,
\end{equation}
where~$A_{x,\tzbf}(u) = A^{(1)}_{x,\tzbf}(u)A^{(2)}_{x,\tzbf}(u)$ with
\begin{equation}
A^{(1)}_{x,\tzbf}(u) = \frac{J_{x,\tzbf}^{(n_0)}(u)}{J_{\Zd,\tzbf}^{(n_0)}(u)} \eqsp,
\qquad \qquad
A^{(2)}_{x,\tzbf}(u) = \frac{\mathbf{N}_{0,t_0 \Id_d} \left( \GammaminorcF{x,\tzbf}[n_0](u) \right)}{\mathbf{N}_{0,t_0 \Id_d} \left( \sqrt{2}\GammaminorcF{\Zd,\tzbf}[n_0](u) \right)}.
\end{equation}
When $A_{x,\tzbf}$ is lower bounded by a positive quantity~$a_{t_0,M}$ for~$x \in \cballdim{d}{\Zerobf_d}{M}$, we obtain $\Qkerminorc{\gamma}^{n_0}(x,\msb) \geq  a_{t_0,M}2^{-d/2} \mu_{t_0}(\msb)$,
which immediately implies~\eqref{eq:propo:minor_main_EM}. The lower bound on~$A_{x,\tzbf}(u)$ is proved in two steps:
\begin{enumerate}[label=(\alph*)]
\item by obtaining upper and lower bounds on the Jacobians~$J_{x,\tzbf}^{(n_0)}(u)$ to lower bound~$A^{(1)}_{x,\tzbf}(u)$, leveraging the fact that the mapping~$g \mapsto \GammaminorcF{\tzbf}[n_0](g)$ and its inverse are Lipschitz (as in \Cref{propo:inverse_lip_Gamma} below) and  making use of Hadamard's inequality (see \Cref{propo:bound_det_Lipsc_map});
\item by making use of the following inequality, which motivates the factor~$\sqrt{2}$ in the argument of the denominator of~$A^{(2)}_{\Zd,\tzbf}(u)$ and the definition of $\mu_{t_0}$:
  \begin{equation}
    \begin{aligned}
t_0 \log A^{(2)}_{x,\tzbf}(u) & = \norm{\GammaminorcF{\Zd,\tzbf}[n_0](u)}^2 -\frac12 \norm{\GammaminorcF{x,\tzbf}[n_0](u)}^2  \geq - \norm{\GammaminorcF{x,\tzbf}[n_0](u)-\GammaminorcF{\Zd,\tzbf}[n_0](u)}^2 \eqsp.     
    \end{aligned}
  \end{equation}
  Here we have used the expression of the density of the Gaussian random variable~$\Gminorc{n_0}$ defined in
\eqref{eq:ula_G_k} and the Young inequality on $\ps{a}{b}$ for $a,b \in \rset^{d}$, which implies that 
  \begin{equation}
  \label{eq:ineq_Gaussian_A_2}
 \|a -b\|^2 \geq \|a\|^2/2 - \|b\|^2 \eqsp. 
  \end{equation}
  When $x \mapsto \GammaminorcF{x,\tzbf}[n_0](u)$ is Lipschitz on~$\cballdim{d}{\Zerobf_d}{M}$ with a constant~$K_{t_0}$, uniformly in~$\gamma \in \ocint{0,\gamma_{t_0}}$ (as in \Cref{propo:inverse_lip_gamma_initial_condition} below), one finds $A^{(2)}_{x,\tzbf}(u) \geq \rme^{-K_{t_0}^2 M^2/t_0}$ for all~$x \in \cballdim{d}{\Zerobf_d}{M}$.
\end{enumerate}

\section{Examples of admissible numerical schemes}
\label{sec:exampl-numer-schem}

We show in this section that most popular discretization schemes associated to \eqref{eq:general_SDE} can be cast into the framework considered in~\eqref{eq:def_xminor_v_minor_gen}, and that they satisfy \Cref{ass:gminorc} and \Cref{ass:lip_minor_gen}. All these numerical schemes can be obtained by decomposing the generator~$\generator$ of~\eqref{eq:general_SDE} as
\begin{equation}
  \label{eq:L=A+B+C}
  \generator = \scrA + \scrB + \scrC \eqsp,
\end{equation}
where, for any $g \in \rmC^2(\rset^{2d})$ and~$(x,v) \in \rset^{2d}$, the elementary operators~$\scrA,\scrB,\scrC$ act as
\begin{align}
  \label{eq:2}
  \scrA g(x,v) &= v^{\transpose} \nabla_x g(x,v)\eqsp, \quad \scrB g(x,v) = b(x)^{\transpose} \nabla_v g(x,v) \eqsp, \quad
  \scrC g(x,v)  = -\kappa v^{\transpose} \nabla_{v} g(x,v) + \frac{\sigma^2}{2} \Delta_v g(x,v) \eqsp.
\end{align}
Note that the dynamics associated with $\scrA$ and $\scrB$ simply correspond to the deterministic flows $t \mapsto (x+vt,v)$ and $t \mapsto (x,v+b(x)t)$, while the dynamics associated with $\scrC$ is the Ornstein--Uhlenbeck process $(x,\rme^{-\kappa t}v+\sigma \int_{0}^t \rme^{-\kappa(t-s)} \rmd \BM_s)_{t \geq 0}$, where, for any $t > 0$, the random variable~$\rme^{-\kappa t}v+\sigma \int_{0}^t \rme^{-\kappa(t-s)} \rmd \BM_s$ is Gaussian with mean~$\rme^{-\kappa t}v$ and covariance matrix~$t \tsigma_t^2 \Idd$, with
\begin{equation}
  \label{eq:def_sigma_t_example}
  \tsigma_t^2 = \sigma^2 \frac{1-\rme^{-2\kappa t}}{2\kappa t } \eqsp. 
\end{equation}
We consider three classes of schemes: stochastic exponential Euler schemes in Section~\ref{sec:A+C_B_splitting}, splitting schemes in Section~\ref{sec:splitting-schemes}, and numerical discretizations relying on stochastic gradients in Section~\ref{sec:numer-schem-with}.

\subsection{Stochastic exponential Euler scheme}
\label{sec:A+C_B_splitting}

The elementary stochastic dynamics with generator $\scrA+\scrC$ is also analytically integrable and corresponds to an Ornstein--Uhlenbeck process (see \Cref{lem:OU_equiv}). One obtains the following numerical scheme by this analytic integration, upon fixing the drift to the current value~$b(\XminorTer_k)$:
\begin{equation}
  \label{eq:def_discretization_semi_euler}
\begin{aligned}
    \XminorTer_{k+1}  & = \XminorTer_k  +\frac{1-\rme^{-\kappa \gamma}}{\kappa} \VminorTer_k + \frac{\kappa \gamma + \rme^{-\kappa \gamma}-1}{\kappa^2} b (\XminorTer_k) + \eta_{k+1} \eqsp ,\\
    \VminorTer_{k+1}  & = \rme^{-\kappa \gamma} \VminorTer_k+ \frac{1-\rme^{-\kappa \gamma}}{\kappa} b (\XminorTer_k) + \xi_{k+1} \eqsp ,
\end{aligned} 
\end{equation}
with
\begin{equation}
  \label{eq:def_xi_eta}
\xi_{k+1} = \sigma \int_0^\gamma \rme^{-\kappa (\gamma-s)} \rmd \BM_{k\gamma+s} \eqsp,
\qquad
\eta_{k+1} = \sigma \int_0^\gamma \frac{1-\rme^{-\kappa (\gamma-s)}}{\kappa} \rmd \BM_{k\gamma+s} \eqsp.
\end{equation}
The scheme~\eqref{eq:def_discretization_semi_euler} corresponds to a stochastic exponential Euler integrator, see~\cite{shi2012convergence,komori:burrage:2014} and references therein in a general framework. It has also been recently considered and studied in the machine learning community, starting with~\cite{cheng:et:al:2018}. Similar schemes were in fact developed in the molecular dynamics communities in the late~70s based on the analytical solution of Langevin dynamics for~$b=0$ provided in~\cite{C43}, see for instance~\cite{EB80}. 

Note that the random variable~$(\eta_{k+1},\xi_{k+1})_{k \in \mathbb{N}}$, given in \eqref{eq:def_xi_eta}, is a family of i.i.d. Gaussian random vectors with covariance matrix
\begin{equation}
  \label{eq:def_continue_matrix}
\coeffContinueMatrix{}{\gamma} \otimes \Idd =  \begin{pmatrix} \coeffContinueMatrix{1}{\gamma} & \coeffContinueMatrix{2}{\gamma} \\
    \coeffContinueMatrix{2}{\gamma} & \coeffContinueMatrix{3}{\gamma} \\
  \end{pmatrix} \otimes \Idd \eqsp ,
\end{equation}
where $\coeffContinueMatrix{}{\gamma}$ has entries
\begin{align}
  \coeffContinueMatrix{1}{\gamma} & = \sigma^2 \int_0^\gamma \left( \frac{1-\rme^{-\kappa (\gamma-s)}}{\kappa} \right)^2 \rmd s = \frac{\sigma^2}{2\kappa^2}\left[2\gamma - \frac{3-4\rme^{-\kappa \gamma}+\rme^{-2\kappa \gamma}}{\kappa}\right] \eqsp, \label{eq:first_coeff_Sigma} \\ 
  \coeffContinueMatrix{2}{\gamma} & = \sigma^2\int_0^\gamma \rme^{-\kappa (\gamma-s)} \frac{1-\rme^{-\kappa (\gamma-s)}}{\kappa} \, \rmd s = \frac{\sigma^2(1-\rme^{-\kappa \gamma})^2}{2\kappa^2}  \eqsp, \label{eq:second_coeff_Sigma} \\
  \coeffContinueMatrix{3}{\gamma} & = \sigma^2\int_0^\gamma \rme^{-2\kappa (\gamma-s)} \, \rmd s = \frac{\sigma^2(1-\rme^{-2\kappa \gamma})}{2\kappa} \eqsp . \label{eq:third_coeff_Sigma}
\end{align}
Note that $\coeffContinueMatrix{3}{\gamma}/\gamma = \tsigma_\gamma^2$ (recall~\eqref{eq:def_sigma_t_example}). Introduce also $Z_{k+1} = \xi_{k+1} / \sqrt{\coeffContinueMatrix{3}{\gamma}}$,
and $W_{k+1}$ satisfying 
\begin{equation}
\eta_{k+1} = \sqrt{\coeffContinueMatrix{1}{\gamma}}\left(\alpha_\gamma Z_{k+1} + \sqrt{1-\alpha_\gamma^2}W_{k+1}\right) \eqsp, 
\qquad
\alpha_\gamma = \coeffContinueMatrix{2}{\gamma}\left/\sqrt{\coeffContinueMatrix{1}{\gamma}\coeffContinueMatrix{3}{\gamma}}\right.  \eqsp. 
\end{equation}
It can be verified (see \Cref{prop:inv}) that $\coeffContinueMatrix{}{\gamma}$ is invertible and~$\alpha_{\gamma} < 1$. Then, an easy computation shows that $(Z_{k+1})_{k \in \mathbb{N}}$ and~$(W_{k+1})_{k \in \mathbb{N}}$ are independent families of i.i.d. Gaussian random vectors with identity covariance matrix. With this notation, the numerical scheme~\eqref{eq:def_discretization_semi_euler} can be rewritten as
\begin{equation}
 \begin{aligned}
  \XminorTer_{k+1}  & = \XminorTer_k + \frac{1-\rme^{-\kappa \gamma}}{\kappa} \VminorTer_k + \frac{\kappa \gamma + \rme^{-\kappa \gamma}-1}{\kappa^2} b (\XminorTer_k) + \sqrt{\coeffContinueMatrix{1}{\gamma}(1-\alpha_\gamma^2)} W_{k+1} + \gamma^{3/2}\tsigma_\gamma \Dbf_\gamma Z_{k+1} \eqsp ,\\
  \VminorTer_{k+1}  & = \rme^{-\kappa \gamma} \VminorTer_k+ \frac{1-\rme^{-\kappa \gamma}}{\kappa} b (\XminorTer_k) +\sqrt{\gamma} \tsigma_\gamma Z_{k+1} \eqsp ,
\end{aligned} 
\end{equation}
where  $  \Dbf_\gamma =\coeffContinueMatrix{2}{\gamma}\left/ \parentheseDeux{\tsigma_\gamma \sqrt{\gamma^{3}\coeffContinueMatrix{3}{\gamma}}}\right.$.
This fits into the framework~\eqref{eq:def_xminor_v_minor_gen} upon setting $\delta=1$, $\gminorc = \rme^{-\kappa \gamma}$, $\sigma_{\gamma} = \tsigma_{\gamma}$,
\begin{equation}
g_\gamma(x,v,z,w) = \frac{1-\rme^{-\kappa \gamma}}{\kappa\gamma} b (x)\eqsp, \quad f_\gamma(x,v,z,w) = \frac{1-\kappa\gamma-\rme^{-\kappa \gamma}}{\kappa \gamma^2} \left( v - \frac{\gamma}{\kappa} b(x) \right) + \frac{1}{\gamma}\sqrt{\coeffContinueMatrix{1}{\gamma}(1-\alpha_\gamma^2)} w \eqsp.
\end{equation}
The conditions in~\Cref{ass:gminorc} are easily seen to be satisfied since $\tsigma_\gamma/\sigma \to 1$ as $\gamma \to 0$, while $\Dbf_\gamma \to 1/2$. Finally, the conditions in~\Cref{ass:lip_minor_gen} hold true when $b$ is Lipschitz. 

\subsection{Splitting schemes}
\label{sec:splitting-schemes}

A systematic way of constructing numerical schemes for Langevin dynamics is to rely on splitting procedures based on the decomposition~\eqref{eq:L=A+B+C}, as systematically studied in~\cite{LM12,LM15,leimkuhler:matthews:stoltz:2016} for kinetic Langevin dynamics. The interest of these schemes is that they reduce to symplectic integrators of the Hamiltonian dynamics as $\kappa \to 0$ when $\sigma^2 = \mathrm{O}(\kappa)$. We consider here first and second order numerical discretizations based on Lie-Trotter or Strang splittings built upon the operators $\scrA,\scrB,\scrC$ introduced in~\eqref{eq:L=A+B+C}. These operators are the generators of elementary SDEs which can be analytically integrated, as discussed after~\eqref{eq:2}.

\paragraph{First order schemes.} Schemes of weak order~1 are obtained, up to cyclic permutations, by composing the elementary dynamics associated with the operators in the order $\scrA,\scrB,\scrC$ or~$\scrA,\scrC,\scrB$. There are therefore 6 possible first order splitting schemes, which can all be cast in the framework~\eqref{eq:def_xminor_v_minor_gen}. Some general comments can be formulated on the 6 splittings schemes.
\begin{itemize}
\item There is no dependence on~$w$ for the functions~$f_\gamma,g_\gamma$, and no dependence on~$z$ for~$g_\gamma$.
\item Functions~$f_\gamma$ which genuinely depend on~$z$ are obtained when~$\scrC$ appears before~$\scrA$. This corresponds for example to the scheme  $\scrC \scrA \scrB$.
\item Functions~$g_\gamma$ which genuinely depend on~$v$ are obtained when~$\scrA$ appears before~$\scrB$. This corresponds for example to the scheme  $\scrC \scrA \scrB$.
\item The functions $f_\gamma,g_\gamma$ are quite similar for schemes were consecutive applications of~$\scrB,\scrC$ are exchanged (as for the evolutions~$\scrA\scrB\scrC$ and~$\scrA\scrC\scrB$; or for~$\scrB\scrC\scrA$ and~$\scrC\scrB\scrA$).
\end{itemize}
In view of these remarks, the simplest scheme, from a structural viewpoint, is associated with~$\scrB\scrA\scrC$ (although the schemes associated with~$\scrA\scrB\scrC$ and~$\scrA\scrC\scrB$ are also quite simple), while the most complicated is the one associated with~$\scrC\scrA\scrB$. We therefore consider more precisely these two schemes, as paradigmatic examples of first order splittings.

The numerical scheme associated with $\scrB\scrA\scrC$ is the one presented in \Cref{exam:verlet}.
The numerical scheme associated with $\scrC\scrA\scrB$ reads
\begin{equation}
\XminorTer_{k+1}  = \XminorTer_k+\gamma\rme^{-\kappa \gamma}\VminorTer_k + \gamma^{3/2} \tsigma_\gamma Z_{k+1}\eqsp, \quad 
\VminorTer_{k+1}  = \rme^{-\kappa \gamma} \VminorTer_k + \gamma b\left(\XminorTer_k+\gamma\rme^{-\kappa \gamma}\VminorTer_k + \gamma^{3/2} \tsigma_\gamma Z_{k+1}\right) + \sqrt{\gamma} \tsigma_\gamma Z_{k+1}\eqsp, 
\end{equation}
which corresponds to~\eqref{eq:def_xminor_v_minor_gen} with $\delta=1$, $\gminorc = \rme^{-\kappa \gamma}$, 
$\sigma_{\gamma} = \tsigma_{\gamma}$, $\Dbf_\gamma = \Idd$, and $f_\gamma(x,v,z,w) = \gamma^{-1}(\rme^{-\kappa \gamma}-1) v$, $g_\gamma(x,v,z,w) = b(x+\rme^{-\kappa \gamma}v + z)$.
For both schemes, the conditions \Cref{ass:gminorc} and \Cref{ass:lip_minor_gen} hold true when $b$ is Lipschitz.

\paragraph{Second order schemes.}
Schemes of weak order~2 are obtained, up to cyclic permutations of the operators, by a Strang splitting based on the operators~$\scrA,\scrB,\scrC$. The scheme~$\scrA\scrB\scrC\scrB\scrA$, for instance, corresponds to integrating the elementary dynamics associated with~$\scrA$ for a time~$\gamma/2$, then the elementary dynamics associated with~$\scrB$ for a time~$\gamma/2$, then the elementary dynamics associated with~$\scrC$ for a time~$\gamma$, then again the elementary dynamics associated with~$\scrB$ for a time~$\gamma/2$, and finally the elementary dynamics associated with~$\scrA$ for a time~$\gamma/2$.

As for first order splitting schemes, there are 6 possible schemes, which can all be cast in the framework~\eqref{eq:def_xminor_v_minor_gen}. Some general comments can be formulated on the 6 splittings schemes.
\begin{itemize}
\item There is no dependence on~$w$ for the functions~$f_\gamma,g_\gamma$ when~$\scrC$ is between instances of~$\scrA$ (as for the schemes~$\scrB\scrA\scrC\scrA\scrB$, $\scrA\scrB\scrC\scrB\scrA$ and~$\scrA\scrC\scrB\scrC\scrA$). This dependence is linear for~$f_\gamma$ when the operator~$\scrA$ is in the central place (as for the schemes~$\scrC\scrB\scrA\scrB\scrC$ and~$\scrB\scrC\scrA\scrC\scrB$).
\item The function~$g_\gamma$ does not depend on~$z,w$ for schemes with~$\scrA$ at the first and last places (as for the schemes~$\scrA\scrB\scrC\scrB\scrA$ and~$\scrA\scrC\scrB\scrC\scrA$).
\item The functions $f_\gamma,g_\gamma$ are quite similar when consecutive operators~$\scrB,\scrC$ are exchanged (as for the schemes~$\scrA\scrB\scrC\scrB\scrA$ and~$\scrA\scrC\scrB\scrC\scrA$; as well as for~$\scrB\scrC\scrA\scrC\scrB$ and~$\scrC\scrB\scrA\scrB\scrC$).
\end{itemize}
In view of these remarks, the simplest scheme, from a structural viewpoint, is associated with~$\scrA\scrB\scrC\scrB\scrA$ (or with~$\scrA\scrC\scrB\scrC\scrB$), while the most complicated is the one associated with~$\scrC\scrA\scrB\scrA\scrC$. We next write out more precisely these two schemes, as paradigmatic examples of second order splittings.

The numerical scheme associated with~$\scrA\scrB\scrC\scrB\scrA$ reads
\begin{equation}
 \begin{aligned}
\XminorTer_{k+1} & = \XminorTer_k+\frac{\gamma(1+\rme^{-\kappa \gamma})}{2} \VminorTer_k+ \frac{\gamma^2(1+\rme^{-\kappa \gamma})}{4} b\left(\XminorTer_k+\frac\gamma2 \VminorTer_k\right) + \frac{\gamma^{3/2}}{2} \tsigma_\gamma Z_{k+1} \eqsp, \\
\VminorTer_{k+1} & = \rme^{-\kappa \gamma} \VminorTer_k + \frac{\gamma(1+\rme^{-\kappa \gamma})}{2} b\left(\XminorTer_k+\frac\gamma2 \VminorTer_k\right) + \sqrt{\gamma} \tsigma_\gamma Z_{k+1}\eqsp,
\end{aligned} 
\end{equation}
where $\tsigma_{\gamma}$ is given by \eqref{eq:def_sigma_t_example}. 
It can indeed be written as \eqref{eq:def_xminor_v_minor_gen} with $\delta=1$, $\gminorc = \rme^{-\kappa \gamma}$, $\sigma_{\gamma} = \tsigma_{\gamma}$, $\Dbf_\gamma = \Idd/2$, and
\begin{equation}
f_\gamma(x,v,z,w) = \frac{\rme^{-\kappa \gamma}-1}{2 \gamma} v + \frac{\gamma(1+\rme^{-\kappa \gamma})}{4} b\left(x+\frac12 v\right),
\qquad 
g_\gamma(x,v,z,w) = \frac{1+\rme^{-\kappa \gamma}}{2} b\left(x+\frac12 v\right).
\end{equation}
The numerical scheme associated with~$\scrC\scrA\scrB\scrA\scrC$ reads
\begin{equation}
 \begin{aligned}
\XminorTer_{k+1} & = \XminorTer_k+\gamma \rme^{-\kappa \gamma/2} \VminorTer_k+\frac{ \gamma^2}{2} b\left(\XminorTer_k+\frac{\gamma \rme^{-\kappa \gamma/2}}{2} \VminorTer_k + \frac{\gamma^{3/2}}{2\sqrt{2}} \tsigma_{\gamma/2} \xi^1_{k+1}\right) + \frac{\gamma^{3/2}}{\sqrt{2}} \tsigma_{\gamma/2} \xi^1_{k+1}\eqsp, \\
\VminorTer_{k+1} & = \rme^{-\kappa \gamma} \VminorTer_k + \gamma \rme^{-\kappa \gamma/2} b\left(\XminorTer_k+\frac{\gamma \rme^{-\kappa \gamma/2}}{2} \VminorTer_k + \frac{\gamma^{3/2}}{2\sqrt{2}} \tsigma_{\gamma/2} \xi^1_{k+1}\right) + \sqrt{\frac\gamma2} \tsigma_{\gamma/2} \left(\rme^{-\kappa \gamma/2}\xi^1_{k+1}+\xi^2_{k+1}\right) \eqsp, 
\end{aligned} 
\end{equation}
where~$(\xi^1_{k+1})_{k \geq 0}$ and~$(\xi^2_{k+1})_{k \geq 0}$ are two independent families of \iid~standard $d$-dimensional Gaussian random variables. In order to write this scheme under the form \eqref{eq:def_xminor_v_minor_gen}, similarly to \Cref{sec:A+C_B_splitting}, we introduce the sequence of random variables~$(Z_{k+1})_{k \in \nset}$ defined as
\begin{equation}
  \sqrt{\frac\gamma2} \tsigma_{\gamma/2} \left(\rme^{-\kappa \gamma/2}\xi^1_{k+1}+\xi^2_{k+1}\right) = \sqrt{\gamma}\sigma_\gamma Z_{k+1} \eqsp,
  \qquad 
  \sigma_\gamma^2 = \tsigma_{\gamma/2}^2 \frac{1+\rme^{-\kappa\gamma}}{2}\eqsp.
\end{equation}
By construction, $(Z_{k+1})_{k \geq 0}$ are \iid~standard $d$-dimensional Gaussian random variables. We next consider the family of \iid~standard $d$-dimensional Gaussian random variables   $(W_{k+1})_{k \geq 0}$ independent of~$(Z_{k+1})_{k \geq 0}$ defined through the relation 
\begin{equation}
\xi^1_{k+1} = \alpha_\gamma Z_{k+1} + \sqrt{1-\alpha_\gamma^2} W_{k+1},
\qquad
\alpha_\gamma = \mathbb{E}\left[\xi^1_{k+1} Z_{k+1} \right] = \frac{\rme^{-\kappa \gamma/2}}{\sqrt{1+\rme^{-\kappa \gamma}}} \in \ooint{0,1} \eqsp.
\end{equation}
In terms of the random variables~$Z_{k+1},W_{k+1}$, the scheme~$\scrC\scrA\scrB\scrA\scrC$ can be reformulated as
\eqref{eq:def_xminor_v_minor_gen} with~$\delta = 1$, $\gminorc = \rme^{-\kappa \gamma}$, $    \Dbf_\gamma = \rme^{-\kappa \gamma/2}/(1+\rme^{-\kappa \gamma}) \Idd$ and 
\begin{equation}
  \label{eq:second_order_splitting_complicated}
  \begin{aligned}
    f_\gamma(x,v,z,w) & = \frac{\rme^{-\kappa \gamma/2}-1}{\gamma} v + \frac{\gamma}{2} b\left(x+\frac{\rme^{-\kappa \gamma/2}}{2} v + \frac{\rme^{-\kappa\gamma/2}}{2(1+\rme^{-\kappa\gamma})} z + \sqrt{\frac{\gamma^3 \tsigma_{\gamma/2}^2}{8(1+\rme^{-\kappa\gamma})}} w\right) +\sqrt{\frac{\gamma^3 \tsigma_{\gamma/2}^2}{2(1+\rme^{-\kappa\gamma})}} w\eqsp,\\
    g_\gamma(x,v,z,w) & = \rme^{-\kappa \gamma/2} b\left(x+\frac{\rme^{-\kappa \gamma/2}}{2} v + \frac{\rme^{-\kappa\gamma/2}}{2(1+\rme^{-\kappa\gamma})} z + \sqrt{\frac{\gamma^3 \tsigma_{\gamma/2}^2}{8(1+\rme^{-\kappa\gamma})}} w\right) \eqsp.
  \end{aligned}
\end{equation}
The conditions \Cref{ass:gminorc} and \Cref{ass:lip_minor_gen} are then easily seen to hold true for the two schemes~$\scrC\scrA\scrB\scrA\scrC$ and~$\scrC\scrA\scrB\scrA\scrC$ when $b$ is Lipschitz.

\subsection{Numerical schemes with stochastic gradients}
\label{sec:numer-schem-with}

We consider in this section discretizations of~\eqref{eq:general_SDE} using stochastic approximation strategies. Such a methodology is particularly appealing in statistics and machine learning where the field $b$ can be very expensive to evaluate or cannot even be accessed~\cite{duflo:2013,welling:the:2011}. In these contexts, the Langevin dynamics have been primarily considered for either  performing Bayesian inference \cite{ding:et:al:2014,ahn:korattikara:welling:2012,ma:chen:fox:2015} or optimizing an objective function \cite{pelletier:1998,raginsky:rakhlin:telgarsky:2017,barkhagen:et:al:2021,de2020efficient}.
In the first case, $b = \nabla \log \pi $ where $\pi :\rset^d \to \rset_+$ is the a posteriori distribution of a statistical model, which can generally be written as~$-\log \pi  = \sum_{k=1}^N U_k$, with~$N$ the number of observations and~$U_k:\rset^d \to \rset$.
It has also been proposed to use Langevin dynamics to find an element of $\argmin_{\rset^d} f$ for some function $f : \rset^d \to \rset$ by setting $b= -\nabla f$ and taking $\sigma$ small. In particular, we are interested in the situation where $\nabla f$ can only be estimated through estimators which can potentially be biased. 

The use of stochastic approximation for $b$ in these two settings can be formalized as follows. We suppose that there exist a probability measure $\mu_{\msy}$ on a measurable space $(\msy,\mcy)$, and a measurable function~$H : \rset^d \times \msy  \to \rset^d$ such that for any $  x \in \rset^d$,
\begin{equation}
  \label{eq:3}
 \widetilde{b}(x) = \int_{\msy}H_x(\rmy) \, \mu_{\msy}(\rmd\rmy) \eqsp, \qquad \sup_{\rset^d} \normLigne{\widetilde{b}-b} < \plusinfty \eqsp. 
\end{equation}
In addition, we suppose that we can generate \iid~samples $(Y_{k+1})_{k\in\nset}$ from $\mu_{\msy}$. Then, a stochastic approximation of the discretization of~\eqref{eq:general_SDE} essentially consists in replacing the evaluation of $b$ at each iteration~$k$ by~$H_{X_k}(Y_{k+1})$. For example, following~\cite{chen:fox:guestrin:2014}, the Euler--Maruyama discretization~\eqref{eq:EM_scheme} can be generalized as 
\begin{equation}
  \label{eq:sto_grad_rec}
  \XminorTer_{k+1} = \XminorTer_k+\gamma\VminorTer_k \eqsp,
  \qquad 
  \VminorTer_{k+1} = (1-\kappa \gamma) \VminorTer_k + \gamma H_{\XminorTer_k}(Y_{k+1}) + \sqrt{\gamma} \sigma Z_k \eqsp. 
\end{equation}
This numerical scheme fits into the framework~\eqref{eq:def_xminor_v_minor_gen} upon taking $W_k = Y_k$ for any~$k\geq 1$,
\begin{equation}
  \tau_{\gamma} = 1-\kappa \gamma \eqsp, \qquad \sigma_{\gamma} = \sigma\eqsp, \qquad \Dbf_{\gamma} = 0\eqsp, \qquad f_\gamma(x,v,z,w) = 0\eqsp, \qquad g_{\gamma}(x,v,z,w) = H_x(w)\eqsp.
\end{equation}
The parameter~$\delta$ is irrelevant. Note that \Cref{ass:gminorc} and~\Cref{ass:lip_minor_gen} hold under the condition that for any $y \in \msy$, the function~$x \mapsto H_{x}(y)$ is Lipschitz with a Lipschitz constant independent of $y$. The stochastic exponential Euler and splitting schemes presented in Sections~\ref{sec:A+C_B_splitting} and~\ref{sec:splitting-schemes} can also be adapted to take into account a stochastic approximation of~$b$, for instance by relying on the methodology developed in~\cite{chen:ding:carin:2015}.

\subsection{General assumptions on the noise}
\label{sec:general_assumptions_noise}

Now that we have presented numerical methods with stochastic gradients, we can specify the assumptions we need for the additional noise variables to prove \Cref{theo:drift_exp} and~\Cref{theo:geo_convergence}. The idea is that the random variables~$W$ can be decomposed into two parts $(W_1,W_2)$. The first random variable $W_1$ is a Gaussian noise resulting from a higher order integration in time of the Brownian motion. Therefore, the random variables $W_1$ should be considered as Gaussian random variables. The second random variable $W_2$ is related to the use of stochastic gradients. For example, it corresponds to the sequence $(Y_{k+1})_{k\in\nset}$ in \eqref{eq:sto_grad_rec}.

\begin{assumptionD}
  \label{ass:w}
  It holds $\msw = \rset^{m_1}\times \rset^{m_2}$ and $\mcw = \mcbb(\rset^{m_1})\otimes \mcbb(\rset^{m_2})$. The probability measure~$\muw$ can be decomposed as $\muw=\muwOne\otimes\muwTwo$, where $\muwOne$ and $\muwTwo$ are probability mesures on $\rset^{m_1}$ and $\rset^{m_2}$ respectively, such that the following two conditions are satisfied. 
  \begin{enumerate}[wide, labelwidth=!, labelindent=0pt,label=\arabic*)]
  \item
\label{ass:w_logsob_spec_1}
    \begin{enumerate}[label=(\alph*)]
      \item \label{ass:w_logsob_spec} There exists $\LSconstant>0$ such that, for any Lipschitz continuous function~$h:\rset^{d+m_1}\to \rset$, with Lipschitz constant $\normLip{h}$, and any~$s\in \rset$, it holds~$\int_{\rset^{d+m_1}} |h| \, \rmd (\varphibf\otimes\muwOne)<\plusinfty$ and
      \begin{equation}
        \int_{\rset^{d+m_1}} \exp\parenthese{sh}\rmd (\varphibf\otimes\muwOne)\leq \exp\parenthese{s\int_{\rset^{d+m_1}} h \, \rmd (\varphibf\otimes\muwOne)+\LSconstant \normLip{h}^2\frac{s^2}{2}} \eqsp;
      \end{equation}
      \item       \label{ass:w_lip} There exists $\LLS\geq 0$ such that for any $x,v,z,z'\in\rset^d$, $w_1,w'_1\in \rset^{m_1}$ and $w_2\in \rset^{m_2}$,
      \begin{multline}
        \norm{f_\gamma(x,v,z,(w_1,w_2))-f_\gamma(x,v,z',(w'_1,w_2))}+
        \norm{g_\gamma(x,v,z,(w_1,w_2))-g_\gamma(x,v,z',(w'_1,w_2))}\\\leq \LLS\norm{(z,w_1)-(z',w'_1)} \eqsp.
      \end{multline}
    \end{enumerate}
    \label{ass:w_logsob}
    \item There exists $\bgamma_W\in\ocint{0,\bgamma}$ such that $\displaystyle \int_{\rset^{m_2}} \rme^{\bgamma_W\norm{w_2}^2}\muwTwo(\rmd w_2)<\plusinfty$.
    \label{ass:w_compact}
  \end{enumerate}
\end{assumptionD}

\begin{remark}
  \label{rem:logsob}
  If $\muwOne$ admits a first moment and satisfies a log-Sobolev inequality with constant $\widetilde{\LSconstant}>0$, \ie~for any continuously differentiable function $h:\rset^{m_1}\to\rset_+$ such that $\int_{\rset^{m_1}} h(w_1) \muwOne(\rmd w_1)=1$,
  \begin{equation}
    \int_{\rset^{m_1}} h(w_1)\log(h(w_1))\, \muwOne(\rmd w_1)\leq 2\widetilde{\LSconstant}\int_{\rset^{m_1}}\norm{\nabla h}^2\muwOne(\rmd w_1)\eqsp ,
  \end{equation}
  then by \cite[Propositions~5.5.1 and~5.2.7]{bakry:gentil:ledoux:2014}, the probability measure~$\varphibf\otimes\muwOne$ satisfies a log-Sobolev inequality with constant $\max(1, \widetilde{\LSconstant})$, which is the case for all the examples we consider in Sections~\ref{sec:A+C_B_splitting} and~\ref{sec:splitting-schemes}. Therefore by Herbst’s argument (see \cite[Proposition 5.4.1]{bakry:gentil:ledoux:2014} or \cite[Theorem 5.5]{boucheron:lugosi:massart:2013}), condition~\Cref{ass:w}-\ref{ass:w_logsob_spec_1}-\ref{ass:w_logsob_spec} holds. 
\end{remark}

Note that \Cref{ass:w}-\ref{ass:w_compact} is not restrictive and
covers the case where~$\muwTwo$ is (sub-)Gaussian, which is frequently
the case for the schemes presented in \Cref{sec:numer-schem-with}. 

\section{Proof of the minorization condition}
\label{sec:proof_minorization}

The proof is organized in several steps. We start by rewriting the end point of the Markov chain in terms of the initial conditions and noise increments in \Cref{sec:algebraic_structure}, in a form amenable to perturbative treatments. The dominant part of the evolution is given by the outcome of discretization schemes corresponding to~\eqref{eq:def_xminor_v_minor_gen} in the case~$b=0$. We therefore carefully study this case in \Cref{sec:noise_estimation}, where we condition solutions by the sum of the random increments characterizing the endpoints, and write out a decomposition into intermediate increments independent of the sum. We next quantify, by stability estimates, how numerical solutions depend on the random increments used to generate them (see \Cref{sec:stability}). This finally allows us to prove Theorem~\ref{theo:minor_main_gen} in \Cref{sec:proof_main_resukt_gen} by considering the actual solutions of the numerical method as perturbations of discretizations of~\eqref{eq:def_xminor_v_minor_gen} in the case~$b=0$, provided the physical time~$t_0$ is sufficiently small. 

\subsection{Algebraic structure of the equations}
\label{sec:algebraic_structure}

It is convenient, in particular for the stability estimates of \Cref{sec:stability}, to rewrite $(\XminorTer_{k+1},\VminorTer_{k+1})$ as a function of $(\XminorTer_{0},\VminorTer_{0})$ and the realizations of the random variables $\{(Z_{i+1},W_{i+1})\}_{i=0}^k$ needed to define the $(k+1)$-th iterate.
First, in view of~\eqref{eq:def_xminor_v_minor_gen} and \eqref{minor_def_Gamma_b_gamma}, the iterates of the Markov chain can be written as $(\XminorTer_{k+1},\VminorTer_{k+1}) = \Gammaminorc{\gamma}\left(\XminorTer_{k},\VminorTer_{k},\sqrt{\gamma}\sigma_{\gamma} Z_{k+1}, W_{k+1}\right)$. This allows to write 
\begin{equation}
  \label{eq:def_Gammaminorc_k+1}
  (\XminorTer_{k+1},\VminorTer_{k+1}) = \Gammaminorc{\gamma}[k+1]\left(\XminorTer_{0},\VminorTer_{0}, \left\{\sqrt{\gamma}\sigma_{\gamma} Z_{i}\right\}_{i=1}^{k+1}, \{W_i \}_{i=1}^{k+1}\right) \eqsp,
\end{equation}
where the applications $\Gammaminorc{\gamma}[i] : \rset^{2d} \times \rset^{i \times d }\times \msw^i \to \rset^{2d}$ are recursively defined as follows:
\begin{equation}
  \label{eq:def_Phiminor_k}
  \begin{aligned}
    & \Gammaminorc{\gamma}[0](x,v) = (x,v) \eqsp, \\
    & \Gammaminorc{\gamma}[i]\left(x,v,\{z_j\}_{j=1}^{i},\{w_j\}_{j=1}^{i}\right) = \Gammaminorc{\gamma}\left(\Gammaminorc{\gamma}[i-1]\left(x,v,\{z_j\}_{j=1}^{i-1},\{w_j\}_{j=1}^{i-1}\right),z_{i},w_{i}\right)\eqsp, \qquad i \geq 1 \eqsp.
    \end{aligned} 
\end{equation}
The next result provides a more explicit and constructive expression for~$\Gammaminorc{\gamma}[k+1]$. It is stated for the sequence $(x_{k+1},v_{k+1}) = \Gammaminorc{\gamma}(x_k,v_k,z_{k+1},w_{k+1})$, with $(x_{i},v_{i}) = \Gammaminorc{\gamma}[i](x,v,\{z_j\}_{j=1}^{i},\{w_j\}_{j=1}^{i})$ for $i\in\iint{1}{k+1}$ and a given initial condition~$(x,v) \in \rset^{2d}$. 

\begin{lemma}
  \label{lem:recu_x_k_v_k_gen}
  Define, for $k \geq 0$, the vectors $\gminorbfc{k+1}_1 = (\gminorbfc{k+1}_{1,1},\dots,\gminorbfc{k+1}_{1,k+1})$ and $\gminorbfc{k+1}_2=(\gminorbfc{k+1}_{2,1},\dots,\gminorbfc{k+1}_{2,k+1})$ with components
\begin{equation}
  \label{eq:def_gminorbfc}
  \gminorbfc{k+1}_{1,i} = \frac{\gamma(1-\gminorc^{k-i+1})}{1-\gminorc}\eqsp,
  \qquad 
  \gminorbfc{k+1}_{2,i} = \gminorc^{k-i+1}\eqsp,
  \qquad
  i\in\iint{1}{k+1}\eqsp. 
 \end{equation}
Then,
\begin{equation}
\begin{aligned}
    x_{k+1} & =x+\gamma \frac{1-\gminorc^{k+1}}{1-\gminorc} v + \sum_{i=0}^{k-1} \gminorbfc{k+1}_{1,i+1}\parentheseDeux{\gamma\bminorg(x_i,\gamma^{\delta}v_i,\gamma^{\delta}z_{i+1},w_{i+1})+z_{i+1}}\\
    & \qquad\qquad\qquad\qquad\qquad\qquad+\gamma\sum_{i=0}^{k}f_\gamma\left(x_i,\gamma^{\delta}v_i,\gamma^{\delta}z_{i+1},w_{i+1}\right)+\gamma^\delta\sum_{i=0}^{k}\Dbf_{\gamma} z_{i+1} \eqsp,\\
    v_{k+1} & = \gminorc^{k+1}v+\sum_{i=0}^{k} \gminorbfc{k+1}_{2,i+1}\parentheseDeux{\gamma\bminorg\left(x_i,\gamma^{\delta}v_i,\gamma^{\delta}z_{i+1},w_{i+1}\right)+z_{i+1}} \eqsp .
  \end{aligned} 
\end{equation}
\end{lemma}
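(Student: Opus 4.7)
The plan is to establish the identities directly by iterating the recursion~\eqref{eq:def_xminor_v_minor_gen} (equivalently the componentwise form of~\eqref{minor_def_Gamma_b_gamma}) and then reorganising the resulting finite double sums. Since the recursions for $v_{k+1}$ and $x_{k+1}$ are only coupled through the fact that $x_{k+1}$ requires summing the $v_i$'s, I would handle the two components in turn.

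First I would treat the velocity component, which is the simpler of the two and produces the coefficients $\gminorbfc{k+1}_{2}$. The recursion
\begin{equation*}
v_{i+1} = \tau_\gamma v_i + \gamma g_\gamma\bigl(x_i,\gamma^\delta v_i,\gamma^\delta z_{i+1},w_{i+1}\bigr) + z_{i+1}
\end{equation*}
is a linear first-order recurrence in $v_i$ with time-independent coefficient~$\tau_\gamma$. Unrolling it from $v_0 = v$ yields
\begin{equation*}
v_{k+1} = \tau_\gamma^{\,k+1} v + \sum_{i=0}^{k} \tau_\gamma^{\,k-i}\bigl[\gamma g_\gamma(x_i,\gamma^\delta v_i,\gamma^\delta z_{i+1},w_{i+1}) + z_{i+1}\bigr],
\end{equation*}
which, after reindexing $j=i+1$, matches the claimed formula with $\gminorbfc{k+1}_{2,j}=\tau_\gamma^{\,k-j+1}$.

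Next, I would plug the closed-form expression for $v_i$ into the iterated position recursion. Summing the $x$-recursion from $i=0$ to $i=k$ gives
\begin{equation*}
x_{k+1} = x + \gamma\sum_{i=0}^{k} v_i + \gamma \sum_{i=0}^{k} f_\gamma\bigl(x_i,\gamma^\delta v_i,\gamma^\delta z_{i+1},w_{i+1}\bigr) + \gamma^\delta \sum_{i=0}^{k} \Dbf_\gamma z_{i+1},
\end{equation*}
so the only nontrivial task is to simplify $\gamma\sum_{i=0}^{k} v_i$. Substituting the explicit formula for $v_i$ just derived and exchanging the order of the two resulting sums (valid for finite sums), the coefficient of $\gamma g_\gamma(x_j,\gamma^\delta v_j,\gamma^\delta z_{j+1},w_{j+1}) + z_{j+1}$ becomes
\begin{equation*}
\gamma \sum_{i=j+1}^{k} \tau_\gamma^{\,i-1-j} = \gamma \sum_{m=0}^{k-1-j} \tau_\gamma^{\,m} = \frac{\gamma(1-\tau_\gamma^{\,k-j})}{1-\tau_\gamma} = \gminorbfc{k+1}_{1,j+1},
\end{equation*}
for $j\in\iint{0}{k-1}$, by the elementary geometric sum formula. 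The prefactor of $v$ likewise becomes $\gamma\sum_{i=0}^{k}\tau_\gamma^{\,i} = \gamma(1-\tau_\gamma^{\,k+1})/(1-\tau_\gamma)$. Combining these identifications yields exactly the expression announced for $x_{k+1}$.

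The main difficulty, if any, is purely notational: keeping track of the various index shifts (in particular the mismatch between the summation range $i\in\iint{0}{k-1}$ for terms involving $g_\gamma$ via $v_i$ and the range $i\in\iint{0}{k}$ for terms coming directly from $f_\gamma$ and $\Dbf_\gamma z_{i+1}$), and correctly using the geometric series identity $\sum_{m=0}^{n-1}\tau^m = (1-\tau^n)/(1-\tau)$, which is valid here since $\tau_\gamma \in \ooint{0,1}$ by \Cref{ass:gminorc}. As an alternative, one could give a short induction on $k$, using the very definition of $\gminorbfc{k+1}$ together with the telescoping identity $\gminorbfc{k+2}_{1,i+1} = \gminorbfc{k+1}_{1,i+1} + \gamma\tau_\gamma^{\,k-i+1}$, but the direct unrolling above seems to be the most transparent route.
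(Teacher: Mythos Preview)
Your proof is correct and essentially the same elementary computation as the paper's, which simply says the result follows by induction on~$k$ together with the identities $\tau_\gamma \gminorbfc{k}_{2,j} = \gminorbfc{k+1}_{2,j}$ and $\gminorbfc{k}_{1,i+1} + \gamma \gminorbfc{k}_{2,i+1} = \gminorbfc{k+1}_{1,i+1}$. You chose direct unrolling and an explicit geometric sum instead of induction (and you even note the inductive alternative at the end), but the content is the same.
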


The proof is obtained by a simple induction and the equalities~$\gminorc \gminorbfc{k}_{2,j} = \gminorbfc{k+1}_{2,j}$ and $\gminorbfc{k}_{1,i+1} + \gamma \gminorbfc{k}_{2,i+1} = \gminorbfc{k+1}_{1,i+1}$ for $j \in\iint{0}{k}$ and $i \in\iint{0}{k-1}$. \Cref{lem:recu_x_k_v_k_gen} allows to rewrite~\eqref{eq:def_Gammaminorc_k+1} as:
\begin{equation}
  \label{eq:Xminor_Vminor_decomp_start_gen}
  \begin{pmatrix} \XminorTer_{k+1} \\ \VminorTer_{k+1} \end{pmatrix}
  = \Mminorc{\gamma}[k+1]\begin{pmatrix} \XminorTer_{0} \\ \VminorTer_{0} \end{pmatrix} + \gamma \sum_{i=0}^{k}  \Bminor^{(i)}\left(\XminorTer_i,\VminorTer_i, \sqrt{\gamma}\sigma_{\gamma} Z_{i+1},W_{i+1} \right) + \sigma_{\gamma} \left[\Gminorc{k+1} + \gamma^{\delta} \begin{pmatrix}\Gminorc{k+1}_3 \\ \Zd \end{pmatrix} \right],
\end{equation}
where the matrix which multiplies the initial condition reads
\begin{equation}
  \label{eq:def_matrix_Mminor_gen}
  \Mminorc{\gamma}[k+1] = \parentheseDeux{\begin{pmatrix}
    1 & \gamma(1-\gminorc^{k+1})/(1-\gminorc) \\
    0 & \gminorc^{k+1}
  \end{pmatrix}} \otimes \Idd \eqsp,
\end{equation}
the drift part of the dynamics is encoded by
\begin{equation}
  \label{eq:def_Bminor}
  \Bminor^{(i)}(x,v,z,w) = \begin{pmatrix} \gminorbfc{k+1}_{1,i+1}\\ \gminorbfc{k+1}_{2,i+1} \end{pmatrix} \otimes \bminorg\left(x,\gamma^\delta v,\gamma^{\delta}z,w\right) +\begin{pmatrix} f_{\gamma}\left(x,\gamma^\delta v,\gamma^{\delta}z,w\right)\\ \Zd \end{pmatrix} \eqsp, \quad i \in\{0,\ldots,k\} \eqsp,
\end{equation}
while the actual noise obtained at the end of the iterations is given by~$  \Gminorc{k+1}_3$ and 
\begin{equation}
  \label{eq:def_G_minor}
  \Gminorc{k+1} = \left(\Gminorc{k+1}_1,\Gminorc{k+1}_2\right) ,
\end{equation}
with 
\begin{equation}
  \label{eq:def_Gminor_gen}
  \Gminorc{k+1}_1 = \sqrt{\gamma}\sum_{i=0}^{k-1} \gminorbfc{k+1}_{1,i+1} Z_{i+1}\eqsp , 
  \qquad
  \Gminorc{k+1}_2 = \sqrt{\gamma}\sum_{i=0}^{k} \gminorbfc{k+1}_{2,i+1} Z_{i+1} \eqsp,
  \qquad  
  \Gminorc{k+1}_3 = \sqrt{\gamma}\Dbf_{\gamma} \sum_{i=0}^{k} Z_{i+1} \eqsp.
\end{equation}
 When~\Cref{ass:lip_minor_gen} is satisfied, it holds, for any $(x,v,z),(x',v',z') \in\rset^{3d}$, $w \in \msw$ and $i \in\iint{0}{k}$, 
\begin{equation}
  \label{eq:lip_Bminor}
  \norm{\Bminor_\gamma^{(i)}(x, v, z,w)-\Bminor_\gamma^{(i)}(x',v',z',w)}\leq (2+k\gamma)\LipT \left( \norm{x-x'} + \gamma^\delta \norm{v-v'}+\gamma^{\delta}\norm{z-z'} \right) \eqsp.
\end{equation}
This result easily follows from the bounds 
\begin{equation}
  \label{lem:estimate_gminorbfc}
  \sup_{i\in \{0,\ldots,k\}}\abs{\gminorbfc{k+1}_{1,i+1}} \leq k\gamma \eqsp ,\qquad \sup_{i\in \{0,\ldots,k\}}\abs{\gminorbfc{k+1}_{2,i+1}} \leq 1 \eqsp .
\end{equation}
These bounds are in turn a consequence of the fact that~$\tau_{\gamma} \in \ooint{0,1}$ and, for the first inequality, $\gminorbfc{k+1}_{1,i} = \gamma\sum_{j=0}^{k-1}\tau_{\gamma}^{j}$.

\subsection{Structure and properties of the noise}
\label{sec:noise_estimation}

We study in this section the structure of the Gaussian noise~\eqref{eq:def_G_minor} in~\eqref{eq:Xminor_Vminor_decomp_start_gen}. We present in \Cref{sec:ppties_continuous_noise} some estimates on the covariance matrix $\coeffContinueMatrix{}{t_0}$, for $t_0 >0$ small enough, of the continuous process~\eqref{eq:general_SDE} with $b \equiv 0$. Then, we relate in \Cref{sec:ppties_Gaussian_noise} the statistics of the Gaussian noise $\Gminorc{k+1}$ in~\eqref{eq:Xminor_Vminor_decomp_start_gen} to $\coeffContinueMatrix{}{t_0}$ for $k \approx t_0/\gamma$. We finally provide a decomposition into a final effective Gaussian increment and independent intermediate increments, by an orthogonal decomposition (see \Cref{sec:increment_decomposition}). More precisely, we explicitly write out a linear transformation~$\mathbf{A}$ such that $(\tZbfc{k+1},\Gminorc{k+1}) = \mathbf{A}\Zbfc{k}$ with~$\Gminorc{k+1}$ independent of~$\tZbfc{k+1}$ and~$\tZbfc{k+1}$ a Gaussian vector whose components are \iid~$d$-dimensional standard Gaussian random variables. 

\subsubsection{Properties of the noise covariance of the continuous process}
\label{sec:ppties_continuous_noise}

Recall the expression~\eqref{eq:def_continue_matrix} of the covariance matrix $\continueMatrix{t}$ of the Gaussian process corresponding to~\eqref{eq:general_SDE} when $b\equiv 0$. Our first technical result provides some bounds on the covariance matrix~$\continueMatrix{t_0}$ associated with the underlying reference Gaussian process over times~$t_0>0$.

\begin{lemma}
  \label{lem:estimate_coeff_Sigma}
  There exist $\btZ >0$ and positive constants $\underline{\varrho}_1,\underline{\varrho}_2,\underline{\varrho}_3,\overline{\varrho}_1,\overline{\varrho}_2,\overline{\varrho}_3$ with $\underline{\varrho}_1\underline{\varrho}_3-\bar{\varrho}_2^2>0$, such that, for any $t_0\in \ocintLigne{0,\btZ}$,
  \begin{equation}
    \label{eq:estimate_coeff_Sigma}
    \underline{\varrho}_1 \leq \frac{\coeffContinueMatrix{1}{t_0}}{\sigma^2 t_0^3} \leq \bar{\varrho}_1 \eqsp,
    \qquad
    \underline{\varrho}_2 \leq \frac{\coeffContinueMatrix{2}{t_0}}{\sigma^2 t_0^2} \leq \bar{\varrho}_2 \eqsp,
    \qquad
    \underline{\varrho}_3 \leq \frac{\coeffContinueMatrix{3}{t_0}}{\sigma^2 t_0} \leq \bar{\varrho}_3 \eqsp. 
  \end{equation}
\end{lemma}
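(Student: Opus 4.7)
The plan is to obtain all three bounds simultaneously by Taylor-expanding the exponentials in~\eqref{eq:first_coeff_Sigma}--\eqref{eq:third_coeff_Sigma} around $t_0 = 0$ and exploiting continuity on a small right-neighborhood of~$0$. First, I would compute the leading-order behavior of each entry. Expanding
\begin{equation}
\mathrm{e}^{-\kappa t_0} = 1 - \kappa t_0 + \frac{\kappa^2 t_0^2}{2} - \frac{\kappa^3 t_0^3}{6} + O(t_0^4), \qquad \mathrm{e}^{-2\kappa t_0} = 1 - 2\kappa t_0 + 2\kappa^2 t_0^2 - \frac{4\kappa^3 t_0^3}{3} + O(t_0^4),
\end{equation}
one immediately finds $1 - \mathrm{e}^{-2\kappa t_0} = 2\kappa t_0 + O(t_0^2)$ and $(1-\mathrm{e}^{-\kappa t_0})^2 = \kappa^2 t_0^2 + O(t_0^3)$, while a short computation with cancellation of the first two orders yields $3 - 4\mathrm{e}^{-\kappa t_0} + \mathrm{e}^{-2\kappa t_0} = 2\kappa t_0 - \tfrac{2}{3}\kappa^3 t_0^3 + O(t_0^4)$, so that $2t_0 - (3 - 4\mathrm{e}^{-\kappa t_0} + \mathrm{e}^{-2\kappa t_0})/\kappa = \tfrac{2}{3}\kappa^2 t_0^3 + O(t_0^4)$. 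Substituting these expansions into \eqref{eq:first_coeff_Sigma}--\eqref{eq:third_coeff_Sigma} gives
\begin{equation}
\frac{\coeffContinueMatrix{1}{t_0}}{\sigma^2 t_0^3} \xrightarrow[t_0 \downarrow 0]{} \frac{1}{3}, \qquad \frac{\coeffContinueMatrix{2}{t_0}}{\sigma^2 t_0^2} \xrightarrow[t_0 \downarrow 0]{} \frac{1}{2}, \qquad \frac{\coeffContinueMatrix{3}{t_0}}{\sigma^2 t_0} \xrightarrow[t_0 \downarrow 0]{} 1.
\end{equation}

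Next I would observe that each of the three ratios $t_0 \mapsto \coeffContinueMatrix{i}{t_0}/(\sigma^2 t_0^{4-i})$, $i \in \{1,2,3\}$, is an analytic function of $t_0$ on $\ooint{0,\infty}$ which extends continuously to $t_0 = 0$ with the limits computed above. By continuity, for any prescribed tolerance $\varepsilon > 0$, there exists $\btZ > 0$ such that, for every $t_0 \in \ocint{0,\btZ}$, each ratio lies within~$\varepsilon$ of its limit. Setting for instance $\underline{\varrho}_1 = 1/3 - \varepsilon$, $\overline{\varrho}_1 = 1/3 + \varepsilon$, $\underline{\varrho}_2 = 1/2 - \varepsilon$, $\overline{\varrho}_2 = 1/2 + \varepsilon$, $\underline{\varrho}_3 = 1 - \varepsilon$, $\overline{\varrho}_3 = 1 + \varepsilon$ yields the double inequalities~\eqref{eq:estimate_coeff_Sigma}.

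It remains to verify the nondegeneracy condition $\underline{\varrho}_1 \underline{\varrho}_3 - \overline{\varrho}_2^2 > 0$. The limiting values satisfy $(1/3) \cdot 1 - (1/2)^2 = 1/12 > 0$, and this inequality is stable under small perturbations: taking $\varepsilon$ small enough, the quantity $\underline{\varrho}_1 \underline{\varrho}_3 - \overline{\varrho}_2^2$ can be made arbitrarily close to $1/12$, and in particular remains strictly positive. One then fixes such an~$\varepsilon$ and the corresponding~$\btZ$, which concludes the proof. There is no serious obstacle here; the only care required is in the Taylor expansion of $\coeffContinueMatrix{1}{t_0}$, where the first two orders cancel and the leading contribution is genuinely of order~$t_0^3$.
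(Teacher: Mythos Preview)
Your proposal is correct and follows essentially the same approach as the paper: both arguments compute the limits $\coeffContinueMatrix{1}{t_0}/(\sigma^2 t_0^3)\to 1/3$, $\coeffContinueMatrix{2}{t_0}/(\sigma^2 t_0^2)\to 1/2$, $\coeffContinueMatrix{3}{t_0}/(\sigma^2 t_0)\to 1$ via Taylor expansion, invoke continuity on a small interval~$(0,\btZ]$, and verify the nondegeneracy via $1/3\cdot 1-(1/2)^2=1/12>0$. The only cosmetic difference is that the paper's appendix makes the Taylor bounds explicit (e.g.\ $\sigma^2 t_0^3/3-\sigma^2\kappa t_0^4/3\leq \coeffContinueMatrix{1}{t_0}\leq \sigma^2 t_0^3/3+\sigma^2\kappa t_0^4/12$) rather than appealing to continuity abstractly, but the logic is identical.
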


The result is an immediate consequence of the limit $\lim_{t_0 \downarrow 0} \coeffContinueMatrix{1}{t_0} t_0^{-3} \sigma^2/3$ (which can be seen from~\eqref{eq:first_coeff_Sigma} by approximating the integrand in the integral appearing on the right hand side of the first equality), as well as the limits $\lim_{t_0 \downarrow 0} \coeffContinueMatrix{2}{t_0} t_0^{-2} = \sigma^2/2$ and $\lim_{t_0 \downarrow 0} \coeffContinueMatrix{3}{t_0} t_0^{-1} = \sigma^2$ which are obtained in a similar way. A detailed version of the proof is provided in the \supplementname~(see \Cref{sec:proof-crefl_coeef_sigma}).

\begin{lemma}
  \label{lem:matrix_minor_gen_continue}
  Assume that~\Cref{ass:gminorc} holds. Then there exist $\brho_c,\btZ>0$ such that the following inequality holds in the sense of $2 \times 2$ symmetric  matrices: for any $t_0\in\ocintLigne{0,\btZ}$,
  \begin{equation} 
    t_0\brho_c^{-1}\begin{pmatrix} t_0^2 &0 \\ 0 &  1 \end{pmatrix}
    \preceq \continueMatrix{t_0} \preceq
    t_0\brho_c\begin{pmatrix} t_0^2 &0\\0 &  1 \end{pmatrix} \eqsp .
  \end{equation}
\end{lemma}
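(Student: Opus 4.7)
The plan is to reduce the claimed operator inequality to a pair of $2 \times 2$ symmetric matrix inequalities that can be checked by verifying nonnegativity of diagonal entries and of determinants, then to invoke the sharp bounds on the entries of $\continueMatrix{t_0}$ provided by \Cref{lem:estimate_coeff_Sigma}. I would take $\btZ$ identical to the one from \Cref{lem:estimate_coeff_Sigma} so that, for every $t_0 \in \ocintLigne{0,\btZ}$, the three entries of $\continueMatrix{t_0}$ are simultaneously comparable to $\sigma^2 t_0^3$, $\sigma^2 t_0^2$ and $\sigma^2 t_0$ via the constants $\underline{\varrho}_i,\bar{\varrho}_i$ ($i=1,2,3$). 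The remaining task is then to choose a single constant $\brho_c > 0$ large enough that both the upper and the lower bounds hold.

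For the upper bound, I would consider the difference $M^+_{t_0} := t_0 \brho_c \mathrm{diag}(t_0^2,1) - \continueMatrix{t_0}$. Its diagonal entries are $\brho_c t_0^3 - \coeffContinueMatrix{1}{t_0}$ and $\brho_c t_0 - \coeffContinueMatrix{3}{t_0}$, and are nonnegative as soon as $\brho_c \geq \bar{\varrho}_1 \sigma^2 \vee \bar{\varrho}_3 \sigma^2$. For $M^+_{t_0}$ to be positive semi-definite, it suffices additionally that $\det M^+_{t_0} \geq 0$, i.e.
\begin{equation}
(\brho_c t_0^3 - \coeffContinueMatrix{1}{t_0})(\brho_c t_0 - \coeffContinueMatrix{3}{t_0}) \geq \coeffContinueMatrix{2}{t_0}^2.
\end{equation}
Using the upper bound $\coeffContinueMatrix{2}{t_0}^2 \leq \bar{\varrho}_2^2 \sigma^4 t_0^4$ and the lower bounds $\coeffContinueMatrix{1}{t_0} \leq \bar{\varrho}_1 \sigma^2 t_0^3$, $\coeffContinueMatrix{3}{t_0} \leq \bar{\varrho}_3 \sigma^2 t_0$, this reduces to requiring $(\brho_c - \bar{\varrho}_1 \sigma^2)(\brho_c - \bar{\varrho}_3 \sigma^2) \geq \bar{\varrho}_2^2 \sigma^4$, which holds for $\brho_c$ chosen large enough.

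For the lower bound, I would treat $M^-_{t_0} := \continueMatrix{t_0} - t_0 \brho_c^{-1} \mathrm{diag}(t_0^2,1)$ in the same fashion. The diagonal entries $\coeffContinueMatrix{1}{t_0} - \brho_c^{-1} t_0^3$ and $\coeffContinueMatrix{3}{t_0} - \brho_c^{-1} t_0$ are nonnegative as soon as $\brho_c^{-1} \leq \underline{\varrho}_1 \sigma^2 \wedge \underline{\varrho}_3 \sigma^2$. The determinant condition, after applying the entrywise estimates, becomes
\begin{equation}
(\underline{\varrho}_1 \sigma^2 - \brho_c^{-1})(\underline{\varrho}_3 \sigma^2 - \brho_c^{-1}) \geq \bar{\varrho}_2^2 \sigma^4.
\end{equation}
As $\brho_c \to +\infty$, the left-hand side tends to $\underline{\varrho}_1 \underline{\varrho}_3 \sigma^4$, which exceeds $\bar{\varrho}_2^2 \sigma^4$ precisely because \Cref{lem:estimate_coeff_Sigma} furnishes the crucial inequality $\underline{\varrho}_1 \underline{\varrho}_3 - \bar{\varrho}_2^2 > 0$. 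This is where the slightly delicate piece of \Cref{lem:estimate_coeff_Sigma} is used; without it one could not simultaneously satisfy both inequalities by inflating $\brho_c$. Taking $\brho_c$ large enough to satisfy all four conditions above yields the claim.

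Finally, since the only dependence of $\continueMatrix{t_0}$ on the scheme parameters goes through $\kappa$ and $\sigma$ (via \Cref{ass:gminorc}), the constants $\brho_c$ and $\btZ$ can be chosen depending only on these and on $\bar{\gamma}$, as required. The main (minor) obstacle is simply keeping track of the correct orderings of the constants $\underline{\varrho}_i, \bar{\varrho}_i$ so that a single $\brho_c$ works for both the upper and lower inequalities; but this is guaranteed by the strict inequality $\underline{\varrho}_1 \underline{\varrho}_3 > \bar{\varrho}_2^2$.
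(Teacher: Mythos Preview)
Your proposal is correct and follows essentially the same approach as the paper's proof: both reduce to checking positivity of the diagonal entries and determinants of the two difference matrices via the entrywise bounds of \Cref{lem:estimate_coeff_Sigma}, and both hinge on the strict inequality $\underline{\varrho}_1\underline{\varrho}_3 > \bar{\varrho}_2^2$ for the lower bound. The only cosmetic difference is that the paper invokes Sylvester's criterion with strict inequalities (checking one diagonal entry and the determinant) while you check both diagonal entries with non-strict inequalities; the substance is identical.
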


\begin{proof}
  Introduce, for $t_0,\rho_c>0$,
  \begin{equation}
    \overline{\Bbf}^{(t_0,\rho_c)}=t_0\rho_c\begin{pmatrix}
    t_0^2 &0\\
    0 &  1\\
    \end{pmatrix}-\begin{pmatrix}
    \coeffContinueMatrix{1}{t_0} & \coeffContinueMatrix{2}{t_0}\\
    \coeffContinueMatrix{2}{t_0} & \coeffContinueMatrix{3}{t_0}\\
    \end{pmatrix} \eqsp ,
    \qquad
    \underline{\Bbf}^{(t_0,\rho_c)}=\begin{pmatrix}
    \coeffContinueMatrix{1}{t_0} & \coeffContinueMatrix{2}{t_0}\\
    \coeffContinueMatrix{2}{t_0} & \coeffContinueMatrix{3}{t_0}\\
    \end{pmatrix}-t_0\rho_c^{-1}\begin{pmatrix}
      t_0^2 &0\\
      0 &  1\\
    \end{pmatrix} \eqsp .
  \end{equation}
  The aim is to choose~$\bar{t}_0$ and~$\bar{\rho}_c$ such that $\overline{\Bbf}^{(t_0,\bar{\rho}_c)}$ and $\underline{\Bbf}^{(t_0,\bar{\rho}_c)}$ are both positive for any $t \in\ocint{0,\bar{t}_0}$. By Sylvester's criterion~\cite[Theorem 7.2.5]{horn:johnson:2012}, the result of the lemma is  implied by the following statement: there exist $\rho_c,\btZ>0$ such that, for any $t_0\in\ocintLigne{0,\btZ}$,
  \begin{equation}
    t_0\rho_c -\coeffContinueMatrix{3}{t_0}>0 \eqsp ,
    \qquad
    \coeffContinueMatrix{3}{t_0}-t_0\rho_c^{-1}>0 \eqsp ,
    \qquad
    \det\left(\overline{\Bbf}^{(t_0,\rho_c)}\right)>0 \eqsp ,
    \qquad
    \det\left(\underline{\Bbf}^{(t_0,\rho_c)}\right)>0 \eqsp .
  \end{equation}
  \sloppy The first two conditions are satisfied for the value of~$\btZ >0$ given by \Cref{lem:estimate_coeff_Sigma} and $\rho_c \geq \rho_c^{(1)} = \sigma^2 \max(\overline{\varrho}_3, \underline{\varrho}_3^{-1})$. Moreover, by \Cref{lem:estimate_coeff_Sigma}, there exist $\rho_c^{(2)} >0$ such that, for any $t_0\in \ocintLigne{0,\btZ}$,
  \begin{align}
    &\det\left(\overline{\Bbf}^{(t_0,\rho_c^{(2)})}\right) = \left(t_0^3\rho_c^{(2)}-\coeffContinueMatrix{1}{t_0}\right)\left(t_0\rho_c^{(2)}-\coeffContinueMatrix{3}{t_0}\right)-\left(\coeffContinueMatrix{2}{t_0}\right)^2\\
    & \geq \left(t_0^3\rho_c^{(2)}-\overline{\varrho}_1\sigma^2 t_0^3 \right)\left( t_0\rho_c^{(2)}-\overline{\varrho}_3\sigma^2 t_0 \right)-\overline{\varrho}_2^2\sigma^4 t_0^4 = t_0^4\parentheseDeux{\left(\rho_c^{(2)}-\overline{\varrho}_1\sigma^2\right)\left(\rho_c^{(2)}-\overline{\varrho}_3\sigma^2 \right)-\overline{\varrho}_2^2\sigma^4} >0 \eqsp.
  \end{align}
  Similar computations show that there exists $\rho_c^{(3)} >0$ for which $\det(\underline{\Bbf}^{(t_0,\rho_c^{(3)})})>0$ for any $t_0\in \ocintLigne{0,\btZ}$. This completes the proof upon setting $\brho_c=\max(\rho_c^{(1)},\rho_c^{(2)},\rho_c^{(3)})$. 
\end{proof}

\subsubsection{Properties of the final Gaussian noise~$\Gminorc{k+1}$}
\label{sec:ppties_Gaussian_noise}

We specify in this section the limit of the covariance of the noise~$\Gminorc{k+1}$ defined in \eqref{eq:def_G_minor} as~$\gamma \to 0$ and $k\gamma \approx t_0$. The following result, whose proof can be read in \supplementname-\Cref{sec:proof-lem:lemma_exp_gminorc}, gathers useful estimates which allow to compare the covariance of~$\Gminorc{k+1}$ to the covariance $\coeffContinueMatrix{}{t_0}$ of the underlying continuous process with~$b=0$ (see Lemma~\ref{lem:covariance_matrix_minorization_conditions_simple_gen} below).

\begin{lemma}
  \label{lem:lemma_exp_gminorc}
  Assume that~\Cref{ass:gminorc} holds. Then, for any $\gamma \in \ocint{0,\bgamma}$ and $\ell \geq 1$, $    \absLigne{\gminorc^\ell-\rme^{-\kappa\gamma \ell}}\leq C_{\kappa}\ell\gamma^2 $.
  Moreover,  for any  $\gamma \in \ocint{0,\bgamma}$,
  \begin{equation}
    \label{eq:lem:lemma_exp_gminorc_2}
|\gminorc - 1| \leq (\kappa+C_\kappa\gamma)\gamma \eqsp, \qquad    \left| \frac{\gamma}{1-\gminorc} - \frac1\kappa\right| \leq \left(\frac{2C_{\kappa}}{\kappa^2}+1\right) \gamma \eqsp .
  \end{equation}
\end{lemma}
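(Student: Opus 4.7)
The plan is to handle the three estimates in order, each following from elementary Taylor-type expansions combined with the bound $|\gminorc - \rme^{-\kappa\gamma}| \leq C_\kappa \gamma^2$ from \Cref{ass:gminorc}.

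For the first inequality, I would use the standard telescoping identity
\begin{equation}
\gminorc^\ell - \rme^{-\kappa\gamma\ell} = \sum_{j=0}^{\ell-1} \gminorc^{j}\, \rme^{-\kappa\gamma(\ell-1-j)} (\gminorc - \rme^{-\kappa\gamma}) \eqsp.
\end{equation}
Since \Cref{ass:gminorc} guarantees $\gminorc \in \ooint{0,1}$ and obviously $\rme^{-\kappa\gamma} \in \ooint{0,1}$, every factor $\gminorc^{j}\,\rme^{-\kappa\gamma(\ell-1-j)}$ is bounded by $1$. Summing the $\ell$ such terms and invoking $|\gminorc-\rme^{-\kappa\gamma}|\leq C_\kappa\gamma^2$ yields the stated bound $|\gminorc^\ell - \rme^{-\kappa\gamma\ell}|\leq C_\kappa\ell\gamma^2$.

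For the second inequality, I would simply write $|\gminorc - 1| \leq |\gminorc - \rme^{-\kappa\gamma}| + |\rme^{-\kappa\gamma} - 1|$ and bound each term, using $|1-\rme^{-\kappa\gamma}|\leq \kappa\gamma$ (which follows from $\rme^{-x}\geq 1-x$ for $x\geq 0$) and \Cref{ass:gminorc} for the other term.

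The third inequality is the main calculation. I would rearrange
\begin{equation}
\frac{\gamma}{1-\gminorc} - \frac{1}{\kappa} = \frac{(\gminorc - \rme^{-\kappa\gamma}) + (\rme^{-\kappa\gamma} - 1 + \kappa\gamma)}{\kappa(1-\gminorc)} \eqsp,
\end{equation}
and bound the numerator by $(C_\kappa + \kappa^2/2)\gamma^2$ using \Cref{ass:gminorc} and the Taylor estimate $|\rme^{-\kappa\gamma}-1+\kappa\gamma|\leq \kappa^2\gamma^2/2$. The crucial step is to produce the lower bound $1-\gminorc \geq \kappa\gamma/2$ on the denominator: starting from $\gminorc \leq \rme^{-\kappa\gamma} + C_\kappa\gamma^2 \leq 1 - \kappa\gamma + (\kappa^2/2 + C_\kappa)\gamma^2$, the restriction $\bgamma\leq (\kappa+2C_\kappa/\kappa)^{-1}$ imposed in \Cref{ass:gminorc} is exactly what ensures $(\kappa^2/2+C_\kappa)\gamma \leq \kappa/2$, and hence $1-\gminorc \geq \kappa\gamma/2$. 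Combining numerator and denominator bounds delivers the factor $(2C_\kappa/\kappa^2 + 1)\gamma$. The only subtle point is recognizing that the precise form of the bound $\bgamma\leq (\kappa+2C_\kappa/\kappa)^{-1}$ is calibrated for exactly this denominator estimate; everything else is a matter of careful bookkeeping.
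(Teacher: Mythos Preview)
Your proposal is correct and follows essentially the same route as the paper. The only cosmetic difference is in the first estimate: the paper phrases the factorization via the integral $b^\ell-a^\ell=\ell\int_a^b x^{\ell-1}\,\rmd x\leq(b-a)\ell b^{\ell-1}$ (with $\max(\gminorc,\rme^{-\kappa\gamma})\leq 1$), while you use the equivalent algebraic telescoping sum; the remaining two estimates, including the denominator bound $1-\gminorc\geq\kappa\gamma/2$ and the role of $\bgamma\leq(\kappa+2C_\kappa/\kappa)^{-1}$, are handled identically.
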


\begin{lemma}
  \label{lem:covariance_matrix_minorization_conditions_simple_gen}
  The random variable~$\Gminorc{k+1}$ defined in~\eqref{eq:def_G_minor} is a $2d$-dimensional Gaussian random variable with mean zero and covariance matrix
  \begin{equation}
    \label{eq:def_C_k}
    \cminorc{k+1} = \begin{pmatrix}
      \cminorc{k+1}_1 &\cminorc{k+1}_2\\
      \cminorc{k+1}_2 &  \cminorc{k+1}_3\\
    \end{pmatrix} \otimes \Idd  \eqsp,
  \end{equation}
  with 
  \begin{equation}
    \label{eq:def_cminor_gen}
  \left\{ \begin{aligned}
    \cminorc{k+1}_1 & = \gamma\norm{\gminorbfc{k+1}_1}^2 = \frac{\gamma^2}{(1-\gminorc)^{2}} \frac{1}{1+\gminorc}\parentheseDeux{(1+\gminorc)k\gamma- \frac{\gminorc(2+\gminorc)-2\gminorc(1+\gminorc)\gminorc^k + \gminorc^{2(k+1)}}{(1-\gminorc)/\gamma}}, \\
  \label{eq:def_cminor_gen_2}
  \cminorc{k+1}_2 & = \gamma\ps{\gminorbfc{k+1}_1}{\gminorbfc{k+1}_2}= \frac{\gamma^2}{(1-\gminorc)^{2}} \frac{\gminorc}{1+\gminorc}\parentheseDeux{1-(1+\gminorc)\gminorc^k + \gminorc^{2k+1}},\\
  \cminorc{k+1}_3 & = \gamma\norm{\gminorbfc{k+1}_2}^2 = \frac{\gamma}{1-\gminorc} \frac{1}{1+\gminorc}\left( 1-\gminorc^{2(k+1)}\right) \eqsp ,
  \end{aligned} \right. 
\end{equation}
where $\gminorbfc{k+1}_1,\gminorbfc{k+1}_2$ are defined in~\eqref{eq:def_gminorbfc}.
  In addition, when~\Cref{ass:gminorc} holds, there exists for any $\btZ>0$ a constant~$C_{\btZ} \geq 0$ such that, for any $i \in \{1,2,3\}$ and $\gamma \in \ocint{0,\bgamma}$,
  \begin{equation}
    \label{eq:convergence_to_continue_matrix}
    \sup_{t_0\in \ocintLigne{0,\btZ}}\abs{ \cminorc{\lfloor t_0/\gamma \rfloor+1}_i- \frac{\coeffContinueMatrix{i}{t_0}}{\sigma^2}} \leq C_{\btZ} \gamma \eqsp.
  \end{equation}
\end{lemma}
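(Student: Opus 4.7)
}

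First, from the defining expressions in~\eqref{eq:def_Gminor_gen}, each component of $\Gminorc{k+1}$ is a finite linear combination of the independent standard Gaussian vectors $Z_{i+1}$, so $\Gminorc{k+1}$ is itself a centered $2d$-dimensional Gaussian vector. The covariance matrix is block-diagonal of the form $\cminor^{(k+1)}\otimes\Idd$, where the scalar $2\times 2$ matrix $\cminor^{(k+1)}$ has entries obtained by applying $\mathbb{E}[Z_iZ_j^{\transpose}]=\delta_{ij}\Idd$ to the sums in~\eqref{eq:def_Gminor_gen}. Explicitly,
\begin{equation*}
\cminorc{k+1}_1=\gamma\normLigne{\gminorbfc{k+1}_1}^2 \eqsp,\qquad
\cminorc{k+1}_2=\gamma\psLigne{\gminorbfc{k+1}_1}{\gminorbfc{k+1}_2} \eqsp,\qquad
\cminorc{k+1}_3=\gamma\normLigne{\gminorbfc{k+1}_2}^2 \eqsp.
\end{equation*}
The closed-form expressions in~\eqref{eq:def_cminor_gen} then follow from the formulas~\eqref{eq:def_gminorbfc}: after the substitution $j=k-i$, the three sums $\sum_{j=0}^{k}\gminorc^{2j}$, $\sum_{j=1}^{k}(1-\gminorc^{j})\gminorc^{j}$ and $\sum_{j=1}^{k}(1-\gminorc^{j})^2$ reduce to geometric series, which we factor using $1-\gminorc^{2}=(1-\gminorc)(1+\gminorc)$. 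The only task here is algebraic verification.

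For the convergence estimate~\eqref{eq:convergence_to_continue_matrix}, I fix $\btZ \in \ocint{0,\bgamma}$ and $t_0\in\ocint{0,\btZ}$, write $k=\floor{t_0/\gamma}$, so $|k\gamma-t_0|\leq\gamma$ and $k\leq\btZ/\gamma$. The plan is to compare term by term with the continuous formulas~\eqref{eq:first_coeff_Sigma}--\eqref{eq:third_coeff_Sigma} using the three estimates from \Cref{lem:lemma_exp_gminorc}:
\begin{equation*}
|\gminorc^\ell-\rme^{-\kappa\gamma\ell}|\leq C_\kappa\ell\gamma^2 \eqsp,\qquad
|\gminorc-1|\leq(\kappa+C_\kappa\gamma)\gamma \eqsp,\qquad
\left|\frac{\gamma}{1-\gminorc}-\frac{1}{\kappa}\right|\leq \tilde{C}_\kappa\gamma \eqsp.
\end{equation*}
Specifically, for $\cminorc{k+1}_3$ I rewrite $\gamma/[(1-\gminorc)(1+\gminorc)]$ as $(\gamma/(1-\gminorc))\cdot(1/(1+\gminorc))$ and $1-\gminorc^{2(k+1)}$ as $1-\rme^{-2\kappa t_0} + R_1$ with $|R_1|\leq C_\kappa \cdot 2(k+1)\gamma^2 + |\rme^{-2\kappa\gamma(k+1)}-\rme^{-2\kappa t_0}|\leq C\gamma$ since the exponential is Lipschitz on bounded intervals and $\gamma(k+1)\in[t_0,t_0+\gamma]$. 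Multiplying the three factors and using that each is $O(1)$ with $O(\gamma)$ error yields $|\cminorc{k+1}_3-\coeffContinueMatrix{3}{t_0}/\sigma^2|\leq C_{\btZ}\gamma$. The same strategy applies to $\cminorc{k+1}_2$, where $(1+\gminorc)\gminorc^k\to 2\rme^{-\kappa t_0}$ and $\gminorc^{2k+1}\to\rme^{-2\kappa t_0}$ with $O(\gamma)$ remainders, and the prefactor $\gamma^2\gminorc/[(1-\gminorc)^2(1+\gminorc)]$ converges to $1/(2\kappa^2)$ with $O(\gamma)$ error.

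The main obstacle is $\cminorc{k+1}_1$, whose expression contains the singular-looking prefactor $\gamma^2/(1-\gminorc)^2$ multiplied by a bracket in which the leading term is $(1+\gminorc)k\gamma$; the second term of the bracket itself features a division by $(1-\gminorc)/\gamma$. To handle this I will expand
\begin{equation*}
(1+\gminorc)k\gamma - \frac{\gamma\left[\gminorc(2+\gminorc)-2\gminorc(1+\gminorc)\gminorc^k+\gminorc^{2(k+1)}\right]}{1-\gminorc}
\end{equation*}
as a combination of quantities each of which is a bounded perturbation of its continuous analogue: the first term approaches $2t_0$ (with $O(\gamma)$ error), and the factor $\gamma/(1-\gminorc)$ in the second term contributes $1/\kappa+O(\gamma)$, while the numerator in the bracket converges to $3-4\rme^{-\kappa t_0}+\rme^{-2\kappa t_0}$, again with $O(\gamma)$ error by the first bound of \Cref{lem:lemma_exp_gminorc} (since $\ell\gamma^2\leq(2k+2)\gamma^2\leq 2(\btZ+\bgamma)\gamma$). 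Multiplying by the outer factor $\gamma^2/[(1-\gminorc)^2(1+\gminorc)]$, which converges to $1/(2\kappa^2)$ with $O(\gamma)$ error, and collecting remainders gives $|\cminorc{k+1}_1-\coeffContinueMatrix{1}{t_0}/\sigma^2|\leq C_{\btZ}\gamma$. All estimates are uniform in $t_0\in\ocint{0,\btZ}$, since each constant depends only on $\kappa,C_\kappa,\btZ,\bgamma$.
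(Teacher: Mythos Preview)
Your proposal is correct and follows essentially the same approach as the paper: compute the covariance entries as $\gamma$ times the relevant inner products of $\gminorbfc{k+1}_1,\gminorbfc{k+1}_2$ (then simplify the geometric sums), and for~\eqref{eq:convergence_to_continue_matrix} combine the estimates of \Cref{lem:lemma_exp_gminorc} with the explicit formulas~\eqref{eq:first_coeff_Sigma}--\eqref{eq:third_coeff_Sigma}. One harmless slip: you restrict to $\btZ\in\ocint{0,\bgamma}$, but the statement allows any $\btZ>0$ and nothing in your argument uses $\btZ\leq\bgamma$ --- just drop that restriction.
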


\begin{proof}
  Note that since $(Z_i)_{i \in \{1,\ldots,k+1\}}$ are \iid~zero-mean Gaussian random variables and $\Gminorc{k+1}_1$ and $\Gminorc{k+1}_2$ are linear combination of elements of this family, $\Gminorc{k+1}$ is a $2d$-dimensional zero-mean Gaussian random variable with covariance matrix
  \begin{equation}
    \begin{pmatrix}
      \rmCov\left(\Gminorc{k+1}_1\right) &\rmCov\left(\Gminorc{k+1}_1,\Gminorc{k+1}_2\right)\\
      \rmCov\left(\Gminorc{k+1}_1,\Gminorc{k+1}_2\right) & \rmCov\left(\Gminorc{k+1}_2\right)
    \end{pmatrix} \eqsp.
  \end{equation}
  By \eqref{eq:def_Gminor_gen} and \eqref{eq:def_gminorbfc}, straightforward computations give
  \begin{align}
    \rmCov\left(\Gminorc{k+1}_1\right) &= \frac{\gamma^3}{(1-\gminorc)^{2}} \sum_{i=0}^{k-1}  \parenthese{1-\gminorc^{k-i}}^2 \Idd = \frac{\gamma^3}{(1-\gminorc)^{2}}\left[k-2\gminorc\frac{1-\gminorc^{k}}{1-\gminorc}+ \gminorc^2\frac{1-\gminorc^{2k}}{1-\gminorc^2}\right] \Idd \eqsp,\\
    \rmCov\left(\Gminorc{k+1}_1,\Gminorc{k+1}_2\right) &= \frac{\gamma^2}{1-\gminorc}\sum_{i=0}^{k-1} \left[ \parenthese{1-\gminorc^{k-i}} \gminorc^{k-i}\right]\Idd = \frac{\gamma^2}{1-\gminorc}\left[\gminorc\frac{1-\gminorc^{k}}{1-\gminorc}- \gminorc^2\frac{1-\gminorc^{2k}}{1-\gminorc^2}\right]\Idd\eqsp,\\
    \rmCov\left(\Gminorc{k+1}_2\right) &= \gamma \sum_{i=0}^{k}  \gminorc^{2(k-i)} \Idd = \frac{\gamma}{1-\gminorc^2}\left( 1-\gminorc^{2(k+1)} \right) \Idd \eqsp,
  \end{align}
  from which we obtain the expressions of the coefficients~$\cminorc{k+1}_i$ for $1 \leq i \leq 3$ after some reorganization allowing to compare the resulting expressions more easily with~\eqref{eq:first_coeff_Sigma}-\eqref{eq:third_coeff_Sigma} (upon replacing~$\gamma$ by~$k\gamma$ in the latter equations).

  We finally show \eqref{eq:convergence_to_continue_matrix}. By \Cref{lem:lemma_exp_gminorc}, we have, for any~$\btZ>0$ and~$\gamma\in\ocint{0,\bgamma}$, 
  \begin{align}
    \begin{aligned}
      \label{eq:calc_limit_exp_gminor}
      \sup_{t_0\in \ocintLigne{0,\btZ}}\abs{\gminorc^{\lfloor t_0/\gamma \rfloor+1}-\rme^{-\kappa t_0}}&\leq C_{\kappa} (\btZ+1) \gamma \eqsp ,
      \qquad
      \sup_{t_0\in \ocintLigne{0,\btZ}}\abs{\gminorc^{2\lfloor t_0/\gamma \rfloor+1}-\rme^{-2\kappa t_0}}&\leq 2C_{\kappa}(\btZ+1) \gamma \eqsp .
    \end{aligned}
  \end{align}
  The inequality~\eqref{eq:convergence_to_continue_matrix} is then a simple consequence of the definition~\eqref{eq:def_continue_matrix} of $\continueMatrix{t_0}$,\eqref{eq:def_cminor_gen} and the estimates~\eqref{eq:lem:lemma_exp_gminorc_2} in \Cref{lem:lemma_exp_gminorc}.
\end{proof}

\subsubsection{Decomposition into final noise and independent increments}
\label{sec:increment_decomposition}

After analyzing the covariance of $\Gminorc{k+1}$ defined by  \eqref{eq:def_G_minor} in the limit $\gamma \to 0$ and $k\gamma \approx t_0$, we aim at providing the conditional distribution of~$(X_{k+1},V_{k+1})$ given~$\Gminorc{k+1}$. This will be a crucial step to use a perturbation argument similar to the one presented for overdamped Langevin dynamics in \Cref{sec:idea_ULA}. To this end, we introduce in this section the linear transformation $\tZbfc{k+1}$ of $\Zbfc{k}$ which plays the same role as \eqref{def_tZc_ULA}, \ie, $\tZbfc{k+1}$ is independent of~$\Gminorc{k+1}$ and its components are \iid~standard Gaussian random variables. For underdamped Langevin dynamics, we introduce for $k \geq 2$ and $\gamma \in\ocint{0,\bgamma}$ the vectors~$\alphaminorbf(k+1,\gamma) = (\alphaminor_1(k+1,\gamma),\dots,\alphaminor_{k+1}(k+1,\gamma))$ and $\betaminorbf(k+1,\gamma) = (\betaminor_1(k+1,\gamma),\dots,\betaminor_{k+1}(k+1,\gamma))$, and define, for any $i \in \{1,\dots,k-1\}$,
\begin{equation}
  \label{eq:def_tZ_gen}
  \tZc{k+1}_i  =  Z_i -  \sqrt{\gamma} \beta_i(k,\gamma)\Gminorc{k+1}_2 - \sqrt{\gamma} \alpha_i(k,\gamma)\Gminorc{k+1}_1.
\end{equation}
The components~$k$ and~$k+1$ of the vectors~$\alphaminorbf(k+1,\gamma),\betaminorbf(k+1,\gamma)$ are not needed at this stage, but they will turn out to be useful later on. As specified below in \Cref{lem:minor_alpha_beta_gen}, the scaling factor~$\sqrt{\gamma}$ in front of the coefficients~$\alpha_i(k,\gamma),\beta_i(k,\gamma)$ ensures that the latter coefficients are of order~1 at most as $\gamma \to 0$ and $k\gamma \approx t_0$.

\begin{remark}
  In order to further motivate the chosen scalings in~\eqref{eq:def_tZ_gen}, note that the linear transform~\eqref{def_tZc_ULA} for overdamped Langevin dynamics can be rewritten as
  \[
  \tZc{k+1}_i = Z_i - \sqrt{\gamma} \frac{G^{(k+1)}}{(k+1)\gamma} \eqsp, \qquad i \in \iint{1}{k} \eqsp.
  \]
  The counterpart of the coefficients~$\alphaminor_i(k+1,\gamma),\betaminor_i(k+1,\gamma)$ in this context is simply~$1/[(k+1)\gamma]$, which is indeed of order~1 when~$k\gamma$ is of order~1.
\end{remark}

The choice of $\alphaminorbf(k+1,\gamma), \betaminorbf(k+1,\gamma)$ corresponds to the orthogonal projection of the Gaussian variables~$\{Z_{i+1}\}_{i=0}^{k-2}$ onto the the orthogonal of the vector space spanned by~$\gminorbfc{k+1}_1$ and~$\gminorbfc{k+1}_2$. This will ensure that the covariance of these projected vectors and~$\Gminorc{k+1}$ vanishes, and hence that the projected vectors $\tZbfc{k+1} = (\tZc{k+1}_1,\dots,\tZc{k+1}_{k-1})$ are independent of~$\Gminorc{k+1}$.

We start by constructing the orthogonal projector onto $\mathrm{Span}(\gminorbfc{k+1}_1,\gminorbfc{k+1}_2)$. We need to restrict the discussion to iteration indices and time steps in the set
\begin{equation}
  \label{eq:def_msec}
  \msec=\defEns{(k,\gamma)\in \nset^*\times (0,+\infty) \, \middle| \, \cminorc{k+1}_1\cminorc{k+1}_3-\left(\cminorc{k+1}_2\right)^2 \neq 0}\eqsp ,
\end{equation}
where the coefficients $(\cminorc{k+1}_i)_{1 \leq i \leq 3}$ are defined in \Cref{lem:covariance_matrix_minorization_conditions_simple_gen}. Note that the condition to be satisfied in~\eqref{eq:def_msec} is in fact that the determinant of the matrix~$\cminorc{k+1}$ in~\eqref{eq:def_C_k} is positive (it is always nonnegative). This condition is not restrictive since, by \Cref{lem:estimate_coeff_Sigma} and \eqref{eq:convergence_to_continue_matrix}, there exists $\btZ>0$ such that, for any $t_0\in\ocint{0,\btZ}$, there is $\bar{\gamma}_{t_0}>0$ for which $(\lfloor t_0/\gamma\rfloor,\gamma)\in \msec$ for any $\gamma\in \ocint{0,\bar{\gamma}_{t_0}}$.

\begin{lemma}
  \label{lem:minor_orthogonal_projection_gen}
  For any $(k,\gamma)\in \msec$, and $j\in \{1,..,k+1\}$, consider $\betaminor_j(k+1,\gamma), \alphaminor_j(k+1,\gamma) \in \rset$ the unique solution of 
  \begin{equation}
    \label{eq:defbetaminor_alpha_minor_j_gen}
    \begin{pmatrix}
      \cminorc{k+1}_1  &\cminorc{k+1}_2\\
      \cminorc{k+1}_2 &  \cminorc{k+1}_3
    \end{pmatrix}
    \begin{pmatrix}
      \alphaminor_j(k+1,\gamma) \\
      \betaminor_j(k+1,\gamma) 
    \end{pmatrix} = \begin{pmatrix}
      \gminorbfc{k+1}_{1,j}\\
      \gminorbfc{k+1}_{2,j}
    \end{pmatrix} \eqsp,
  \end{equation}
  and define 
  \begin{equation}
    \label{eq:def_Pminor_gen}
    \Pminorc{k} = \operatorname{I}_{k+1} - \gamma \betaminorbf(k+1,\gamma) \left[\gminorbfc{k+1}_2\right]^{\transpose} - \gamma \alphaminorbf(k+1,\gamma) \left[\gminorbfc{k+1}_1 \right]^{\transpose} \in \rset^{(k+1) \times (k+1)}\eqsp. 
  \end{equation}
  Then, $\Pminorc{k}$ is the orthogonal projection onto $\rmSpan\left(\gminorbfc{k+1}_1,\gminorbfc{k+1}_2\right)^{\perp}$.
\end{lemma}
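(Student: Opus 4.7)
The strategy is to recognize the right-hand side of the definition \eqref{eq:def_Pminor_gen} as nothing other than the classical formula $I - M(M^\transp M)^{-1}M^\transp$ for the orthogonal projection onto $\mathrm{Ran}(M)^\perp$, for an appropriate matrix $M$ built from $\gminorbfc{k+1}_1$ and $\gminorbfc{k+1}_2$.

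First, I would introduce the matrix $M_{k+1} = \begin{pmatrix} \gminorbfc{k+1}_1 & \gminorbfc{k+1}_2 \end{pmatrix} \in \rset^{(k+1)\times 2}$ whose two columns are the vectors defined in \eqref{eq:def_gminorbfc}. From the expressions of $\cminorc{k+1}_1,\cminorc{k+1}_2,\cminorc{k+1}_3$ in \Cref{lem:covariance_matrix_minorization_conditions_simple_gen}, the key algebraic identity
\begin{equation}
\begin{pmatrix} \cminorc{k+1}_1 & \cminorc{k+1}_2 \\ \cminorc{k+1}_2 & \cminorc{k+1}_3 \end{pmatrix} = \gamma\, M_{k+1}^{\transp} M_{k+1}
\end{equation}
holds by direct identification. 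The assumption $(k,\gamma)\in \msec$ therefore states precisely that the $2\times 2$ Gram matrix $M_{k+1}^{\transp} M_{k+1}$ has nonzero determinant, so $\gminorbfc{k+1}_1$ and $\gminorbfc{k+1}_2$ are linearly independent and existence/uniqueness of $(\alphaminor_j(k+1,\gamma),\betaminor_j(k+1,\gamma))$ in \eqref{eq:defbetaminor_alpha_minor_j_gen} is ensured.

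Second, I would assemble the scalar system \eqref{eq:defbetaminor_alpha_minor_j_gen} over $j\in\{1,\dots,k+1\}$ into one matrix identity. Setting $\Abf = \begin{pmatrix} \alphaminorbf(k+1,\gamma) & \betaminorbf(k+1,\gamma) \end{pmatrix} \in \rset^{(k+1)\times 2}$, the $k+1$ systems in \eqref{eq:defbetaminor_alpha_minor_j_gen} read collectively
\begin{equation}
\gamma\, (M_{k+1}^{\transp} M_{k+1})\, \Abf^{\transp} = M_{k+1}^{\transp},
\qquad \text{hence}\qquad \Abf = \gamma^{-1} M_{k+1} (M_{k+1}^{\transp} M_{k+1})^{-1}.
\end{equation}

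Third, substituting into \eqref{eq:def_Pminor_gen} and noticing that $\gamma\, \alphaminorbf(k+1,\gamma)(\gminorbfc{k+1}_1)^{\transp} + \gamma\, \betaminorbf(k+1,\gamma)(\gminorbfc{k+1}_2)^{\transp} = \gamma \Abf M_{k+1}^{\transp}$ yields
\begin{equation}
\Pminorc{k} = \operatorname{I}_{k+1} - M_{k+1}(M_{k+1}^{\transp} M_{k+1})^{-1} M_{k+1}^{\transp}.
\end{equation}
The right-hand side is the standard expression for $\operatorname{I}_{k+1}$ minus the orthogonal projection onto $\mathrm{Ran}(M_{k+1}) = \rmSpan(\gminorbfc{k+1}_1, \gminorbfc{k+1}_2)$, which is exactly the orthogonal projection onto $\rmSpan(\gminorbfc{k+1}_1, \gminorbfc{k+1}_2)^{\perp}$, concluding the proof. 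No step here is a real obstacle: everything reduces to identifying $\cminorc{k+1}/\gamma$ as the Gram matrix of the two columns of $M_{k+1}$, which is essentially a bookkeeping task that was already done in \Cref{lem:covariance_matrix_minorization_conditions_simple_gen}.
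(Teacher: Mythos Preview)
Your proof is correct. The paper's argument is essentially the same in spirit but is packaged as a direct verification rather than an identification: it simply checks that $\Pminorc{k}w=w$ for $w\in\rmSpan(\gminorbfc{k+1}_1,\gminorbfc{k+1}_2)^\perp$ (immediate from the definition) and that $\Pminorc{k}\gminorbfc{k+1}_1=\Pminorc{k}\gminorbfc{k+1}_2=0$ (a short computation using \eqref{eq:defbetaminor_alpha_minor_j_gen} and the identities $\cminorc{k+1}_i=\gamma\langle\gminorbfc{k+1}_\ell,\gminorbfc{k+1}_m\rangle$). Your route is slightly more conceptual in that it recognizes \eqref{eq:def_Pminor_gen} as the textbook formula $I-M(M^\transp M)^{-1}M^\transp$ in one stroke, which makes symmetry and idempotence of $\Pminorc{k}$ transparent without any further computation; the paper's hands-on verification is equally short but self-contained, not invoking the projection formula explicitly.
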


\begin{proof}
  We compute the action of~$\Pminorc{k}$ on the vector space generated by~$\gminorbfc{k+1}_1$ and~$\gminorbfc{k+1}_2$ and its orthogonal.
  Note first that the definition~\eqref{eq:def_Pminor_gen} ensures that $\Pminorc{k}w = w$ for all $w \in \rmSpan\left(\gminorbfc{k+1}_1,\gminorbfc{k+1}_2\right)^{\perp}$. Moreover, in view of \Cref{lem:covariance_matrix_minorization_conditions_simple_gen} and by the definition~\eqref{eq:defbetaminor_alpha_minor_j_gen} of the coefficients of~$\alphaminorbf(k+1,\gamma),\betaminorbf(k+1,\gamma)$, 
\begin{equation}
\gamma \norm[2]{\gminorbfc{k+1}_1}\alphaminorbf + \gamma \ps{\gminorbfc{k+1}_1}{\gminorbfc{k+1}_2}\betaminorbf=\gminorbfc{k+1}_1\eqsp , \qquad \gamma\ps{\gminorbfc{k+1}_1}{\gminorbfc{k+1}_2}\alphaminorbf+\gamma\norm[2]{\gminorbfc{k+1}_2}\betaminorbf=\gminorbfc{k+1}_2 \eqsp ,
\end{equation}
which implies that $\Pminorc{k}\gminorbfc{k+1}_1 = \Pminorc{k}\gminorbfc{k+1}_2 = 0$, and finally $\Pminorc{k}w = 0$ for~$w \in \rmSpan\left(\gminorbfc{k+1}_1,\gminorbfc{k+1}_2\right)$.
\end{proof}

We are now in position to specify the law of~$\tZbfc{k+1} = \left(\tZc{k+1}_1,\dots,\tZc{k+1}_{k-1}\right)$.

\begin{lemma}
  \label{lem:indep_z_tilde_gen}
 Let~$(k,\gamma) \in \msec$. Then, $\tZbfc{k+1}$ defined by~\eqref{eq:def_tZ_gen} is a $d \times (k-1)$-dimensional zero-mean Gaussian random variable with positive definite covariance matrix $\vartZc{k} \left[\vartZc{k}\right]^{\transpose}$, where
  \begin{equation}
    \label{eq:def_vartZc_gen}
    \vartZc{k} =  \parentheseDeux{ \begin{pmatrix}
        \operatorname{I}_{k-1} & \bfZero_{k-1,2} 
      \end{pmatrix} \Pminorc{k}} \otimes \Idd \in \rset^{(k-1)d \times (k+1)d}\eqsp.
  \end{equation}
In addition, $\tZbfc{k+1}$ is independent of $\Gminorc{k+1}$ defined in~\eqref{eq:def_Gminor_gen}.
\end{lemma}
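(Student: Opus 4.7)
My plan is to view $\tZbfc{k+1}$ as a linear transform of the standard Gaussian vector $\Zbfc{k}=(Z_1^\transpose,\dots,Z_{k+1}^\transpose)^\transpose \in \rset^{(k+1)d}$ and then leverage \Cref{lem:minor_orthogonal_projection_gen} both for the covariance computation and for the independence. Substituting the expressions~\eqref{eq:def_Gminor_gen} of $\Gminorc{k+1}_1,\Gminorc{k+1}_2$ into~\eqref{eq:def_tZ_gen}, one obtains for $i\in\iint{1}{k-1}$ that
\begin{equation}
\tZc{k+1}_i = Z_i - \gamma \alphaminor_i(k+1,\gamma) \sum_{j=1}^{k+1}\gminorbfc{k+1}_{1,j} Z_j - \gamma \betaminor_i(k+1,\gamma) \sum_{j=1}^{k+1}\gminorbfc{k+1}_{2,j} Z_j = \sum_{j=1}^{k+1}(\Pminorc{k})_{ij} Z_j,
\end{equation}
where I used $\gminorbfc{k+1}_{1,k+1}=0$. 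Hence $\tZbfc{k+1} = \vartZc{k}\Zbfc{k}$ with $\vartZc{k}$ given by~\eqref{eq:def_vartZc_gen}, so $\tZbfc{k+1}$ is a centered Gaussian with covariance $\vartZc{k}[\vartZc{k}]^\transpose$.

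For the positive definiteness, it suffices to show that the $(k-1)\times(k+1)$ matrix $\begin{pmatrix}\operatorname{I}_{k-1} & \bfZero_{k-1,2}\end{pmatrix}\Pminorc{k}$ has full row rank $k-1$. By \Cref{lem:minor_orthogonal_projection_gen}, $\Pminorc{k}$ is the orthogonal projector onto the $(k-1)$-dimensional space $\rmSpan(\gminorbfc{k+1}_1,\gminorbfc{k+1}_2)^{\perp}$ (the span being $2$-dimensional thanks to $(k,\gamma)\in\msec$ and \Cref{lem:covariance_matrix_minorization_conditions_simple_gen}). It is therefore enough to verify that the linear map from this $(k-1)$-dimensional subspace to $\rset^{k-1}$ sending $y$ to $(y_1,\dots,y_{k-1})$ is injective: any $y=(0,\dots,0,y_k,y_{k+1})$ lying in $\rmSpan(\gminorbfc{k+1}_1,\gminorbfc{k+1}_2)^{\perp}$ must satisfy the $2\times 2$ linear system with matrix $\bigl(\begin{smallmatrix}\gminorbfc{k+1}_{1,k}&\gminorbfc{k+1}_{1,k+1}\\ \gminorbfc{k+1}_{2,k}&\gminorbfc{k+1}_{2,k+1}\end{smallmatrix}\bigr)=\bigl(\begin{smallmatrix}\gamma &0\\ \gminorc & 1\end{smallmatrix}\bigr)$, whose determinant equals $\gamma>0$. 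Therefore $y_k=y_{k+1}=0$, and the map is injective, hence bijective.

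For the independence claim, note that $(\tZbfc{k+1},\Gminorc{k+1})$ is jointly Gaussian as a linear function of $\Zbfc{k}$, so it suffices to show that their cross-covariance vanishes. For $i\in\iint{1}{k-1}$ and $\ell\in\{1,2\}$,
\begin{equation}
\mathrm{Cov}\bigl(\tZc{k+1}_i,\Gminorc{k+1}_\ell\bigr) = \sqrt{\gamma}\sum_{j=1}^{k+1}(\Pminorc{k})_{ij}\gminorbfc{k+1}_{\ell,j}\,\Idd = \sqrt{\gamma}\bigl(\Pminorc{k}\gminorbfc{k+1}_\ell\bigr)_i\Idd = 0,
\end{equation}
where the last equality uses $\Pminorc{k}\gminorbfc{k+1}_\ell=0$ from \Cref{lem:minor_orthogonal_projection_gen}. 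This concludes the proof. Most of the work is already packaged in \Cref{lem:minor_orthogonal_projection_gen}; the only place that requires care is the full-rank argument, since the construction throws away two coordinates of $\Pminorc{k} \Zbfc{k}$, and one must check that this truncation does not reduce the rank, which is what the determinant computation above achieves.
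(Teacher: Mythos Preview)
Your proof is correct and follows essentially the same strategy as the paper: express $\tZbfc{k+1}$ as a linear image of $\Zbfc{k}$, use $\Pminorc{k}\gminorbfc{k+1}_\ell=0$ for independence, and reduce positive definiteness to the nonvanishing of the $2\times2$ determinant $\det\bigl(\begin{smallmatrix}\gminorbfc{k+1}_{1,k}&\gminorbfc{k+1}_{1,k+1}\\ \gminorbfc{k+1}_{2,k}&\gminorbfc{k+1}_{2,k+1}\end{smallmatrix}\bigr)=\gamma$. The only difference is in how that reduction is packaged: the paper builds the full $(k+1)\times(k+1)$ matrix $\mathbf{A}$ sending $\Zbfc{k}$ to $(\tZbfc{k+1},\Gminorc{k+1})$ and shows $\det(\mathbf{A})>0$ by Gaussian elimination (so the joint covariance is positive definite, and in particular the marginal of $\tZbfc{k+1}$ is), whereas you work directly with the $(k-1)\times(k+1)$ matrix $\vartZc{k}$ and establish full row rank via the injectivity of the coordinate truncation on $\mathrm{Ran}(\Pminorc{k})$. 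Your route is slightly more geometric and proves exactly what is claimed; the paper's route gives the extra information that the full joint covariance is nondegenerate.
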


\begin{proof}
  It is easy to see that $\widetilde{\mathbf{G}}^{(k)} = (\tZbfc{k+1},\Gminorc{k+1}) = \mathbf{A}\Zbfc{k}$ with
  \begin{equation}
    \mathbf{A} =  \parentheseDeux{ \begin{pmatrix}
      \begin{matrix}
        \operatorname{I}_{k-1} & \bfZero_{k-1,2} 
      \end{matrix} \\
            \begin{matrix}
        \bfZero_{2,k-1}& \bfZero_{2,2} 
      \end{matrix}
    \end{pmatrix}
\Pminorc{k} + \sqrt{\gamma}
\begin{pmatrix}
  \bfZero_{k-1,k+1}    \\
  \left[\gminorbfc{k+1}_1\right]^{\transpose}\\
    \left[\gminorbfc{k+1}_2\right]^{\transpose}
\end{pmatrix}
}   \otimes \Idd  \eqsp.
  \end{equation}
  The matrix $\mathbf{A}$ is invertible by Gaussian elimination in view of the definition~\eqref{eq:def_Pminor_gen} of~$\Pminorc{k}$ and the ones \eqref{eq:def_gminorbfc} of $\gminorbfc{k+1}_1,\gminorbfc{k+1}_2$, and 
  \begin{equation}
  \begin{aligned}
    \det(\mathbf{A})^{1/d} & =  \det\parentheseDeux{     \begin{pmatrix}
        \begin{matrix}
          \operatorname{I}_{k-1} & \bfZero_{k-1,2} 
      \end{matrix} \\
        \begin{matrix}
          \bfZero_{2,k-1}& \bfZero_{2,2} 
        \end{matrix}
      \end{pmatrix}\Pminorc{k} + \sqrt{\gamma}
      \begin{pmatrix}
        \bfZero_{k-1,k+1}    \\
               \left[\gminorbfc{k+1}_1\right]^{\transpose}\\
               \left[\gminorbfc{k+1}_2\right]^{\transpose}
      \end{pmatrix}
    } = \det     \begin{pmatrix}
      \begin{matrix}
        \operatorname{I}_{k-1} & \bfZero_{k-1,2} 
      \end{matrix} \\
      \sqrt{\gamma}\left[\gminorbfc{k+1}_1\right]^{\transpose}\\
      \sqrt{\gamma}\left[\gminorbfc{k+1}_2\right]^{\transpose}
    \end{pmatrix} \\
    & = \gamma \det\begin{pmatrix}\gminorbfc{k+1}_{1,k} & \gminorbfc{k+1}_{1,k+1} \\ \gminorbfc{k+1}_{2,k} & \gminorbfc{k+1}_{2,k+1} \end{pmatrix}  >0 \eqsp. 
  \end{aligned}
  \end{equation}
  Therefore, $\widetilde{\mathbf{G}}^{(k)}$ is a zero-mean Gaussian random variable with positive definite covariance matrix~$\mathbf{A}\mathbf{A}^{\transpose}$. As a result, $\tZbfc{k+1}$ defined by \eqref{eq:def_tZ_gen} is a $d \times (k-1)$-dimensional zero-mean Gaussian random variable with positive definite covariance matrix $\vartZc{k} [\vartZc{k}]^{\transpose}$ with $\vartZc{k}$ given by~\eqref{eq:def_vartZc_gen}. Finally, since $\Pminorc{k}\gminorbfc{k+1}_1 = \Pminorc{k}\gminorbfc{k+1}_2 = 0$ in view of~\eqref{eq:def_Pminor_gen}, it follows that for $i \in\{1,\ldots,k-1\}$,  $  \rmCov(\tZ_i^{(k)},\Gminor_1^{(k)})=0$, $ \rmCov(\tZ_i^{(k)},\Gminor_2^{(k)}) = 0$
  and so $\tZbfc{k+1}$ is independent of~$\Gminorc{k+1}$ because all random variables at hand are Gaussian. 
\end{proof}

We conclude this section by providing some bounds of the coefficients of~$\alphaminorbf(k+1,\gamma),\betaminorbf(k+1,\gamma)$ defined in~\eqref{eq:defbetaminor_alpha_minor_j_gen}. We denote by $\Vert\xi\Vert_\infty = \max(|\xi_1|,\dots,|\xi_{k}|)$ the $\ell^\infty$ norm of a vector~$\xi=(\xi_1,\dots,\xi_{k}) \in \rset^{k}$.

\begin{lemma}
  \label{lem:minor_alpha_beta_gen}
  Assume that \Cref{ass:gminorc} holds. Then, there exists $\tbound>0$ and~$K \in \mathbb{R}_+$ such that, for any $t_0\in \ocint{0,\tbound}$,
  \begin{equation}
  \limsup_{\gamma\downarrow 0} \left\Vert\alphaminorbf(\lfloor t_0/\gamma\rfloor +1 ,\gamma)\right\Vert_\infty \leq  K/t_0^2 \eqsp, 
  \qquad
  \limsup_{\gamma\downarrow 0} \left\Vert \betaminorbf(\lfloor t_0/\gamma\rfloor+1,\gamma)\right\Vert_\infty \leq K/t_0 \eqsp.
  \end{equation}
\end{lemma}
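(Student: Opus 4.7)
My plan is to apply Cramer's rule to the linear system \eqref{eq:defbetaminor_alpha_minor_j_gen}, which yields the explicit formulas
\begin{equation}
\alphaminor_j(k+1,\gamma) = \frac{\cminorc{k+1}_3\, \gminorbfc{k+1}_{1,j} - \cminorc{k+1}_2\, \gminorbfc{k+1}_{2,j}}{\cminorc{k+1}_1 \cminorc{k+1}_3 - \bigl(\cminorc{k+1}_2\bigr)^2},
\qquad
\betaminor_j(k+1,\gamma) = \frac{\cminorc{k+1}_1\, \gminorbfc{k+1}_{2,j} - \cminorc{k+1}_2\, \gminorbfc{k+1}_{1,j}}{\cminorc{k+1}_1 \cminorc{k+1}_3 - \bigl(\cminorc{k+1}_2\bigr)^2} \eqsp ,
\end{equation}
and then to control each factor using the asymptotics established earlier. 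The whole argument rests on tracking the asymmetric scaling in powers of $t_0$: the diagonal entry $\cminorc{k+1}_1$ scales like $t_0^3$, the off-diagonal $\cminorc{k+1}_2$ like $t_0^2$, and $\cminorc{k+1}_3$ like $t_0$, so the determinant scales like $t_0^4$; this will furnish an extra factor $t_0^{-2}$ for $\alphaminor$ and $t_0^{-1}$ for $\betaminor$ compared with a naive counting.

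For the numerators, I would first bound the weight vectors uniformly in $j$ and $\gamma$. Since $\tau_\gamma \in \ooint{0,1}$ by \Cref{ass:gminorc}, one has $\gminorbfc{k+1}_{2,j} = \tau_\gamma^{k-j+1} \in \ccint{0,1}$, and using the elementary estimate $1-\tau_\gamma^m \leq m(1-\tau_\gamma)$ (obtained from the geometric sum) gives $\gminorbfc{k+1}_{1,j} \leq (k-j+1)\gamma \leq k\gamma$. Taking $k = \lfloor t_0/\gamma\rfloor$ yields $\Vert\gminorbfc{k+1}_1\Vert_\infty \leq t_0$ and $\Vert\gminorbfc{k+1}_2\Vert_\infty \leq 1$. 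For the covariance entries, \eqref{eq:convergence_to_continue_matrix} combined with \Cref{lem:estimate_coeff_Sigma} gives, for $t_0 \in \ocint{0,\btZ}$,
\begin{equation}
\limsup_{\gamma\downarrow 0}\,\cminorc{\lfloor t_0/\gamma\rfloor+1}_1 \leq \bar\varrho_1 t_0^3 \eqsp , \qquad \limsup_{\gamma\downarrow 0}\,\cminorc{\lfloor t_0/\gamma\rfloor+1}_2 \leq \bar\varrho_2 t_0^2 \eqsp , \qquad \limsup_{\gamma\downarrow 0}\,\cminorc{\lfloor t_0/\gamma\rfloor+1}_3 \leq \bar\varrho_3 t_0 \eqsp .
\end{equation}

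For the denominator, I would combine the lower bounds $\coeffContinueMatrix{1}{t_0}\coeffContinueMatrix{3}{t_0} \geq \underline\varrho_1\underline\varrho_3 \sigma^4 t_0^4$ with the upper bound $(\coeffContinueMatrix{2}{t_0})^2 \leq \bar\varrho_2^2\sigma^4 t_0^4$ from \Cref{lem:estimate_coeff_Sigma}, so that, using \eqref{eq:convergence_to_continue_matrix} once more,
\begin{equation}
\liminf_{\gamma\downarrow 0}\,\Bigl[\cminorc{\lfloor t_0/\gamma\rfloor+1}_1 \cminorc{\lfloor t_0/\gamma\rfloor+1}_3 - \bigl(\cminorc{\lfloor t_0/\gamma\rfloor+1}_2\bigr)^2\Bigr] \geq \bigl(\underline\varrho_1\underline\varrho_3 - \bar\varrho_2^2\bigr)\, t_0^4 \eqsp ,
\end{equation}
where the right-hand side is strictly positive by the assumption in \Cref{lem:estimate_coeff_Sigma}. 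In particular, $(\lfloor t_0/\gamma\rfloor,\gamma)\in\msec$ for $\gamma$ small enough, so the linear system \eqref{eq:defbetaminor_alpha_minor_j_gen} is indeed invertible, and one may set $\tbound = \btZ$.

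Plugging these estimates into Cramer's formulas gives
\begin{equation}
\limsup_{\gamma\downarrow 0}\,\left\Vert\alphaminorbf(\lfloor t_0/\gamma\rfloor+1,\gamma)\right\Vert_\infty \leq \frac{\bar\varrho_3 t_0 \cdot t_0 + \bar\varrho_2 t_0^2 \cdot 1}{(\underline\varrho_1\underline\varrho_3 - \bar\varrho_2^2)\,t_0^4} = \frac{\bar\varrho_2+\bar\varrho_3}{\underline\varrho_1\underline\varrho_3 - \bar\varrho_2^2}\cdot\frac{1}{t_0^2} \eqsp ,
\end{equation}
and symmetrically $\limsup_{\gamma\downarrow 0}\Vert\betaminorbf\Vert_\infty \leq (\bar\varrho_1+\bar\varrho_2)/[(\underline\varrho_1\underline\varrho_3 - \bar\varrho_2^2)t_0]$, so setting $K$ to be the maximum of these two constants yields the claim. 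There is no real obstacle here beyond careful bookkeeping; the only substantive ingredient is the strict positivity $\underline\varrho_1\underline\varrho_3-\bar\varrho_2^2>0$ already guaranteed by \Cref{lem:estimate_coeff_Sigma}, which reflects the fact that the $2\times 2$ covariance of the limiting (position, velocity) increments of the $b\equiv 0$ Langevin process is non-degenerate at leading order in $t_0$.
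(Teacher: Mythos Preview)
Your proof is correct and follows essentially the same route as the paper: solve \eqref{eq:defbetaminor_alpha_minor_j_gen} by Cramer's rule, bound the weight vectors via \eqref{lem:estimate_gminorbfc}, and control the entries $\cminorc{\lfloor t_0/\gamma\rfloor+1}_i$ through their convergence \eqref{eq:convergence_to_continue_matrix} to $\coeffContinueMatrix{i}{t_0}/\sigma^2$ combined with \Cref{lem:estimate_coeff_Sigma}. The only difference is in the denominator: the paper invokes \Cref{lem:matrix_minor_gen_continue} together with the Minkowski determinant inequality to obtain $\det\continueMatrix{t_0}\geq t_0^4/\brho_c^2$, whereas you use directly the condition $\underline\varrho_1\underline\varrho_3-\bar\varrho_2^2>0$ built into \Cref{lem:estimate_coeff_Sigma}; your argument is slightly more elementary but yields the same scaling and the same conclusion.
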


\begin{proof}
  We consider~$\tbound>0$ as given by Lemmas~\ref{lem:estimate_coeff_Sigma} and~\ref{lem:matrix_minor_gen_continue}, and such that, for any $t_0\in\ocint{0,\btZ}$, there is~$\bar{\gamma}_{t_0}$ for which~$(\lfloor t_0/\gamma\rfloor,\gamma)\in \msec$ for any $\gamma\in \ocint{0,\bar{\gamma}_{t_0}}$ (see the discussion before \Cref{lem:minor_orthogonal_projection_gen}). For such values of~$\tbound$ and~$\gamma$, the system~\eqref{eq:defbetaminor_alpha_minor_j_gen} for $k_0 = \lfloor t_0/\gamma\rfloor$ can be explicitly solved as
  \begin{equation}
  \alphaminor_j(k_0+1,\gamma) = \frac{\cminorc{k_0+1}_3 \gminorbfc{k_0+1}_{1,j}- \cminorc{k_0+1}_2 \gminorbfc{k_0+1}_{2,j}}{\cminorc{k_0+1}_1 \cminorc{k_0+1}_3-\left(\cminorc{k_0+1}_2\right)^2} \eqsp,
  \qquad
  \betaminor_j(k_0+1,\gamma) = \frac{\cminorc{k_0+1}_1 \gminorbfc{k_0+1}_{2,j}- \cminorc{k_0+1}_2 \gminorbfc{k_0+1}_{1,j}}{\cminorc{k_0+1}_1 \cminorc{k_0+1}_3-\left(\cminorc{k_0+1}_2\right)^2} \eqsp. 
  \end{equation}
  By~\eqref{lem:estimate_gminorbfc} and \Cref{lem:covariance_matrix_minorization_conditions_simple_gen}, it follows that, for any~$t_0\in\ocint{0,\btZ}$,
  \begin{equation}
  \limsup_{\gamma\downarrow 0} \abs{\alphaminor_j(k_0+1,\gamma)} \leq \limsup_{\gamma\downarrow 0} \frac{t_0\abs{\cminorc{k_0+1}_3} + \abs{\cminorc{k_0+1}_2}}{\abs{\cminorc{k_0+1}_1 \cminorc{k_0+1}_3-\left(\cminorc{k_0+1}_2\right)^2}} = \frac{\sigma^2\left(t_0\coeffContinueMatrix{3}{t_0}+\coeffContinueMatrix{2}{t_0}\right)}{\abs{\coeffContinueMatrix{1}{t_0}\coeffContinueMatrix{3}{t_0}-\parenthese{\coeffContinueMatrix{2}{t_0}}^2}} \eqsp ,
  \end{equation}
and 
\begin{equation}
\limsup_{\gamma\downarrow 0} \abs{\betaminor_j(k_0+1,\gamma)} \leq \limsup_{\gamma\downarrow 0} \frac{\abs{\cminorc{k_0+1}_1} +t_0 \abs{\cminorc{k_0+1}_2}}{\abs{\cminorc{k_0+1}_1 \cminorc{k_0+1}_3-\left(\cminorc{k_0+1}_2\right)^2}} = \frac{\sigma^2\left(\coeffContinueMatrix{1}{t_0}+t_0\coeffContinueMatrix{2}{t_0}\right)}{\abs{\coeffContinueMatrix{1}{t_0}\coeffContinueMatrix{3}{t_0}-\parenthese{\coeffContinueMatrix{2}{t_0}}^2}} \eqsp .
\end{equation}
The numerators of the last term in the two previous equalities are upper bounded by \Cref{lem:estimate_coeff_Sigma} as $t_0\coeffContinueMatrix{3}{t_0}+\coeffContinueMatrix{2}{t_0}\leq (\bar{\varrho}_2+\bar{\varrho}_3)\sigma^2 t_0^2$ and $\coeffContinueMatrix{1}{t_0}+t_0\coeffContinueMatrix{2}{t_0}\leq (\bar{\varrho}_1+\bar{\varrho}_2)\sigma^2 t_0^3$; while the denominator is lower bounded with the Minkowski determinant theorem (see \eg~\cite[Section 4.1.8]{marcus:minc:1992}) and \Cref{lem:matrix_minor_gen_continue} as
  \begin{align}
    \abs{\coeffContinueMatrix{1}{t_0}\coeffContinueMatrix{3}{t_0}-\parenthese{\coeffContinueMatrix{2}{t_0}}^2} \geq  \det\parentheseDeux{ \frac{t_0}{\brho_c}\begin{pmatrix} t_0^2 &0\\ 0 &  1\\ \end{pmatrix}} = \frac{t_0^4}{\brho_c^{2}} \eqsp.
  \end{align}
  This finally gives the claimed estimates.
\end{proof}

\subsection{Stability estimates}
\label{sec:stability}

We provide in this section estimates on the mapping $\Gammaminorc{\gamma}[k+1]$ defined in~\eqref{eq:def_Phiminor_k} providing the $(k+1)$-th iterates~$(\XminorTer_{k+1},\VminorTer_{k+1})$ of the Markov chain defined in~\eqref{eq:def_xminor_v_minor_gen} as a function of the initial condition $(x,v)$ and the realizations $\{z_{i+1},w_{i+1}\}_{i=0}^{k}$. We first establish in \Cref{sec:stability_all_noise} that this function is Lipschitz continuous with respect to the initial condition $(x,v)$ and the realizations $\{z_{i+1}\}_{i=0}^{k}$, and provide upper bounds on its Lipschitz constant. Pursuing on the same argument as \Cref{sec:idea_ULA} (see \eqref{eq:ula_G_2_k}), we then show in \Cref{sec:stability_conditioned} that $(X_{k+1},V_{k+1})$ can be written as
\[
(X_{k+1},V_{k+1}) = \bGammaminorc{\gamma,x,v,\{\tZc{k+1}_j\}_{j=1}^{k-1},\{W_j\}_{j=1}^{k+1}}[k+1]\left(\Gminorc{k+1}_1, \Gminorc{k+1}_2\right) \eqsp,
\]
with~$(\Gminorc{k+1}_1, \Gminorc{k+1}_2)$ and $\{\tZc{k+1}_j\}_{j=1}^{k-1}$ respectively defined in~\eqref{eq:def_Gminor_gen} and~\eqref{eq:def_tZ_gen}. We establish Lipschitz properties of the function~$(g_1,g_2) \mapsto \bGammaminorc{\gamma,x,v,\{\widetilde{z}_j\}_{j=1}^{k-1},\{w_j\}_{j=1}^{k+1}}[k+1](g_1,g_2)$. Then, we prove in \Cref{sec:stability_diffeo} that this function is in fact a $\rmC^1$-diffeomorphism. We conclude this section by studying some properties of its inverse seen as a function of~$(g_1,g_2)$ and the initial conditions~$(x,v)$. 
  
\subsubsection{Stability with respect to initial conditions and all noise increments}
\label{sec:stability_all_noise}

The following result provides some Lipschitz bounds for the functions~$\Gammaminorc{\gamma}[k+1]$ defined in~\eqref{eq:def_Phiminor_k}. They are stated in terms of a norm on~$\rset^{2d}$ parametrized by some positive parameter~$\lambda>0$, namely~$\|x\|+\lambda\|v\|$. The proof of Proposition~\ref{propo:inverse_lip_Gamma} below will require~$\lambda$ to be chosen sufficiently small. 

\begin{lemma}
  \label{lem:Lip_Phi_gen}
  Assume that \Cref{ass:lip_minor_gen} holds and $\sup_{\gamma \in\ocint{0,\bgamma}} \tau_{\gamma} \leq 1$ and fix~$\lambda > 0$, a time step~$\gamma > 0$, a maximal number of iterations~$\Nminor \in \nsets$, and realizations $\{w_j\}_{j=1}^{\Nminor+1} \subset \rset^{m}$ of the additional noise. For two initial conditions $(x,v),(x',v') \in \rset^{2d}$ and realizations of the noise $\{z_j\}_{j=1}^{\Nminor+1},\{z_j'\}_{j=1}^{\Nminor+1} \subset \rset^{d}$, define for any $k \in \{0,\ldots,\Nminor-1\}$ the iterates
  \begin{equation}
    \label{eq:lem:Lip_Phi_gen}
    \begin{aligned}
      (x_{k+1},v_{k+1}) & =  \Gammaminorc{\gamma}[k+1]\left(x,v,\left\{z_j\right\}_{j=1}^{k+1},\{w_j\}_{j=1}^{k+1}\right) \eqsp, \qquad       (x'_{k+1},v'_{k+1}) &= \Gammaminorc{\gamma}[k+1]\left(x',v',\left\{z'_j\right\}_{j=1}^{k+1},\{w_j\}_{j=1}^{k+1}\right) \eqsp.
    \end{aligned}
  \end{equation}
  Then, for any $k \in \{0,\ldots,\Nminor\}$, 
  \begin{equation}
    \label{eq:first_stab_estimate}
    \norm{x_{k}-x'_{k}}+\lambda\norm{v_{k}-v'_{k}}\leq \LipGammaminor(\lambda)^{k}\Big( \norm{x-x'}+\lambda\norm{v-v'} \Big) +\LipMGammaminor(\lambda)\sum_{i=1}^{k} \LipGammaminor(\lambda)^{k-i} \norm{z_i-z'_i}  \eqsp,
  \end{equation}
  where
  \begin{equation}
    \label{eq:def_LipGammaminor}
    \LipGammaminor(\lambda) = 1 + \gamma \left[ \frac1\lambda + (1+\lambda) \max\left(1,\frac{\gamma^\delta}{\lambda}\right)\Ltt\right] \eqsp,  
  \qquad 
  \LipMGammaminor(\lambda)= \lambda+\gamma^\delta\mathscr{D}+\gamma^{1+\delta} \parenthese{1+\lambda}\LipT\eqsp. 
  \end{equation}
  Moreover, when the initial conditions coincide (namely when~$(x,v) = (x',v')$),
  \begin{equation}
    \label{eq:second_stab_estimate_x}
    \sum_{i=1}^{k} \norm{x_i-x_i'} \leq \gamma^{\delta} (\mathscr{D}+\gamma\LipT)\norm{z_{k}-z'_{k}} + \{k \LipMGammaminor(\lambda) \LipGammaminor(\lambda)^{k} +\mathscr{L}^x_{k,\gamma,\lambda}\}\sum_{i=1}^{k-1} \norm{z_{i}-z_{i}'} \eqsp,
  \end{equation}
  and
  \begin{equation}
    \label{eq:second_stab_estimate_v}
    \sum_{i=1}^{k} \norm{v_i-v_i'} \leq \left(1+\gamma^{1+\delta}\LipT\right)^k \norm{z_k-z_k'} + k \mathscr{L}^v_{k,\gamma,\lambda} \sum_{i=1}^{k-1} \norm{z_{i}-z_{i}'}\eqsp,
  \end{equation}
  with
  \begin{equation}
    \label{eq:def_mathscr_L}
    \begin{aligned}
      \mathscr{L}^x_{k,\gamma,\lambda} & = \gamma\LipT  \LipMGammaminor(\lambda) \LipGammaminor(\lambda)^{k}   + \gamma\left(1+\gamma^\delta\LipT\right)\left(k\gamma \LipMGammaminor(\lambda)\LipT \left(1+\gamma^{1+\delta}\LipT\right)^k \LipGammaminor\left(\lambda\right)^{k} +  \left(1+\gamma^{1+\delta}\LipT\right)^k \right), \\
      \mathscr{L}^v_{k,\gamma,\lambda} & = (k-1) \gamma \LipMGammaminor(\lambda) \left(1+\gamma^{1+\delta}\LipT\right)^k \LipGammaminor(\lambda)^{k}\LipT + \left(1+\gamma^{1+\delta}\LipT\right)^k.
    \end{aligned}
  \end{equation}
\end{lemma}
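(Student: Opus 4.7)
The plan is to obtain the three claimed inequalities via a standard three-step procedure: derive one-step differences from the recursion~\eqref{eq:def_xminor_v_minor_gen} and the Lipschitz bound of~\Cref{ass:lip_minor_gen}; combine them into a scalar recursion on $a_k := \|x_k - x'_k\| + \lambda\|v_k - v'_k\|$ and iterate to get~\eqref{eq:first_stab_estimate}; and finally iterate the individual $x$- and $v$-recursions separately, in the case of coinciding initial conditions, to obtain \eqref{eq:second_stab_estimate_x}--\eqref{eq:second_stab_estimate_v}.

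For the first step, applying \eqref{eq:def_xminor_v_minor_gen} to both trajectories and using the Lipschitz bound on~$f_\gamma$ with the scaling in the three arguments $(x, \gamma^\delta v, \gamma^\delta z)$ yields
\begin{equation}
\|x_{k+1}-x'_{k+1}\| \leq (1+\gamma \Ltt)\|x_k-x'_k\| + \gamma(1+\gamma^\delta \Ltt)\|v_k-v'_k\| + \gamma^\delta(\mathscr{D}+\gamma \Ltt)\|z_{k+1}-z'_{k+1}\|,
\end{equation}
and a corresponding bound for $\|v_{k+1}-v'_{k+1}\|$ using the analogous bound on $g_\gamma$ together with $\tau_\gamma \leq 1$:
\begin{equation}
\|v_{k+1}-v'_{k+1}\| \leq (1+\gamma^{1+\delta}\Ltt)\|v_k-v'_k\| + \gamma \Ltt\|x_k-x'_k\| + (1+\gamma^{1+\delta}\Ltt)\|z_{k+1}-z'_{k+1}\|.
\end{equation}
For the second step, adding the first inequality to $\lambda$ times the second and using the identity $\gamma(1+\gamma^\delta\Ltt) + \lambda(1+\gamma^{1+\delta}\Ltt) = \lambda(1+\gamma/\lambda+\gamma\Ltt(1+\lambda)\gamma^\delta/\lambda)$, we will verify that the coefficients of $\|x_k-x'_k\|$ and $\|v_k-v'_k\|$ on the right-hand side are both bounded by $L_\Gamma(\lambda)$ and $\lambda L_\Gamma(\lambda)$ respectively (the $\max(1,\gamma^\delta/\lambda)$ factor in \eqref{eq:def_LipGammaminor} is precisely what is needed to cover both regimes $\gamma^\delta \leq \lambda$ and $\gamma^\delta > \lambda$), while the coefficient of $\|z_{k+1}-z'_{k+1}\|$ is bounded by $\LipMGammaminor(\lambda)$. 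The one-step bound $a_{k+1} \leq \LipGammaminor(\lambda)\, a_k + \LipMGammaminor(\lambda)\|z_{k+1}-z'_{k+1}\|$ then yields \eqref{eq:first_stab_estimate} by a direct induction (discrete Grönwall).

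For the third step, we work with the individual recursions under $(x,v) = (x',v')$, so that $x_0 = x'_0$ and $v_0 = v'_0$. Iterating the $v$-bound gives
\begin{equation}
\|v_k-v'_k\| \leq \sum_{j=0}^{k-1}(1+\gamma^{1+\delta}\Ltt)^{k-1-j}\!\left[\gamma \Ltt\|x_j-x'_j\| + (1+\gamma^{1+\delta}\Ltt)\|z_{j+1}-z'_{j+1}\|\right],
\end{equation}
and similarly for $\|x_k-x'_k\|$, with $(1+\gamma \Ltt)^{k-1-j}$ as geometric factor and $\gamma(1+\gamma^\delta \Ltt)\|v_j-v'_j\|$ plus $\gamma^\delta(\mathscr{D}+\gamma \Ltt)\|z_{j+1}-z'_{j+1}\|$ inside the bracket. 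Summing over $k$ from $1$ to the target index, the key observation is that $\|z_k-z'_k\|$ enters only through the last-step term $j=k-1$ at the outermost summation level (contributing the isolated coefficients $\gamma^\delta(\mathscr{D}+\gamma \Ltt)$ and $1+\gamma^{1+\delta}\Ltt$, which we further upper-bound by $\gamma^\delta(\mathscr{D}+\gamma \Ltt)$ and $(1+\gamma^{1+\delta}\Ltt)^k$ respectively to match the statement). The remaining indices $\|z_i-z'_i\|$ with $i \leq k-1$ pick up two types of contributions: the direct geometric one from the noise term, and the indirect one coming from inserting \eqref{eq:first_stab_estimate} to bound $\|x_j-x'_j\| \leq a_j \leq \LipMGammaminor(\lambda) \sum_{\ell=1}^j \LipGammaminor(\lambda)^{j-\ell}\|z_\ell-z'_\ell\|$ into the $v$-iteration (and analogously the $v$-estimate into the $x$-iteration). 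Counting the number of triples $(k,j,i)$ contributing to each $z_i$ (a quantity of order $k^2$) and using the crude uniform bound $\LipGammaminor(\lambda)^{k-\ell} \leq \LipGammaminor(\lambda)^k$ on the geometric factors produces exactly the expressions $k\mathscr{L}^v_{k,\gamma,\lambda}$ and $kL_\Gamma(\lambda)^k M_\Gamma(\lambda) + \mathscr{L}^x_{k,\gamma,\lambda}$ defined in~\eqref{eq:def_mathscr_L}.

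The main technical obstacle will be the third step: careful book-keeping is required to obtain the precise form of the constants $\mathscr{L}^x_{k,\gamma,\lambda}$ and $\mathscr{L}^v_{k,\gamma,\lambda}$, in particular to isolate the coefficient of $\|z_k-z'_k\|$ from those of the earlier increments, and to decide at which stage to invoke \eqref{eq:first_stab_estimate} versus the recursions in order to avoid an implicit estimate with $\sum \|x_i-x'_i\|$ appearing on both sides.
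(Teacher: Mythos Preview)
Your proposal is correct and follows essentially the same route as the paper: the one-step bounds on $\|x_{k+1}-x'_{k+1}\|$ and $\|v_{k+1}-v'_{k+1}\|$, their $\lambda$-weighted combination into a scalar recursion, and the iteration giving~\eqref{eq:first_stab_estimate} are identical to the paper's argument. For~\eqref{eq:second_stab_estimate_v} your plan (iterate the $v$-recursion, sum, insert $\|x_j-x'_j\|\leq a_j$ from~\eqref{eq:first_stab_estimate}) is exactly what the paper does.

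The only organizational difference is in~\eqref{eq:second_stab_estimate_x}: rather than iterating the $x$-recursion fully and inserting a $v$-estimate into it, the paper splits $\sum_{i=1}^{k}\|x_i-x'_i\|=\sum_{i=1}^{k-1}\|x_i-x'_i\|+\|x_k-x'_k\|$, bounds the first sum directly by $\sum_{i=1}^{k-1}a_i$ via~\eqref{eq:first_stab_estimate} (yielding the $k\,\mathrm{M}_{\Gamma,\gamma}(\lambda)\mathrm{L}_{\Gamma,\gamma}(\lambda)^{k}$ term), and only for the last term $\|x_k-x'_k\|$ goes back to the one-step $x$-estimate and the iterated $v$-formula for $\|v_{k-1}-v'_{k-1}\|$. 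This avoids the nested triple sum you anticipate and produces the constants $\mathscr{L}^x_{k,\gamma,\lambda}$ in the stated form with essentially no further work; your approach would reach the same conclusion but with heavier bookkeeping.
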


For reasons that will appear more clearly in \Cref{propo:Lip_psi_i_k_lip_xi_k_gen} and its proof,  the difference $\norm{z_k-z_k'}$ is isolated on purpose in~\eqref{eq:second_stab_estimate_x}-\eqref{eq:second_stab_estimate_v} and appears with a prefactor at most~1 instead of~$k$ in contrast to the other differences~$\norm{z_i-z_i'}$ for~$i \in\{1,\ldots,k-1\}$.

\begin{proof}
  Since $(x_{k+1},v_{k+1}) = \Gammaminorc{\gamma}(x_k,v_k,z_{k+1},w_{k+1})$ for $k \in\iint{0}{N-1}$, we obtain by \Cref{ass:lip_minor_gen} and \eqref{minor_def_Gamma_b_gamma} that
  \begin{align}
    \norm{x_{k+1}-x'_{k+1}} & \leq \norm{x_{k}-x'_{k}}+\gamma\norm{v_{k}-v'_{k}} + \gamma^\delta\norm{\Dbf_{\gamma} (z_{k+1}- z'_{k+1})} \notag \\
    & \quad + \gamma\norm{f_\gamma\left(x_k,\gamma^{\delta} v_k,\gamma^{\delta}z_{k+1},w_{k+1}\right)-f_\gamma\left(x'_k,\gamma^{\delta} v'_k,\gamma^{\delta}z'_{k+1},w_{k+1}\right)} \notag \\
    & \leq (1+\gamma \LipT) \norm{x_{k}-x'_{k}} + \gamma\left(1+\gamma^\delta\LipT\right)\norm{v_k-v'_k} + \gamma^{\delta} (\mathscr{D}+\gamma\LipT)\norm{z_{k+1}-z'_{k+1}} \eqsp, \label{eq:one_step_x_estimate}
  \end{align}
  and, since $\tau_{\gamma} \leq 1$,
  \begin{align}
    \norm{v_{k+1}-v'_{k+1}} & \leq \gminorc\norm{v_k-v'_k}+\norm{z_{k+1}-z'_{k+1}} \\
    & \quad + \gamma\norm{g_\gamma\left(x_k,\gamma^{\delta} v_k,\gamma^{\delta}z_{k+1},w_{k+1}\right)-g_\gamma\left(x'_k, \gamma^{\delta} v'_k,\gamma^{\delta}z_{k+1}',w_{k+1}\right)} \\
    & \leq \gamma\LipT \norm{x_{k}-x'_{k}} + (1+\gamma^{1+\delta}\LipT)\norm{v_k-v'_k} + (1+\gamma^{1+\delta}\LipT)\norm{z_{k+1}-z'_{k+1}}\eqsp. \label{eq:one_step_v_estimate}
  \end{align}
  Therefore,
  \begin{align}
    & \norm{x_{k+1}-x'_{k+1}} +\lambda\norm{v_{k+1}-v'_{k+1}} \\
    & \leq \left(1+\gamma (1+\lambda)\Ltt\right)\norm{x_{k}-x'_{k}}+\left(\lambda\left[1+\gamma^{1+\delta} \Ltt\right] + \gamma \left[1+ \gamma^{\delta}\Ltt\right] \right)\norm{v_k-v'_k} + \LipMGammaminor(\lambda)\norm{z_{k+1}-z'_{k+1}} \\
    & \leq \left[ 1 + \gamma \max\left((1+\lambda)\Ltt, \frac1\lambda + \left(1+\frac1\lambda\right)\gamma^{\delta} \Ltt\right)\right] \left(\norm{x_{k}-x'_{k}}+\lambda\norm{v_{k}-v'_{k}}\right) + \LipMGammaminor(\lambda)\norm{z_{k+1}-z'_{k+1}}\eqsp.
  \end{align}
  The bound~\eqref{eq:first_stab_estimate} then follows from the inequality $  1 + \gamma \max\left((1+\lambda)\Ltt, \lambda^{-1} + \left(1+\lambda^{-1}\right)\gamma^{\delta} \Ltt\right) \leq \LipGammaminor(\lambda)$,
  and an easy induction on~$k$. 

We now prove~\eqref{eq:second_stab_estimate_x}. For the sum up to indices~$k-1$, we use~\eqref{eq:first_stab_estimate} and $x_0=x_0',v_0=v_0'$ to write
\begin{equation}
  \label{eq:bound_sum_x_until_k-1}
\begin{aligned}
  \sum_{i=1}^{k-1} \norm{x_i-x_i'}  & \leq \sum_{i=1}^{k-1}\{ \norm{x_i-x_i'} + \lambda \norm{v_i-v_i'}\} \\
  & \leq \LipMGammaminor(\lambda) \sum_{i=1}^{k-1} \sum_{j=1}^{i} \LipGammaminor\left(\lambda\right)^{i-j} \norm{z_j-z'_j} \leq \LipMGammaminor(\lambda) (k-1) \LipGammaminor(\lambda)^{k}\sum_{i=1}^{k-1} \norm{z_i-z'_i} \eqsp.
\end{aligned}
\end{equation}
The last term in the sum is bounded with~\eqref{eq:one_step_x_estimate} and~\eqref{eq:first_stab_estimate} as
\begin{equation}
  \label{eq:bound_sum_x_term_k}
\begin{aligned}
& \norm{x_k-x_k'} \leq (1+\gamma \LipT) \norm{x_{k-1}-x'_{k-1}} + \gamma\left(1+\gamma^\delta\LipT\right)\norm{v_{k-1}-v'_{k-1}} + \gamma^{\delta} (\mathscr{D}+\gamma\LipT)\norm{z_{k}-z'_{k}} \\
& \quad \leq \gamma\left(1+\gamma^\delta\LipT\right)\norm{v_{k-1}-v'_{k-1}} + \gamma^{\delta} (\mathscr{D}+\gamma\LipT)\norm{z_{k}-z'_{k}} + (1+\gamma \LipT)\LipMGammaminor(\lambda) \LipGammaminor\left(\lambda\right)^{k} \sum_{i=1}^{k-1} \norm{z_i-z'_i}.
\end{aligned}
\end{equation}
It remains to bound $\norm{v_{k-1}-v'_{k-1}}$ on the right-hand side.
By an easy induction based on~\eqref{eq:one_step_v_estimate}  and since $x_0= x_0'$, $v_0=v_0'$, we obtain, for $\ell \in\{0,\ldots,N-1\}$,
\begin{equation}
  \label{eq:norm_vi_vi'}
  \norm{v_\ell-v_\ell'} \leq \gamma \LipT \sum_{i=1}^{\ell-1} \left(1+\gamma^{1+\delta}\LipT\right)^{\ell-1-i} \norm{x_i-x_i'} + \sum_{i=1}^{\ell} \left(1+\gamma^{1+\delta}\LipT\right)^{\ell-i+1} \norm{z_{i}-z_{i}'}\eqsp.
\end{equation}
In particular, using~the previous inequality for $\ell=k-1$ and \eqref{eq:bound_sum_x_until_k-1}, we get
\begin{equation}
\begin{aligned}
  \norm{v_{k-1}-v_{k-1}'} & \leq \gamma\LipT \left(1+\gamma^{1+\delta}\LipT\right)^k \sum_{i=1}^{k-1} \norm{x_{i}-x_{i}'}+  \left(1+\gamma^{1+\delta}\LipT\right)^k \sum_{i=1}^{k-1} \norm{z_{i}-z_{i}'} \\
  & \leq \left(k\gamma \LipMGammaminor(\lambda)\LipT \left(1+\gamma^{1+\delta}\LipT\right)^k \LipGammaminor\left(\lambda\right)^{k} +  \left(1+\gamma^{1+\delta}\LipT\right)^k \right)\sum_{i=1}^{k-1} \norm{z_{i}-z_{i}'}\eqsp,
\end{aligned}
\end{equation}
which, plugged in the estimate~\eqref{eq:bound_sum_x_term_k} for~$\norm{x_k-x_k'}$ and combined with~\eqref{eq:bound_sum_x_until_k-1}, leads to~\eqref{eq:second_stab_estimate_x}.

To obtain~\eqref{eq:second_stab_estimate_v}, we use first~\eqref{eq:norm_vi_vi'} and isolate the $k$-th term in the sum to write
\begin{equation}
\begin{aligned}
  \sum_{i=1}^{k} \norm{v_i-v_i'} & \leq k\gamma \LipT \left(1+\gamma^{1+\delta}\LipT\right)^k \sum_{i=1}^{k-1} \norm{x_{i}-x_{i}'} + \left(1+\gamma^{1+\delta}\LipT\right)^k \left[ \norm{z_k-z_k'} + k \sum_{i=1}^{k-1} \norm{z_{i}-z_{i}'} \right],
\end{aligned}
\end{equation}
and then conclude with~\eqref{eq:bound_sum_x_until_k-1}. 
\end{proof}

\subsubsection{Stability of maps conditioned by the total noise}
\label{sec:stability_conditioned}

In order to express $(X_{k+1},V_{k+1})$ in  terms of $\Gminorc{k+1}$ and $\tZbfc{k+1}$ defined in \eqref{eq:def_G_minor} and \eqref{eq:def_tZ_gen}, we perform a linear change of variables in the functions~$\Gammaminorc{\gamma}[i]$ defined in~\eqref{eq:def_Phiminor_k}, for $i \in\{0,\ldots,k+1\}$. More precisely, we aim at writing, for $i \in\{0,\ldots,k+1\}$,
\begin{equation}
  \label{eq:21}
  (\XminorTer_i,\VminorTer_i ) = \tGammaminorc{\gamma}[i,k+1]\left(X_0,V_0,\left\{\sqrt{\gamma}\sigma_{\gamma}\tZc{k+1}_j\right\}_{j=1}^{i\wedge (k-1) },\{W_j\}_{j=1}^{i},\sigma_\gamma\Gminorc{k+1}\right) \eqsp.
\end{equation}

To this end, we define, for a given~$(k,\gamma) \in \msec$, the following functions, which take as arguments the initial condition~$(x,v) \in \rset^{2d}$,  the realizations $(\gminor_1,\gminor_2)\in \rset^{2d}$, $\{\widetilde{z}_j\}_{j=1}^{k-1} \in \rset^{d\times(k-1)}$ and $\{w_j\}_{j=1}^{k+1} \in \msw^{k+1}$: for $i \in \{0,\ldots,k+1\}$,
\begin{equation}
  \label{eq:def_ttGammaminorc_0}
\tGammaminorc{\gamma}[i,k+1]\left(x,v,\{\widetilde{z}_j\}_{j=1}^{i\wedge (k-1) },\{w_i\}_{j=1}^{i},(g_1,g_2)\right) = \Gammaminorc{\gamma}[i]\left(x,v,\{z_i \}_{j=1}^{i},\{w_i\}_{j=1}^{i}\right)  \eqsp,
\end{equation}
where 
\begin{align}
  \label{eq:def_z_i_tilde_z_i_g1_g2}
  z_i & = \widetilde{z}_i+ \gamma \alpha_{i}(k,\gamma) g_1 + \gamma \beta_{i}(k,\gamma) g_2, \qquad i\in\iint{1}{k-1} \eqsp,\\
  \label{eq:def_z_k_tilde_z_i_g1_g2}
  z_k & = \frac1\gamma\left( g_1-\sum_{i=0}^{k-2} \gminorbfc{k+1}_{1,i+1} z_{i+1} \right) \eqsp,  \qquad
  z_{k+1}  = g_2-\sum_{i=0}^{k-1} \gminorbfc{k+1}_{2,i+1} z_{i+1} \eqsp. 
\end{align}
Note that \eqref{eq:21} holds in view of~\eqref{eq:def_tZ_gen} and~\eqref{eq:def_Gminor_gen} (mind in particular the change of scaling in~\eqref{eq:def_z_i_tilde_z_i_g1_g2} compared to~\eqref{eq:def_tZ_gen}, which is due to the fact that~$\tilde{z}_j$ correspond to the increments~$\sqrt{\gamma}\widetilde{Z}_j^{(k)}$ in~\eqref{eq:21}). 
The final iterate can then be expressed in terms of the initial condition $(X_0,V_0)$ and the noise~$\Gminorc{k+1}$ by reformulating~\eqref{eq:Xminor_Vminor_decomp_start_gen} as 
\begin{equation}
  \label{eq:def_bGamma_v0}
  (\XminorTer_{k+1},\VminorTer_{k+1}) = \bGammaminorc{\gamma,\sqrt{\gamma}\sigma_{\gamma}\tZbfc{k+1},\bfWc{k+1}}[k+1]\left((X_0,V_0),\sigma_{\gamma}\Gminorc{k+1}\right)  \eqsp,
\end{equation}
where $\bfWc{k+1}=(W_1,\dots,W_{k+1})$, and 
\begin{equation}
    \label{eq:def_bGamma}
    \begin{aligned}
      & \bGammaminorc{\gamma,\{\widetilde{z}_j\}_{j=1}^{k-1},\{w_j\}_{j=1}^{k+1}}[k+1]\Big((x,v),(g_1,g_2)\Big) \\
      & \qquad \qquad = \begin{pmatrix} g_1\\g_2 \end{pmatrix} + \gamma^{\delta} \begin{pmatrix} \Dbf_{\gamma} \tLambda^{(k+1)}_{\{\widetilde{z}_j\}_{j=1}^{k-1}}(g_1,g_2) \\ \Zd \end{pmatrix} + \Ximinorc{\gamma,x,v,\{\widetilde{z}_j\}_{j=1}^{k-1},\{w_j\}_{j=1}^{k+1}}[k+1](g_1,g_2),
    \end{aligned}
\end{equation}
with
\begin{equation}
  \label{eq:def_Ximinor_gen}
  \begin{aligned}
    & \Ximinorc{\gamma,x,v,\{\widetilde{z}_j\}_{j=1}^{k-1},\{w_j\}_{j=1}^{k+1}}[k+1] (g_1,g_2) = \Mminorc{\gamma}[k+1]
    \begin{pmatrix}
     x\\
      v
    \end{pmatrix} \\
    &\qquad\qquad+ \gamma \sum_{i=0}^{k}  \widetilde{\Bminor}^{(i)}\parenthese{\tGammaminorc{\gamma}[i,k+1]\left(x,v,\{\widetilde{z}_j\}_{j=1}^{i \wedge (k-1)},\{w_j\}_{j=1}^{i},(g_1,g_2)\right),\{\widetilde{z}_i\}_{j=1}^{(i+1)\wedge (k-1)},(g_1,g_2),w_{i+1}}
    \eqsp,
  \end{aligned}
\end{equation}
 $\Mminorc{\gamma}[k+1]$ is given by \eqref{eq:def_matrix_Mminor_gen}, and, setting $\{z_i\}_{i=1}^{k+1}$ as in \eqref{eq:def_z_i_tilde_z_i_g1_g2}-\eqref{eq:def_z_k_tilde_z_i_g1_g2},
 \begin{equation}
     \label{eq:def_Ximinor_genb}
\widetilde{\Bminor}^{(i)}\left(x,v,\{\widetilde{z}_j\}_{j=1}^{(i+1)\wedge (k-1)},(g_1,g_2),w_{i+1}\right) = \Bminor^{(i)}(x,v,z_{i+1},w_{i+1})  \eqsp,
\end{equation}
and
\begin{equation}
  \label{eq:def_Lambda}
  \tLambda^{(k+1)}_{\{\widetilde{z}_j\}_{j=1}^{k-1}}\left(g_1,g_2\right) = \sum_{j=1}^{k+1} z_j \eqsp. 
\end{equation}

We give in the next proposition key estimates which allow to prove that $\bGammaminorc{\gamma,x,v,\{\widetilde{z}_j\}_{j=1}^{k-1},\{w_j\}_{j=1}^{k+1}}[k+1]$ is a $\rmC^{1}$-diffeomorphism on $\rset^{2d}$ (see \Cref{propo:inverse_lip_Gamma} below). To state the result, we define, for any~$(k,\gamma) \in \msec$, 
\begin{equation}
  \label{eq:m_k_gamma}
\rmm(k,\gamma)=\max\left(\norm{\alphaminorbf(k+1,\gamma)}_{\infty}, \norm{\betaminorbf(k+1,\gamma)}_{\infty} \right) \eqsp,
\end{equation}
where $\alphaminorbf(k+1,\gamma),\betaminorbf(k+1,\gamma)$ are defined by \eqref{eq:defbetaminor_alpha_minor_j_gen}.

\begin{lemma}
  \label{propo:Lip_psi_i_k_lip_xi_k_gen}
  Assume that \Cref{ass:lip_minor_gen} holds and $\sup_{\gamma \in\ocint{0,\bgamma}} \tau_{\gamma} \leq 1$. For any parameter~$\lambda > 0$, $(k,\gamma) \in \msec$, initial condition~$(x,v) \in \rset^{2d}$, realizations $\tzbf = \{\tz_j\}_{j=1}^{k-1} \subset \rset^{d}$ and $\bfw = \{w_j\}_{j=1}^{k+1} \in \msw^{k+1}$, as well as  $(g_1,g_2),(g'_1,g'_2) \in \rset^{2d}$, it holds 
\begin{equation}
  \label{eq:propo:Lip_psi_i_k_lip_xi_k_gen_1}
  \begin{aligned}
    & \norm{\tLambda^{(k+1)}_{\tzbf}(g_1,g_2) - \tLambda^{(k+1)}_{\tzbf}(g_1',g_2')} \\
    & \qquad \qquad \qquad \leq \left(1+\rmm(k,\gamma) k\gamma + \frac{1-\gminorc}{\gamma}\left[1+ \rmm(k,\gamma)(k\gamma)^2\right] \right)\left( \norm{g_1-g_1'} + \norm{g_2-g_2'} \right),
  \end{aligned}
\end{equation}
and
\begin{equation}
\label{eq:propo:Lip_psi_i_k_lip_xi_k_gen_2}
\begin{aligned}
  & \norm{  \Ximinorc{\gamma,x,v,\tzbf,\bfw}[k+1](g_1,g_2) - \Ximinorc{\gamma,x,v,\tzbf,\bfw}[k+1](g'_1,g'_2)} \leq (2+k\gamma)\LipT\LipXic{k+1,\gamma,\lambda} \left( \norm{g_1-g'_1} + \norm{g_2-g'_2} \right),
\end{aligned}
\end{equation}
with
\begin{equation}
  \label{eq:def_LipXic}
  \begin{aligned}
    \LipXic{k+1,\gamma,\lambda}
    & = \gamma^\delta \left(1+(k\gamma)^2\rmm(k,\gamma)\right)\left(2+\mathscr{D}+\left(1+\gamma^\delta\LipT\right)^k+\gamma \LipT \right) + \gamma^{1+\delta}\left(1+k\gamma\rmm(k,\gamma)\right) \\
    & \qquad + k\gamma^2 \rmm(k,\gamma)\left[k \LipMGammaminor(\lambda) \LipGammaminor(\lambda)^{k}+\mathscr{L}^x_{k,\gamma,\lambda}+\gamma^\delta\left(1+k\mathscr{L}^v_{k,\gamma,\lambda}\right)\right], 
  \end{aligned}
\end{equation}
where $\LipMGammaminor(\lambda), \LipGammaminor(\lambda)$ and~$\mathscr{L}^x_{k,\gamma,\lambda},\mathscr{L}^v_{k,\gamma,\lambda}$ are defined in \eqref{eq:def_LipGammaminor} and~\eqref{eq:def_mathscr_L}, respectively.
\end{lemma}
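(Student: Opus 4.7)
The proof decomposes into two essentially independent estimates. Throughout, set $A = g_1 - g_1'$ and $B = g_2 - g_2'$, and note that since the definitions~\eqref{eq:def_z_i_tilde_z_i_g1_g2}-\eqref{eq:def_z_k_tilde_z_i_g1_g2} of $z_j$ are linear in $(g_1,g_2)$, the $\tzbf$-dependent contributions cancel in every $z_j - z_j'$, which simplifies both computations considerably.

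For the first bound~\eqref{eq:propo:Lip_psi_i_k_lip_xi_k_gen_1}, the plan is to substitute the explicit formula~\eqref{eq:def_z_k_tilde_z_i_g1_g2} for $z_{k+1} - z_{k+1}'$ into $\sum_{j=1}^{k+1}(z_j-z_j')$ and use the identities $\gminorbfc{k+1}_{2,j} = \gminorc^{k-j+1}$ and $\gminorbfc{k+1}_{2,k}=\gminorc$ to reach the telescoped expression
\begin{equation}
\sum_{j=1}^{k+1}(z_j - z_j') = B + (1-\gminorc)(z_k - z_k') + \sum_{j=1}^{k-1}(1-\gminorc^{k-j+1})(z_j - z_j').
\end{equation}
Each piece is then bounded separately: the estimate $\norm{z_j - z_j'} \leq \gamma \rmm(k,\gamma)(\norm{A}+\norm{B})$ for $j \leq k-1$ follows from~\eqref{eq:def_z_i_tilde_z_i_g1_g2} and the definition~\eqref{eq:m_k_gamma} of $\rmm(k,\gamma)$, which combined with $|1-\gminorc^{k-j+1}| \leq 1$ produces the $\rmm(k,\gamma) k\gamma$ contribution; meanwhile, $\norm{z_k-z_k'} \leq \gamma^{-1}\norm{A} + k^2\gamma\rmm(k,\gamma)(\norm{A}+\norm{B})$, using $|\gminorbfc{k+1}_{1,j}| \leq k\gamma$ from~\eqref{lem:estimate_gminorbfc}, so that $(1-\gminorc)\norm{z_k - z_k'}$ yields the $(1-\gminorc)\gamma^{-1}[1 + \rmm(k,\gamma)(k\gamma)^2]$ contribution. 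Collecting all the pieces gives exactly~\eqref{eq:propo:Lip_psi_i_k_lip_xi_k_gen_1}.

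For the second bound~\eqref{eq:propo:Lip_psi_i_k_lip_xi_k_gen_2}, the term $\Mminorc{\gamma}[k+1](x,v)^\transpose$ in~\eqref{eq:def_Ximinor_gen} is independent of $(g_1,g_2)$ and cancels in the difference. Each remaining $\widetilde{\Bminor}^{(i)}$ depends on $(g_1, g_2)$ both through the iterate $(x_i, v_i)$ of $\tGammaminorc{\gamma}[i,k+1]$ and directly through the noise argument $z_{i+1}$, so the Lipschitz bound~\eqref{eq:lip_Bminor} yields
\begin{equation}
\norm{\Ximinorc{\gamma,x,v,\tzbf,\bfw}[k+1](g_1,g_2) - \Ximinorc{\gamma,x,v,\tzbf,\bfw}[k+1](g_1',g_2')} \leq (2+k\gamma)\Ltt \, \mathcal{S},
\end{equation}
with $\mathcal{S} = \gamma\sum_{i=1}^{k}\norm{x_i-x_i'} + \gamma^{1+\delta}\sum_{i=1}^{k}\norm{v_i-v_i'} + \gamma^{1+\delta}\sum_{j=1}^{k+1}\norm{z_j-z_j'}$. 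I would control the first two sums via the stability estimates~\eqref{eq:second_stab_estimate_x}-\eqref{eq:second_stab_estimate_v} of \Cref{lem:Lip_Phi_gen}, substituting the bounds on $\norm{z_j-z_j'}$ derived in the first part, and the third sum directly from those same bounds. The three groupings in the definition~\eqref{eq:def_LipXic} of $\LipXic{k+1,\gamma,\lambda}$ correspond respectively to the direct $z$-contribution (first line), to the $\mathscr{L}^v$-term arising from $\gamma^{1+\delta}\sum_i\norm{v_i-v_i'}$, and to the $\mathscr{L}^x$-term arising from $\gamma\sum_i\norm{x_i-x_i'}$.

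The only delicate point in the bookkeeping is that $\norm{z_k - z_k'}$ carries a factor $\gamma^{-1}\norm{A}$; however, in the second bound this is multiplied by the outer $\gamma^{1+\delta}$ prefactor, yielding a harmless $\gamma^\delta\norm{A}$ contribution consistent with the $\gamma^\delta$-scaled terms in~\eqref{eq:def_LipXic}. Apart from this, the rest of the argument is a long but routine triangle-inequality computation.
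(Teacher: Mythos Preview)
Your approach is correct and matches the paper's proof: both the telescoping identity for $\tLambda$ and the reduction of the $\Xi$-estimate to the three sums $\gamma\sum\|x_i-x_i'\|$, $\gamma^{1+\delta}\sum\|v_i-v_i'\|$, $\gamma^{1+\delta}\sum\|z_j-z_j'\|$ via~\eqref{eq:lip_Bminor} and \Cref{lem:Lip_Phi_gen} are exactly what the paper does.

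One correction to your bookkeeping description: the attribution of the three groupings in~\eqref{eq:def_LipXic} is not accurate. The factor $(2+\mathscr{D}+(1+\gamma^\delta\LipT)^k+\gamma\LipT)$ multiplying $\gamma^\delta(1+(k\gamma)^2\rmm(k,\gamma))$ cannot come from the direct $z$-sum alone---the $\mathscr{D}+\gamma\LipT$ piece is the coefficient of the isolated $\|z_k-z_k'\|$ term in~\eqref{eq:second_stab_estimate_x}, and $(1+\gamma^\delta\LipT)^k$ comes from the isolated $\|z_k-z_k'\|$ term in~\eqref{eq:second_stab_estimate_v}. The paper organizes the final collection by \emph{$z$-index} rather than by source sum: all coefficients of $\|z_k-z_k'\|$ across the three sums yield the first term of~\eqref{eq:def_LipXic}, the lone $\|z_{k+1}-z_{k+1}'\|$ (present only in the direct $z$-sum) yields the $\gamma^{1+\delta}(1+k\gamma\rmm(k,\gamma))$ term, and all coefficients of $\sum_{j\leq k-1}\|z_j-z_j'\|$ yield the second line. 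This is precisely why~\eqref{eq:second_stab_estimate_x}--\eqref{eq:second_stab_estimate_v} were stated with $\|z_k-z_k'\|$ singled out. With this reorganization, the routine computation goes through exactly as you describe.
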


\begin{proof}
  Fix~$\lambda > 0$, $(k,\gamma) \in \msec$, $(x,v) \in \rset^{2d}$, $\tzbf = \{\tz_j\}_{j=1}^{k-1} \subset \rset^{d}$, $\bfw = \{w_j\}_{j=1}^{k+1} \in \msw^{k+1}$, and  $(g_1,g_2),(g'_1,g'_2) \in \rset^{2d}$. We introduce for $i \in\iint{0}{k}$, $  (x_i,v_i)  =\tGammaminorc{\gamma}[i,k+1](x,v,\{\widetilde{z}_j\}_{j=1}^{i \wedge (k-1)},\{w_j\}_{j=1}^{i},(\gminor_1,\gminor_2))$, and similarly $    (x_i',v_i')  = \tGammaminorc{\gamma}[i,k+1](x,v,\{\widetilde{z}_j\}_{j=1}^{i \wedge (k-1)},\{w_j\}_{j=1}^{i},(\gminor_1',\gminor_2'))$.
 With this notation and recalling~\eqref{eq:def_Phiminor_k}, we obtain by the definition~\eqref{eq:def_ttGammaminorc_0} of $\tGammaminorc{\gamma}[i,k]$ that
  \begin{equation}
    \label{eq:proof_xi_lip_lem:Lip_Phi_gen}
    (x_{i},v_{i}) =  \Gammaminorc{\gamma}[i]\left(x,v,\{z_j\}_{j=1}^{i},\{w_j\}_{j=1}^{i}\right) \eqsp,
    \qquad
    (x'_{i},v'_{i}) = \Gammaminorc{\gamma}[i]\left(x,v,\{z'_j\}_{j=1}^{i},\{w_j\}_{j=1}^{i}\right) \eqsp,
  \end{equation}
    where~$\{z_i\}_{i=1}^{k+1}$ and $\{z_i'\}_{i=1}^{k+1}$ are the sequences defined in~\eqref{eq:def_z_i_tilde_z_i_g1_g2} based on $(g_1,g_2)$ and $(g_1',g_2')$, respectively.
  We start by estimating the difference~$\norm{z_i-z_i'}$ for $i\in\iint{1}{k+1}$. First, by~\eqref{eq:def_z_i_tilde_z_i_g1_g2}, for any $i \in \{1,\dots,k-1\}$,
  \begin{equation}
    \label{eq:proof_Lip_bound_z_1_k-1}
     \norm{z_i-z_i'} \leq \gamma \rmm(k,\gamma)\left( \norm{g_1-g_1'} + \norm{g_2-g_2'} \right).
  \end{equation}
  Next, by~\eqref{eq:def_z_k_tilde_z_i_g1_g2} and~\eqref{lem:estimate_gminorbfc} we get 
  \begin{equation}
    \label{eq:proof_Lip_bound_z_k_z_k_p}
    \begin{aligned}
      \norm{z_k-z_k'} & \leq \frac1\gamma \norm{g_1-g_1'} +  k \sum_{i=1}^{k-1} \norm{z_i-z_i'} \leq \frac{1 + (k\gamma)^2 \rmm(k,\gamma)}{\gamma} \left( \norm{g_1-g_1'} + \norm{g_2-g_2'} \right), \\
      \norm{z_{k+1}-z_{k+1}'} &\leq  \norm{g_2-g_2'} + \sum_{i=1}^{k} \norm{z_i-z_i'} \\
      & \leq \norm{z_{k}-z_k'} + [1 + k\gamma \rmm(k,\gamma)] \left( \norm{g_1-g_1'} + \norm{g_2-g_2'} \right).
    \end{aligned}
  \end{equation}
  We are now in position to prove~\eqref{eq:propo:Lip_psi_i_k_lip_xi_k_gen_1}. By \eqref{eq:def_Lambda} and \eqref{eq:def_gminorbfc} and since $g_2 = \sum_{i=0}^{k}\gminorbfc{k+1}_{2,i+1} z_{i+1}$ by \eqref{eq:def_z_k_tilde_z_i_g1_g2}, 
  \begin{align}
    &  \norm{\tLambda^{(k+1)}_{\{\widetilde{z}_j\}_{j=1}^{k-1}}(g_1,g_2)-\tLambda^{(k+1)}_{\{\widetilde{z}_j\}_{j=1}^{k-1}}(g_1',g_2')} \leq \norm{g_2-g_2'} + \sum_{i=0}^{k-1} \left(1-\gminorbfc{k+1}_{2,i+1}\right) \norm{ z_{i+1} -z_{i+1}'} \\
    & \qquad \qquad \qquad \qquad \leq \norm{g_2-g_2'} + (1-\gminorc)\norm{z_k-z_k'} + \sum_{i=0}^{k-2} \norm{ z_{i+1}  -z_{i+1}'} \\
    & \qquad \qquad \qquad \qquad \leq \norm{g_2-g_2'} + (1-\gminorc)\norm{z_k-z_k'}+ \rmm(k,\gamma) k\gamma \left( \norm{g_1-g_1'}+ \norm{g_2-g_2'}\right) \eqsp.
\end{align}
  The estimate~\eqref{eq:propo:Lip_psi_i_k_lip_xi_k_gen_1} follows by combining the last inequality with~\eqref{eq:proof_Lip_bound_z_k_z_k_p}.

We next turn to the proof of \eqref{eq:propo:Lip_psi_i_k_lip_xi_k_gen_2}. First, by \eqref{eq:def_Ximinor_gen}-\eqref{eq:def_Ximinor_genb} and the notation introduced at the beginning of the proof, 
\begin{equation}
  \begin{aligned}
    \label{eq:proof_Lip_Xi_bound_decomp_1}
    \norm{  \Ximinorc{\gamma,x,v,\tzbf,\bfw}[k+1](g_1,g_2) - \Ximinorc{\gamma,x,v,\tzbf,\bfw}[k+1](g'_1,g'_2)} \leq \gamma \sum_{i=0}^k \norm{\Theta^{(i)}(x_i,v_i,z_{i+1},w_{i+1})-\Theta^{(i)}(x_i',v_i',z_{i+1}',w_{i+1})} & \\
    \leq (2+k\gamma)\LipT \gamma\left( \sum_{i=1}^k \norm{x_i-x_i'} + \gamma^\delta \sum_{i=1}^k \norm{v_i-v_i'}+\gamma^{\delta}\sum_{i=0}^k \norm{z_{i+1}-z'_{i+1}} \right), &
  \end{aligned}
\end{equation}
where we used~\eqref{eq:lip_Bminor} for the last inequality, and the fact that~$(x_0,v_0)=(x_0',v_0')$ to eliminate the term~$i=0$ in the first two sums on the right hand side of the last inequality. Therefore, in view of~\eqref{eq:second_stab_estimate_x} and~\eqref{eq:second_stab_estimate_v} in \Cref{lem:Lip_Phi_gen}, 
\begin{equation}
\begin{aligned}
  & \norm{  \Ximinorc{\gamma,x,v,\tzbf,\bfw}[k+1](g_1,g_2) - \Ximinorc{\gamma,x,v,\tzbf,\bfw}[k+1](g'_1,g'_2)}\\
  & \qquad \qquad \leq (2+k\gamma)\LipT \gamma\left( \gamma^\delta \norm{z_{k+1}-z_{k+1}'} + \gamma^\delta \mathscr{K}_{k,\gamma} \norm{z_k-z_k'} + \mathscr{R}_{k,\gamma,\lambda} \sum_{i=1}^{k-1} \norm{z_{i}-z'_{i}} \right),
  \end{aligned}
\end{equation}
with $\mathscr{K}_{k,\gamma} = 1 + \mathscr{D} + \left(1+\gamma^{1+\delta}\LipT\right)^k + \gamma\LipT$ and $\mathscr{R}_{k,\gamma,\lambda} = k \LipMGammaminor(\lambda) \LipGammaminor(\lambda)^{k}+ \mathscr{L}^x_{k,\gamma,\lambda} + \gamma^\delta (1+k\mathscr{L}^v_{k,\gamma,\lambda})$. The conclusion then follows from~\eqref{eq:proof_Lip_bound_z_1_k-1} and~\eqref{eq:proof_Lip_bound_z_k_z_k_p}.
\end{proof}

\subsubsection{Estimates on diffeomorphisms defined for the total noise}
\label{sec:stability_diffeo}

 We use in this section the stability results obtained in Lemmas~\ref{lem:Lip_Phi_gen} and~\ref{propo:Lip_psi_i_k_lip_xi_k_gen} to prove that the function $\bGammaminorc{\gamma,\{\widetilde{z}_j\}_{j=1}^{k-1},\{w_j\}_{j=1}^{k+1}}[k+1]$ giving the $(k+1)$-th iterate in~\eqref{eq:def_bGamma}, considered as a function of $(g_1,g_2)$ for fixed initial condition $(x,v)$, is a $\rmC^1$-diffeomorphism. In addition, we also establish regularity properties of this function and its inverse with respect to the initial condition $(x,v)$. To state our results, we introduce the following functions for~$(k,\gamma) \in\msec$, and given realizations~$\tzbf = \{\tz_j\}_{j=1}^{k-1} \subset \rset^{d}$, $\bfw=\{w_j\}_{j=1}^{k+1} \subset \rset^{m}$:
\begin{itemize}[wide, labelwidth=!, labelindent=0pt]
\item for a given initial condition~$(x,v) \in \rset^{2d}$, set $\bGammaminorc{\gamma,x,v,\tzbf,\bfw}[k+1](g_1,g_2) = \bGammaminorc{\gamma,\tzbf,\bfw}[k+1]((x,v),(g_1,g_2))$;
\item for a given realization~$(g_1,g_2) \in \rsetdd$, set~$\bGammaminorc{\gamma,\tzbf,\bfw,g_1,g_2}[k+1](x,v) = \bGammaminorc{\gamma,\tzbf,\bfw}[k+1]((x,v),(g_1,g_2))$. 
\end{itemize}
The first result is that~$\bGammaminorc{\gamma,x,v,\tzbf,\bfw}[k+1]$ is a diffeomorphism when~$k\gamma$ is a sufficiently small positive time and~$\gamma$ is not too large.

\begin{proposition}
  \label{propo:inverse_lip_Gamma}
    Assume that \Cref{ass:gminorc} and \Cref{ass:lip_minor_gen} hold. There exists~$\tbound >0$ such that, for any $t_0\in \ocint{0,\tbound}$, there is $\bar{\gamma}_{t_0}>0$ for which, for any $\gamma\in \ocint{0,\bar{\gamma}_{t_0}}$, $(x,v) \in \rset^{2d}$,  $\tzbf \in \rset^{(k-1)\times d}$ and $\bfw \in \rset^{(k+1)\times m}$
  \begin{enumerate}[wide, labelwidth=!, labelindent=0pt, label=(\alph*)]
\item   \label{propo:inverse_lip_Gamma_a} \label{item:3:propo:Lip_gamma_i_k_lip_xi_k}  
 $\bGammaminorc{\gamma,x,v,\tzbf,\bfw}[\lfloor t_0/\gamma\rfloor+1]$  is $3/2$-Lipschitz and a $\rmC^{1}$-diffeomorphism on $\rset^{2d}$ satisfying, for any $(g_1,g_2),(g'_1,g'_2) \in\rset^{2d}$,
  \begin{equation}
    \label{eq:ineq_bGamma_lower_bound}
    \norm{\bGammaminorc{\gamma,x,v,\tzbf,\bfw}[\lfloor t_0/\gamma\rfloor+1](g_1,g_2) - \bGammaminorc{\gamma,x,v,\tzbf,\bfw}[\lfloor t_0/\gamma\rfloor+1](g'_1,g'_2) } \geq \frac12 \norm{\left(g_1-g'_1,g_2-g'_2\right)} \eqsp;
  \end{equation}
  \item   \label{propo:inverse_lip_Gamma_b} \label{item:3:2:propo:Lip_gamma_i_k_lip_xi_k}  
    the inverse of~$\bGammaminorc{\gamma,x,v,\tzbf,\bfw}[\lfloor t_0/\gamma\rfloor]$, denoted by~$\bGammaminorinvc{\gamma,x,v,\tzbf,\bfw}[\lfloor t_0/\gamma\rfloor]$, is $2$-Lipschitz: for any $(u_1,u_2),(u'_1,u'_2) \in\rset^{2d}$,
  \begin{equation}
    \label{eq:ineq_invbGamma_upper_bound}
    \norm{\bGammaminorinvc{\gamma,x,v,\tzbf,\bfw}[\lfloor t_0/\gamma\rfloor+1](u_1,u_2) - \bGammaminorinvc{\gamma,x,v,\tzbf,\bfw}[\lfloor t_0/\gamma\rfloor+1](u'_1,u'_2) } \leq 2\norm{(u_1-u'_1,u_2-u'_2)} \eqsp.
  \end{equation}
  \end{enumerate}
\end{proposition}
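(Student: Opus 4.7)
The idea is to write $\bGammaminorc{\gamma,x,v,\tzbf,\bfw}[k+1]$ as a perturbation of the identity on $\rsetdd$. From \eqref{eq:def_bGamma},
\[
\bGammaminorc{\gamma,x,v,\tzbf,\bfw}[k+1](g_1,g_2) = (g_1,g_2) + \Psi_{\gamma,x,v,\tzbf,\bfw}^{(k+1)}(g_1,g_2),
\qquad
\Psi_{\gamma,x,v,\tzbf,\bfw}^{(k+1)}(g_1,g_2) := \gamma^\delta \begin{pmatrix} \Dbf_\gamma \tLambda^{(k+1)}_{\tzbf}(g_1,g_2) \\ \Zd \end{pmatrix} + \Ximinorc{\gamma,x,v,\tzbf,\bfw}[k+1](g_1,g_2).
\]
The core step is to exhibit $\tbound>0$ and, for each $t_0\in\ocint{0,\tbound}$, some $\bar{\gamma}_{t_0}>0$ such that, with $k=\lfloor t_0/\gamma\rfloor$ and $\gamma\leq\bar{\gamma}_{t_0}$, the map $\Psi^{(k+1)}_{\gamma,x,v,\tzbf,\bfw}$ is $\frac12$-Lipschitz. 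Once this is established, \ref{propo:inverse_lip_Gamma_a} and \ref{propo:inverse_lip_Gamma_b} follow at once: the upper bound $3/2$ comes from the triangle inequality, \eqref{eq:ineq_bGamma_lower_bound} from the reverse triangle inequality, and bijectivity with a $2$-Lipschitz inverse is standard for contractive perturbations of the identity on a Banach space (solving $y=(\Id+\Psi^{(k+1)})(x)$ by Banach's fixed-point theorem applied to $x\mapsto y-\Psi^{(k+1)}(x)$). The $\rmC^1$-diffeomorphism property follows because $\Psi^{(k+1)}$ is $\rmC^1$ (by \Cref{ass:lip_minor_gen}, the maps $f_\gamma,g_\gamma$ are $\rmC^1$ in $(x,v,z)$, and compositions preserve this), and its differential has operator norm at most $\frac12$, so $\Id + D\Psi^{(k+1)}$ is everywhere invertible by a Neumann series argument.

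To bound the Lipschitz constant of $\Psi^{(k+1)}$ I will apply \Cref{propo:Lip_psi_i_k_lip_xi_k_gen}, so that for any $\lambda>0$,
\[
\Lip\bigl(\Psi^{(k+1)}_{\gamma,x,v,\tzbf,\bfw}\bigr) \leq \gamma^\delta \mathscr{D}\left(1+\rmm(k,\gamma)k\gamma + \frac{1-\gminorc}{\gamma}\bigl(1+\rmm(k,\gamma)(k\gamma)^2\bigr)\right) + (2+k\gamma)\LipT\,\LipXic{k+1,\gamma,\lambda}.
\]
For $k\gamma\leq t_0\leq\tbound$, \Cref{lem:lemma_exp_gminorc} bounds $(1-\gminorc)/\gamma$, while \Cref{lem:minor_alpha_beta_gen} yields $\rmm(k,\gamma)\leq K/t_0^{\,2}$ for $\gamma$ small enough depending on $t_0$. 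Hence the first summand is $\gamma^\delta\mathrm{O}_{t_0}(1)$ and tends to $0$ as $\gamma\downarrow 0$ at fixed $t_0$. The delicate part is the second summand: in \eqref{eq:def_LipXic}, the first two contributions carry a prefactor $\gamma^\delta$ or $\gamma^{1+\delta}$ and are controlled as above, but the last contribution contains the factor $k\gamma^2\rmm(k,\gamma)\cdot k\LipMGammaminor(\lambda)\LipGammaminor(\lambda)^k$, which behaves like $\LipMGammaminor(\lambda)\LipGammaminor(\lambda)^k$ (as $k\gamma\sim t_0$).

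The central technical point is therefore the balance in $\lambda$: with $\LipMGammaminor(\lambda)\sim\lambda$ and $\LipGammaminor(\lambda)^k\lesssim \exp\bigl(t_0/\lambda + t_0(1+\lambda)\LipT\bigr)$, the product $\lambda\exp(t_0/\lambda)$ is minimized at $\lambda\simeq t_0$ and then equals $\mathrm{O}(t_0)$. I will therefore fix $\lambda = t_0$ (or $\lambda = c\,t_0$ for a suitable constant $c$) and check term-by-term that each summand of $\LipXic{k+1,\gamma,\lambda}$ splits as $\varphi(t_0)+\psi(\gamma,t_0)$ with $\varphi(t_0)\to 0$ as $t_0\downarrow 0$ and, for every fixed $t_0$, $\psi(\gamma,t_0)\to 0$ as $\gamma\downarrow 0$. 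The contributions involving $k\mathscr{L}^v_{k,\gamma,\lambda}$ and $\mathscr{L}^x_{k,\gamma,\lambda}$ are likewise handled with $\lambda=t_0$ since their most singular dependence in $\gamma$ is cancelled by the prefactor $k\gamma^2\rmm(k,\gamma)\sim K\gamma/t_0$ and by the remaining $\gamma^\delta$ factor.

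Given this decomposition, the proof proceeds in two steps: first pick $\tbound>0$ small enough that $\varphi(t_0)\leq 1/4$ for all $t_0\in\ocint{0,\tbound}$, then, for each such $t_0$, pick $\bar{\gamma}_{t_0}>0$ small enough that the $\gamma$-dependent remainders (including the first summand in the displayed bound above) add up to at most $1/4$. This yields $\Lip(\Psi^{(k+1)})\leq 1/2$ uniformly in $(x,v)$, $\tzbf$, $\bfw$, and finishes the proof. The main obstacle is the careful verification of the $\lambda = t_0$ balance: the bound in \eqref{eq:def_LipXic} contains several interacting terms, and one must confirm that the simultaneous polynomial-in-$1/t_0$ blow-up of $\rmm(k,\gamma)$ and $1/\lambda$ and the exponential factor $\LipGammaminor(\lambda)^k$ are all counteracted by prefactors that render each term $\mathrm{o}(1)$ as $(\gamma,t_0)\to(0^+,0^+)$ along the chosen regime.
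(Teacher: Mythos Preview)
Your proposal is correct and follows essentially the same route as the paper: write $\bGammaminorc{\gamma,x,v,\tzbf,\bfw}[k+1] = \Id + \Psi^{(k+1)}$, invoke \Cref{propo:Lip_psi_i_k_lip_xi_k_gen} to bound $\Lip(\Psi^{(k+1)})$ by the quantity $C(k+1,\gamma,\lambda)$ in \eqref{eq:7}, and then choose $\lambda$ so that this bound is at most $1/2$; the diffeomorphism and inverse-Lipschitz statements then follow from the standard contractive-perturbation-of-identity argument (the paper cites \cite[Exercise~3.26]{duistermaat:kolk:2004}, which is exactly your Banach fixed point reasoning).

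The only genuine difference is in the tuning of $\lambda$. You optimise $\lambda\mapsto \lambda\exp(t_0/\lambda)$ and set $\lambda\simeq t_0$, which makes the dominant term of order $e\,t_0$ and forces you to track the interaction between the $\gamma^\delta/\lambda$ factor in $\LipGammaminor(\lambda)$ and the choice $\lambda=t_0$ (this is fine once you impose $\gamma^\delta\leq t_0$ through $\bar{\gamma}_{t_0}$). The paper instead fixes $\lambda = 1/(16K\LipT)$ \emph{independently of $t_0$}: then $\LipGammaminor(\lambda)^k\to \exp(t_0\cdot\mathrm{const})\to 1$ as $t_0\downarrow 0$, and the surviving term in $\limsup_{\gamma\downarrow 0} C(\lfloor t_0/\gamma\rfloor+1,\gamma,\lambda)$ is simply $2\lambda K\LipT\cdot(1+o(1))$, which is $\leq 1/4$ by the choice of $\lambda$. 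This avoids your ``balance'' verification entirely and makes the term-by-term check of \eqref{eq:def_LipXic} a one-line $\limsup$ computation. Either choice works, but the paper's is slightly more economical.
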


\begin{proof}
  We prove that there exists~$\btZ >0$ such that, for any
  $t_0 \in (0,\btZ]$, there is $\bgamma_{t_0} >0$ for which
  $(g_1,g_2) \mapsto \bGammaminorc{\gamma,x,v,\tzbf,\bfw}[\lfloor
  t_0/\gamma\rfloor](g_1,g_2)-(g_1,g_2)$ is
  $1/2$-Lipschitz. Item~\ref{item:3:propo:Lip_gamma_i_k_lip_xi_k} is
  then a straightforward consequence of~\cite[Exercise~3.26]{duistermaat:kolk:2004}.  First, for any $\lambda >0$,
  $(k,\gamma) \in \msec $, $(x,v) \in \rset^{2d}$,
  $\tzbf \in \rset^{(k-1)\times d}$ 
  $\bfw \in \rset^{(k+1)\times m}$, $(g_1,g_2),(g'_1,g'_2)
  \in\rset^{2d}$, by~\eqref{eq:def_bGamma} and \Cref{propo:Lip_psi_i_k_lip_xi_k_gen}, we have 
    \[
    \begin{aligned}
      & \left\| \bGammaminorc{\gamma,x,v,\{\widetilde{z}_j\}_{j=1}^{k-1},\{w_j\}_{j=1}^{k+1}}[k+1](g_1,g_2) - \begin{pmatrix} g_1\\g_2 \end{pmatrix} - \bGammaminorc{\gamma,x,v,\{\widetilde{z}_j\}_{j=1}^{k-1},\{w_j\}_{j=1}^{k+1}}[k+1](g_1',g_2') + \begin{pmatrix} g_1'\\g_2' \end{pmatrix} \right\| \\
      & \qquad \leq \gamma^{\delta}\mathscr{D} \left\| \tLambda^{(k+1)}_{\{\widetilde{z}_j\}_{j=1}^{k-1}}(g_1,g_2)-\tLambda^{(k+1)}_{\{\widetilde{z}_j\}_{j=1}^{k-1}}(g_1',g_2') \right\| \\
      & \qquad \qquad + \left\| \Ximinorc{\gamma,x,v,\{\widetilde{z}_j\}_{j=1}^{k-1},\{w_j\}_{j=1}^{k+1}}[k+1](g_1,g_2) - \Ximinorc{\gamma,x,v,\{\widetilde{z}_j\}_{j=1}^{k-1},\{w_j\}_{j=1}^{k+1}}[k+1](g_1',g_2') \right\| \\
      & \qquad \leq C(k+1,\gamma,\lambda) \left( \left\|g_1-g_1'\right\|+\left\|g_2-g_2'\right\|\right) \eqsp,
    \end{aligned}
  \]
  where we have set
  \begin{equation}
    \label{eq:7}
 C(k+1,\gamma,\lambda)  =  \gamma^{\delta}\mathscr{D}\parenthese{1+\rmm(k,\gamma) k\gamma + \frac{1-\gminorc}{\gamma}[1+ \rmm(k,\gamma)(k\gamma)^2]} +    (2+k\gamma)\LipT\LipXic{k+1,\gamma,\lambda} \eqsp.
\end{equation}

As noted previously, there is $\btZ^{(1)} \geq 0 $ for which, for any $t_0\in\ocintLigne{0,\btZ^{(1)}}$, there exists $\bar{\gamma}_{t_0}^{(1)}$ such that for any $\gamma\in \ocintLigne{0,\bar{\gamma}_{t_0}^{(1)}}$, we have $(\lfloor t_0/\gamma\rfloor,\gamma)\in \msec$.
Then,  \Cref{lem:minor_alpha_beta_gen} implies that there exists $\btZ^{(2)} \in \ocintLigne{0,\btZ^{(1)}}$ such that for any $t_0\in\ocintLigne{0,\btZ^{(2)}}$,  $ \limsup_{\gamma\downarrow 0} \rmm(\floor{t_0/\gamma},\gamma) \leq K[t_0^{-2} \vee t_0^{-1}]$ for some constant $K \geq 0$.
This implies, for any $t_0 \in \ocintLigne{0,\btZ^{(2)}}$ and $\lambda >0$,

    \[
    \begin{aligned}
      \limsup_{\gamma\downarrow 0} C(\lfloor
      t_0/\gamma\rfloor+1,\gamma,\lambda)
      & = \limsup_{\gamma\downarrow 0}(2+\lfloor t_0/\gamma\rfloor\gamma)\LipT \lfloor t_0/\gamma\rfloor^2 \gamma^2 \LipMGammaminor(\lambda) \LipGammaminor(\lambda)^{\lfloor
      t_0/\gamma\rfloor } \rmm(\lfloor t_0/\gamma\rfloor,\gamma)\\
      & = \lambda (2+ t_0)\LipT t_0^2 \exp\left( t_0 \left[ \frac1\lambda + (1+\lambda)\Ltt\right]\right) \limsup_{\gamma\downarrow 0} \rmm(\lfloor t_0/\gamma\rfloor,\gamma) \\
      & \leq \lambda (2+ t_0)\LipT K [t_0\vee 1] \exp\left( t_0 \left[
          \frac1\lambda + (1+\lambda)\Ltt\right]\right) \eqsp,
    \end{aligned}
    \]
    where we have used for the penultimate inequality the expressions of $\LipMGammaminor(\lambda), \LipGammaminor(\lambda)$ provided by \eqref{eq:def_LipGammaminor}. The latter quantity is smaller than~1/4 for any~$t_0\in(0,\btZ^{(2)}]$ upon choosing first~$\lambda = 1/(16K\LipT)$, and then~$\btZ \in \ocintLigne{0,\btZ^{(2)}}$ sufficiently small so that for any $t_0 \in \ocint{0,\btZ}$, 
    \[
      (2+ t_0)[t_0\vee 1] \exp\left( t_0 \left[ 16K\LipT + \left(1+\frac{1}{16K\LipT}\right)\Ltt\right]\right) \leq 4 \eqsp. 
    \]
    This leads to the claimed statement, and therefore proves item~\ref{item:3:propo:Lip_gamma_i_k_lip_xi_k}. Item~\ref{item:3:2:propo:Lip_gamma_i_k_lip_xi_k} next easily follows from~\eqref{eq:ineq_bGamma_lower_bound}. 
\end{proof}

The second result is that $\bGammaminorc{\gamma,\tzbf,\bfw,g_1,g_2}$ is a diffeomorphism when $k\gamma$ is a sufficiently small positive time and~$\gamma$ is not too large.

\begin{proposition}
  \label{propo:inverse_lip_gamma_initial_condition}
  Assume that \Cref{ass:gminorc} and \Cref{ass:lip_minor_gen} hold. There exists $\tbound >0$, such that, for any~$t_0\in \ocint{0,\tbound}$, there is $\bar{\gamma}_{t_0} \in (0,1]$ for which, for any $\gamma\in \ocint{0,\bar{\gamma}_{t_0}}$, $(g_1,g_2),(u_1,u_2) \in \rsetdd$, $\tzbf \in \rset^{(k-1)\times d}$ and $\bfw \in \rset^{(k+1)\times m}$, 
  \begin{enumerate}[wide, labelwidth=!, labelindent=0pt, label=(\alph*)]
  \item \label{item:1:propo:inverse_lip_gamma_initial_condition}
    the mapping~$\bGammaminorc{\gamma,\tzbf,\bfw,g_1,g_2}[\lfloor t_0/\gamma \rfloor+1]$ is Lipschitz on~$\rset^{2d}$: for any $(x,v),(x',v') \in \rset^{2d}$,
    \begin{equation}
       \norm{\bGammaminorc{\gamma,\tzbf,\bfw,g_1,g_2}[\lfloor t_0/\gamma \rfloor+1](x,v) - \bGammaminorc{\gamma,\tzbf,\bfw,g_1,g_2}[\lfloor t_0/\gamma \rfloor+1](x',v')} \leq \rme^{(1+2\Ltt)(t_0+\gamma)}\left( \norm{x-x'}+ \norm{v-v'}\right)\eqsp; 
    \end{equation}
  \item  \label{item:2:propo:inverse_lip_gamma_initial_condition}
    the inverse of~$\bGammaminorc{\gamma,x,v,\tzbf,\bfw}[\lfloor t_0/\gamma \rfloor+1]$ (see \Cref{propo:inverse_lip_Gamma}), namely $(x,v) \mapsto \bGammaminorinvc{\gamma,x,v,\tzbf,\bfw}[\lfloor t_0/\gamma \rfloor+1](u_1,u_2)$, is Lipschitz on $\rset^{2d}$: for any $(x,v),(x',v') \in \rset^{2d}$, 
    \begin{equation}
      \label{eq:item:2:propo:inverse_lip_gamma_initial_condition}
      \norm{\bGammaminorinvc{\gamma,x,v,\tzbf,\bfw}[\lfloor t_0/\gamma \rfloor+1](u_1,u_2) - \bGammaminorinvc{\gamma,x',v',\tzbf,\bfw}[\lfloor t_0/\gamma \rfloor+1](u_1,u_2)} \leq 2 \rme^{(1+2\Ltt)(t_0+\gamma)}\left( \norm{x-x'}+ \norm{v-v'} \right) \eqsp. 
    \end{equation}
  \end{enumerate}
\end{proposition}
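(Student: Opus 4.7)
The plan is to reduce both items to the stability estimate already established in \Cref{lem:Lip_Phi_gen} when the noise increments are held fixed, combined with the $2$-Lipschitz bound on the inverse supplied by \Cref{propo:inverse_lip_Gamma}~\ref{propo:inverse_lip_Gamma_b}. No genuinely new analysis should be needed; the care concerns keeping straight which among $(x,v)$, $(g_1,g_2)$ and $\tzbf$ is being treated as a variable and which as a parameter.

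For item~\ref{item:1:propo:inverse_lip_gamma_initial_condition}, I would first observe that once $\tzbf=\{\tz_j\}_{j=1}^{k-1}$ and $(g_1,g_2)$ are fixed, the relations~\eqref{eq:def_z_i_tilde_z_i_g1_g2}-\eqref{eq:def_z_k_tilde_z_i_g1_g2} determine the sequence $\{z_j\}_{j=1}^{k+1}$ uniquely. By~\eqref{eq:def_ttGammaminorc_0}, the map $(x,v)\mapsto \bGammaminorc{\gamma,\tzbf,\bfw,g_1,g_2}[k+1](x,v)$ then coincides with the $(k+1)$-th iterate of the Markov chain driven by this frozen noise sequence. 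Applying~\eqref{eq:first_stab_estimate} of \Cref{lem:Lip_Phi_gen} with $z_j=z'_j$ and $\lambda=1$ yields a Lipschitz constant of $\LipGammaminor(1)^{k+1}$. I would then impose $\bar\gamma_{t_0}\leq 1$ so that $\max(1,\gamma^\delta)=1$ in~\eqref{eq:def_LipGammaminor}, whence $\LipGammaminor(1)=1+\gamma(1+2\LipT)\leq \rme^{\gamma(1+2\LipT)}$; setting $k+1=\lfloor t_0/\gamma\rfloor+1$ and using $(k+1)\gamma\leq t_0+\gamma$ produces the claimed bound.

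For item~\ref{item:2:propo:inverse_lip_gamma_initial_condition}, I would use a standard implicit-function manipulation. Writing $N=\lfloor t_0/\gamma\rfloor+1$ and setting $u = \bGammaminorinvc{\gamma,x,v,\tzbf,\bfw}[N](u_1,u_2)$, $u' = \bGammaminorinvc{\gamma,x',v',\tzbf,\bfw}[N](u_1,u_2)$, the definition of the inverses gives $\bGammaminorc{\gamma,x,v,\tzbf,\bfw}[N](u) = (u_1,u_2) = \bGammaminorc{\gamma,x',v',\tzbf,\bfw}[N](u')$. Subtracting $\bGammaminorc{\gamma,x,v,\tzbf,\bfw}[N](u')$ from both sides yields
\begin{equation}
\bGammaminorc{\gamma,x,v,\tzbf,\bfw}[N](u) - \bGammaminorc{\gamma,x,v,\tzbf,\bfw}[N](u') = \bGammaminorc{\gamma,x',v',\tzbf,\bfw}[N](u') - \bGammaminorc{\gamma,x,v,\tzbf,\bfw}[N](u').
\end{equation}
The left-hand side has norm at least $\norm{u-u'}/2$ by the $2$-Lipschitz continuity of $\bGammaminorinvc{\gamma,x,v,\tzbf,\bfw}[N]$ from \Cref{propo:inverse_lip_Gamma}~\ref{propo:inverse_lip_Gamma_b}, while the right-hand side is exactly the quantity bounded by item~\ref{item:1:propo:inverse_lip_gamma_initial_condition} (applied with $(g_1,g_2):=u'$, only the initial conditions varying). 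Combining the two estimates produces the factor $2\rme^{(1+2\LipT)(t_0+\gamma)}$.

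The main obstacle is purely notational bookkeeping: in each application of \Cref{lem:Lip_Phi_gen} and \Cref{propo:inverse_lip_Gamma}, one must distinguish carefully which argument plays the role of the variable and which is frozen. In particular, for item~\ref{item:2:propo:inverse_lip_gamma_initial_condition} the inverse-Lipschitz bound of \Cref{propo:inverse_lip_Gamma} pertains to the $(g_1,g_2)$ argument, whereas the estimate to be proved concerns Lipschitz dependence on $(x,v)$; the bridge between the two is provided precisely by item~\ref{item:1:propo:inverse_lip_gamma_initial_condition}. Beyond this careful tracking of variables, no new quantitative estimate should be required.
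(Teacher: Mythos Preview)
Your proposal is correct and follows essentially the same route as the paper. For item~\ref{item:1:propo:inverse_lip_gamma_initial_condition} the paper also freezes the noise via~\eqref{eq:def_ttGammaminorc_0}, applies~\eqref{eq:first_stab_estimate} with $\lambda=1$, and bounds $\LipGammaminor(1)^{\lfloor t_0/\gamma\rfloor+1}$ by the exponential exactly as you do; for item~\ref{item:2:propo:inverse_lip_gamma_initial_condition} the paper performs the same implicit-function rearrangement, the only cosmetic difference being that it invokes the $2$-Lipschitz bound on $\bGammaminorinvc{\gamma,x',v',\tzbf,\bfw}[N]$ from \Cref{propo:inverse_lip_Gamma}\ref{propo:inverse_lip_Gamma_b} rather than the equivalent lower bound~\eqref{eq:ineq_bGamma_lower_bound} you cite.
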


\begin{proof}
Let $\btZ \geq 0 $ such that for any $t_0\in\ocintLigne{0,\btZ}$, there exists $\bar{\gamma}_{t_0}$ for which, for any $\gamma\in \ocintLigne{0,\bar{\gamma}_{t_0}}$, $(\lfloor t_0/\gamma\rfloor,\gamma)\in \msec$ and \Cref{propo:inverse_lip_Gamma}-\ref{propo:inverse_lip_Gamma_a}-\ref{propo:inverse_lip_Gamma_b} hold. 
To prove item~\ref{item:1:propo:inverse_lip_gamma_initial_condition}, we note that~\eqref{eq:21}-\eqref{eq:def_ttGammaminorc_0} and~\eqref{eq:first_stab_estimate} in \Cref{lem:Lip_Phi_gen} with $\lambda =1$ imply for any $t_0\in\ocintLigne{0,\btZ}$,  $\gamma\in \ocintLigne{0,\bar{\gamma}_{t_0}}$,
$(g_1,g_2) \in \rsetdd$, $\tzbf \in \rset^{(k-1)\times d}$ and $\bfw \in \rset^{(k+1)\times m}$, $(x,v),(x',v') \in \rset^{2d}$,
  \begin{equation}
    \norm{\bGammaminorc{\gamma,x,v,\tzbf,\bfw}[\lfloor t_0/\gamma \rfloor+1](g_1,g_2) - \bGammaminorc{\gamma,x',v',\tzbf,\bfw}[\lfloor t_0/\gamma \rfloor+1](g_1,g_2)} \leq \LipGammaminor(1)^{\lfloor t_0/\gamma \rfloor+1}\parentheseDeux{\norm{x-x'}+\norm{v-v'}} \eqsp,    
  \end{equation}
  from which the result follows in view of the bound
  \[
  \LipGammaminor(\lambda)^{\lfloor t_0/\gamma \rfloor+1} \leq \exp\left((t_0+\gamma) \left[ \frac1\lambda + (1+\lambda) \max\left(1,\frac{\gamma^\delta}{\lambda}\right)\Ltt\right] \right)\eqsp,
  \]
  and the fact that~$\gamma \leq \bar{\gamma}_{t_0} \leq 1$.

 \sloppy Regarding \ref{item:2:propo:inverse_lip_gamma_initial_condition},
 let
  $(x,v),(x',v') \in \rset^{2d}$ and denote by
  $(g_1,g_2) = \bGammaminorinvc{\gamma,x,v,\tzbf,\bfw}[\lfloor
  t_0/\gamma \rfloor](u_1,u_2)$ and
  $(g'_1,g'_2) = \bGammaminorinvc{\gamma,x',v',\tzbf,\bfw}[\lfloor
  t_0/\gamma \rfloor](u_1,u_2)$. We obtain using
  $(g'_1,g'_2) = \bGammaminorinvc{\gamma,x',v',\tzbf,\bfw}[\lfloor
  t_0/\gamma \rfloor](\bGammaminorc{\gamma,\tzbf,\bfw,g_1,g_2}[\lfloor
  t_0/\gamma \rfloor](x,v))$ and 
  \Cref{propo:inverse_lip_Gamma}-\ref{item:3:2:propo:Lip_gamma_i_k_lip_xi_k}, for any $t_0\in\ocintLigne{0,\btZ}$,  $\gamma\in \ocintLigne{0,\bar{\gamma}_{t_0}}$,
$(g_1,g_2) \in \rsetdd$, $\tzbf \in \rset^{(k-1)\times d}$ and $\bfw \in \rset^{(k+1)\times m}$,
  \begin{align}
    &    \txts \norm{ \bGammaminorinvc{\gamma,x,v,\tzbf,\bfw}[\lfloor t_0/\gamma \rfloor+1](u_1,u_2) 
      - \bGammaminorinvc{\gamma,x',v',\tzbf,\bfw}[\lfloor t_0/\gamma \rfloor+1](u_1,u_2)} = \norm{(g_1,g_2)-(g_1',g_2')}\\
    &\qquad \qquad \qquad\qquad \qquad \qquad \txts = \norm{\bGammaminorinvc{\gamma,x',v',\tzbf,\bfw}[\lfloor t_0/\gamma \rfloor+1](\bGammaminorc{\gamma,\tzbf,\bfw,g_1,g_2}[\lfloor t_0/\gamma \rfloor+1](x',v'))-\bGammaminorinvc{\gamma,x',v',\tzbf,\bfw}[\lfloor t_0/\gamma \rfloor+1](\bGammaminorc{\gamma,\tzbf,\bfw,g_1,g_2}[\lfloor t_0/\gamma \rfloor+1](x,v))}\\
    &\qquad \qquad \qquad\qquad \qquad \qquad \txts \leq 2 \norm{\bGammaminorc{\gamma,\tzbf,\bfw,g_1,g_2}[\lfloor t_0/\gamma \rfloor+1](x',v')-\bGammaminorc{\gamma,\tzbf,\bfw,g_1,g_2}[\lfloor t_0/\gamma \rfloor+1](x,v)} \eqsp,
  \end{align}
  which completes the proof using \ref{item:1:propo:inverse_lip_gamma_initial_condition}.
\end{proof}

\subsection{Proof of \Cref{theo:minor_main_gen}}
\label{sec:proof_main_resukt_gen}

We can now finally provide the proof of \Cref{theo:minor_main_gen}.

\begin{proof}[Proof of \Cref{theo:minor_main_gen}]
  For $(x,v) \in \rset^{2d}$ and $\msb \in \mathcal{B}(\rset^{2d})$, it holds, by the definition \eqref{eq:def_bGamma},
  \begin{equation}
  \Rkerminorc{\gamma}^{\lfloor t_0/\gamma \rfloor+1}((x,v),\msb) =\expe{\1_{\msb}\left\{\bGammaminorc{\gamma,x,v, \sqrt{\gamma}\sigma_{\gamma}\tZbfc{\lfloor t_0/\gamma \rfloor+1},\bfWc{\lfloor t_0/\gamma \rfloor+1}}[\lfloor t_0/\gamma \rfloor+1]\left(\sigma_{\gamma}\Gminorc{\lfloor t_0/\gamma \rfloor+1}\right)\right\}}.
  \end{equation}
  Fix~$M>0$,  and consider~$\tbound > 0$ such that the statements of Propositions~\ref{propo:inverse_lip_Gamma} and~\ref{propo:inverse_lip_gamma_initial_condition} hold true. Introduce next $t_0\in \ocint{0,\tbound}$ and the corresponding stepsize~$\bar{\gamma}_{t_0}$ as given by Propositions~\ref{propo:inverse_lip_Gamma} and~\ref{propo:inverse_lip_gamma_initial_condition}.

  The random variables $\sigma_{\gamma} \Gminorc{\lfloor t_0/\gamma \rfloor+1}$ and $\sqrt{\gamma}\sigma_{\gamma} \tZbfc{\lfloor t_0/\gamma \rfloor+1}$ are independent Gaussian random variables by \Cref{lem:indep_z_tilde_gen}. Denoting their densities with respect to the Lebesgue measure by~$\varphi_{t_0,\gamma}$ and~$\psi_{t_0,\gamma}$ respectively, setting $n_0 = \lfloor t_0/\gamma \rfloor+1$, and using a change of variable, for any  $\gamma \in \ocint{0,\bgamma_{t_0}}$
  \begin{equation}
  \begin{aligned}
    & \Rkerminorc{\gamma}^{\lfloor t_0/\gamma \rfloor+1}((x,v),\msb) = \int_{\rset^{n_0 d}\times \msw^{n_0}} \1_{\msb}\left\{\bGammaminorc{\gamma,x,v,\tzbf,\bfw}[\lfloor t_0/\gamma \rfloor+1](\gminor_1,\gminor_2)\right\} \varphi_{t_0,\gamma}(\gminor_1,\gminor_2)\psi_{t_0,\gamma}(\tzbf) \, \rmd \tzbf \, \wdensity{\otimes \lfloor t_0/\gamma \rfloor+1}(\rmd\bfw) \, \rmd \gminor_1 \, \rmd \gminor_2 \\
    & = \int_{\rset^{n_0 d}\times \msw^{n_0}} \!\!\!\1_{\msb}(u_1,u_2)\JacD_{\bGammaminorinvc{\gamma,x,v,\tzbf,\bfw}[\lfloor t_0/\gamma \rfloor+1]}(u_1,u_2) \varphi_{t_0,\gamma}\!\left(\bGammaminorinvc{\gamma,x,v,\tzbf,\bfw}[\lfloor t_0/\gamma \rfloor+1](u_1,u_2)\right)\!\psi_{t_0,\gamma}(\tzbf) \, \rmd \tzbf \, \wdensity{\otimes \lfloor t_0/\gamma \rfloor+1}(\rmd\bfw) \, \rmd u_1 \, \rmd u_2 \eqsp,
  \end{aligned}
  \end{equation}
  where $\bGammaminorinvc{\gamma,x,v,\tzbf,\bfw}[\lfloor t_0/\gamma \rfloor+1]$ is the inverse of~$\bGammaminorc{\gamma,x,v,\tzbf,\bfw}[\lfloor t_0/\gamma \rfloor+1]$ (well defined by~\Cref{propo:inverse_lip_Gamma}) and~$\JacD_{\bGammaminorinvc{\gamma,x,v,\tzbf,\bfw}[\lfloor t_0/\gamma \rfloor+1]}(u_1,u_2)$ is the absolute value of the determinant of the Jacobian matrix of this mapping. 

  \sloppy We can now introduce the reference point~$(\Zd,\Zd)$,  and relate the transition probability starting from~$(x,v)$ in terms of transitions starting from this $(\Zd,\Zd)$. For reasons that will become clear below and similarly to what is done in \Cref{sec:idea_ULA}, we replace $\bGammaminorinvc{\gamma,x,v,\tzbf,\bfw}[\lfloor t_0/\gamma \rfloor+1](u_1,u_2)$ by~$\sqrt{2}\,\bGammaminorinvc{\gamma,\Zd,\Zd,\tzbf,\bfw}[\lfloor t_0/\gamma \rfloor+1](u_1,u_2)$ and not simply~$\bGammaminorinvc{\gamma,\Zd,\Zd,\tzbf,\bfw}[\lfloor t_0/\gamma \rfloor+1](u_1,u_2)$ in $\varphi_{t_0,\gamma}$, henceforth write for any  $\gamma \in \ocint{0,\bgamma_{t_0}}$
  \begin{equation}
    \begin{aligned}
      \label{eq:proof_small_set_mino_decomp_1_gen}
      \Rkerminorc{\gamma}^{\lfloor t_0/\gamma \rfloor+1}((x,v),\msb) & = \int_{\rset^{n_0 d}\times \msw^{n_0}}\1_{\msb}(u_1,u_2) A_{{\gamma,x,v,\tzbf,\bfw}}(u_1,u_2) \JacD_{\bGammaminorinvc{\gamma,\Zd,\Zd,\tzbf,\bfw}[\lfloor t_0/\gamma \rfloor+1]}(u_1,u_2) \\
      & \qquad \qquad \times \varphi_{t_0,\gamma} \left(\sqrt{2} \, \bGammaminorinvc{\gamma,\Zd,\Zd,\tzbf,\bfw}[\lfloor t_0/\gamma \rfloor+1](u_1,u_2) \right) \psi_{t_0,\gamma}(\tzbf) \, \rmd \tzbf \, \wdensity{\otimes \lfloor t_0/\gamma \rfloor+1}(\rmd\bfw) \, \rmd u_1 \, \rmd u_2 \eqsp,
    \end{aligned}
  \end{equation}
where $A_{{\gamma,x,v,\tzbf,\bfw}}(u_1,u_2)  =   A^{(1)}_{{\gamma,x,v,\tzbf,\bfw}}(u_1,u_2)   A^{(2)}_{{\gamma,x,v,\tzbf,\bfw}}(u_1,u_2)$ with
\begin{align}
  \label{eq:6}
  A^{(1)}_{{\gamma,x,v,\tzbf,\bfw}}(u_1,u_2) & = \frac{\JacD_{\bGammaminorinvc{\gamma,x,v,\tzbf,\bfw}[\lfloor t_0/\gamma \rfloor+1]}(u_1,u_2)}{\JacD_{\bGammaminorinvc{\gamma,\Zd,\Zd,\tzbf,\bfw}[\lfloor t_0/\gamma \rfloor+1]}(u_1,u_2)}\eqsp, \qquad 
  A^{(2)}_{{\gamma,x,v,\tzbf,\bfw}}(u_1,u_2) = \frac{\varphi_{t_0,\gamma}\left(\bGammaminorinvc{\gamma,x,v,\tzbf,\bfw}[\lfloor t_0/\gamma \rfloor+1](u_1,u_2)\right)}{\varphi_{t_0,\gamma}\left(\sqrt{2}\,\bGammaminorinvc{\gamma,\Zd,\Zd,\tzbf,\bfw}[\lfloor t_0/\gamma \rfloor+1](u_1,u_2)\right)} \eqsp. 
\end{align}
We next bound $A^{(1)}_{{\gamma,x,v,\tzbf,\bfw}}(u_1,u_2)$ and~$A^{(2)}_{{\gamma,x,v,\tzbf,\bfw}}(u_1,u_2)$ from below. For the latter term, we assume that~$\norm{x} + \norm{v} \leq M$.

By an application of Hadamard's inequality (see \Cref{propo:bound_det_Lipsc_map} below) and \Cref{propo:inverse_lip_Gamma}-\ref{item:3:propo:Lip_gamma_i_k_lip_xi_k}, we obtain for any $
 (u_1,u_2) \in \rset^{2d}$, $(\tzbf,\bfw) \in \rset^{n_0 d}\times \msw^{n_0}$ and   $\gamma \in \ocint{0,\bgamma_{t_0}}$,
\begin{equation}
\JacD_{\bGammaminorinvc{\gamma,x,v,\tzbf,\bfw}[\lfloor t_0/\gamma \rfloor+1]}(u_1,u_2) = \frac{1}{\JacD_{\bGammaminorc{\gamma,x,v,\tzbf,\bfw}[\lfloor t_0/\gamma \rfloor+1]}\left(\bGammaminorinvc{\gamma,x,v,\tzbf,\bfw}[\lfloor t_0/\gamma \rfloor+1](u_1,u_2)\right)} \geq  \left(\frac23\right)^d.
\end{equation}
Similarly, $\JacD_{\bGammaminorinvc{\gamma,\Zd,\Zd,\tzbf,\bfw}[\lfloor t_0/\gamma \rfloor+1]}(u_1,u_2) \leq  2^d$ by \Cref{propo:inverse_lip_Gamma}-\ref{item:3:2:propo:Lip_gamma_i_k_lip_xi_k}, so that for any $ (u_1,u_2) \in \rset^{2d}$, $(\tzbf,\bfw) \in \rset^{n_0 d}\times \msw^{n_0}$ and  $\gamma \in \ocint{0,\bgamma_{t_0}}$,
\begin{equation}
  \label{eq:bound_A_1_gen}
  A^{(1)}_{{\gamma,x,v,\tzbf,\bfw}}(u_1,u_2)\geq \frac{1}{3^d} \eqsp .
\end{equation}
We next recall that $\varphi_{t_0,\gamma}$ is a Gaussian density with mean~0 and covariance matrix~$\sigma_\gamma^2 \cminorc{\lfloor t_0/\gamma \rfloor+1}$ (see~\eqref{eq:def_C_k}), and use the inequality~\eqref{eq:ineq_Gaussian_A_2} to write for any $ (u_1,u_2) \in \rset^{2d}$, $(\tzbf,\bfw) \in \rset^{n_0 d}\times \msw^{n_0}$,   $\gamma \in \ocint{0,\bgamma_{t_0}}$,
\begin{equation}
A^{(2)}_{{\gamma,x,v,\tzbf,\bfw}}(u_1,u_2) \geq \exp\parenthese{-\frac{1}{\sigma_{\gamma}^2}\norm{\left(\cminorc{\lfloor t_0/\gamma \rfloor+1}\right)^{-\frac{1}{2}} \parenthese{\bGammaminorinvc{\gamma,x,v,\tzbf,\bfw}[\lfloor t_0/\gamma \rfloor+1](u_1,u_2)-\bGammaminorinvc{\gamma,\Zd,\Zd,\tzbf,\bfw}[\lfloor t_0/\gamma \rfloor+1](u_1,u_2)}}^2}.
\end{equation}
Next, by \Cref{propo:inverse_lip_gamma_initial_condition}-\ref{item:2:propo:inverse_lip_gamma_initial_condition} and \Cref{lem:covariance_matrix_minorization_conditions_simple_gen},
\begin{align}
  \liminf_{\widetilde{\gamma}\downarrow 0} A^{(2)}_{\widetilde{\gamma},x,v,\tzbf,\bfw}(u_1,u_2)  & \geq \exp\parenthese{- 4 \rme^{2(1+2\Ltt)t_0}\normop{\parentheseDeux{\continueMatrix{t_0}}^{-1}} \left(\norm{x}+ \norm{v}\right)^2}\\
  &  \geq \exp\parenthese{-4 \rme^{2(1+2\Ltt)t_0} \normop{\parentheseDeux{\continueMatrix{t_0}}^{-1}} M^2} = \eta_{t_0,M} \eqsp,
\end{align}
where $\continueMatrix{t_0}$ is the matrix defined in~\eqref{eq:def_continue_matrix}. Upon reducing~$\bar{\gamma}_{t_0}$, we can therefore assume that the following inequality holds: for any $ (u_1,u_2) \in \rset^{2d}$, $(\tzbf,\bfw) \in \rset^{n_0 d}\times \msw^{n_0}$,   $\gamma \in \ocint{0,\bgamma_{t_0}}$,
\begin{equation}
  \label{eq:bound_A_2_gen} A^{(2)}_{{\gamma},x,v,\tzbf,\bfw}(u_1,u_2) \geq \frac{\eta_{t_0,M}}{2}.
\end{equation}

Now that the factor~$A_{{\gamma,x,v,\tzbf,\bfw}}(u_1,u_2)$ is bounded from below, we can consider the remaining terms in~\eqref{eq:proof_small_set_mino_decomp_1_gen} in order to construct a reference minorization measure. More precisely, using again \Cref{propo:inverse_lip_Gamma}-\ref{item:3:propo:Lip_gamma_i_k_lip_xi_k}, the change of variable $(g_1,g_2)= \sqrt{2} \, \bGammaminorinvc{\gamma,\Zd,\Zd,\tzbf,\bfw}[\lfloor t_0/\gamma \rfloor+1](u_1,u_2)$ implies, for any  $\gamma \in \ocint{0,\bgamma_{t_0}}$,
\begin{align}
  &\int_{\rset^{n_0 d}\times \msw^{n_0}} \hspace{-0.6cm} \1_{\msb}(u_1,u_2) \JacD_{\bGammaminorinvc{\gamma,\Zd,\Zd,\tzbf,\bfw}[\lfloor t_0/\gamma \rfloor+1]}(u_1,u_2)  \varphi_{t_0,\gamma}\left(\sqrt{2}\,\bGammaminorinvc{\gamma,\Zd,\Zd,\tzbf,\bfw}[\lfloor t_0/\gamma \rfloor+1](u_1,u_2)\right)\psi_{t_0,\gamma}(\tzbf) \, \rmd \tzbf \, \wdensity{\otimes \lfloor t_0/\gamma \rfloor+1}(\rmd\bfw) \, \rmd u_1 \, \rmd u_2\\
  &\qquad = 2^{-d} \int_{\rset^{n_0 d}\times \msw^{n_0}} \1_{\msb}\left\{\bGammaminorc{\gamma,\Zd,\Zd,\tzbf,\bfw}[\lfloor t_0/\gamma \rfloor+1]\left(\frac{\gminor_1}{\sqrt{2}},\frac{\gminor_2}{\sqrt{2}}\right)\right\} \varphi_{t_0,\gamma}(\gminor_1,\gminor_2)\psi_{t_0,\gamma}(\tzbf) \, \rmd \tzbf \, \wdensity{\otimes \lfloor t_0/\gamma \rfloor+1}(\rmd\bfw) \, \rmd \gminor_1 \, \rmd \gminor_2\\
&\qquad = 2^{-d}\mu_{t_0,\gamma}(\msb)\eqsp, \label{eq:constructed_minorization_measure}
\end{align}
where $\mu_{t_0,\gamma}$ is a probability measure on~$(\rset^{2d}, \mcb{\rset^{2d}})$. By combining the latter inequality and the lower bounds~\eqref{eq:bound_A_1_gen} and~\eqref{eq:bound_A_2_gen} in~\eqref{eq:proof_small_set_mino_decomp_1_gen}, we obtain the following result: for any  $\gamma \in \ocint{0,\bgamma_{t_0}}$ and $(x,v) \in \rset^{2d}$ such that~$\norm{x}+\norm{v} \leq M$,
\begin{equation}
 \Rkerminorc{\gamma}^{\lfloor t_0/\gamma \rfloor+1}((x,v),\msb) \geq \varepsilon_{t_0,M} \mu_{t_0,\gamma}(\msb) \eqsp, 
\end{equation}
with $\varepsilon_{t_0,M} = 6^{-d} \eta_{t_0,M}/2$. The inequality~\eqref{eq:theo:minor_main_gen} then follows from~\cite[Lemma 18.2.7]{douc:moulines:priouret:2018} for instance. 
\end{proof}

We conclude this section by recalling (and proving for completeness) a well-known result on the Jacobian determinant of~$\rmC^1$ Lipschitz function.

\begin{proposition}
  \label{propo:bound_det_Lipsc_map}
  Let $\Phi : \rset^d \to \rset^d$ be a continuously differentiable $L$-Lipschitz function, \ie~for any $x,y \in \rset^d$,
  \begin{equation}
\norm{\Phi(x)-\Phi(y)} \leq L\norm{x-y} \eqsp.
  \end{equation}
  Then, $\JacD_{\Phi}(x) \leq L^d$, where $\JacD_{\Phi}$ denotes the absolute value of the determinant of the Jacobian matrix of $\Phi$.
\end{proposition}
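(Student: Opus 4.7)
The plan is to show that the operator norm of the Jacobian matrix $\rmD\Phi(x)$ is bounded by $L$ at every point $x \in \rset^d$, and then invoke Hadamard's inequality to control the determinant in terms of the Euclidean norms of the columns.

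First, I would establish that $\normop{\rmD\Phi(x)} \leq L$ for every $x \in \rset^d$. This follows directly from the Lipschitz assumption: for any unit vector $u \in \rset^d$ and $t > 0$, one has
\begin{equation}
\norm{\frac{\Phi(x+tu)-\Phi(x)}{t}} \leq L \eqsp,
\end{equation}
and letting $t \downarrow 0$ gives $\norm{\rmD\Phi(x) u} \leq L$. Taking the supremum over unit vectors~$u$ yields the desired operator norm bound.

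Next, I would apply Hadamard's inequality to the matrix $\rmD\Phi(x)$. Recall that Hadamard's inequality states that for any square matrix $\mathbf{M} \in \rset^{d \times d}$ with columns $\mathbf{M}_1,\dots,\mathbf{M}_d$, one has $|\det(\mathbf{M})| \leq \prod_{i=1}^d \norm{\mathbf{M}_i}$. Applied to $\rmD\Phi(x)$, whose $i$-th column is $\rmD\Phi(x) e_i$ (with $(e_i)_{i=1}^d$ the canonical basis of $\rset^d$), this gives
\begin{equation}
\JacD_{\Phi}(x) = |\det(\rmD\Phi(x))| \leq \prod_{i=1}^d \norm{\rmD\Phi(x) e_i} \leq \prod_{i=1}^d \normop{\rmD\Phi(x)} \leq L^d\eqsp,
\end{equation}
which concludes the proof. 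There is no real obstacle here; the only subtlety is the passage from the Lipschitz constant to the operator norm of the Jacobian, which is a standard consequence of differentiability.
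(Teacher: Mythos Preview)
Your proof is correct and follows essentially the same approach as the paper: bound the norm of the Jacobian acting on basis vectors by the Lipschitz constant, then apply Hadamard's inequality to the columns (the paper phrases it in terms of $\nabla\Phi_i$, i.e.\ the rows, but the argument is identical).
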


\begin{proof}
  Denote by $\JacM_{\Phi} : \rset^{d} \to \rset^{d \times d}$ the Jacobian matrix of~$\Phi$. Since $\Phi$ is $L$-Lipschitz, it holds, for any $(x,h) \in \rset^d \times \rset^d$, $   \norm{\JacM_{\Phi}(x)h} \leq L \norm{h}$.
  By choosing $h=\mathbf{e}_i$ for $i \in\{1,\ldots,d\}$ (with $\mathbf{e}_1, \dots, \mathbf{e}_d$ the canonical basis of $\rset^d$), it follows that
  \begin{equation}
  \left\|\nabla \Phi_i \right\|^2 = \sum_{j=1}^d \left(\frac{\partial \Phi_i}{\partial x_j}\right)^2 \leq L^2,
  \end{equation}
  where $\Phi_i$ is the $i$-th component of $\Phi$. In addition, using Hadamard's inequality (see for instance~\cite[Example 4.18]{hiai:petz:2014}), we obtain~$\left|\det(\JacM_{\Phi}(x))\right| \leq \prod_{i=1}^d \left\|\nabla \Phi_i \right\| \leq L^{d}$, which completes the proof.
\end{proof}

\section{Proof of \Cref{theo:drift_exp}}
\label{sec:proof-crefth_drift}

Let us start by recalling the expression of the function~$\Lya:\rset^{2d}\to \rset$, introduced in~\eqref{eq:def_function_lyap_super_quad_init} and defined under \Cref{ass:gminorc} and \Cref{ass:contoleGAndF}(U) for~$x,v\in \rset^d$ and $\gamma\in \ocint{0,\bar{\gamma}}$:
\begin{equation}
  \label{eq:def_function_lyap_super_quad}
  \Lya(x,v)= \frac{\kappa^2}{2} \norm{x}^2 + \norm{v}^2 + \frac{\kappa^2\gamma (1+\gamma^\delta \vartheta_\gamma)}{1-\gminorc} \ps{x}{v} + 2\parametreGradU U(x) \eqsp.
\end{equation}
This function is indeed nonnegative by \Cref{lem:minoration_lya} below. The proof of~\Cref{theo:drift_exp} relies on a conditioned Lyapunov drift inequality for~$\Lya$ (see Section~\ref{sec:conditioned_drift}), which motivates the choice of the various prefactors in~\eqref{eq:def_function_lyap_super_quad} (see the discussion at the end of the proof of Lemma~\ref{lem:big_calc_drift}). This allows to write the proof of \Cref{theo:drift_exp} in \Cref{sec:proof_thm2}, with the help of some technical results postponed to \Cref{sec:supporting_lemmas}.

\subsection{Conditioned drift inequality}
\label{sec:conditioned_drift}

The following Lyapunov inequality is a key result to prove~\Cref{theo:drift_exp}. Recall that the function~$\fonctionControle : \rset^{3d} \times \rset^{m_1+m_2} \to \rset_+$ is defined in~\eqref{eq:5}.

\begin{lemma}
  \label{lem:big_calc_drift}
  Assume that~\Cref{ass:gminorc}, \Cref{ass:lip}(U), \Cref{ass:contoleGAndF}(U) and \Cref{ass:w} hold. For any $x,v \in \rset^{d}$, $w\in \rset^{m_1+m_2}$ and $\gamma \in \ocint{0,\bar{\gamma}}$, set $(X_1^{x,v,w},V_1^{x,v,w}) =  \Gammaminorc{\gamma}\left(x,v,\sqrt{\gamma}\sigma_{\gamma} Z, w\right)$, where $\Gammaminorc{\gamma}$ is defined by \eqref{minor_def_Gamma_b_gamma} and $Z$ is a $d$-dimensional standard Gaussian random variable. Then, there exists $C \geq 0$ such that, for any~$\gamma\in \ocint{0,\bgamma\wedge 1}$, $x,v\in \rset^d$ and $w=(w_1,w_2)\in \rset^{m_1+m_2}$,
  \begin{align}
    \expe{\Lya(\XminorTer_1^{x,v,w},\VminorTer_1^{x,v,w})}\leq  \Lya(x,v)& -\gamma\kappa\norm{v}^2-\gamma\kappa\constasssuperexpone\parentheseDeux{ \frac{\norm{\nabla U(x)}^2}{L^2}+\norm{x}}\\
    & +C\gamma\left[1+\norm{\nabla U(x)}+\norm{v} + \gamma^{\deltau/2}  \fonctionControleBis(x,v,w)+\gamma^{\deltau}\norm{x}\norm{w_1}\right] \eqsp,
  \end{align}
where the expectation is over the realizations of~$Z$, and
$  \fonctionControleBis(x,v,w) = \norm{\nabla U(x)}^2/L^2+\norm{v}^2+\norm{w}^2+\norm{x}$.
\end{lemma}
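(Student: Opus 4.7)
The plan is to expand $\PE[\Lya(\XminorTer_1^{x,v,w},\VminorTer_1^{x,v,w})]$ through the four summands of~\eqref{eq:def_function_lyap_super_quad} and identify which contributions at order $\gamma$ are designed to cancel. Writing the one-step update as $\XminorTer_1 - x = \gamma v + \gamma f_\gamma + \gamma^{\delta+\half}\sigma_\gamma\Dbf_\gamma Z$ and $\VminorTer_1 = \gminorc v + \gamma g_\gamma + \sqrt{\gamma}\sigma_\gamma Z$, where $f_\gamma, g_\gamma$ denote their evaluations at the scaled arguments $(x,\gamma^\delta v, \gamma^{\delta+\half}\sigma_\gamma Z, w)$, I would expand each summand using the centering of $Z$ and the three structural estimates of~\Cref{ass:contoleGAndF}.

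For $(\kappa^2/2)\PE[\norm{\XminorTer_1}^2 - \norm{x}^2]$, the $\ps{x}{\cdot}$ cross terms produce at leading order $\kappa^2\gamma\ps{x}{v} + \kappa^2\gamma\PE[\ps{x}{f_\gamma}]$; the second inequality of~\Cref{ass:contoleGAndF} bounds the second term by $\kappa^2\gamma^{1+\delta}\vartheta_\gamma\ps{x}{v}$ plus $\gamma^{\deltau}$-small remainders, giving total $\kappa^2\gamma(1+\gamma^\delta\vartheta_\gamma)\ps{x}{v}$ at leading order. Expanding $\PE[\norm{\VminorTer_1}^2] - \norm{v}^2$ yields $(\gminorc^2-1)\norm{v}^2 \leq -2\kappa\gamma\norm{v}^2 + O(\gamma^2)$ by~\Cref{lem:lemma_exp_gminorc}, together with $2\gamma\gminorc\PE[\ps{v}{g_\gamma}] + \gamma\sigma_\gamma^2 d + O(\gamma^2)$; splitting $g_\gamma = -\parametreGradU\nabla U(x) + [g_\gamma + \parametreGradU\nabla U(x)]$ and bounding the second part via the first inequality of~\Cref{ass:contoleGAndF} produces $-2\parametreGradU\gamma\gminorc\ps{v}{\nabla U(x)}$ plus a remainder linear in $\norm{v}$. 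The weighted bilinear term with coefficient $c_\gamma = \kappa^2\gamma(1+\gamma^\delta\vartheta_\gamma)/(1-\gminorc)$ is the crucial one: expanding
\begin{equation}
  \PE[\ps{\XminorTer_1}{\VminorTer_1}] - \ps{x}{v} = (\gminorc-1)\ps{x}{v} + \gamma\gminorc\norm{v}^2 + \gamma\PE[\ps{x}{g_\gamma}] + O(\gamma^2)
\end{equation}
and multiplying by $c_\gamma$ produces exactly $-\kappa^2\gamma(1+\gamma^\delta\vartheta_\gamma)\ps{x}{v}$, which cancels the $\ps{x}{v}$ contribution from the $\norm{\XminorTer_1}^2$ step, together with $\kappa\gamma\norm{v}^2$ (via $\gamma/(1-\gminorc)\to 1/\kappa$ from~\Cref{lem:lemma_exp_gminorc}) and, via the third inequality of~\Cref{ass:contoleGAndF}, the targeted dissipation $-\kappa\constasssuperexpone\gamma[\norm{\nabla U(x)}^2/L^2 + \norm{x}]$. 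Finally, $2\parametreGradU\PE[U(\XminorTer_1) - U(x)]$ is estimated via the Lipschitz gradient bound from~\Cref{ass:lip}; its leading term $2\parametreGradU\gamma\ps{\nabla U(x)}{v}$ cancels $-2\parametreGradU\gamma\gminorc\ps{v}{\nabla U(x)}$ from the $\norm{\VminorTer_1}^2$ expansion up to a remainder of order $\gamma^2\norm{\nabla U(x)}\norm{v}$, which is absorbed in the final error.

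Summing the four contributions, the $\ps{x}{v}$ and $\ps{\nabla U(x)}{v}$ cross terms cancel; the $\norm{v}^2$ coefficients combine as $-2\kappa\gamma + \kappa\gamma = -\kappa\gamma$; and the $\ps{x}{g_\gamma}$ contribution delivers $-\kappa\constasssuperexpone\gamma[\norm{\nabla U(x)}^2/L^2+\norm{x}]$, matching the claimed negative drift. The remaining pieces --- the Gaussian moment $\gamma\sigma_\gamma^2 d$, the cross noise expectations $\gamma^{3/2}\sigma_\gamma\PE[\ps{Z}{g_\gamma}]$ and its $f_\gamma$ analogue controlled by~\Cref{ass:lip_minor_gen} plus Cauchy--Schwarz on $\PE[\norm{Z}^2] = d$, the $\gamma^2\PE[\norm{g_\gamma}^2+\norm{f_\gamma}^2]$ terms bounded by the first inequality of~\Cref{ass:contoleGAndF}, and the $O(\gamma^2)$ corrections from $\gminorc$ --- are assembled by Young's inequality into a remainder of the form $C\gamma[1 + \norm{\nabla U(x)} + \norm{v} + \gamma^{\deltau/2}\fonctionControleBis(x,v,w) + \gamma^{\deltau}\norm{x}\norm{w_1}]$. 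The main obstacle will be the careful bookkeeping: the precise coefficient $c_\gamma$, including the $\vartheta_\gamma$ correction, is tuned to cancel the $\ps{x}{v}$ cross term at order $\gamma$, while the factor $2\parametreGradU$ in front of $U(x)$ is tuned to cancel $\ps{\nabla U(x)}{v}$ via the $g_\gamma \approx -\parametreGradU\nabla U(x)$ structure from~\Cref{ass:contoleGAndF}; missing either cancellation would leave an unsigned $O(\gamma)$ term linear in $(x,v)$ that is not absorbed in the claimed remainder.
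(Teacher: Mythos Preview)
The proposal is correct and follows essentially the same approach as the paper: expand the four summands of $\Lya$, bound each using the descent lemma for $U$ and the three inequalities of \Cref{ass:contoleGAndF}, and identify the cancellations of the $\ps{x}{v}$ and $\ps{\nabla U(x)}{v}$ cross terms that pin down the coefficients in~\eqref{eq:def_function_lyap_super_quad}. One minor remark: for the cross-noise terms $\gamma^{3/2}\sigma_\gamma\PE[\ps{Z}{g_\gamma}]$ you do not need \Cref{ass:lip_minor_gen}; the paper simply uses Cauchy--Schwarz together with the first inequality of \Cref{ass:contoleGAndF} on $\PE[\norm{g_\gamma+\parametreGradU\nabla U(x)}^2]$, which is cleaner and keeps the hypotheses minimal.
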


\begin{proof}
  The finite nonnegative constant~$C$ in this proof may change from line to line but does not depend on~$\gamma$, $x$, $v$ or $w$. For ease of notation, we also simply denote~$(X_1^{x,v,w},V_1^{x,v,w})$ by~$(X_1,V_1)$, and do not explicitly indicate that~$\gamma \in \ocint{0,\bar{\gamma}\wedge 1}$. By definition,
  \begin{equation}
    \label{eq:lem_drift_w_0}
    \expe{\Lya(\XminorTer_1,\VminorTer_1)}=2\parametreGradU\expe{U(\XminorTer_1)}+\frac{\kappa^2}{2}\expe{\norm{\XminorTer_1}^2}+\expe{\norm{\VminorTer_1}^2}+\frac{\kappa^2\gamma(\gamma^\delta \vartheta_\gamma+1)}{1-\gminorc}\expe{\ps{\XminorTer_1}{\VminorTer_1}}\eqsp.
  \end{equation}
  We successively bound each expectation on the right-hand side of the previous equality. To bound~$\expe{U(\XminorTer_1)}$, we make use of the following estimates, which are straightforwardly obtained from~\Cref{ass:contoleGAndF}(U) and the equality~$\expe{\Fpzc(x,v,\sqrt{\gamma}\sigma_{\gamma} Z,w)} = \fonctionControleBis(x,v,w) + \gamma\sigma_\gamma^2 d$:
  \begin{equation}
    \label{eq:drift_proof_lem_f}
    \begin{aligned}
      \expe{\norm{f_\gamma\left(x,\gamma^{\delta}v,\gamma^{\delta+\half} \sigma_{\gamma} Z,w\right)}^2} &\leq C\left[1+\gamma^{\deltau}\fonctionControleBis(x,v,w)\right], \\
      \expe{\norm{f_\gamma\left(x,\gamma^{\delta}v,\gamma^{\delta+\half} \sigma_{\gamma} Z,w\right)}} & \leq C\left[1+ \gamma^{\deltau/2}\sqrt{\fonctionControleBis(x,v,w)} \right],
    \end{aligned}
  \end{equation}
the second inequality being obtained from the first one by a Cauchy--Schwarz inequality and the bound $\sqrt{a+b} \leq \sqrt{a}+\sqrt{b}$ for~$a,b \geq 0$. Using~\cite[Lemma 1.2.3]{nesterov:2004} and~\Cref{ass:lip}(U), we obtain that
  \begin{align}
    \expe{U(\XminorTer_1)}&=\expe{U\parenthese{x + \gamma v + \gamma f_\gamma\left(x,\gamma^{\delta}v,\gamma^{\delta+\half} \sigma_{\gamma} Z,w\right) + \gamma^{\delta+\half} \sigma_{\gamma} \Dbf_{\gamma} Z}}\\
    &\leq U(x)+\expe{\ps{\nabla U(x)}{\gamma v + \gamma f_\gamma\left(x,\gamma^{\delta}v,\gamma^{\delta+\half} \sigma_{\gamma} Z,w\right)}}\\
    &\qquad + \frac{L}{2}\expe{\norm{\gamma v + \gamma f_\gamma\left(x,\gamma^{\delta}v,\gamma^{\delta+\half} \sigma_{\gamma} Z,w\right)+ \gamma^{\delta+\half} \sigma_{\gamma} \Dbf_{\gamma} Z}^2}\\
    &\leq U(x)+\gamma \ps{\nabla U(x)}{v}+\gamma \norm{\nabla U(x)}\expe{\norm{f_\gamma\left(x,\gamma^{\delta}v,\gamma^{\delta+\half} \sigma_{\gamma} Z,w\right)}}\\
    &\qquad + \frac{3 L}{2} \left( \gamma^2 \norm{v}^2 + \gamma^2 \expe{\norm{f_\gamma\left(x,\gamma^{\delta}v,\gamma^{\delta+\half} \sigma_{\gamma} Z,w\right)}^2} + \gamma^{1+2\delta}\sigma_\gamma^2 \expe{\norm{\Dbf_{\gamma} Z}^2} \right).
  \end{align}
  Note first that, in view of~\eqref{eq:drift_proof_lem_f} and~\Cref{ass:gminorc}, and since~$\norm{v}^2 \leq \fonctionControleBis(x,v,w)$,
  \[
  \gamma^2 \norm{v}^2 + \gamma^2 \expe{\norm{f_\gamma\left(x,\gamma^{\delta}v,\gamma^{\delta+\half} \sigma_{\gamma} Z,w\right)}^2} + \gamma^{1+2\delta}\sigma_\gamma^2 \expe{\norm{\Dbf_{\gamma} Z}^2} \leq C\left[\gamma^{1+(2\delta)\wedge 1}+\gamma^2\fonctionControleBis(x,v,w)\right].
  \]
  Moreover, still with~\eqref{eq:drift_proof_lem_f}, and since~$\norm{\nabla U(x)} \leq L \sqrt{\fonctionControleBis(x,v,w)}$,
  \begin{align}
    \norm{\nabla U(x)}\expe{\norm{f_\gamma\left(x,\gamma^{\delta}v,\gamma^{\delta+\half} \sigma_{\gamma} Z,w\right)}} & \leq C\norm{\nabla U(x)}\left[1+ \gamma^{\deltau/2}\sqrt{\fonctionControleBis(x,v,w)} \right], \\
    & \leq C\left[ \norm{\nabla U(x)} + \gamma^{\deltau/2} \fonctionControleBis(x,v,w) \right]. \label{eq:bound_difficult_term_drift_U}
  \end{align}
  Since~$\deltau \leq 1$, this leads finally to
  \begin{equation}
    \label{eq:lem_drift_w_1}
    \expe{U(\XminorTer_1)} \leq U(x)+\gamma \ps{\nabla U(x)}{v} +C\gamma\left[\gamma^{(2\delta) \wedge 1}+\norm{\nabla U(x)}+\gamma^{\deltau/2}\fonctionControleBis(x,v,w)\right]\eqsp .
  \end{equation}
  
  Let us next bound~$\expe{\norm{\XminorTer_1}^2}$. Note first that~\Cref{ass:contoleGAndF}(U) implies that 
  \begin{align}
  \label{eq:8}
  \expe{    \ps{x}{f_\gamma\left(x,\gamma^{\delta}v, \gamma^{\delta+\half} \sigma_{\gamma} Z,w\right)}}&\leq \gamma^\delta\vartheta_\gamma\ps{x}{v}\\
  &\quad+\constgGradU\parentheseDeux{1+\gamma^{\deltau}\fonctionControleBis\left(x,v,w\right)+ \gamma^{1+2\delta+\deltau} \sigma_{\gamma}^2 d+\gamma^{\deltau}\norm{x}\norm{w_1}} \eqsp,
  \end{align}
  so that, with \Cref{ass:gminorc} and \Cref{ass:contoleGAndF}(U),  
  \begin{align}
    \expe{\norm{\XminorTer_1}^2}&\leq \norm{x}^2+\gamma^2 \norm{v}^2+\gamma^2\expe{\norm{f_\gamma\left(x,\gamma^{\delta}v,\gamma^{\delta+\half} \sigma_{\gamma} Z,w\right)}^2}+\gamma^{1+2\delta}\sigma_\gamma^2\mathscr{D}^2d+2\gamma\ps{x}{v}\\
    &\qquad+2\gamma\expe{\ps{x}{f_\gamma\left(x,\gamma^{\delta}v,\gamma^{\delta+\half} \sigma_{\gamma} Z,w\right)}}+2\gamma^2\expe{\ps{v}{f_\gamma\left(x,\gamma^{\delta}v,\gamma^{\delta+\half} \sigma_{\gamma} Z,w\right)}}\\
    &\qquad+ 2\gamma^{3/2+\delta}\sigma_\gamma\expe{\ps{f_\gamma\left(x,\gamma^{\delta}v,\gamma^{\delta+\half} \sigma_{\gamma} Z,w\right)}{\Dbf_{\gamma} Z}}\\
    &\leq \norm{x}^2+2\gamma\left(1+\gamma^\delta\vartheta_\gamma\right)\ps{x}{v}+C\gamma\left[1+ \gamma^{\deltau}\fonctionControleBis(x,v,w) +\gamma^{\deltau}\norm{x}\norm{w_1}\right]\eqsp.
    \label{eq:lem_drift_w_2}
\end{align}
  
  To bound~$\expe{\norm{\VminorTer_1}^2}$, we rely on the following estimates, obtained from~\Cref{ass:contoleGAndF}(U):  
  \begin{equation}
  \label{eq:drift_proof_lem_g}
\begin{aligned}
  & \expe{     \norm{g_\gamma\left(x,\gamma^{\delta}v, \gamma^{\delta+\half} \sigma_{\gamma} Z,w\right)+\parametreGradU\nabla U(x)}^2} \leq C\parenthese{1+\gamma^{\deltau}\fonctionControleBis\left(x,v,w\right)}, \\
  & \expe{     \norm{g_\gamma\left(x,\gamma^{\delta}v, \gamma^{\delta+\half} \sigma_{\gamma} Z,w\right)+\parametreGradU\nabla U(x)}} \leq C\parenthese{1+\gamma^{\deltau/2}\sqrt{\fonctionControleBis\left(x,v,w\right)}}, \\
& \expe{    \ps{x}{g_\gamma\left(x,\gamma^{\delta}v, \gamma^{\delta+\half} \sigma_{\gamma} z,w\right)}}\leq -\constasssuperexpone\parentheseDeux{ \frac{\norm{\nabla U(x)}^2}{L^2}+\norm{x}}+ C\gamma^{\deltau}\parenthese{1+\fonctionControleBis\left(x,v,w\right)}.  
  \end{aligned}
\end{equation}
Therefore, 
\begin{align}
  \expe{\norm{\VminorTer_1}^2}& = \gminorc^2\norm{v}^2+\gamma^2\parametreGradU^2\norm{\nabla U(x)}^2+\gamma^2\expe{\norm{g_\gamma\left(x,\gamma^{\delta}v, \gamma^{\delta+\half} \sigma_{\gamma} Z,w\right)+\parametreGradU\nabla U(x)}^2}\\
  &\qquad+\gamma\sigma_\gamma^2 d-2\gminorc\gamma\parametreGradU\ps{v}{\nabla U(x)}\\
  &\qquad+2\gminorc\gamma\ps{v}{\expe{g_\gamma\left(x,\gamma^{\delta}v, \gamma^{\delta+\half} \sigma_{\gamma} Z,w\right)}+\parametreGradU\nabla U(x)}\\
  &\qquad -2\gamma^2\parametreGradU\ps{\nabla U(x)}{\expe{g_\gamma\left(x,\gamma^{\delta}v, \gamma^{\delta+\half} \sigma_{\gamma} Z,w\right)}+\parametreGradU\nabla U(x)}\\
  &\qquad +2\gamma^{3/2}\sigma_\gamma\expe{\ps{g_\gamma\left(x,\gamma^{\delta}v, \gamma^{\delta+\half} \sigma_{\gamma} Z,w\right)+\parametreGradU\nabla U(x)}{Z}}.
\end{align}
The terms on the third and fourth lines of the above series of inequalities can be bounded as in~\eqref{eq:bound_difficult_term_drift_U} using~\Cref{ass:contoleGAndF}(U). For instance, for the one in the third line:
\[
\begin{aligned}
  \left| \ps{v}{\expe{g_\gamma\left(x,\gamma^{\delta}v, \gamma^{\delta+\half} \sigma_{\gamma} Z,w\right)}+\parametreGradU\nabla U(x)}\right|
  & \leq \norm{v} \expe{ \norm{g_\gamma\left(x,\gamma^{\delta}v, \gamma^{\delta+\half} \sigma_{\gamma} Z,w\right)+\parametreGradU\nabla U(x)}} \\
  & \leq C\parentheseDeux{\norm{v} +\gamma^{\deltau/2} \norm{v}\sqrt{\fonctionControleBis\left(x,v,w\right)}}, \\
  & \leq C\parentheseDeux{\norm{v} +\gamma^{\deltau/2} \fonctionControleBis\left(x,v,w\right)}.
\end{aligned}
\]
We thus obtain with \Cref{ass:gminorc} that
\begin{align}
\nonumber
\expe{\norm{\VminorTer_1}^2} &\leq \gminorc^2\norm{v}^2-2\gminorc\gamma\parametreGradU\ps{v}{\nabla U(x)}+ C\gamma\defEns{1+\norm{v} +\gamma^{\deltau/2}\fonctionControleBis(x,v,w)}\\
\label{eq:lem_drift_w_3}
& \leq (1-\kappa \gamma)\norm{v}^2-2\gamma\parametreGradU\ps{v}{\nabla U(x)}+ C\gamma\defEns{1+\norm{v} +\gamma^{\deltau/2}\fonctionControleBis(x,v,w)}
\eqsp,
\end{align}
where we have used for the last inequality that~$|\gminorc-1| \leq $ and $\gminorc^2 = \rme^{-2\kappa\gamma} +\gminorc^2-\rme^{-2\kappa\gamma} \leq 1 - 2\gamma \kappa + 2(\kappa^2 + C_{\kappa})\gamma^2$ by \Cref{lem:lemma_exp_gminorc} (the term of order~$\gamma^2$ going into the remainder term in the inequality). Finally, using \Cref{ass:gminorc} and~\Cref{ass:contoleGAndF}(U), as well as the estimates~\eqref{eq:drift_proof_lem_f} and~\eqref{eq:drift_proof_lem_g},
\begin{align}
  &\expe{\ps{\XminorTer_1}{\VminorTer_1}} = \gminorc\ps{x}{v}+\gamma\expe{\ps{x}{g_\gamma\left(x,\gamma^{\delta}v, \gamma^{\delta+\half} \sigma_{\gamma} Z,w\right)}}+\gamma\gminorc\norm{v}^2\\
  &\qquad +\gamma\gminorc\expe{\ps{f_\gamma\left(x,\gamma^{\delta}v,\gamma^{\delta+\half} \sigma_{\gamma} Z,w\right)}{v}}\\
  &\qquad-\gamma^2\parametreGradU\expe{\ps{f_\gamma\left(x,\gamma^{\delta}v,\gamma^{\delta+\half} \sigma_{\gamma} Z,w\right)}{\nabla U(x)}} \\
  &\qquad +\gamma^2\expe{\ps{f_\gamma\left(x,\gamma^{\delta}v,\gamma^{\delta+\half} \sigma_{\gamma} Z,w\right)}{g_\gamma\left(x,\gamma^{\delta}v, \gamma^{\delta+\half} \sigma_{\gamma} Z,w\right)+\parametreGradU\nabla U(x)}}\\
  &\qquad +\gamma^2\expe{\ps{v}{g_\gamma\left(x,\gamma^{\delta}v, \gamma^{\delta+\half} \sigma_{\gamma} Z,w\right)+\parametreGradU \nabla U(x)}} -\gamma^2\parametreGradU\ps{v}{\nabla U(x)} \\ 
  &\qquad +\gamma^{3/2}\sigma_\gamma \expe{\ps{f_\gamma\left(x,\gamma^{\delta}v,\gamma^{\delta+\half} \sigma_{\gamma} Z,w\right)}{Z}}\\
  &\qquad+ \gamma^{\delta+3/2}\sigma_\gamma \expe{\ps{\Dbf_{\gamma} Z}{g_\gamma\left(x,\gamma^{\delta}v, \gamma^{\delta+\half} \sigma_{\gamma} Z,w\right)+\parametreGradU\nabla U(x)}}\\
  &\qquad + \gamma^{1+\delta}\sigma_\gamma^2\expe{\ps{\Dbf_{\gamma} Z}{Z}}\\
  &\leq \gminorc\ps{x}{v}-\gamma\constasssuperexpone\parentheseDeux{ \frac{\norm{\nabla U(x)}^2}{L^2}+\norm{x}} + \gamma\gminorc\norm{v}^2+C\gamma\defEns{1+\norm{v}+\gamma^{\deltau/2}\fonctionControleBis(x,v,w)}\eqsp. \label{eq:lem_drift_w_4}
\end{align}

We can now come back to~\eqref{eq:lem_drift_w_0}. In order to make apparent our choices of prefactors for the Lyapunov function~\eqref{eq:def_function_lyap_super_quad}, we introduce real numbers~$c_1,c_2,c_3>0$ and~$c_4 \in \rset$, and define
\[
\mathscr{W}_{c}(x,v) = c_1 U(x) + c_2 \norm{x}^2 + c_3 \norm{v}^2 + c_4 \ps{x}{v} \eqsp.
\]
By multiplying~\eqref{eq:lem_drift_w_1} by~$c_1$, \eqref{eq:lem_drift_w_2} by~$c_2$, \eqref{eq:lem_drift_w_3} by~$c_3$ and~\eqref{eq:lem_drift_w_4} by~$c_4$, we obtain  
  \begin{align}
    \expe{\mathscr{W}_{c}(\XminorTer_1,\VminorTer_1)}&\leq \mathscr{W}_{c}(x,v) -c_4\gamma \constasssuperexpone\parentheseDeux{\frac{\norm{\nabla U(x)}^2}{L^2}+\norm{x}} - \gamma \left[2c_3  \kappa - c_4\gminorc\right] \norm{v}^2 \\
    & \qquad + \left[c_4(\gminorc-1)+2c_2\gamma\left(1+\gamma^\delta\vartheta_{\gamma}\right)\right]\ps{x}{v} + \gamma\left[c_1-2c_3\parametreGradU\right] \ps{v}{\nabla U(x)}\\
                                                           & \qquad + C\gamma \left[ 1 + \norm{\nabla U(x)} + \norm{v}+\gamma^{\deltau}\norm{x}\norm{w_1} + \gamma^{\deltau/2}  \fonctionControleBis(x,v,w) \right].
  \end{align}
  We choose $c_1,c_2,c_3,c_4$ to cancel the prefactors of the scalar products in the second line and to ensure that the term $-\left[2 \kappa c_3  - c_4\gminorc\right] \norm{v}^2 \leq - \tilde{c} \norm{v}^2 + C \gamma^2 \fonctionControleBis(x,v,w)$ for some $\tilde{c} >0$. Such requirements can be obtained with $\tilde{c} = \kappa$ by setting $c_1 = 2 \parametreGradU$, $c_2= \kappa^2/2$, $c_3= 1$ and $c_4 = \kappa^2 \gamma(1+\gamma^{\delta}\vartheta_{\gamma})/(1-\gminorc)$ and using $\abs{c_4 - \kappa} \leq C \gamma$ by \Cref{lem:lemma_exp_gminorc}.
\end{proof}

\subsection{Proof of \Cref{theo:drift_exp}}
\label{sec:proof_thm2}

We are now in position to prove \Cref{theo:drift_exp}. The finite nonnegative constant~$C$ in this proof may change from line to line but does not depend on $\gamma$, $x$, $v$ or $w$. A first idea in the proof is to rewrite the Lyapunov condition to be shown for~$\Lyaexp$ as a Lyapunov condition for the function
\begin{equation}
  \label{eq:def_Lyaexp}
  \LSfonction = \sqrt{1+\Lya} \eqsp,
\end{equation}
thanks to \Cref{ass:w}. 
In all this proof, the timestep~$\gamma$ belongs to~$\ocintLigne{0,\cbgamma}$ with~$\cbgamma >0$ defined below in~\eqref{eq:def_c}. We also denote by 
$
(X_1,V_1) = \Gammaminorc{\gamma}(x,v,\sqrt{\gamma}\sigma_{\gamma} Z, W)
$,
the output of one step of the Markov chain starting from a given configuration~$x,v\in \rset^d$, with~$Z$ a $d$-dimensional standard Gaussian random variable and $W=(W_1,W_2)$ a random variable independent of $Z$ with distribution~$\muw=\muwOne\otimes\muwTwo$ (recall that~$\Gammaminorc{\gamma}$ is defined by \eqref{minor_def_Gamma_b_gamma}).

\sloppy We start by bounding~$\Rkerminorc{\gamma}\Lyaexp$ in terms of~$\CPE{\LSfonction(X_1,V_1)}{W_2}$. The first observation is that, in view of Lemmas~\ref{lem:phi_lip} and~\ref{lem:Gammaminorc_lip}, there exists a constant~$\mathscr{L} \in \rset_+$ such that the function $(z,w_1) \mapsto \LSfonction(\Gammaminorc{\gamma}(x,v,\sqrt{\gamma}\sigma_{\gamma} z, (w_1,w_2)))$ is Lipschitz with Lipschitz constant~$\sqrt{\mathscr{L}\gamma}$, uniformy in~$x,v$ and~$w_2$. Therefore, by \Cref{ass:w}-\ref{ass:w_logsob_spec_1}-\ref{ass:w_logsob_spec}, for any $\varpi\in \rset_+^*$,
\begin{align}
  \Rkerminorc{\gamma}\Lyaexp(x,v) &=\expe{\CPE{\Lyaexp(X_1,V_1)}{W_2}} \leq \expe{\exp\parenthese{\varpi\CPE{\LSfonction(X_1,V_1)}{W_2}+\frac{\varpi^2 \mathscr{L}\gamma}{2} }}\eqsp .
  \label{eq:first_inequality_proof_theorem_drift}
\end{align}

The next step of the proof is to obtain bounds on~$\CPE{\LSfonction(X_1,V_1)}{W_2}$. We first relate this quantity to~$\CPE{\Lya(X_1,V_1)}{W_2}$ and then rely on \Cref{lem:big_calc_drift}. More precisely, using the bound~$\sqrt{1+t}\leq 1+t/2$ for~$t \geq -1$,
\begin{align}
  \frac{\CPE{\LSfonction(X_1,V_1)}{W_2}}{\LSfonction(x,v)} &= \CPE{\sqrt{1+\frac{\Lya(X_1,V_1)-\Lya(x,v)}{\LSfonction^2(x,v)}}}{W_2}\\
  \label{eq:sqrt_phi_drift_1}
  & \leq \CPE{1+\frac{\Lya(X_1,V_1)-\Lya(x,v)}{2\LSfonction^2(x,v)}}{W_2} = 1 + \frac{\CPE{\Lya(X_1,V_1)}{W_2}-\Lya(x,v)}{2\LSfonction(x,v)^2}. 
\end{align}
Since $W_1$ admits a first moment by \Cref{ass:w}, using that $a \leq (2\varepsilon)^{-1} + \varepsilon a^2 /2$ for any $a,\varepsilon >0$, there exists $\bar{\gamma}_U^{(1)}\in \ocintLigne{0,\cbgamma}$ such that for any $\gamma\in \ocintLigne{0,\bar{\gamma}_U^{(1)}}$,
\begin{align}
  &C\CPE{1+\norm{\nabla U(x)}+\norm{v} + \gamma^{\deltau/2}  \fonctionControleBis(x,v,W)+\gamma^{\deltau}\norm{x}\norm{W_1}}{W_2} \\
  &\qquad\qquad\qquad\qquad\qquad\qquad\qquad\qquad\qquad\leq C(1+\gamma^{\deltau/2}\norm{W_2}^2) + \frac{\kappa}{2}\left[ \norm{v}^2 + \constasssuperexpone \left(\frac{\norm{\nabla U(x)}^2}{L^2} + \norm{x} \right)\right]  \eqsp.
\end{align}
Therefore, by \Cref{lem:big_calc_drift} and since~$-\norm{v}^2\leq -\norm{v}+1$,
\begin{equation}
  \begin{aligned}
  \CPE{\Lya(X_1,V_1)}{W_2} & \leq \Lya(x,v)-\frac{\gamma\kappa}{2} \left( \norm{v}^2 + \constasssuperexpone\norm{x}\right)+ C\gamma (1+\norm{W_2}^2) \\
  & \leq \Lya(x,v)-\frac{\gamma\kappa\min(1, \constasssuperexpone)}{2} \left( \norm{v} +\norm{x}\right)+ C\gamma (1+\norm{W_2}^2)\eqsp.
  \end{aligned}
\end{equation}
Plugging this estimate in \eqref{eq:sqrt_phi_drift_1} and using Lemmas~\ref{lem:majoration_lsfonction} and~\ref{lem:minoration_lya}, as well as the inequality $\sqrt{a+b} \geq 2^{-1}(\sqrt{a} +\sqrt{b})$ for~$a,b \geq 0$, we obtain
\begin{align}
  \CPE{\LSfonction(X_1,V_1)}{W_2}
  & \leq \LSfonction(x,v)+\gamma\frac{-\kappa\min(1,\constasssuperexpone)(\norm{x}+\norm{v}) + C(1+\norm{W_2}^2)}{4\LSfonction(x,v)} \\
  & \leq \LSfonction(x,v)-\gamma\frac{\kappa\min(1,\constasssuperexpone)(\norm{x}+\norm{v})}{4[1+\cPhi\parenthese{\norm{x}+\norm{v}}] } + C\gamma \frac{1+\norm{W_2}^2}{1 + \sqrt{\cLya/2}(\norm{x} +\norm{v})} \eqsp. \label{eq:second_inequality_proof_theorem_drift}
\end{align}
Then, by~\eqref{eq:first_inequality_proof_theorem_drift} and~\eqref{eq:second_inequality_proof_theorem_drift}, for any $\gamma\in \ocintLigne{0,\bar{\gamma}_U^{(1)}}$, $\varpi >0$, and $x,v\in\rset^d$,
\begin{equation}
  \label{eq:10}
  \frac{\Rkerminorc{\gamma}\Lyaexp(x,v)}{\Lyaexp(x,v)}
  \leq \expe{\exp\left(\frac{\varpi^2 \mathscr{L}\gamma}{2} + \varpi C\gamma \frac{1+\norm{W_2}^2}{1 + \sqrt{\cLya/2}(\norm{x} +\norm{v})}\right)}\exp\left(-\gamma\varpi\frac{\kappa\min(1,\constasssuperexpone)(\norm{x}+\norm{v})}{ 4[1+\cPhi\parenthese{\norm{x}+\norm{v}}] }\right) .
\end{equation} 
We now  choose successively~$K_U>0$ sufficiently large and~$\varpi>0$ sufficiently small so that the Markov chain induces a contraction for the Lyapunov function~$\Lyaexp$ on the set~$\{(x,v)\in\rset^{2d}, \norm{x} + \norm{v} \geq K_U \}$. We first need to this end a bound on exponential moments of~$W_2$. By Jensen's inequality and~\Cref{ass:w}, for any~$c>0$ and~$\gamma\in\ocintLigne{0,\bgamma_W/c}$, setting $C_W = \expe{\rme^{\bgamma_W\norm{W_2}^2}}$,
\begin{equation}
  \label{eq:controle_moment_proof_theorem_drift}
  \expe{\rme^{c\gamma\norm{W_2}^2}}=\expe{\left(\rme^{\bgamma_W\norm{W_2}^2}\right)^{c\gamma/\bgamma_W}}\leq    C_W^{c\gamma/\bgamma_{W}} \eqsp.
\end{equation}

For any  $\varpi >0$, $\gamma\in \ocintLigne{0,\bar{\gamma}_U^{(1)}}$ such that $\varpi \gamma \leq \bgamma_{W}(1 + \sqrt{\cLya/2}K_U)/C$ and  $K_U > 0$, $x,v\in\rset^d$ with~$\norm{x} + \norm{v} \geq K_U$, it therefore holds by \eqref{eq:10},
\begin{equation}
  \frac{1}{\varpi\gamma}\log\left( \frac{\Rkerminorc{\gamma}\Lyaexp(x,v)}{\Lyaexp(x,v)} \right)
 \leq \frac{\varpi \mathscr{L}}{2} + \frac{C}{1 + \sqrt{\cLya/2}K_U}\left(1+\frac{\log C_W}{\cbgamma}\right) - \frac{\kappa\min(1,\constasssuperexpone)K_U}{ 4[1+\cPhi K_U] } \eqsp.
\end{equation}
Setting 
\[
  K_U = 1 \vee \parentheseDeux{\frac{ 8[1+\cPhi ]
    }{\kappa\min(1,\constasssuperexpone)}\left(1+\frac{\log
        C_W}{\cbgamma}\right) \sqrt{2/\cLya}} \eqsp, \qquad \varpi_U =
  \frac{\kappa\min(1,\constasssuperexpone)K_U}{ 8\mathscr{L}[1+\cPhi
    K_U] } \eqsp, 
\]
and $\bar{\gamma}_U^{(2)} = \cbgamma[1\wedge\{(1 + \sqrt{\cLya/2}K_U)/C\varpi_U\}]$,
it follows that, for any~$\gamma \leq \min(\bar{\gamma}_U^{(1)},\bar{\gamma}_U^{(2)})$ and $(x,v)\in\rset^{2d}$ with~$\norm{x} + \norm{v} \geq K_U$,
\begin{equation}
  \label{eq:11}
\Rkerminorc{\gamma}\LyaexpU(x,v) \leq \lambda_U^\gamma \LyaexpU(x,v) \eqsp, \qquad \lambda_U = \exp\left(-\frac{\kappa\min(1,\constasssuperexpone)K_U}{ 16[1+\cPhi K_U] }\right) < 1 \eqsp.
\end{equation}

We finally consider the case $\norm{x}+\norm{v}< K_U$.
First, we note that~\eqref{eq:second_inequality_proof_theorem_drift} implies that for any $\gamma \leq \gamma_U^{(1)}$, $  \CPE{\LSfonction(X_1,V_1)}{W_2}\leq \LSfonction(x,v) + C\gamma(1+\norm{W_2}^2)$.
By plugging this result in~\eqref{eq:first_inequality_proof_theorem_drift}, we obtain
\[
\Rkerminorc{\gamma}\LyaexpU(x,v) \leq \expe{\exp\left(\frac{\gamma\varpi_U}{2}\left[\varpi_U\mathscr{L}+2C(1+\norm{W_2}^2)\right]\right)} \LyaexpU(x,v) \eqsp .
\]
 When~$\gamma \leq \bar{\gamma}_U= \min(\bar{\gamma}_U^{(1)},\bar{\gamma}_U^{(2)},\bar{\gamma}_U^{(3)})$, with $\bar{\gamma}_U^{(3)} = \cbgamma/(\varpi_U C)$,
 the right-hand side can be bounded by~\eqref{eq:controle_moment_proof_theorem_drift} as
\[
\Rkerminorc{\gamma}\LyaexpU(x,v) \leq \mathrm{e}^{\mathscr{K}\gamma}\LyaexpU(x,v), \qquad \mathscr{K} = \frac{\varpi_U}{2}\left[\varpi_U\mathscr{L}+2C\left(1+\frac{\log C_W}{\cbgamma}\right)\right] \eqsp .
\]
We can therefore write, in view of the inequality $\rme^{t}-\rme^{s} \leq (t-s) \rme^{t}$ for~$t,s \in \rset$, $s \leq t$, for any $x,v \in\rset^d$ with $\norm{x} + \norm{v} < K_U$, and $\gamma \leq \bar{\gamma}_U$,
\[
\begin{aligned}
  \Rkerminorc{\gamma}\Lyaexp(x,v)
  & \leq \lambda_U^{\gamma}\Lyaexp(x,v) + \parenthese{\mathrm{e}^{\mathscr{K}\gamma}-\lambda_U^{\gamma}} \Lyaexp(x,v) \\
  & \leq \lambda_U^{\gamma}\Lyaexp(x,v) + \gamma(\mathscr{K}-\log \lambda_U)\mathrm{e}^{\mathscr{K}\gamma} \Lyaexp(x,v)\eqsp.
\end{aligned}
\]
We finally obtain the following bound for any $x,v \in\rset^d$, $\norm{x} + \norm{v} < K_U$, $\gamma \leq \bar{\gamma}_U$, using \Cref{lem:majoration_lsfonction}:
\[
\Rkerminorc{\gamma}\Lyaexp(x,v) \leq \lambda_U^{\gamma}\Lyaexp(x,v) + \gamma b_U,
\qquad
b_U = (\mathscr{K}-\log \lambda_U)\mathrm{e}^{\mathscr{K} \bar{\gamma}_U + \varpi_U(1+\cPhi K_U)} \eqsp.
\]
Combining this bound with \eqref{eq:11} completes the proof.

\subsection{Supporting lemmas}
\label{sec:supporting_lemmas}

The proofs of the following technical lemmas are postponed to the \supplementname. The first technical result ensures that~$\Lya$ is non-negative.

\begin{lemma}
  \label{lem:minoration_lya}
  Assume \Cref{ass:gminorc} and \Cref{ass:contoleGAndF}(U). Then, for any $x,v \in \rset^d$ and $\gamma \in \ocintLigne{0,\cbgamma}$ with
  \begin{equation}
    \label{eq:def_c}
    \cbgamma = \min\left( 1,\bgamma, \parenthese{\frac{\cLya}{\kappa\bvartheta + (1+\bvartheta)(2 C_{\kappa} +\kappa^2)}}^{(\delta\wedge 1)^{-1}} \right),
    \qquad
    \cLya = \frac12\min\left(\frac{\kappa^2}{6},\frac{1}{4}\right),
  \end{equation}
  it holds $    \Lya(x,v) \geq \cLya ( \norm{x}^2 +\norm{v}^2) + 2 \parametreGradU U(x)$,
  where $\Lya$ is defined in~\eqref{eq:def_function_lyap_super_quad}.
\end{lemma}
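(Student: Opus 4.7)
The plan is to reduce the inequality to a scalar quadratic form estimate by discarding the nonnegative term $2\parametreGradU U(x)$ (which is nonnegative since $U \geq 0$ by \Cref{ass:lip}) and bounding the prefactor of the cross term $\ps{x}{v}$.

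First, I would bound $c_\gamma := \kappa^2\gamma(1+\gamma^\delta \vartheta_\gamma)/(1-\gminorc)$. By \Cref{ass:gminorc} one has $\gminorc \in \ooint{0,1}$ and $|\vartheta_\gamma| \leq \bvartheta$, so $|c_\gamma| \leq \kappa^2 (\gamma/(1-\gminorc))(1+\bvartheta\gamma^\delta)$. Applying~\eqref{eq:lem:lemma_exp_gminorc_2} from~\Cref{lem:lemma_exp_gminorc} together with $\gamma\in\ocint{0,1}$, I would expand and collect terms to obtain
\begin{equation*}
|c_\gamma| \leq \kappa + \eta_\gamma, \qquad \eta_\gamma := \left(\kappa\bvartheta + (1+\bvartheta)(2C_\kappa+\kappa^2)\right)\gamma^{\delta\wedge 1}.
\end{equation*}
The choice of $\cbgamma$ in~\eqref{eq:def_c} has been calibrated precisely so that $\eta_\gamma \leq \cLya$ for all $\gamma \in \ocint{0,\cbgamma}$.

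Second, the claim reduces (after dropping $2\parametreGradU U(x)$) to showing
\begin{equation*}
(\kappa^2/2 - \cLya)\|x\|^2 + (1-\cLya)\|v\|^2 - (\kappa+\eta_\gamma)\,|\ps{x}{v}| \geq 0.
\end{equation*}
By Cauchy-Schwarz, this is implied by positive semi-definiteness of $\begin{pmatrix} \kappa^2/2-\cLya & -(\kappa+\eta_\gamma)/2 \\ -(\kappa+\eta_\gamma)/2 & 1-\cLya \end{pmatrix}$. The diagonal entries are positive because $\cLya \leq \kappa^2/12 \leq \kappa^2/2$ and $\cLya \leq 1/8 < 1$, so it suffices to verify the determinant condition
\begin{equation*}
4(\kappa^2/2 - \cLya)(1-\cLya) \geq (\kappa + \eta_\gamma)^2,
\end{equation*}
and since $\eta_\gamma \leq \cLya$, it is enough to prove this with $\eta_\gamma$ replaced by $\cLya$.

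Third, expanding the difference gives $\kappa^2 - 2\kappa^2\cLya - 4\cLya - 2\kappa\cLya + 3\cLya^2$. I would bound the three subtracted terms separately using the defining bounds on $\cLya$: from $\cLya \leq 1/8$ one gets $2\kappa^2\cLya \leq \kappa^2/4$; from $\cLya \leq \kappa^2/12$ one gets $4\cLya \leq \kappa^2/3$; and $2\kappa\cLya \leq \kappa^2/4$ follows by splitting on whether $\kappa \leq 1$ (using $\cLya \leq \kappa^2/12$) or $\kappa \geq 1$ (using $\cLya \leq 1/8$). Summing yields
\begin{equation*}
4(\kappa^2/2 - \cLya)(1-\cLya) - (\kappa + \cLya)^2 \geq \kappa^2\left(1 - \tfrac14 - \tfrac13 - \tfrac14\right) + 3\cLya^2 = \tfrac{\kappa^2}{6} + 3\cLya^2 > 0,
\end{equation*}
which closes the argument. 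The only real obstacle is the bookkeeping of constants so that the admissible range $\gamma \in \ocint{0,\cbgamma}$ actually produces $\eta_\gamma \leq \cLya$, but this is exactly why $\cbgamma$ is defined as it is in~\eqref{eq:def_c}.
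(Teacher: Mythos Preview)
Your argument is correct and follows essentially the same route as the paper: bound the cross-term coefficient via \Cref{lem:lemma_exp_gminorc}, then verify a quadratic-form lower bound. The paper organizes the second step slightly differently, introducing the intermediate function $V_0(x,v)=\tfrac{\kappa^2}{2}\|x\|^2+\|v\|^2+\kappa\ps{x}{v}+2\parametreGradU U(x)$, showing $|\Lya-V_0|\leq \cLya(\|x\|^2+\|v\|^2)$ from the bound on $|c_\gamma-\kappa|$, and then obtaining $V_0\geq 2\cLya(\|x\|^2+\|v\|^2)+2\parametreGradU U(x)$ by Young's inequality with parameter $\eta=2/3$ (which is exactly what produces the constants $\kappa^2/6$ and $1/4$ in~$\cLya$). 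Your direct Sylvester-criterion computation achieves the same thing in one shot.

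One small correction: you invoke \Cref{ass:lip} to say $U\geq 0$, but the lemma does not assume \Cref{ass:lip}, and in any case nonnegativity of $U$ is irrelevant here since the term $2\parametreGradU U(x)$ appears identically on both sides of the claimed inequality and simply cancels. Just drop that parenthetical.
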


The next result provides an upper bound on~$\LSfonction$, itself obtained from an upper bound on~$\Lya$.

\begin{lemma}
  \label{lem:majoration_lsfonction}
  Assume \Cref{ass:gminorc}, \Cref{ass:lip}(U) and \Cref{ass:contoleGAndF}(U). Then, for any $\gamma\in \ocintLigne{0,\min(1,\bgamma)}$ and $x,v\in \rset^d$,
    \begin{equation}
    \label{eq:upper_bound_QF_Lya}
    \Lya(x,v)\leq \cPhi^2(\norm{x}^2+\norm{v}^2) \eqsp, \qquad     \LSfonction(x,v)\leq 1+\cPhi\parenthese{\norm{x}+\norm{v}}\eqsp,
  \end{equation}
  where
  \begin{equation}
    \cPhi = \sqrt{\max\left(1,\frac{\kappa^2}{2} + \parametreGradU L\right) + \frac12(1+\bvartheta)(\kappa^2+\kappa+2C_\kappa)}\eqsp.  
  \end{equation}
\end{lemma}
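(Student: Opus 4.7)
The plan is to bound $\Lya$ term by term, using the $L$-smoothness of $U$, a uniform bound on the coefficient of the cross term $\ps{x}{v}$, and Young's inequality, and then derive the bound on $\LSfonction$ directly from the quadratic bound on $\Lya$.

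First, by \Cref{ass:lip} (i.e., $U(0)=0$, $\nabla U(0)=0$ and $\nabla U$ is $L$-Lipschitz), a standard consequence of the descent lemma (see for instance \cite[Lemma~1.2.3]{nesterov:2004}, which has already been used in the proof of \Cref{lem:big_calc_drift}) yields $U(x)\leq (L/2)\norm{x}^2$, so that $2\parametreGradU U(x)\leq \parametreGradU L\norm{x}^2$. Second, I control the coefficient of the cross term. By \Cref{lem:lemma_exp_gminorc}, for any $\gamma\in \ocintLigne{0,\bgamma}$, $\gamma/(1-\gminorc)\leq \kappa^{-1}+(2C_\kappa/\kappa^2+1)\gamma$, so that for $\gamma\in\ocintLigne{0,\min(1,\bgamma)}$,
\begin{equation}
\left|\frac{\kappa^2\gamma(1+\gamma^\delta\vartheta_\gamma)}{1-\gminorc}\right|\leq (1+\bvartheta)(\kappa+2C_\kappa+\kappa^2)\eqsp,
\end{equation}
where I also used \Cref{ass:gminorc} and \Cref{ass:contoleGAndF} to bound $|1+\gamma^\delta\vartheta_\gamma|\leq 1+\bvartheta$.

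Third, by Young's inequality $|\ps{x}{v}|\leq (\norm{x}^2+\norm{v}^2)/2$, so combining the previous two estimates in the definition \eqref{eq:def_function_lyap_super_quad} of $\Lya$ gives
\begin{equation}
\Lya(x,v)\leq \left(\frac{\kappa^2}{2}+\parametreGradU L\right)\norm{x}^2+\norm{v}^2+\frac{1+\bvartheta}{2}(\kappa+2C_\kappa+\kappa^2)(\norm{x}^2+\norm{v}^2)\eqsp.
\end{equation}
Bounding the coefficients of $\norm{x}^2$ and $\norm{v}^2$ by $\max(1,\kappa^2/2+\parametreGradU L)$ yields the first stated inequality $\Lya(x,v)\leq \cPhi^2(\norm{x}^2+\norm{v}^2)$.

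Finally, for the bound on $\LSfonction=\sqrt{1+\Lya}$, it suffices to observe that $\norm{x}^2+\norm{v}^2\leq (\norm{x}+\norm{v})^2$, so that
\begin{equation}
1+\Lya(x,v)\leq 1+\cPhi^2(\norm{x}+\norm{v})^2\leq (1+\cPhi(\norm{x}+\norm{v}))^2\eqsp,
\end{equation}
and taking square roots gives the second stated inequality. I do not expect any real obstacle here: the only subtle point is to keep track of the constants so as to recover exactly the expression of $\cPhi$ given in the statement, which is a matter of bookkeeping rather than of genuine mathematical difficulty.
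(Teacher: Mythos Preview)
Your proof is correct and essentially identical to the paper's: both bound $U(x)$ via the descent lemma, control the cross-term coefficient using \Cref{lem:lemma_exp_gminorc} together with $\gamma\leq 1$ and $|\vartheta_\gamma|\leq\bvartheta$, and then use $|\ps{x}{v}|\leq(\norm{x}^2+\norm{v}^2)/2$. The only cosmetic difference is in the last step, where the paper uses $\sqrt{a+b}\leq\sqrt{a}+\sqrt{b}$ while you use $1+a^2\leq(1+a)^2$; both yield the same bound on $\LSfonction$.
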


The last two results provide Lipschitz bounds which allow to obtain~\eqref{eq:first_inequality_proof_theorem_drift}.

\begin{lemma}
  \label{lem:phi_lip}
  Assume that \Cref{ass:gminorc}, \Cref{ass:lip}(U) and \Cref{ass:contoleGAndF}(U) hold. For any $\gamma\in \ocintLigne{0,\cbgamma}$, the fonction $\LSfonction\in \rmC^1(\rset^{2d},\rset)$ defined by~\eqref{eq:def_Lyaexp} is Lipschitz continuous, and its Lipschitz constant is uniformly bounded by
  \begin{equation}
    \begin{aligned}
      \mathscr{L}_\phi = \frac{1}{\sqrt{\cLya}} \max\left(2, 2\parametreGradU L + \kappa^2, (1+\bvartheta)(\kappa^2+\kappa+C_\kappa) \right) \eqsp ,
    \end{aligned}
  \end{equation}
  where $\cbgamma,\cLya$ are defined in~\eqref{eq:def_c}.
\end{lemma}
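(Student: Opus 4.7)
The plan is direct: express $\nabla\LSfonction$ in closed form, bound the numerator $\nabla\Lya$ linearly in $\|(x,v)\|$, and use the quadratic lower bound on $1+\Lya$ from \Cref{lem:minoration_lya} to conclude that $\|\nabla\LSfonction\|$ is uniformly bounded on $\rset^{2d}$.

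First, I would verify the regularity. Under \Cref{ass:lip}, $U$ is $\rmC^1$, so $\Lya$ defined by~\eqref{eq:def_function_lyap_super_quad} is $\rmC^1$ on $\rset^{2d}$. For $\gamma\in \ocintLigne{0,\cbgamma}$, \Cref{lem:minoration_lya} gives $\Lya(x,v)\geq 0$, hence $1+\Lya\geq 1>0$ and $\LSfonction = \sqrt{1+\Lya}\in \rmC^1(\rset^{2d},\rset)$, with
\begin{equation}
\nabla_x \LSfonction = \frac{\kappa^2 x + 2\parametreGradU\nabla U(x) + K_\gamma v}{2\sqrt{1+\Lya}},
\qquad
\nabla_v \LSfonction = \frac{2v + K_\gamma x}{2\sqrt{1+\Lya}},
\qquad
K_\gamma = \frac{\kappa^2\gamma(1+\gamma^\delta\vartheta_\gamma)}{1-\gminorc}.
\end{equation}

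Next I would control the three ingredients in the numerator. The term $\nabla U(x)$ is handled by \Cref{ass:lip}: since $\nabla U(0)=0$ and $\nabla U$ is $L$-Lipschitz, $\|\nabla U(x)\|\leq L\|x\|$. The constant $K_\gamma$ is handled by \Cref{lem:lemma_exp_gminorc}: writing $K_\gamma = \kappa(1+\gamma^\delta\vartheta_\gamma)\bigl[1+(\kappa\gamma/(1-\gminorc)-1)\bigr]$ and using $|\gamma/(1-\gminorc)-1/\kappa|\leq (2C_\kappa/\kappa^2+1)\gamma$ together with $|\vartheta_\gamma|\leq\bvartheta$ and $\gamma\leq\cbgamma\leq 1$, one gets a bound of the form $|K_\gamma|\leq (1+\bvartheta)(\kappa^2+\kappa+C_\kappa)$ (up to absorbing the remaining $O(\gamma)$ contribution via the choice of $\cbgamma$ in~\eqref{eq:def_c}, which is precisely the role of that threshold).

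Setting $M = \max\bigl(2,\ 2\parametreGradU L+\kappa^2,\ (1+\bvartheta)(\kappa^2+\kappa+C_\kappa)\bigr)$, the triangle inequality gives $\|\nabla_x \Lya\|\leq M(\|x\|+\|v\|)$ and $\|\nabla_v\Lya\|\leq M(\|x\|+\|v\|)$, hence
\begin{equation}
\|\nabla\Lya(x,v)\|^2 \leq 4M^2(\|x\|^2+\|v\|^2).
\end{equation}
On the other hand, \Cref{lem:minoration_lya} together with $U\geq 0$ yields $1+\Lya(x,v)\geq \cLya(\|x\|^2+\|v\|^2)$ (the constant $1$ on the left handles the origin, where the gradient of $\Lya$ vanishes anyway). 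Therefore
\begin{equation}
\|\nabla\LSfonction(x,v)\|^2 \;=\; \frac{\|\nabla\Lya(x,v)\|^2}{4(1+\Lya(x,v))} \;\leq\; \frac{4M^2(\|x\|^2+\|v\|^2)}{4\cLya(\|x\|^2+\|v\|^2)} \;=\; \frac{M^2}{\cLya} \;=\; \mathscr{L}_\phi^2,
\end{equation}
with the case $(x,v)=(0,0)$ treated by a direct computation showing $\nabla\LSfonction(0,0)=0$. The mean value inequality then yields that $\LSfonction$ is $\mathscr{L}_\phi$-Lipschitz on $\rset^{2d}$.

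The main (minor) obstacle is bookkeeping: one must verify that the dominant part of $K_\gamma$ is $\kappa(1+\gamma^\delta\vartheta_\gamma)$ and that the $O(\gamma)$ remainder is absorbed by the choice of $\cbgamma$ in~\eqref{eq:def_c}, which is engineered precisely so that $|K_\gamma|\leq (1+\bvartheta)(\kappa^2+\kappa+C_\kappa)$ uniformly in $\gamma\in\ocintLigne{0,\cbgamma}$. Everything else is a direct application of Cauchy--Schwarz and the lower bound from \Cref{lem:minoration_lya}.
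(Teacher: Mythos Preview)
Your proposal is correct and follows essentially the same route as the paper: compute $\nabla\LSfonction = \nabla\Lya/(2\sqrt{1+\Lya})$, bound $\nabla\Lya$ linearly in $(\|x\|,\|v\|)$ using $\|\nabla U(x)\|\leq L\|x\|$ and the estimate on the cross-term coefficient $K_\gamma$ from \Cref{lem:lemma_exp_gminorc}, then divide by the quadratic lower bound from \Cref{lem:minoration_lya} and conclude via the mean value theorem. The only cosmetic difference is that the paper bounds $\|\nabla_x\LSfonction\|$ and $\|\nabla_v\LSfonction\|$ separately and then combines them via $\|\nabla\LSfonction\|\leq 2\max(\|\nabla_x\LSfonction\|,\|\nabla_v\LSfonction\|)$, whereas you bound $\|\nabla\Lya\|^2$ in one shot; also note that the paper's own computation of $|K_\gamma|$ (via \eqref{eq:bound_cross_term_Wgamma}) actually gives $(1+\bvartheta)(\kappa^2+\kappa+2C_\kappa)$ rather than the $C_\kappa$ appearing in the stated constant, so your hedging there is warranted.
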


\begin{lemma}
  \label{lem:Gammaminorc_lip}
  Assume that~\Cref{ass:gminorc} and \Cref{ass:w} hold, and recall that~$\Gammaminorc{\gamma}$ is defined in~\eqref{minor_def_Gamma_b_gamma}. Then, for any $\gamma\in \ocintLigne{0,\min(1,\bgamma)}$, $x,v\in\rset^d$ and $w_2\in \rset^m$, the function $(z,w_1) \mapsto \Gammaminorc{\gamma}\left(x,v,\sqrt{\gamma}\sigma_{\gamma} z, (w_1,w_2)\right)$ is Lipschitz continuous, and its Lipschitz constant is bounded by $\sqrt{2\gamma}[2\LLS \max(1,\bsigma)+\mathscr{D}\bsigma+\bsigma]$.
\end{lemma}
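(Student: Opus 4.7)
}

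The plan is to directly unfold the two components of the map $(z,w_1) \mapsto \Gammaminor_{\gamma}(x,v,\sqrt{\gamma}\sigma_{\gamma}z,(w_1,w_2))$ from the definition \eqref{minor_def_Gamma_b_gamma}, exploit the joint Lipschitz continuity of $(f_{\gamma},g_{\gamma})$ in $(z,w_1)$ provided by \Cref{ass:w}-\ref{ass:w_logsob_spec_1}-\ref{ass:w_lip}, and then collect terms carefully using the bounds $\normop{\Dbf_{\gamma}}\leq \mathscr{D}$ and $\sigma_{\gamma}\leq \bsigma$ from \Cref{ass:gminorc}, together with $\gamma^{\delta+1/2}\leq \sqrt{\gamma}$ and $\gamma\leq \sqrt{\gamma}$, both valid for $\gamma\in\ocintLigne{0,\min(1,\bgamma)}$ since $\delta>0$.

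First, I would fix $(z,w_1),(z',w_1')\in \rset^{d+m_1}$, and use~\Cref{ass:w}-\ref{ass:w_lip} (applied with $x$ fixed, $v$ replaced by $\gamma^{\delta}v$, and the third argument replaced by $\gamma^{\delta+\half}\sigma_{\gamma}z$ or $\gamma^{\delta+\half}\sigma_{\gamma}z'$) to bound
\begin{equation}
  \normHLine{f_{\gamma}(x,\gamma^{\delta}v,\gamma^{\delta+\half}\sigma_{\gamma}z,(w_1,w_2))-f_{\gamma}(x,\gamma^{\delta}v,\gamma^{\delta+\half}\sigma_{\gamma}z',(w_1',w_2))} \leq \LLS\sqrt{\gamma^{2\delta+1}\sigma_{\gamma}^2\norm{z-z'}^2+\norm{w_1-w_1'}^2},
\end{equation}
and similarly for $g_{\gamma}$. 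Using the triangle inequality on the first component of $\Gammaminor_{\gamma}$ then yields
\begin{equation}
  \normHLine{X\text{-comp}} \leq \gamma\LLS\sqrt{\gamma^{2\delta+1}\sigma_{\gamma}^2\norm{z-z'}^2+\norm{w_1-w_1'}^2}+\gamma^{\delta+\half}\sigma_{\gamma}\mathscr{D}\norm{z-z'},
\end{equation}
and an analogous bound for the second component with $\gamma^{\delta+\half}\sigma_{\gamma}\mathscr{D}$ replaced by $\sqrt{\gamma}\sigma_{\gamma}$.

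Next, I would use $\sqrt{a^2+b^2}\leq a+b$ for $a,b\geq 0$, then bound $\gamma\leq \sqrt{\gamma}$ and $\gamma^{\delta+\half}\leq \sqrt{\gamma}$, and replace $\sigma_{\gamma}$ by its upper bound $\bsigma$. Summing the two component norms gives, after elementary regrouping,
\begin{equation}
  \normHLine{X\text{-comp}}+\normHLine{V\text{-comp}} \leq \sqrt{\gamma}\bigl[2\LLS\max(1,\bsigma)+\mathscr{D}\bsigma+\bsigma\bigr]\bigl(\norm{z-z'}+\norm{w_1-w_1'}\bigr),
\end{equation}
where I have bounded $2\LLS\sqrt{\gamma}\bsigma\leq 2\LLS\bsigma$ and $2\LLS\leq 2\LLS\max(1,\bsigma)$, and then collected $2\LLS\max(1,\bsigma)$ as a common upper bound. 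Finally, the standard inequalities $\sqrt{A^2+B^2}\leq A+B$ and $\norm{z-z'}+\norm{w_1-w_1'}\leq \sqrt{2}\,\normLine{(z-z',w_1-w_1')}$ yield the claimed Lipschitz constant $\sqrt{2\gamma}[2\LLS\max(1,\bsigma)+\mathscr{D}\bsigma+\bsigma]$.

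This proof is essentially a routine bookkeeping argument; there is no real obstacle. The only point requiring a small amount of care is to keep the $\max(1,\bsigma)$ factor clean so that it absorbs both the Lipschitz contribution in $w_1$ (proportional to $\LLS$) and the one in $z$ (proportional to $\LLS\bsigma$) under a single constant, rather than treating the $z$ and $w_1$ variables separately at the end.
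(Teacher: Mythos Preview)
Your proposal is correct and follows essentially the same approach as the paper: unfold the two components of $\Gammaminorc{\gamma}$, apply \Cref{ass:w}-\ref{ass:w_logsob_spec_1}-\ref{ass:w_lip} to bound the $f_\gamma,g_\gamma$ differences, use $\gamma\leq\sqrt{\gamma}$ and $\gamma^{\delta+1/2}\leq\sqrt{\gamma}$ together with the bounds from \Cref{ass:gminorc}, and convert $\norm{z-z'}+\norm{w_1-w_1'}$ to $\sqrt{2}\,\norm{(z-z',w_1-w_1')}$. The paper writes the chain of inequalities for the full $\rset^{2d}$-norm in one go rather than treating the two components separately, but the estimates and the final constant are identical.
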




\paragraph{Acknowledgments.}
G.S. thanks Matthias Sachs for pointing out a mistake in the proof of~\cite[Lemma~2.8]{leimkuhler:matthews:stoltz:2016}, which triggered this work. The authors also thank the referees for their suggestions to improve the presentation of the manuscript. The work of G.S. is funded in part by the European Research Council (ERC) under the European Union's Horizon 2020 research and innovation programme (grant agreement No~810367), and by the Agence Nationale de la Recherche, under grants ANR-19-CE40-0010-01 (QuAMProcs) and ANR-21-CE40-0006 (SINEQ). A.D. acknowledges support  of the Lagrange Mathematical and Computing Research Center.


\bibliographystyle{abbrv}
\bibliography{../../Bibliography/bibliography}

\tableofcontents
\appendix

\section{Postponed proofs}

\subsection{Proof of \Cref{lem:estimate_coeff_Sigma}}
\label{sec:proof-crefl_coeef_sigma}
  Note that by \eqref{eq:def_continue_matrix}
  and using that for any $t\in \rset_+ $, $t-t^2/2+t^3/6-t^4/24\leq 1-\rme^{-t}\leq t-t^2/2+t^3/6$, we have for any $t_0\in \rset_+ $,
  \begin{equation}
    \sigma^2t_0^3/3-\sigma^2\kappa t_0^4/3\leq \coeffContinueMatrix{1}{t_0}\leq \sigma^2t_0^3/3 +\sigma^2\kappa t_0^4/12 \eqsp .
  \end{equation}
Similarly using that for any $t\in \rset_+ $, $t-t^2/2\leq 1-\rme^{-t}\leq t $, we have for any $t_0\in \rset_+ $,
  \begin{equation}
    \sigma^2t_0^2/2-\sigma^2\kappa t_0^3/2 \leq \coeffContinueMatrix{2}{t_0}\leq \sigma^2t_0^2/2 \eqsp ,
\qquad  
\sigma^2t_0-\sigma^2\kappa t_0^2 \leq \coeffContinueMatrix{3}{t_0}\leq \sigma^2t_0 \eqsp .
  \end{equation}  
  Then, since $1/3-1/4>0$, taking $\btZ$ sufficiently small completes the proof.

  \subsection{Proof of \Cref{lem:lemma_exp_gminorc}}
  \label{sec:proof-lem:lemma_exp_gminorc}
  
  The first estimate is a direct consequence of the following inequality for~$0 \leq a \leq b$ and~$\ell \geq 1$:
\begin{equation}
 0 \leq b^\ell-a^\ell = \ell \int_a^b x^{\ell-1} \, \rmd x \leq (b-a)\ell b^{\ell-1},
\end{equation}
together with the bound $\max(\gminorc,\rme^{-\kappa \gamma}) \leq 1$. The bound on~$|\gminorc - 1|$  follows from the fact that this quantity is bounded by~$1-\rme^{-\kappa \gamma} + C_\kappa \gamma^2$ in view of~\Cref{ass:gminorc}, together with the inequality~$1-\rme^{-\kappa \gamma} \leq \kappa \gamma$. For the final estimate, we write 
\begin{equation}
\left| \frac{\gamma}{1-\gminorc} - \frac1\kappa\right| = \frac{|\gminorc - 1 + \kappa\gamma|}{\kappa(1-\gminorc)} \leq \frac{|\gminorc - \rme^{-\kappa \gamma}| + |\rme^{-\kappa\gamma}- 1 + \kappa\gamma|}{\kappa(1-\gminorc)} \eqsp.
\end{equation}
The first term in the last numerator is bounded by~$C_\kappa \gamma^2$ in view of \Cref{ass:gminorc}. For the second one and the denominator, we use the inequality $-t^2/2 \leq 1-t-\rme^{-t} \leq 0$ for any~$t \geq 0$ to write $|\rme^{-\kappa\gamma}- 1 + \kappa\gamma| \leq \kappa^2\gamma^2/2$ and 
\begin{equation}
\frac{1-\gminorc}{\gamma} \geq \frac{1-\rme^{-\kappa\gamma}-C_\kappa\gamma^2}{\gamma} \geq \kappa - \left(\frac{\kappa^2}{2}+C_{\kappa}\right) \gamma \geq \frac{\kappa}{2}\eqsp,
\end{equation}
where the last inequality follows from the bound~$\gamma \leq \bgamma \leq (\kappa + 2C_{\kappa}/\kappa)^{-1}$ in~\Cref{ass:gminorc}. This finally leads to~\eqref{eq:lem:lemma_exp_gminorc_2}.

  \subsection{Proof of \Cref{lem:minoration_lya}}
  Let $\gamma \in \ocintLigne{0,\cbgamma}$.  Consider~$V_0(x,v) = \kappa^2 \norm{x}^2/2+\norm{v}^2+\kappa\ps{x}{v}+2\parametreGradU U(x)$. The Cauchy--Schwarz inequality and \Cref{lem:lemma_exp_gminorc} give
  \begin{align}
    \label{eq:9}
    \abs{\Lya(x,v) - V_0(x,v) } &\leq \kappa \abs{ \frac{\kappa\gamma(1+\gamma^{\delta}\vartheta_{\gamma})}{1-\tau_{\gamma}} -1} \frac{\norm{x}^2+\norm{v}^2}{2}  \\
    &\leq \kappa\left( \gamma^\delta \vartheta_{\gamma} + (1+\gamma^{\delta}\vartheta_{\gamma})\left|\frac{\kappa\gamma}{1-\tau_{\gamma}} -1\right| \right)\frac{\norm{x}^2+\norm{v}^2}{2} \\
    &  \leq \kappa\left( \gamma^\delta \bvartheta + (1+\gamma^{\delta}\bvartheta) \left[\frac{2C_\kappa}{\kappa} +\kappa\right]\gamma \right)\frac{\norm{x}^2+\norm{v}^2}{2}  \leq \cLya (\norm{x}^2+\norm{v}^2) \eqsp,
  \end{align}
  where the last inequality follows from the definition of~$\cbgamma$. In addition, using the Cauchy--Schwarz inequality again, we get for any~$\eta \in (1/2,1)$, 
  \begin{equation}
      V_0(x,v)  \geq \frac{\kappa^2}{2} (1-\eta)\norm{x}^2 + \left(1-\frac{1}{2\eta}\right) \norm{v}^2 +2 \parametreGradU U(x)  \geq 2\cLya (\norm{x}^2+\norm{v}^2) +2 \parametreGradU U(x) \eqsp,
  \end{equation}
  where the last inequality is obtained with~$\eta=2/3$. The combination of the two previous inequalities finally gives the claimed result.

  \subsection{Proof of \Cref{lem:majoration_lsfonction}}

  By~\cite[Lemma 1.2.3]{nesterov:2004} and~\Cref{ass:lip}(U), it holds $U(x) \leq L\norm{x}^2/2$. Moreover, the Cauchy--Schwarz inequality, \Cref{lem:lemma_exp_gminorc} and the last condition in~\Cref{ass:contoleGAndF}(U) lead to 
  \begin{align}
    \frac{\kappa^2\gamma (1+\gamma^\delta \vartheta_\gamma)}{1-\gminorc} \ps{x}{v}
    & \leq \frac{\kappa^2}{2}(1+\bvartheta)\left(\frac1\kappa+\left[\frac{2C_\kappa}{\kappa^2}+1\right]\gamma\right)\left(\norm{x}^2+\norm{v}^2\right) \\
    & \leq \frac12 (1+\bvartheta)(\kappa^2+\kappa+2C_\kappa)(\norm{x}^2+\norm{v}^2), \label{eq:bound_cross_term_Wgamma}
  \end{align}
  where we used~$\gamma \leq 1$ in the last inequality. This finally implies the first inequality in \eqref{eq:upper_bound_QF_Lya} by the definition~\eqref{eq:def_function_lyap_super_quad} of~$\Lya$. 
  The proof of the second one is concluded with the inequality~$\sqrt{a+b}\leq \sqrt{a}+\sqrt{b}$ for~$a,b\in \rset_+$.

  \subsection{Proof of \Cref{lem:phi_lip}}

    In view of the definitions~\eqref{eq:def_function_lyap_super_quad} and~\eqref{eq:def_Lyaexp}, it holds
  \[
  \nabla_x \LSfonction(x,v) = \frac{1}{2\LSfonction(x,v)}\left[\kappa^2x + \frac{\kappa^2\gamma (1+\gamma^\delta \vartheta_\gamma)}{1-\gminorc} v + 2\parametreGradU\nabla U(x)\right],
  \]
  so that, by a triangle inequality and upon bounding the prefactor of~$v$ as in~\eqref{eq:bound_cross_term_Wgamma}, and using also \Cref{lem:minoration_lya} and the inequality~$\norm{\nabla U(x)} \leq L\norm{x}$, we obtain
  \begin{align}
    \norm{\nabla_x \LSfonction(x,v)}
    & \leq \frac{\kappa^2\norm{x}+(1+\bvartheta)(\kappa^2+\kappa+2C_\kappa)\norm{v}+2\parametreGradU\norm{\nabla U(x)}}{2\sqrt{1+\cLya \{\norm[2]{x} + \norm[2]{v}\}}}\\
    &\leq \frac{1}{\sqrt{\cLya}}\max\left(\parametreGradU L + \frac{\kappa^2}{2}, \frac12(1+\bvartheta)(\kappa^2+\kappa+2C_\kappa) \right).
  \end{align}
  Similarly, for any $x,v \in \rset^d$,
  \begin{equation}
    \norm{\nabla_v \LSfonction(x,v)}\leq \frac{1}{\sqrt{\cLya}}\max \parenthese{1,\frac12(1+\bvartheta)(\kappa^2+\kappa+2C_\kappa) } \eqsp .
  \end{equation}
  The conclusion then follows from the inequality~$\norm{\nabla \LSfonction(x,v)} \leq 2 \max(\norm{\nabla_x \LSfonction(x,v)},\norm{\nabla_v \LSfonction(x,v)})$ and the mean value theorem.

  \subsection{Proof of \Cref{lem:Gammaminorc_lip}}

    In view of~\eqref{minor_def_Gamma_b_gamma}, \Cref{ass:gminorc} and \Cref{ass:w}-\ref{ass:w_logsob_spec_1}-\ref{ass:w_lip}, we can write, for any $x,v,z,z'\in\rset^d$, $w_1,w'_1\in \rset^{m_1}$ and $w_2\in\rset^{m_2}$,
  \begin{align}
    & \norm{\Gammaminorc{\gamma}\left(x,v,\sqrt{\gamma}\sigma_{\gamma} z, (w_1,w_2)\right)-\Gammaminorc{\gamma}\left(x,v,\sqrt{\gamma}\sigma_{\gamma} z', (w'_1,w_2)\right)} \\
    & \qquad \leq \gamma \norm{f_\gamma\left(x, \gamma^{\delta} v,\gamma^{1/2+\delta} \sigma_{\gamma} z,(w_1,w_2)\right)-f_\gamma\left(x, \gamma^{\delta} v,\gamma^{1/2+\delta} \sigma_{\gamma} z',(w_1',w_2)\right)} + \gamma^{\delta+1/2}\sigma_\gamma\norm{\Dbf_{\gamma}(z-z')} \\
    & \qquad \quad + \gamma \norm{g_\gamma\left(x, \gamma^{\delta}v,\gamma^{\delta+1/2}\sigma_\gamma z,(w_1,w_2)\right)-g_\gamma\left(x, \gamma^{\delta}v,\gamma^{\delta+1/2}\sigma_\gamma z',(w_1',w_2)\right)} + \sigma_\gamma \sqrt{\gamma}\norm{z-z'} \\
    & \qquad \leq \left(2\LLS\gamma \max(1,\gamma^{\delta+1/2}\bsigma)+\gamma^{\delta+1/2}\mathscr{D}\bsigma+\bsigma\sqrt{\gamma}\right) \left( \norm{z-z'} + \norm{w_1-w'_1} \right) \eqsp , \\
    & \qquad \leq \sqrt{2}\left(2\LLS\gamma \max(1,\gamma^{\delta+1/2}\bsigma)+\gamma^{\delta+1/2}\mathscr{D}\bsigma+\bsigma\sqrt{\gamma}\right) \norm{(z,w_1)-(z',w_1')} \eqsp ,
  \end{align}
  which completes the proof.

  \subsection{Proof of \Cref{propo:verif_drift_condition_splitting_scheme}}
  \label{sec:check_D3}

  In view of~\eqref{eq:second_order_splitting_complicated} and since $b = - \nabla U$,
\[
\begin{aligned}
  f_\gamma(x,v,z,w)
  & = \mathscr{C}_{1,\gamma} v - \frac{\gamma}{2} \nabla U\left(x+ \mathscr{C}_{2,\gamma} v + \mathscr{C}_{3,\gamma} z + \gamma^{3/2}\mathscr{C}_{4,\gamma} w\right) + 2\gamma^{3/2} \mathscr{C}_{4,\gamma} w\eqsp,
\end{aligned}
\]
with
\[
\mathscr{C}_{1,\gamma} = \frac{\rme^{-\kappa \gamma/2}-1}{\gamma},
\qquad
\mathscr{C}_{2,\gamma} = \frac{\rme^{-\kappa \gamma/2}}{2},
\qquad
\mathscr{C}_{3,\gamma} = \frac{\rme^{-\kappa\gamma/2}}{2(1+\rme^{-\kappa\gamma})},
\qquad
\mathscr{C}_{4,\gamma} = \sqrt{\frac{\tsigma_{\gamma/2}^2}{8(1+\rme^{-\kappa\gamma})}}.
\]
The coefficients~$\mathscr{C}_{i,\gamma}$ (for~$1 \leq i \leq 4$) are uniformly bounded in~$\gamma$ for~$\gamma \in (0,\bgamma]$, and we denote by~$\overline{\mathscr{C}}$ their maximal value:
\begin{equation}
  \label{eq:s1}
  \overline{\mathscr{C}} = \sup_{1 \leq i \leq 4} \sup_{\gamma \in (0,\bgamma]} \mathscr{C}_{i,\gamma} < +\infty \eqsp.
  \end{equation}
A Cauchy--Schwarz inequality gives 
  \[
  \begin{aligned}
  \norm{f_\gamma(x,\gamma^\delta v,\gamma^{\delta+\half} \sigma_\gamma z,w)}^2 & \leq 3 \gamma^{2\delta} \overline{\mathscr{C}}^2 \norm{v}^2 + 12 \gamma^3 \overline{\mathscr{C}}^2 \norm{w}^2 \\
  & \qquad + \frac{3\gamma^2}{4} \norm{\nabla U\left(x+ \gamma^{\delta} \mathscr{C}_{2,\gamma} v + \gamma^{\delta + \half} \sigma_\gamma \mathscr{C}_{3,\gamma} z + \gamma^{3/2}\mathscr{C}_{4,\gamma} w\right)}^2\eqsp.
  \end{aligned}
  \]
  It therefore suffices to bound the term on the second line of the previous inequality. To this end, we note that~\Cref{ass:lip}(U) and a Cauchy--Schwarz inequality imply that, for any~$h \in \rset^d$,
  \begin{equation}
    \label{eq:bound_nablaU_x+h}
    \norm{\nabla U(x+h)}^2 \leq 2 \norm{\nabla U(x)}^2 + 2L^2 \norm{h}^2 \eqsp.
  \end{equation}
  The first condition in~\Cref{ass:contoleGAndF}(U) is then easily seen to hold upon setting~$h = \gamma^{\delta} \mathscr{C}_{2,\gamma} v + \gamma^{\delta + \half} \sigma_\gamma \mathscr{C}_{3,\gamma} z + \gamma^{3/2}\mathscr{C}_{4,\gamma} w$.

  To prove that the second condition in~\Cref{ass:contoleGAndF}(U) holds, we need to be careful about the dependence of our estimates on~$\norm{x}$. We rely on~\eqref{eq:19}, which implies that there exist~$a>0$ and~$b \in \rset $ such that
  \[
  - \ps{x}{\nabla U(x)}\leq -a \left( \norm{x}+\norm[2]{\nabla U(x)}\right) + b.
  \]
  An inequality similar to~\eqref{eq:bound_nablaU_x+h} can also be written for any~$h \in \rset^d$: 
  \[
  -2\norm{\nabla U(x+h)}^2 \leq -\norm{\nabla U(x)}^2 + 2L^2 \norm{h}^2 \eqsp.
  \]
  We therefore obtain, using a Cauchy--Schwarz inequality, for any~$h \in \rset^d$,
  \begin{align}
    - \ps{x}{\nabla U(x+h)} & = - \ps{x+h}{\nabla U(x+h)} + \ps{h}{\nabla U(x+h)} \\
    & \leq -a \left( \norm{x+h}+\norm[2]{\nabla U(x+h)}\right) + b + \frac{a}{2} \left( \frac{\norm{h}^2}{a^2} + \norm{\nabla U(x+h)}^2\right) \\
    & \leq -a \left( \norm{x}+\frac14 \norm[2]{\nabla U(x)}\right) + \widetilde{b}\left(1+\norm{h}^2\right) \eqsp, \label{eq:ineq_diss_D3}
  \end{align}
  for some constant $\widetilde{b} \in\rset$.
  The second condition in~\Cref{ass:contoleGAndF}(U) then follows from the above inequality, the fact that 
  \[
  \begin{aligned}
    \ps{x}{f_\gamma\left(x,\gamma^{\delta}v, \gamma^{\delta+\half} \sigma_{\gamma} z,w\right)}
    & = \gamma^\delta\mathscr{C}_{1,\gamma} \ps{x}{v} + 2\gamma^{3/2} \mathscr{C}_{4,\gamma} \ps{x}{w} - \frac{\gamma}{2} \ps{x}{\nabla U\left(x+h\right)} 
  \end{aligned}
  \]
  with~$h= \gamma^{\delta}\mathscr{C}_{2,\gamma} v + \gamma^{\delta+\half} \sigma_\gamma \mathscr{C}_{3,\gamma} z + \gamma^{3/2}\mathscr{C}_{4,\gamma} w$ and using~\eqref{eq:s1}.

  Let us next check that the conditions in \Cref{ass:contoleGAndF}(U) involving~$g_\gamma$ are satisfied with $\parametreGradU = 1$. In view of~\eqref{eq:second_order_splitting_complicated} and since $b = - \nabla U$,
  \[
  g_\gamma(x,v,z,w) = -\mathscr{G}_{1,\gamma} \nabla U\left(x+\mathscr{G}_{2,\gamma} v + \mathscr{G}_{3,\gamma} z + \gamma^{3/2}\mathscr{G}_{4,\gamma} w\right),
  \]
  with
  \[
  \mathscr{G}_{1,\gamma} = \rme^{-\kappa \gamma/2},
  \qquad
  \mathscr{G}_{2,\gamma} = \frac{\rme^{-\kappa \gamma/2}}{2},
  \qquad
  \mathscr{G}_{3,\gamma} = \frac{\rme^{-\kappa\gamma/2}}{2(1+\rme^{-\kappa\gamma})},
  \qquad
  \mathscr{G}_{4,\gamma} = \sqrt{\frac{\tsigma_{\gamma/2}^2}{8(1+\rme^{-\kappa\gamma})}}.
  \]
The coefficients~$\mathscr{G}_{i,\gamma}$ (for~$1 \leq i \leq 4$) are uniformly bounded in~$\gamma$ for~$\gamma \in (0,\bgamma]$, and we denote by~$\overline{\mathscr{G}}$ their maximal value:
\begin{equation}
  \label{eq:s2}
  \overline{\mathscr{G}} = \sup_{1 \leq i \leq 4} \sup_{\gamma \in (0,\bgamma]} \mathscr{G}_{i,\gamma} < +\infty \eqsp.
\end{equation}
Note also that, there exists $K \geq 0$ such that for any~$\gamma \in (0,\bgamma]$,
      \begin{equation}
        \label{eq:G0_estimate}
        |\mathscr{G}_{1,\gamma} - 1| \leq K\gamma \eqsp,
      \end{equation}
      so that we bound using \Cref{ass:lip}(U) the term involving~$g_\gamma$ in the first condition as
    \[
      \begin{aligned}
        \norm{g_\gamma\left(x,\gamma^{\delta}v, \gamma^{\delta+\half} \sigma_{\gamma} z,w\right)+\nabla U(x)}^2 & \leq 2 \left(1-\mathscr{G}_{1,\gamma}\right)^2 \norm{\nabla U(x)}^2 + 2\overline{\mathscr{G}}^2 \norm{\nabla U(x+h)-\nabla U(x)}^2 \\
        & \leq 2 K^2 \gamma^2 \norm{\nabla U(x)}^2 + 2\overline{\mathscr{G}}^2 L^2 \norm{h}^2 \eqsp,
        \end{aligned}
    \]
    with $h = \gamma^{\delta} \mathscr{G}_{2,\gamma} v + \gamma^{\delta+\half} \sigma_{\gamma} \mathscr{G}_{3,\gamma} z + \gamma^{3/2}\mathscr{G}_{4,\gamma} w$, which easily implies that the second condition holds by \eqref{eq:s2}. Moreover, with the same definition of~$h$,
    \[
    \ps{x}{g_\gamma\left(x,\gamma^{\delta}v, \gamma^{\delta+\half} \sigma_{\gamma} z,w\right)} = -\mathscr{G}_{1,\gamma} \ps{x}{\nabla U(x+h)}, 
    \]
    from which the third condition easily follows in view of~\eqref{eq:ineq_diss_D3}, \eqref{eq:s2} and~\eqref{eq:G0_estimate}.

\section{Complementary and technical results}
\label{sec:compl-techn-results}

\begin{lemma}
\label{prop:inv}
For any $\kappa, \sigma, \gamma>0$, $\coeffContinueMatrix{}{\gamma} \otimes \Idd$ in \eqref{eq:def_continue_matrix} is invertible.
\end{lemma}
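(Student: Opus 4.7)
\section*{Proof plan for \Cref{prop:inv}}

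The plan is to reduce the invertibility of the $2d \times 2d$ matrix $\coeffContinueMatrix{}{\gamma} \otimes \Idd$ to the invertibility of the $2\times 2$ matrix $\coeffContinueMatrix{}{\gamma}$, and then establish the latter by identifying it as the covariance matrix of two stochastic integrals driven by linearly independent integrands. First, I would invoke the standard fact that the eigenvalues of a Kronecker product are the products of the eigenvalues of the factors, so that $\det(\coeffContinueMatrix{}{\gamma}\otimes\Idd) = \det(\coeffContinueMatrix{}{\gamma})^{d}$. It therefore suffices to show $\det(\coeffContinueMatrix{}{\gamma}) > 0$ for every $\kappa,\sigma,\gamma > 0$.

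Next, I would interpret $\coeffContinueMatrix{}{\gamma}$ probabilistically. Inspecting \eqref{eq:first_coeff_Sigma}--\eqref{eq:third_coeff_Sigma}, each entry has the form $\int_0^\gamma f_i(s) f_j(s)\,\rmd s$ (up to the common factor $\sigma^2$), where $f_1(s) = (1-\rme^{-\kappa(\gamma-s)})/\kappa$ and $f_2(s) = \rme^{-\kappa(\gamma-s)}$. Hence $\coeffContinueMatrix{}{\gamma}/\sigma^2$ is the Gram matrix of $(f_1,f_2)$ in $\rmL^2([0,\gamma])$, and is positive definite if and only if $f_1,f_2$ are linearly independent in $\rmL^2([0,\gamma])$. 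Linear independence is immediate from the identity $\kappa f_1(s) + f_2(s) = 1$: any linear dependence $\alpha f_1 + \beta f_2 \equiv 0$ would, combined with this identity, force the nonconstant function $f_2$ to be constant (since $\kappa > 0$), a contradiction. This gives $\det(\coeffContinueMatrix{}{\gamma}) > 0$ and completes the argument.

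As a robustness check, I would also note that a direct computation yields the closed form
\[
\det(\coeffContinueMatrix{}{\gamma}) = \frac{\sigma^4(1-\rme^{-\kappa\gamma})}{2\kappa^4}\Big[(1+\rme^{-\kappa\gamma})\kappa\gamma - 2(1-\rme^{-\kappa\gamma})\Big],
\]
and the function $h(t) = (1+\rme^{-t})t - 2(1-\rme^{-t})$ satisfies $h(0) = 0$, $h'(t) = 1 - (1+t)\rme^{-t}$, $h''(t) = t\rme^{-t} > 0$ for $t>0$, so $h'$ is strictly increasing from $h'(0)=0$, hence $h(t) > 0$ for all $t > 0$. This provides an alternative (and completely elementary) confirmation that $\det(\coeffContinueMatrix{}{\gamma}) > 0$. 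There is no real obstacle here; the only thing to be careful about is to state clearly that no smallness of $\gamma$ is required, in contrast to the quantitative bounds established earlier in \Cref{lem:estimate_coeff_Sigma} and \Cref{lem:matrix_minor_gen_continue}.
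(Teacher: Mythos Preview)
Your proof is correct and follows essentially the same approach as the paper: reduce via the Kronecker determinant identity to the $2\times 2$ matrix, recognise it as (a multiple of) the Gram matrix of two $\rmL^2$ functions, and conclude by linear independence, which is exactly what the paper does with Cauchy--Schwarz. Your additional closed-form check on $\det(\coeffContinueMatrix{}{\gamma})$ and the convexity argument for $h(t)$ are a nice bonus not present in the paper.
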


\begin{proof}
Note first that we only need to consider the case $\sigma=\kappa=1$. Let $\gamma>0$. Set $\mathrm{M} = \coeffContinueMatrix{}{\gamma} \otimes \Idd$. We show that $\det(\mathrm{M}) >0$.
Since by  \cite[Exercise 45, Chapter 1]{hiai:petz:2014}, $\det(\mathrm{M})= \det( \coeffContinueMatrix{}{\gamma})^d$ where $\coeffContinueMatrix{}{\gamma}$ is given by \eqref{eq:def_continue_matrix}, it suffices to show that $\det( \coeffContinueMatrix{}{\gamma})>0$.
Denote for any $t>0$ and square-integrable functions $h_1,h_2:\ccint{0,t}\to \rset$,
\[
\ps{h_1}{h_2}_{\rml^2(\ccint{0,t})}=\int_0^t h_1(s)h_2(s) \, \rmd s,
\qquad
\norm{h_1}_{\rml^2(\ccint{0,t})}=\sqrt{\ps{h_1}{h_1}_2}.
\]
By \eqref{eq:def_continue_matrix},
\begin{equation}
  \det( \coeffContinueMatrix{}{\gamma})=\det \begin{pmatrix}
    \norm{h_1}_{\rml^2(\ccint{0,\gamma})}^2&\ps{h_1}{h_2}_{\rml^2(\ccint{0,\gamma})}\\
    \ps{h_1}{h_2}_{\rml^2(\ccint{0,\gamma})}&\norm{h_2}_{\rml^2(\ccint{0,\gamma})}^2
  \end{pmatrix}=\norm{h_1}_{\rml^2(\ccint{0,\gamma})}^2\norm{h_2}_{\rml^2(\ccint{0,\gamma})}^2-\ps{h_1}{h_2}_{\rml^2(\ccint{0,\gamma})}^2 \eqsp ,
\end{equation}
where for any $s\in \ccint{0,\gamma}$, $h_1(s)=1-\rme^{- (\gamma-s)}$ and $h_2(s)=\rme^{- (\gamma-s)}$. The result follows by a Cauchy–Schwarz inequality since $h_1, h_2$ are linearly independent. 
\end{proof}

\begin{lemma}
  \label{lem:OU_equiv}
Let $x,v\in\rset^d$ and $\kappa>0$. Consider for any $t\in \rset_+$, 
\begin{equation}
  \label{eq:def_discretization}
  \begin{aligned}
  \widetilde{\bfX}_{t}&=x+\frac{1-\rme^{-\kappa t}}{\kappa} v+ \frac{\kappa t+\rme^{-\kappa t}-1}{\kappa^2}   b (x) +\sigma \int_{0}^{t} \frac{1-\rme^{-\kappa (t-s)}}{\kappa} \rmd \BM_{s} \eqsp ,\\
\widetilde{\bfV}_{t} &= \rme^{-\kappa t} v+\frac{1-\rme^{-\kappa t}}{\kappa}  b (x) +\sigma\int_{0}^{t} \rme^{-\kappa (t-s)} \rmd \BM_{s} \eqsp ,
\end{aligned}
\end{equation}
where $(\BM_t)_{t \geq 0}$ is a standard $d$-dimensional Brownian motion.
 The process $(\widetilde{\bfX}_{t},\widetilde{\bfV}_{t})_{t\geq 0}$ is the unique solution of the SDE,
  \begin{equation}
    \label{eq:sde_discretization}
    \widetilde{\bfX}_t =  x + \int_{0}^t \widetilde{\bfV}_s \, \rmd s    \eqsp,
    \qquad
    \widetilde{\bfV}_t =  v + \int_{0}^t \defEns{-\kappa \widetilde{\bfV}_s +  b(x) } \, \rmd s  + \sigma\BM_t  \eqsp .
  \end{equation}
\end{lemma}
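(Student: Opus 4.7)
The plan is to verify directly that the explicit expressions in \eqref{eq:def_discretization} define an Itô process satisfying \eqref{eq:sde_discretization}, and then invoke standard uniqueness for linear SDEs with constant (in fact, Lipschitz) coefficients. Since the system \eqref{eq:sde_discretization} has a triangular structure — $\widetilde{\bfV}$ satisfies an autonomous linear SDE, and $\widetilde{\bfX}$ is just its time integral plus $x$ — I would split the verification into two steps.

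First, I would check the equation for $\widetilde{\bfV}$. Rewriting
\begin{equation}
\widetilde{\bfV}_t = e^{-\kappa t}\left[ v + \frac{e^{\kappa t}-1}{\kappa}b(x) + \sigma \int_0^t e^{\kappa s}\, d\mathrm{B}_s \right],
\end{equation}
the bracketed process is a semimartingale with differential $b(x)\, dt + \sigma e^{\kappa t}\, d\mathrm{B}_t$. Applying Itô's product rule with the deterministic (hence finite-variation) factor $e^{-\kappa t}$ yields $d\widetilde{\bfV}_t = -\kappa \widetilde{\bfV}_t\, dt + b(x)\, dt + \sigma\, d\mathrm{B}_t$, which is precisely the second equation in \eqref{eq:sde_discretization} in integrated form.

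Next, I would verify that $\widetilde{\bfX}_t = x + \int_0^t \widetilde{\bfV}_s\, ds$. The two deterministic contributions to $\int_0^t \widetilde{\bfV}_s\, ds$ are
\begin{equation}
v\int_0^t e^{-\kappa s}\, ds = \frac{1-e^{-\kappa t}}{\kappa} v, \qquad \frac{b(x)}{\kappa}\int_0^t (1-e^{-\kappa s})\, ds = \frac{\kappa t + e^{-\kappa t} - 1}{\kappa^2}\, b(x),
\end{equation}
which match the deterministic part of $\widetilde{\bfX}_t - x$. For the stochastic contribution, I would apply the stochastic Fubini theorem to swap the order of integration:
\begin{equation}
\sigma \int_0^t\!\!\left(\int_0^s e^{-\kappa(s-u)}\, d\mathrm{B}_u\right) ds \;=\; \sigma \int_0^t\!\!\left(\int_u^t e^{-\kappa(s-u)}\, ds\right) d\mathrm{B}_u \;=\; \sigma \int_0^t \frac{1 - e^{-\kappa(t-u)}}{\kappa}\, d\mathrm{B}_u,
\end{equation}
which reproduces the stochastic term in $\widetilde{\bfX}_t$ in \eqref{eq:def_discretization}.

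The only delicate point is justifying the stochastic Fubini exchange, but this is routine since the kernel $(s,u)\mapsto e^{-\kappa(s-u)}\mathbf{1}_{u\leq s\leq t}$ is deterministic and uniformly bounded on $[0,t]^2$, so the classical version of the theorem applies directly. Uniqueness of the solution to \eqref{eq:sde_discretization} follows from standard existence/uniqueness results for SDEs with Lipschitz coefficients, since the drift is linear in $(\widetilde{\bfX}_t,\widetilde{\bfV}_t)$ (with $b(x)$ a constant) and the diffusion is additive. No genuine obstacle is expected; the whole proof is a direct computation.
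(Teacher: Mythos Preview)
Your proposal is correct and follows essentially the same strategy as the paper: direct verification of the two integral equations, with the stochastic Fubini theorem handling the exchange of $ds$ and $d\mathrm{B}_u$ in the computation of $\int_0^t \widetilde{\bfV}_s\,ds$. The only cosmetic difference is that for the $\widetilde{\bfV}$ equation the paper reuses the already-computed expression for $\int_0^t \widetilde{\bfV}_s\,ds$ (multiplying by $-\kappa$ and adding $b(x)t$) rather than applying It\^o's product rule to $e^{-\kappa t}$ times a semimartingale; both routes are equally short and valid.
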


\begin{proof}
  For any $ t \in \rset_+$, by \eqref{eq:def_discretization}, \Cref{lem:fubini_sto} and linearity, 
  \begin{align}
    \int_{0}^{t} \widetilde{\bfV}_s \, \rmd s &=\int_{0}^{t}\rme^{-\kappa s} v+\frac{1-\rme^{-\kappa s}}{\kappa}  b (x) \, \rmd s +\sigma\int_{0}^{t}\int_{0}^{s} \rme^{-\kappa (s-r)} \, \rmd \BM_{r} \, \rmd s\\
    \label{eq:prop_solve_1}
    &=\frac{1-\rme^{-\kappa t}}{\kappa} v+ \frac{\kappa t+\rme^{-\kappa t}-1}{\kappa^2}   b (x)+\sigma\int_{0}^{t}\int_{r}^{t} \rme^{-\kappa (s-r)} \, \rmd s \, \rmd \BM_{r} \\
    &=\widetilde{\bfX}_t-x\eqsp .
  \end{align}
  In addition, using \eqref{eq:prop_solve_1}, we obtain 
  \begin{align}
    \int_{0}^t -\kappa\widetilde{\bfV}_s +b(x) \rmd s&=(\rme^{-\kappa t}-1) v-\frac{\rme^{-\kappa t}-1}{\kappa}   b (x) +\sigma \int_{0}^{t} (\rme^{-\kappa (t-s)}-1) \, \rmd \BM_{s} = \widetilde{\bfV}_t-v-\sigma \int_{0}^{t} \rmd \BM_{s} \eqsp,
  \end{align} 
  which completes the proof. 
\end{proof}

The following Fubini-type result for stochastic integrals is established in \cite[Theorem 1]{kailath1978fubini} (see also~\cite[Chapter~IV, Exercise~(5.17)]{revuz:yor:1999}), but an alternative proof is given here for completeness.

\begin{lemma} 
  \label{lem:fubini_sto}
  For any $f \in \rmC^1(\rset)$, $g \in \rmC^0(\rset)$, $u,v\geq 0$, 
  \begin{equation}
    \label{eq:fubini_sto}
    \int_{u}^{v} \int_{u}^{v}  \1_{ \rset_+}(t-s) f(s)g(t) \, \rmd B_s \, \rmd t = \int_{u}^{v} \int_{u}^{v}  \1_{ \rset_+}(t-s) f(s)g(t) \, \rmd t \, \rmd B_s.
  \end{equation}
\end{lemma}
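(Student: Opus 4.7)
\textbf{Proof plan for \Cref{lem:fubini_sto}.} The plan is to reduce both sides of~\eqref{eq:fubini_sto} to single integrals by carrying out the ``trivial'' integration on each side, and then to identify the two resulting processes through an application of the Itô integration by parts formula. Without loss of generality we may assume~$u \leq v$; otherwise both sides vanish.

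First, I would rewrite the left-hand side of~\eqref{eq:fubini_sto} as
\begin{equation}
\int_{u}^{v}\int_{u}^{v}\1_{\rset_+}(t-s) f(s) g(t) \, \rmd B_s \, \rmd t = \int_{u}^{v} g(t) M_t \, \rmd t, \qquad M_t := \int_{u}^{t} f(s) \, \rmd B_s,
\end{equation}
and the right-hand side as
\begin{equation}
\int_{u}^{v}\int_{u}^{v}\1_{\rset_+}(t-s) f(s) g(t) \, \rmd t \, \rmd B_s = \int_{u}^{v} f(s) H(s) \, \rmd B_s, \qquad H(s) := \int_{s}^{v} g(r) \, \rmd r.
\end{equation}
The deterministic rewriting of the outer $t$-integral on the left is justified by continuity of $t \mapsto M_t$ and boundedness of~$g$ on~$[u,v]$; the rewriting on the right simply uses the linearity of the Itô integral in its integrand (with $f(s) H(s)$ deterministic and bounded, hence in the relevant space).

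Next, since $g \in \rmC^0(\rset)$, the function~$H$ belongs to~$\rmC^1(\rset)$ with $H'(s) = -g(s)$ and in particular is of bounded variation on~$[u,v]$, while $(M_t)_{t \geq u}$ is a continuous semimartingale with $\rmd M_t = f(t)\,\rmd B_t$. Applying the Itô product rule to~$H(t)M_t$ between~$u$ and~$v$ yields
\begin{equation}
H(v) M_v - H(u) M_u = \int_u^v H(t) \, \rmd M_t + \int_u^v M_t \, \rmd H(t) = \int_u^v H(t) f(t) \, \rmd B_t - \int_u^v g(t) M_t \, \rmd t,
\end{equation}
where the quadratic covariation vanishes because~$H$ is of bounded variation. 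The boundary terms vanish as well, since~$H(v) = 0$ by definition and $M_u = 0$. Rearranging gives exactly $\int_u^v g(t) M_t \, \rmd t = \int_u^v f(t) H(t) \, \rmd B_t$, which is the claimed identity.

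There is essentially no obstacle: the regularity hypothesis $g \in \rmC^0$ is precisely what is needed to ensure that $H \in \rmC^1$ so that the product rule applies without a quadratic-variation term, and the hypothesis $f \in \rmC^1$ (in fact only $f$ bounded and measurable is needed here) guarantees that $M$ is a well-defined continuous martingale and that $f H$ lies in~$\rml^2([u,v])$. The only mildly delicate point, which I would briefly justify, is the first rewriting of the left-hand side: it follows from a classical Fubini argument after approximating $g$ by step functions and using the continuity of $t\mapsto M_t$ together with $\Esp[\sup_{t\in[u,v]} M_t^2] < \infty$ (by Doob's inequality and the boundedness of~$f$ on~$[u,v]$).
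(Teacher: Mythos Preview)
Your proof is correct and rests on the same core tool as the paper's --- the It\^o integration-by-parts formula --- but your execution is noticeably more direct. The paper sets $M_w = f(w)B_w$ (the \emph{product}, not the stochastic integral) and $G(w)=\int_0^w g$, then applies It\^o's formula once to recover $\int_0^w f\,\rmd B$ from $M_w$ (this is where the hypothesis $f\in\rmC^1$ is actually used), and a second time to the product $G(w)M_w$; the terms involving $f'$ are then matched on both sides via ordinary Fubini. You instead take $M_t=\int_u^t f\,\rmd B$ directly and pair it with $H(s)=\int_s^v g$, so a single application of the product rule to $H(t)M_t$ suffices, the boundary terms vanish by construction, and no $f'$-terms ever appear. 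Your route is shorter and, as you observe, shows that $f\in\rmC^1$ is not really needed (bounded measurable $f$ would do); the paper's detour through $f(w)B_w$ is what forces the extra regularity.
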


\begin{proof}
  Consider $f \in \rmC^1(\rset)$, $g \in \rmC^0(\rset)$, $v\geq 0$. Without loss of generality, it is sufficient to show \eqref{eq:fubini_sto} for  $u=0$. 
  Introduce, for any $w\geq 0$, $G(w)=\int_0^w g(t) \,\rmd t$ and $M_w = f(w)B_w$. By integration by parts \cite[Chapter IV, Proposition~(3.1)]{revuz:yor:1999}, we have, for any $w \geq 0$, 
  \begin{align}
  \label{eq:ibp}
  M_w&=\int_0^w f'(s)B_s \, \rmd s +\int_0^w f(s) \, \rmd B_s \eqsp ,\\
  \label{eq:ibpbis}
  G(w)M_w  & =\int_0^w g(s)f(s)B_s \, \rmd s +\int_0^w G(s) \, \rmd M_s \eqsp .
  \end{align}
  Then, by~\eqref{eq:ibp},
  \begin{equation}
    \label{eq:ibp_2}
    \int_{0}^{v} \int_{0}^{v}  \1_{\rset_+}(t-s) f(s)g(t) \,\rmd B_s \,\rmd t = \int_{0}^{v} \left(\int_{0}^{t}  f(s) \,\rmd B_s \right) g(t) \,\rmd t = \int_{0}^{v} \left(M_t  -\int_{0}^{t} f'(s)B_s \,\rmd s\right) g(t) \,\rmd t \eqsp ,
  \end{equation}
  and 
  \begin{align}
    \int_{0}^{v} \int_{0}^{v}  \1_{\rset_+}(t-s) f(s)g(t) \,\rmd t \,\rmd B_s &=\int_{0}^{v} \left(G(v) - G(s) \right) f(s) \,\rmd B_s\\
    \label{eq:ibp_3}
  &=\int_{0}^{v}\left(G(v) - G(s) \right) \,\rmd M_s-\int_{0}^{v}  \left(G(v) - G(s) \right)  f'(s)B_s \,\rmd s\eqsp. 
  \end{align}
  By Fubini's theorem, almost surely we have $  \int_{0}^{v} \int_{0}^{t} f'(s)B_s \,\rmd s g(t) \,\rmd t =  \int_{0}^{v}  \left(G(v) - G(s) \right)  f'(s)B_s \,\rmd s$.
  Therefore using this result and \eqref{eq:ibp_2}-\eqref{eq:ibp_3},  \eqref{eq:fubini_sto} holds if $\int_{0}^{v} M_t  g(t) \,\rmd t = \int_{0}^{v}\left(G(v) - G(t) \right) \,\rmd M_t$.
  which follows from~\eqref{eq:ibpbis} as
  \begin{align}
    \int_{0}^{v}\left(G(v) - G(t) \right) \rmd M_t&= G(v) M_v-\int_{0}^{v} G(t) \,\rmd M_t =\int_{0}^{v} g(t)f(t)B_t \,\rmd  t  = \int_{0}^{v} M_t  g(t) \,\rmd t  \eqsp.
  \end{align}
  This allows to conclude the proof.
\end{proof}

\end{document}